\newcommand{\tr}{\textnormal{tr}}
\newcommand{\ric}{\textnormal{Ric}}
\newcommand{\dbar}{\overline{\partial}}
\newcommand{\ddt}[1]{\frac{\partial #1}{\partial t}}
\newcommand{\cS}{\mathcal{S}}
\newcommand{\ddbar}{\sqrt{-1}\partial\dbar}
\newcommand{\mref}[1]{(\ref{#1})}
\newtheorem{theorem}{Theorem}[section]
\newtheorem{proposition}{Proposition}[section]
\newtheorem{lemma}{Lemma}[section]
\newtheorem{definition}{Definition}[section]
\newtheorem{corollary}{Corollary}[section]
\newtheorem{remark}{Remark}[section]
\renewcommand{\thefootnote}{\fnsymbol{footnote}}
\newcommand{\starttext}{ \setcounter{footnote}{0}
\renewcommand{\thefootnote}{\arabic{footnote}}}
\newcommand{\beq}{\begin{equation}}
\newcommand{\bea}{\begin{eqnarray}}
\newcommand{\eea}{\end{eqnarray}} \newcommand{\ee}{\end{equation}}
\def\ba{\begin{eqnarray}}
\def\ea{\end{eqnarray}}
\def\cK{{\mathcal K}}
\def\ti\tilde
\def\u{\underline}
\def\tr{{\rm tr}}
\def\ti{\tilde}
\begin{document}

\starttext \baselineskip=18pt \setcounter{footnote}{0}

\starttext \baselineskip=18pt \setcounter{footnote}{0}

\title{SOBOLEV INEQUALITIES ON K\"AHLER SPACES}

\author[{Bin Guo, Duong H. Phong, Jian Song and Jacob Sturm}
]{Bin Guo$^*$, Duong H. Phong$^\dagger$, Jian Song$^\ddagger$ and Jacob Sturm$^{\dagger\dagger}$ }

\thanks{Work supported in part by the National Science Foundation under grants DMS-22-03273, DMS-22-03607 and DMS-23-03508, and the collaboration grant 946730 from Simons Foundation.}

\address{$^*$ Department of Mathematics \& Computer Science, Rutgers University, Newark, NJ 07102}

\email{bguo@rutgers.edu}

\address{$^\dagger$ Department of Mathematics, Columbia University, New York, NY 10027}

\email{phong@math.columbia.edu}

\address{$^\ddagger$ Department of Mathematics, Rutgers University, Piscataway, NJ 08854}

\email{jiansong@math.rutgers.edu}

\address{$^{\dagger\dagger}$ Department of Mathematics \& Computer Science, Rutgers University, Newark, NJ 07102}

\email{sturm@andromeda.rutgers.edu}

\begin{abstract} 
We establish a uniform Sobolev inequality for K\"ahler metrics, which only require an entropy bound and no lower bound on the Ricci curvature. We further extend our Sobolev inequality to singular K\"ahler metrics on K\"ahler spaces with normal singularities. This allows us to build a general theory of global geometric analysis on singular K\"ahler spaces including the spectral theorem, heat kernel estimates, eigenvalue estimates and diameter estimates. Such estimates were only known previously in very special cases such as Bergman metrics.  As a consequence, we derive various geometric estimates, such as the diameter estimate and the Sobolev inequality, for K\"ahler-Einstein currents on projective varieties with definite or vanishing first Chern class. 
{\footnotesize }

\end{abstract}

\maketitle

{ \footnotesize  \tableofcontents}

\baselineskip=15pt
\setcounter{equation}{0}
\setcounter{footnote}{0}

\section{Introduction}
\setcounter{equation}{0}

The Sobolev inequality is a fundamental tool in global geometric analysis. The optimal Sobolev constant reflects the geometric structure of the underlying Riemannian manifold, and geometric curvature bounds will in turn control the Sobolev constant. The classical works of Yau and his collaborators have built a wide range of geometric estimates for compact or complete Riemannian manifolds  assuming a lower bound for the Ricci curvature (c.f. \cite{ScYa, CL, CLY}). In \cite{GPSS}, we were able to establish uniform Green's formula and diameter estimates for a large family of K\"ahler metrics on a K\"ahler manifold without such a Ricci curvature lower bound.
Rather, these estimates only depended on an entropy bound. 
One of the first main results in the present paper
is to obtain a uniform Sobolev inequality for K\"ahler metrics, again assuming only an entropy bound and no Ricci curvature lower bound. As a consequence, under similar assumptions, we also obtain a wide range of geometric estimates such as heat kernel estimate and eigenvalue estimates.
  
\smallskip  
In his celebrated work \cite{Y} on the Calabi conjecture, Yau initiated the theory of complex Monge-Amp\`ere equations with singular data, in the hope of understanding canonical K\"ahler metrics with singularities and anticipating future applications to algebraic geometry. The analytic theory was subsequently developed by Ko\l{}odziej \cite{K} using pluripotential theory, and later further extended to complex Monge-Amp\`ere equations on singular K\"ahler varieties \cite{EGZ, DPali}. However, very little geometric information can be extracted by pluripotential theory for singular K\"ahler metrics, and particularly for K\"ahler-Einstein metrics on projective varieties with log terminal singularities. Very recently, an alternative proof of Ko\l{}odziej’s estimates has been found by Guo, Phong, and Tong in \cite{GPT}, using PDE methods. These PDE methods have had many important consequences \cite{GPa, GP, GPSS}.
Their key features are their flexibility and uniformity, which allow many geometric estimates to be passed on to K\"ahler spaces with normal singularities. 

\smallskip
The next main goal of the present paper is to develop a geometric theory of complex Monge-Amp\`ere equations with {\it singularities}. A theory with sharp geometric estimates had been a well-known challenge in presence of singularities. This is because geometric invariants such as the diameter and the Sobolev inequality are defined in terms of the underlying metric, whose very definition requires two derivatives of the potential. Bounds for such high derivatives of the potential are usually not available in presence of singularities. Another basic and subtle issue right at the start, is to identify suitable K\"ahler spaces with singularities which would contain K\"ahler spaces of interest, such as those carrying canonical metrics such as K\"ahler-Einstein metrics. In this context, we note that the standard notion on smooth K\"ahler metrics on normal varieties, defined as restrictions of smooth K\"ahler metrics on the ambiant space, is inadequate, since such notions would exclude K\"ahler-Einstein metrics.

\smallskip

Nevertheless, building in part on the methods of \cite{GPT, GPS, GPSS} and especially the uniform Sobolev estimates on K\"ahler manifolds obtained in the first part of the paper, we shall identify a natural class of singular K\"ahler spaces (c.f. Definition 3.1) and 
establish for them a whole package of estimates such as Sobolev inequalities, the spectral theorem, heat kernel estimates, diameter estimates and volume non-collapsing estimates. These results provide both conceptual and technical platforms for a geometric regularity theory on singular K\"ahler spaces, which had long been an objective in both analytic and algebraic geometry. As an application, we derive a unique diameter bound for singular K\"ahler-Einstein spaces. 
Applying our methods to the K\"ahler-Ricci flow, we can extend the Sobolev inequality to the metric completion of the singular K\"ahler current and get the diameter/noncollapsing bound from the Sobolev inequality without any curvature assumptions. We note that
diameter bounds for the metric completion $(\hat X,d)$ of normal K\"ahler spaces
have been an open problem for a long time except for special cases when $(X, \omega)$ is a limit of a sequence of K\"ahler manifolds with suitable Ricci curvature bounds (cf. \cite{RZ, S1}).
On the analytic side, our Sobolev inequality allows classical PDE techniques such as the Moser's iteration to work on singular K\"ahler spaces. Regarding the general problem of geometric analysis with singularities, we would like to mention the recent beautiful work of Cao et al. \cite{CGN} on Hermitian-Einstein metrics on spaces with normal singularities in the sense of Grauert, which also relied in an essential manner on the estimates of \cite{GPT, GPS}.

\smallskip
The paper is organized as follows. Sections 2 to 5 are devoted to a precise formulation of our results, with the case of smooth K\"ahler manifolds described in Section \ref{smooth},  and the case of K\"ahler spaces with singularities described in Section \ref{secapp}, which includes applications to singular K\"ahler-Einstein metrics and finite-time limits of the K\"ahler-Ricci flow.
Throughout Sections \ref{section 3} to \ref{section 5}, we will assume that $X$ is a smooth K\"ahler manifold. In Section \ref{section 3}, we will present the proof of Theorem \ref{thm:Sob}, while Theorem \ref{thm:heat} will be proven in Section \ref{section 4}. Additionally, Section \ref{section 5} will contain statements of local versions of Sobolev inequalities. In the remaining sections, we will assume that $X$ is a normal K\"ahler variety. We will consider then the applications of Theorems \ref{thm:Sob} and \ref{thm:heat}, and in particular provide the proof of the results formulated in Section \ref{secapp}.

\section{Uniform bounds for smooth K\"ahler manifolds}
\label{smooth}
\setcounter{equation}{0}

We first state our results for smooth K\"ahler manifolds. Let $(X,\omega_X)$ be a compact K\"ahler manifold without boundary with $\omega_X$ a fixed K\"ahler metric. Without loss of generality we normalize $\omega_X$ such that $\int_X\omega_X^n = 1$. Suppose that the complex dimension of $X$ is $n$. Given a K\"ahler metric $\omega$ on $X$, we denote its volume by 
$$V_\omega=[\omega]^n = \int_X \omega^n$$
 and define the relative volume function by
$$e^{F_\omega} = \frac{1}{V_\omega}\frac{\omega^n}{\omega_X^n}.$$ 
Given $p\ge 1$ we define the $p$-th Nash-Yau entropy of $\omega$ by
$${\mathcal N}_p(\omega) = \frac{1}{V_\omega}\int_X \Big|\log \frac{1}{V_\omega} \frac{ \omega^n}{\omega_X^n} \Big|^p \;{\omega^n} =  \int_X |F_\omega|^p e^{F_\omega} \omega_X^n.$$
For a given {\em nonnegative} {\em continuous} function $\gamma\in C^0(X)$, and given parameters $0< A\le +\infty, K>0$, as in \cite{GPSS}, we consider the following subset of the space of K\"ahler metrics on $X$:
\begin{equation}\label{eqn:the set}
{\mathcal W}:={\mathcal W}(n,p, A, K,\gamma ): = \Big\{ \omega :\; [\omega]\cdot [\omega_X]^{n-1}< A,\, {\mathcal N}_p(\omega) \le K, \, e^{F_\omega} \ge \gamma \Big\}.
\end{equation}
It has been shown in \cite{GPSS} that if $p>n$ and the vanishing locus of $\gamma$ is relatively small (c.f. (\ref{eqn:gamma})), then  the K\"ahler metrics $\omega\in {\mathcal W}(n,p,A, K, \gamma )$ have uniformly bounded diameter and satisfy local noncollapsing.  In this paper, we aim to study the heat kernels of the Laplacian $\Delta_\omega$ of K\"ahler metrics $\omega$ in this subset, and as applications, we give lower bounds of eigenvalues of  $-\Delta_\omega$, by using the approach of Cheng-Li \cite{CL}. The main tool is the following Sobolev-type inequality. Throughout the paper, as in \cite{GPSS} we always assume $\gamma\in C^0(X)$ satisfies 
\begin{equation}\label{eqn:gamma}
\dim_{{\mathcal H}} \{\gamma = 0 \} < 2n -1,\quad \gamma\ge 0,
\end{equation} 
where ${\mathrm {dim}}_{{\mathcal H}}E$ denotes the Hausdorff dimension of a closed subset $E\subset X$. In our earlier work \cite{GPSS}, uniform estimates are established for the Green's function and the diameter associated to K\"ahler metrics in $\mathcal{W}(n, p, A, K, \gamma)$ . We denote 
\begin{equation}\label{norinter}
I_\omega = [\omega]\cdot[\omega_X]^{n-1}
\end{equation}
 to be the intersection of the K\"ahler classes $[\omega]$ and $[\omega_X]$. To state results for possibly unbounded K\"ahler classes, we will write the admissible set of K\"ahler metrics with $A = \infty$ in \mref{eqn:the set} as ${\mathcal W}(n, p, \infty, K, \gamma)$. 


\begin{theorem}\label{thm:Sob} 

Given $p>n$, $K>0$ and $\gamma\in C^0(X)$ satisfying (\ref{eqn:gamma}), there exist   $q = q(n, p)>1$ and $C = C(n, p,   K,\gamma, q)>0$ such that  for any K\"ahler metric $\omega\in {\mathcal W}(n,p,\infty, K, \gamma )$ and any $u\in W^{1,2}(X)$,  we have the following Sobolev-type inequality 
\begin{equation}\label{eqn:Sob}
\Big(\frac{1}{V_\omega}\int_X | u - \overline{u}  |^{2q}\omega^n   \Big)^{1/q}\le C \frac{ I_\omega}{V_\omega} \int_X |\nabla u|_\omega^2 \omega^n,
\end{equation}
where $\overline{u} = \frac{1}{V_\omega}\int_X u \omega^n$ is the average of $u$ over $(X,\omega)$.
\end{theorem}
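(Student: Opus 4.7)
The plan is to deduce the Sobolev inequality from the uniform Green's function bounds for metrics $\omega \in \mathcal{W}(n,p,\infty,K,\gamma)$ established in our earlier work \cite{GPSS}, combined with a duality argument and Riesz-Thorin interpolation. The key input is the existence of $q_0 = q_0(n,p) > 1$ and $C>0$ (depending only on $n,p,K,\gamma$) such that, for every $\omega\in\mathcal{W}$, the Green's function $G_\omega(x,y)$ of $-\Delta_\omega$ (normalized by $\int_X G_\omega(x,\cdot)\,\omega^n = 0$) satisfies
\begin{equation*}
\sup_{x\in X} \int_X |G_\omega(x,y)|^{q_0}\,\frac{\omega^n(y)}{V_\omega} \;\le\; C\,\Big(\frac{I_\omega}{V_\omega}\Big)^{q_0}.
\end{equation*}
This extends the bounds of \cite{GPSS} to the unbounded K\"ahler class, with $I_\omega/V_\omega$ being exactly the scale-invariant quantity produced by the rescaling $\omega\mapsto \lambda\omega$ on which $G_\omega$ transforms as $\lambda^{1-n}$.

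Next I introduce the Poisson operator $\mathcal{G}f:=v$, where $v$ is the mean-zero solution of $-\Delta_\omega v = f$ for mean-zero $f$; explicitly, $v(x)=\int_X G_\omega(x,y)\,f(y)\,\omega^n(y)$. Writing $d\mu:=\omega^n/V_\omega$, H\"older's inequality applied pointwise in $x$ gives $\|\mathcal{G}\|_{L^{q_0'}(d\mu)\to L^\infty} \le C\,I_\omega$, since
\begin{equation*}
|v(x)| \;=\; V_\omega\,\Big|\!\int_X G_\omega(x,y)f(y)\,d\mu(y)\Big| \;\le\; V_\omega\,\|G_\omega(x,\cdot)\|_{L^{q_0}(d\mu)}\,\|f\|_{L^{q_0'}(d\mu)}.
\end{equation*}
By the symmetry $G_\omega(x,y)=G_\omega(y,x)$ together with Minkowski's integral inequality, the dual endpoint $\|\mathcal{G}\|_{L^1(d\mu)\to L^{q_0}(d\mu)}\le C\,I_\omega$ also holds. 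Riesz-Thorin interpolation between these endpoints at $\theta=1/2$, which corresponds to the choice $q:=q_0>1$, yields
\begin{equation*}
\|\mathcal{G}\|_{L^{(2q)'}(d\mu)\to L^{2q}(d\mu)} \;\le\; C\,I_\omega.
\end{equation*}

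The Sobolev inequality now follows by duality. For $u\in W^{1,2}(X)$ with $\bar u=0$, one has $\|u\|_{L^{2q}(d\mu)} = \sup\{\int_X uf\,d\mu : \bar f=0,\ \|f\|_{L^{(2q)'}(d\mu)}\le 1\}$. For each such $f$, setting $v=\mathcal{G}f$, integration by parts gives $\int_X uf\,d\mu = \int_X \nabla u\cdot\nabla v\,d\mu \le \|\nabla u\|_{L^2(d\mu)}\|\nabla v\|_{L^2(d\mu)}$, while the energy identity combined with the interpolation bound yields
\begin{equation*}
\|\nabla v\|_{L^2(d\mu)}^2 \;=\; \int_X vf\,d\mu \;\le\; \|v\|_{L^{2q}(d\mu)}\|f\|_{L^{(2q)'}(d\mu)} \;\le\; C\,I_\omega.
\end{equation*}
Taking the supremum over $f$ produces $\|u\|_{L^{2q}(d\mu)}^2 \le C\,I_\omega\,\|\nabla u\|_{L^2(d\mu)}^2$, which rewritten in terms of the unnormalized measure $\omega^n$ is precisely \eqref{eqn:Sob}.

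The principal obstacle is the Green's function input above -- the uniform $L^{q_0}$ control of $G_\omega$ in the absence of any Ricci lower bound and uniformly across the unbounded K\"ahler class with the correct $I_\omega/V_\omega$ scaling. This rests on the auxiliary complex Monge-Amp\`ere machinery developed in \cite{GPT, GPSS}, and extending it from bounded to unbounded classes requires a careful rescaling argument exploiting the scale-invariance of the entropy condition. Once the kernel estimate is secured, the Riesz-Thorin interpolation and the duality argument outlined above are essentially classical.
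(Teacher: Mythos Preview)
Your proof is correct and takes a genuinely different route from the paper's. Both arguments rest on the same key input --- the uniform $L^{1+\varepsilon_0}$ integrability of the Green's function from \cite{GPSS} (the paper's Lemma~\ref{lemma 3.2}(i)) --- and both handle the unbounded K\"ahler class via the same rescaling $\hat\omega = I_\omega^{-1}\omega$. The difference lies in how the Green's function estimate is converted into a Sobolev inequality. The paper works directly from the Green's formula $u(x)-\bar u = \int_X \langle\nabla\mathcal{G}_\omega(x,\cdot),\nabla u\rangle_\omega\,\omega^n$, splits the integrand via a weighted Cauchy--Schwarz with weight $\mathcal{G}_\omega^{(1+\beta)/2}$, invokes the elementary auxiliary bound $\int |\nabla\mathcal{G}_\omega|^2/\mathcal{G}_\omega^{1+\beta}\,\omega^n \le V_\omega^\beta/\beta$ (Lemma~\ref{lemma 3.2}(ii)), and then applies Minkowski's integral inequality. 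Your argument instead bounds the Poisson operator at the two endpoints $L^{q_0'}\to L^\infty$ and $L^1\to L^{q_0}$, interpolates by Riesz--Thorin to $L^{(2q_0)'}\to L^{2q_0}$, and closes by duality and the energy identity. Your approach is somewhat more conceptual, avoids the gradient estimate (ii) entirely, and actually delivers the better exponent $q=1+\varepsilon_0$, whereas the paper's weighted splitting with $\beta=\varepsilon_0/2$ yields the smaller $q=(1+\varepsilon_0)/(1+\varepsilon_0/2)$. On the other hand, the paper's hands-on Green's formula computation adapts more transparently to the local Sobolev and Poincar\'e inequalities of Section~\ref{section 5}; and the paper does employ a Riesz--Thorin argument very close to yours in the proof of Theorem~\ref{lemma additional}.
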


We remark that the Sobolev constant in Theorem \ref{thm:Sob} does not depend on the curvature of the K\"ahler metric $\omega$. The Sobolev inequality \mref{eqn:Sob} is indeed {\em  scale invariant}, although the exponent $q$ may be  smaller than $\frac{n}{n-1}$. We also stress that if we impose the Ko\l{}odziej-type condition on $e^{F_\omega}$, i.e. its $L^{1+\epsilon'}(X,\omega_X^n)$-norm is uniformly bounded for some $\epsilon'>0$, then the constant $q>1$ can be chosen as close as possible  to the {exponent} $\frac{n}{n-1}$ as in the Euclidean case, with the constant $C_q$ possibly blowing up as $q$ approaches $ \frac{n}{n-1}$. This fact will be clear from the proof of Theorem \ref{thm:Sob} by combining the almost sharp estimates on the Green's functions established in \cite{GPS}.\footnote{In \cite{GPS} the metrics are assumed to be uniformly {\em big}, however, the same proof remains applicable to the degenerating metrics by slight modifications.} However, for the geometric applications in this paper, having $q>1$ suffices.

We note that the Sobolev inequality (\ref{eqn:Sob}) can also be used to prove the diameter estimate of $(X,\omega)$, which has been established in our previous work \cite{GPSS} using the Green's formula. We are informed by Guedj, Guenancia and Zeriahi that they very recently obtained diameter estimates under hypotheses different from ours (c.f. \cite{GGZ}). 

\smallskip

With the Sobolev inequality in Theorem \ref{thm:Sob}, we can now state the heat kernel upper bounds and eigenvalue lower bounds for K\"ahler metrics in $\mathcal{W}(n, p, \infty, K, \gamma)$.  Recall that the heat kernel of the Laplacian $\Delta_\omega$ satisfies the equation
\begin{equation}\label{eqn:heat}
\frac{\partial}{\partial t} H(x,y,t) = \Delta_{\omega, y} H(x,y,t) \quad \mbox {and } \lim_{t\to 0^+} H(x,y,t) = \delta_x(y). 
\end{equation}
It is well-known that $H(x,y,t)$ is a positive smooth function on $X\times X\times (0,\infty)$, and it is symmetric in $x$ and $y$. 
\begin{theorem}\label{thm:heat}
Let $q = q(n,p)>1$ be the constant in Theorem \ref{thm:Sob}. There exists $C=C(n,p,K , \gamma)>0$ such that for any $x,y\in X$
\begin{equation}\label{eqn:heat kernel}H(x,y,t) \le  \left\{\begin{array}{ll}
\frac{C}{V_\omega}\big(\frac{I_\omega}{t} \big)^{\frac{q}{q-1}} \exp\big({-\frac{d_\omega(x,y)^2}{10t}}\big), & \quad \mbox{if }t\in (0,I_\omega]\\
 \frac{C}{V_\omega}  \exp\big({-\frac{d_\omega(x,y)^2}{10t}}\big), & \quad\mbox{if }t \in (I_\omega,\infty).
\end{array}\right.\end{equation}Here $d_\omega(x,y)$ is the geodesic distance between $x$ and $y$ with repsect to the metric $\omega$. 
When $x=y$, a stronger upper bound of the heat kernel on the diagonal of $X\times X$ holds as below,
\begin{equation}\label{eqn:CL}H(x,x,t) \le \frac{1}{V_\omega} + \frac{C}{V_\omega} I_\omega^{\frac{q}{q-1}}t^{-\frac{q}{q-1}}. \end{equation}
Moreover, if $t\ge I_\omega$, we have for some positive constant $c>0$
\begin{equation}\label{eqn:exp heat}
\sup_{x,y\in X}\big| H(x,y,t) - \frac{1}{V_\omega}  \big| \le \frac{C}{V_\omega} e^{- c I_\omega^{-1} t}.
\end{equation}
\end{theorem}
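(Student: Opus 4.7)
The strategy is to use the Sobolev inequality of Theorem~\ref{thm:Sob} as the sole analytic input for a Nash–Moser–Davies argument, working on the $L^2$-orthogonal complement of constants to match the Poincar\'e form of \eqref{eqn:Sob}. Throughout, let $P_t := e^{t\Delta_\omega}$ denote the heat semigroup and let $\Pi$ denote the $L^2$-orthogonal projection onto constants; then $H(x,y,t)$ is the Schwartz kernel of $P_t$, while the Schwartz kernel of $P_t - \Pi$ is $H(x,y,t) - V_\omega^{-1}$.

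The first step promotes \eqref{eqn:Sob} to a Nash-type inequality via H\"older interpolation. With $\theta := q/(2q-1)\in(0,1)$, the three-space H\"older bound $\|f\|_2 \le \|f\|_{2q}^\theta\|f\|_1^{1-\theta}$ applied to any mean-zero $f$, combined with \eqref{eqn:Sob}, gives
$$\|f\|_2^{2(2q-1)/q} \le C\,I_\omega V_\omega^{-(q-1)/q}\,\|\nabla f\|_2^2\,\|f\|_1^{2(q-1)/q}.$$
Taking $f := P_t f_0$ for mean-zero $f_0$ and using $\tfrac{d}{dt}\|P_t f_0\|_2^2 = -2\|\nabla P_t f_0\|_2^2$ with the $L^1$-contractivity $\|P_t f_0\|_1 \le \|f_0\|_1$ produces a closed differential inequality for $\phi(t) := \|P_t f_0\|_2^2$. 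Direct integration yields the mean-zero $L^1\to L^2$ bound $\|P_t f_0\|_2^2 \le CV_\omega^{-1}(I_\omega/t)^{q/(q-1)}\|f_0\|_1^2$. Self-adjointness of $P_t$ and the semigroup splitting $P_t - \Pi = (P_{t/2}-\Pi)^2$ dualize this to
$$\sup_{x,y} |H(x,y,t) - V_\omega^{-1}| \le CV_\omega^{-1}(I_\omega/t)^{q/(q-1)},$$
and the diagonal identity $H(x,x,t) - V_\omega^{-1} = \|H(x,\cdot,t/2) - V_\omega^{-1}\|_2^2$ produces \eqref{eqn:CL}.

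For the Gaussian off-diagonal factor in \eqref{eqn:heat kernel}, I would apply Davies' exponential-weight perturbation. For bounded Lipschitz $\xi$ with $|\nabla\xi|\le 1$ and $\lambda > 0$, the twisted function $u_t := e^{-\lambda\xi}P_t(e^{\lambda\xi}f)$ satisfies
$$\frac{d}{dt}\|u_t\|_2^2 \le -2\|\nabla u_t\|_2^2 + 2\lambda^2\|u_t\|_2^2,$$
so after the rescaling $u_t \mapsto e^{-\lambda^2 t}u_t$ a Nash ODE analogous to Step~2 applies. Since the weight $e^{-\lambda\xi}$ does not preserve the mean-zero subspace, one first upgrades \eqref{eqn:Sob} to the full Sobolev inequality $\|u\|_{2q}^2 \le CI_\omega V_\omega^{-(q-1)/q}\|\nabla u\|_2^2 + CV_\omega^{1/q-1}\|u\|_2^2$, obtained from $\|u\|_{2q} \le \|u - \overline{u}\|_{2q} + V_\omega^{1/(2q)}|\overline{u}|$ together with $|\overline{u}|^2 \le V_\omega^{-1}\|u\|_2^2$; the lower-order term in the resulting Nash ODE produces a harmless $e^{Ct/I_\omega}$ prefactor that is absorbed by the Gaussian. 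Taking $\xi(y) := d_\omega(x_0,y)$ for fixed $x_0\in X$ and optimizing $\lambda \sim d_\omega(x,y)/t$ generates the factor $\exp(-d_\omega(x,y)^2/(10t))$, and the two regimes $t \le I_\omega$ and $t > I_\omega$ in \eqref{eqn:heat kernel} correspond to whether the polynomial $(I_\omega/t)^{q/(q-1)}$ or the constant $1$ dominates.

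Finally, \eqref{eqn:exp heat} follows by combining the spectral gap $\lambda_1(-\Delta_\omega) \ge c/I_\omega$—immediate from the Poincar\'e inequality $\|u - \overline{u}\|_2^2 \le CI_\omega\|\nabla u\|_2^2$ deduced from \eqref{eqn:Sob} by H\"older's inequality on the probability measure $V_\omega^{-1}\omega^n$—with the bounds of Step~2. For $t > 2I_\omega$, the three-factor splitting $P_t - \Pi = (P_{I_\omega}-\Pi)(P_{t-2I_\omega}-\Pi)(P_{I_\omega}-\Pi)$, together with $\|(P_{I_\omega}-\Pi)\|_{L^1\to L^2} \le CV_\omega^{-1/2}$, its dual $\|(P_{I_\omega}-\Pi)\|_{L^2\to L^\infty} \le CV_\omega^{-1/2}$, and the spectral estimate $\|(P_{t-2I_\omega}-\Pi)\|_{L^2\to L^2} \le e^{-\lambda_1(t-2I_\omega)}$, yields $\|P_t - \Pi\|_{L^1\to L^\infty} \le CV_\omega^{-1}\exp(-cI_\omega^{-1}t)$, which is \eqref{eqn:exp heat}. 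The main obstacle is the Davies step: the exponential twist $e^{\lambda\xi}$ does not commute with the projection $\Pi$, so careful control of the lower-order term in the full Sobolev inequality throughout the Nash iteration is the most delicate part of the argument.
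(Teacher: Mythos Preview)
Your approach is correct in outline and parallels the paper closely, but the packaging differs. For the on-diagonal bound \eqref{eqn:CL} the paper follows Cheng--Li: it derives a differential inequality directly for $\tilde H(x,x,t)=H(x,x,t)-V_\omega^{-1}$ using \eqref{eqn:Sob} and H\"older, which is exactly the Nash inequality you wrote down in disguise. For the off-diagonal Gaussian the paper uses Davies' original scheme with a time-dependent exponent $r(t)\colon 1\to\infty$ and a log-Sobolev step, rather than your Nash-ODE-with-twist; both are standard variants of Davies' method and yield the same bound. For \eqref{eqn:exp heat} the paper reads off the decay from the eigenfunction expansion and the spectral gap, which is equivalent to your three-factor operator splitting.

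One point needs correction: your claim that the lower-order $e^{Ct/I_\omega}$ prefactor ``is absorbed by the Gaussian'' is wrong for $t>I_\omega$, since $e^{-d_\omega(x,y)^2/(10t)}\le 1$ cannot cancel exponential growth in $t$. The paper handles this regime by a $t$-dependent choice of the auxiliary parameter (their $\delta$) so the prefactor stays bounded. Your framework recovers the bound more simply: once \eqref{eqn:CL} is established, Cauchy--Schwarz on the semigroup gives $H(x,y,t)\le H(x,x,t)^{1/2}H(y,y,t)^{1/2}\le C/V_\omega$ for $t\ge I_\omega$, and since the scale-invariant diameter bound forces $d_\omega(x,y)^2/t\le C$ in this range, the Gaussian factor is bounded below by a universal constant and can be inserted for free. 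You should replace the ``absorbed by the Gaussian'' sentence with this observation.
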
 
The classical heat kernel upper bound is achieved in \cite{CLY}. Theorem \ref{thm:heat} does not require any assumption on the Ricci curvature and it will also give uniform lower bounds for the eigenvalues associated to metrics in $\mathcal{W}(n, p, \infty, K, \gamma)$.
Let $0=\lambda_0 < \lambda_1 \le \lambda_2\le \ldots$ (counting multiplicity) be the increasing sequence of eigenvalues of the Laplacian $-\Delta_\omega$. We have the following lower bound on these eigenvalues.
\begin{corollary}\label{cor:CL}
There exists  $c = c(n,p, K, \gamma)>0$ such that for any $k\in {\mathbb N}$,  
$$\lambda_k\ge c \, I_\omega^{-1}  k^{\frac{q-1}{q}}.$$
\end{corollary}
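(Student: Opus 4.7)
\textbf{Proof plan for Corollary \ref{cor:CL}.}
The strategy is the standard Cheng--Li trick: convert the on-diagonal heat kernel bound \eqref{eqn:CL} into an upper bound on the spectral counting function via the trace formula, and then read off a lower bound for $\lambda_k$. The key input is the ``small time'' on-diagonal estimate
\begin{equation*}
H(x,x,t) \le \frac{1}{V_\omega} + \frac{C}{V_\omega}\, I_\omega^{q/(q-1)}\, t^{-q/(q-1)},
\end{equation*}
which is already established in Theorem \ref{thm:heat}; no further Sobolev or geometric work is needed.

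First I would integrate this pointwise bound against $\omega^n$ and use the spectral expansion
\begin{equation*}
\int_X H(x,x,t)\, \omega^n \;=\; \sum_{j=0}^{\infty} e^{-\lambda_j t},
\end{equation*}
which is valid because $\{-\Delta_\omega\}$ has a discrete spectrum on a compact smooth K\"ahler manifold and $H$ is the associated heat kernel. Integration yields the trace-type bound
\begin{equation*}
\sum_{j=0}^{\infty} e^{-\lambda_j t} \;\le\; 1 + C\, I_\omega^{q/(q-1)}\, t^{-q/(q-1)}
\end{equation*}
for all $t>0$, with $C=C(n,p,K,\gamma)$.

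Next, for a fixed $k\ge 1$, since $\lambda_0\le\lambda_1\le\cdots\le\lambda_k$, we have the elementary inequality $(k+1)\,e^{-\lambda_k t} \le \sum_{j=0}^{\infty} e^{-\lambda_j t}$. Choosing the specific test time $t = \lambda_k^{-1}$ then gives
\begin{equation*}
\frac{k+1}{e} \;\le\; 1 + C\, I_\omega^{q/(q-1)}\, \lambda_k^{q/(q-1)}.
\end{equation*}
For $k$ larger than an absolute constant this forces $k/(2e)\le C\, I_\omega^{q/(q-1)}\,\lambda_k^{q/(q-1)}$, i.e.\ $\lambda_k \ge c\, I_\omega^{-1} k^{(q-1)/q}$. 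For the finitely many small values of $k$ not covered this way, I would either invoke $\lambda_1>0$ (since $X$ is connected) and absorb the loss into $c$, or simply remark that the estimate is vacuous at $k=0$ and is achieved by adjusting the constant.

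I do not expect any real obstacle here: once Theorem \ref{thm:heat} is in hand, this is a purely algebraic deduction from the heat trace inequality. The only minor points of care are (i) making sure the trace formula is justified so that termwise summation of $e^{-\lambda_j t}$ equals the integrated heat kernel, and (ii) choosing $t=\lambda_k^{-1}$ (rather than $t$ balancing the two terms on the right) so that the final exponent $(q-1)/q$ comes out correctly.
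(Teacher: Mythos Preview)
Your proposal is correct and follows essentially the same approach as the paper: integrate the on-diagonal bound \eqref{eqn:CL} to get a trace inequality, then set $t=1/\lambda_k$ and use $\sum_{j\le k} e^{-\lambda_j/\lambda_k}\ge k e^{-1}$. The only cosmetic difference is that the paper subtracts off the $j=0$ term first (working with $\tilde H=H-V_\omega^{-1}$), obtaining $\sum_{j\ge 1} e^{-\lambda_j t}\le C I_\omega^{q/(q-1)} t^{-q/(q-1)}$ directly, which gives the bound for all $k\ge 1$ without a separate argument for small $k$.
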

We observe that the uniform positive lower bound of $\lambda_1$ was proved in \cite{GPSS}. The known estimates on higher order eigenvalues required lower bounds on the Ricci curvature \cite{CL}.


\section{Applications to singular K\"ahler spaces}\label{secapp}
\setcounter{equation}{0}

In this section, we will apply the Sobolev inequality and the heat kernel estimates to normal K\"ahler varieties. We will focus on the family of closed positive $(1,1)$-currents whose log volume measure has only log type analytic singularities (c.f. Definition \ref{logtype}).   

\begin{definition} \label{defak} Let $X$ be an $n$-dimensional compact normal K\"ahler space. Let $\pi: Y \rightarrow X$ be a log resolution of singularities and let $\theta_Y$ be a smooth K\"ahler metric  on the nonsingular model $Y$.  We define the set of admissible semi-K\"ahler currents 
$${\mathcal{AK}}(X, \theta_Y, n, p, A, K, \gamma),$$  
 to be the set of any semi-K\"ahler current $\omega$ on $X$ satisfying the following conditions.

\begin{enumerate}

\item $[\omega]$ is a K\"ahler class on $X$ and $\omega$ has bounded local potentials. 

\smallskip

\item $[\pi^*\omega] \cdot [\theta_Y]^{n-1}\leq A$ and $[\omega]^n \geq A^{-1}$.  

\smallskip

\item The $p$-th Nash-Yau entropy is bounded for some $p>n$, i.e.
$${\mathcal{N}}_p (\omega) = \frac{1}{V_\omega} \int_Y \left|\log \frac{1}{V_\omega} \frac{(\pi^*\omega)^n}{(\theta_Y)^n} \right|^p (\pi^*\omega)^n  \leq K, $$
where $V_\omega= [\omega]^n$.

\medskip

\item The log volume measure ratio 
$$\log \left( \frac{(\pi^*\omega)^n}{(\theta_Y)^n} \right)$$ has log type analytic singularities (c.f. Definition \ref{logtype}). There also exists a non-negative $\gamma \in C^0(Y)$ such that $\{\gamma=0\}$ is contained in a proper analytic subvariety of $Y$ and 
 $$\frac{(\pi^*\omega)^n}{ (\theta_Y)^n}\geq \gamma.$$

\end{enumerate}

\end{definition}

The nonsingular model $Y$ is indeed K\"ahler (cf. Lemma 2.2 \cite{CMM}) in Definition \ref{defak}.  The $\mathcal{AK}(X, \theta_Y, n, p, A, K, \gamma)$ space is the extension of $\mathcal{W}(n, p, A, K, \gamma)$ with two key additional conditions: (1) uniform lower bound of the volume for $[\omega]$ to prevent collapsing;  (2) the volume measure ratio has log type analytic singularities to guarantee smoothness of $\omega$ in an open Zariski subset of $X$. 

\smallskip

Smooth K\"ahler metrics on a normal variety $X$ are defined locally as the restriction of smooth K\"ahler metrics in the ambient space (cf. Definition \ref{defn:6.1}). In general, 
canonical metrics on $X$ such as K\"ahler-Einstein metrics are not smooth K\"ahler metrics in this sense. For example, if $X$ is a projective variety with orbifold singularities,  the smooth orbifold K\"ahler metrics are never smooth K\"ahler metrics on $X$. However, the K\"ahler-Einstein metrics on projective varieties belong to the $\mathcal{AK}$-space of Definition \ref{defak} and they are not smooth near the algebraic singularities. This is one of the reasons we consider the   $\mathcal{AK}$-spaces.  

\begin{remark} \label{bignefex}   The space $\mathcal{AK}(X, \theta_Y, n, p, A, K, \gamma)$ in Definition (\ref{defak}) can be easily generalized for positive  currents $\omega$ with minimal singularities in a big and nef class.  
Most of our results still hold for such positive currents $\omega$. We choose to state our results for $\mathcal{AK}(X, \theta_Y, n, p, A, K, \gamma)$ with the K\"ahler assumptions for convenience and simplicity. In fact, the definition for $\mathcal{AK}$-space of currents can be further extended without the nef assumption. 

\end{remark}

For any $\omega \in \mathcal{AK}(X, \theta_Y, n, p, A, K, \gamma)$, we let $\cS_{X, \omega}$ be the union of the singular set of $X$ and the singular set of $\pi_*\left( \log \frac{(\pi^*\omega)^n}{(\theta_Y)^n} \right)$. By definition, $\cS_{X, \omega}$ is an analytic subvariety of $X$. The current $\omega$ is a smooth K\"ahler metric on $X\setminus \cS_{X, \omega}$ (c.f. Proposition \ref{omapp}). 

\begin{definition} Let $X$ be an $n$-dimensional a compact normal K\"ahler variety and 
$$\omega \in \mathcal{AK}(X, \theta_Y, n, p, A, K, \gamma).$$ We define
$$(\hat X, d) = \overline{(X\setminus \cS_{X, \omega}, \omega|_{X\setminus \cS_{X, \omega}})}$$
to be the metric completion of $\left(X\setminus \cS_{X,\omega}, \omega|_{X\setminus \cS_{X,\omega}} \right)$. We also denote the unique metric measure space  associated with $(X, \omega)$ by 
$$(\hat X, d, \omega^n).$$

\end{definition}

We remark that $\omega^n$ extends uniquely to a volume measure on $\hat X$ because both $\omega$ and $\omega^n$ does not carry mass on $\cS_{X, \omega}$.

\smallskip

Our goal is to extend Theorem \ref{thm:Sob} and Theorem \ref{thm:heat}  to the metric measure space $(\hat X, d, \omega^n)$ associated to $(X, \omega)$. We then apply the Sobolev inequality to  establish the volume non-collapsing and diameter bounds for $(\hat X, d, \omega^n)$ in Section \ref{singdiasec}. A spectral theorem and heat kernel estimates will also be proved in Section \ref{section heat} and Section \ref{section 10}. Finally, we will apply our estimates to singular K\"ahler-Einstein spaces in Section \ref{section 11} and limiting spaces of non-collapsing finite time solutions of the K\"ahler-Ricci flow in Section \ref{section 12}.

\subsection{Sobolev inequality, heat kernel and diameter estimates}

$~$ 

\medskip

Let $X$ be an $n$-dimensional normal K\"ahler variety. For any $p>n$ and any $\omega\in \mathcal{AK}(X, \theta_Y, n,p, A, K, \gamma)$, we let $(\hat X, d, \omega^n)$ the metric measure space associated to $(X, \omega)$. We will consider the Sobolev space $W^{1,2}(\hat X,d, \omega^n)$ as in Definition \ref{sobdef} on  the metric measure space $(\hat X, d, \omega^n)$ associated to $\omega$.

\begin{theorem} \label{singsob} Let $X$ be an $n$-dimensional compact normal K\"ahler space. For any $\omega\in \mathcal{AK}(X, \theta_Y, n, p, A, K, \gamma)$, the metric measure space $(\hat X, d, \omega^n)$ associated to $(X, \omega)$ satisfies the following properties. 

\begin{enumerate}

\item There exists $C=C(X, \theta_Y, n, p, A, K, \gamma)>0$ such that  
$${\textnormal{diam}}(\hat X, d) \leq C.$$
In particular, $(\hat X, d)$ is a compact metric space.

\medskip 

\item  There exist $q>1$ and $C_S=C_S(X, \theta_Y, n, p,  A, K, \gamma, q)>0$ such that 
$$
\Big(\int_{\hat X} | u  |^{2q}\omega^n   \Big)^{1/q}\le C_S \left( \int_{\hat X} |\nabla u|^2 ~\omega^n + \int_{\hat X} u^2 \omega^n \right) .
$$
for all $u\in W^{1, 2}(\hat X, d, \omega^n)$. 

\medskip

\item There exists $C=C(X, \theta_Y, n, p, A, K, \gamma)>0$ such that the following trace formula holds for the heat kernel of $(\hat X, d, \omega^n)$
$$H(x,x, t) \leq \frac{1}{V_\omega} + \frac{C}{V_\omega} t^{-\frac{q}{q-1}}. $$ 

\medskip

\item Let $0=\lambda_0 < \lambda_1 \leq \lambda_2 \leq ... $ be the increasing sequence of eigenvalues of the Laplacian $-\Delta_\omega$ on $(\hat X, d, \omega^n)$. Then there exists $c=c(X, \theta_Y, n, p, A, K, \gamma)>0$ such that
$$\lambda_k \geq c k^{\frac{q-1}{q}}. $$

\end{enumerate}

\end{theorem}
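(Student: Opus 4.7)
The plan is to reduce all four statements to the smooth theorems of Section~\ref{smooth} via approximation on the log resolution $\pi:Y\to X$. I will construct a one-parameter family of smooth K\"ahler metrics $\omega_\epsilon$ on $Y$, lying in the K\"ahler class $[\pi^*\omega]+\epsilon[\theta_Y]$, which converges to $\pi^*\omega$ as $\epsilon\to 0^+$ and which satisfies the hypotheses of Theorems~\ref{thm:Sob} and \ref{thm:heat} with constants independent of $\epsilon$. The natural construction is to solve the family of complex Monge--Amp\`ere equations
\[
(\pi^*\omega+\epsilon\theta_Y+\ddbar\varphi_\epsilon)^n=c_\epsilon\,\hat\Omega,\qquad \sup_Y\varphi_\epsilon=0,
\]
where $\hat\Omega$ is a smoothing of $(\pi^*\omega)^n$ that preserves the log-type analytic singular structure of its density against $\theta_Y^n$, and $c_\epsilon$ is the volume-normalization constant. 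Setting $\omega_\epsilon:=\pi^*\omega+\epsilon\theta_Y+\ddbar\varphi_\epsilon$, I will verify using the three structural conditions in Definition~\ref{defak} that $\omega_\epsilon\in\mathcal{W}(n,p,\infty,K',\gamma')$ uniformly in $\epsilon$, with $(K',\gamma')$ depending only on $(X,\theta_Y,n,p,A,K,\gamma)$.

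\medskip

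Once such a family is in hand, Theorems~\ref{thm:Sob}, \ref{thm:heat} and Corollary~\ref{cor:CL} applied to $(Y,\omega_\epsilon)$ produce, with constants independent of $\epsilon$, the Sobolev--Poincar\'e inequality, the diagonal heat kernel bound $H_\epsilon(y,y,t)\le V_{\omega_\epsilon}^{-1}+CV_{\omega_\epsilon}^{-1}I_{\omega_\epsilon}^{q/(q-1)}t^{-q/(q-1)}$, and the eigenvalue bound $\lambda_k(Y,\omega_\epsilon)\ge cI_{\omega_\epsilon}^{-1}k^{(q-1)/q}$. Because Definition~\ref{defak} forces both $V_{\omega_\epsilon}$ and $I_{\omega_\epsilon}$ into a compact subinterval of $(0,\infty)$, all these constants are genuinely uniform. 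The diameter assertion (1) then follows as in \cite{GPSS}: the uniform Sobolev inequality and volume lower bound on $(Y,\omega_\epsilon)$ imply a uniform diameter bound, and since distances are lower semicontinuous under the smooth convergence $\omega_\epsilon\to\pi^*\omega$ off the exceptional locus, $\mathrm{diam}(\hat X,d)\le\liminf_\epsilon\mathrm{diam}(Y,\omega_\epsilon)<\infty$.

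\medskip

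For (2), (3), and (4), I will identify the Sobolev space, Dirichlet form and heat semigroup on $(\hat X,d,\omega^n)$ as limits of their $(Y,\omega_\epsilon)$ counterparts. The key geometric point is that $\omega_\epsilon^n\to\omega^n$ in total variation, since both densities converge smoothly on the complement of the exceptional divisor $E$ and of $\pi^{-1}(\cS_{X,\omega})$ and neither measure charges that exceptional set, and that uniformly bounded $C^k$ estimates for $\varphi_\epsilon$ hold on compact subsets of $Y\setminus(E\cup\pi^{-1}(\cS_{X,\omega}))$ by the PDE estimates of \cite{GPT,GPS}. Smooth compactly supported functions on this open set are then dense in $W^{1,2}(\hat X,d,\omega^n)$, so passing $\epsilon\to 0$ in the uniform Poincar\'e--Sobolev inequality on $(Y,\omega_\epsilon)$, and converting the mean-zero form into the stated form via $|\bar u|^2\le V_\omega^{-1}\|u\|_{L^2}^2$ together with the uniform bounds on $V_\omega$, yields (2). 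The trace bound (3) follows either from Mosco convergence of Dirichlet forms (so that the heat semigroups converge strongly and the diagonal bound is inherited by the limit) or, equivalently, by running the Nash iteration directly on $(\hat X,d,\omega^n)$ using (2). Finally, (4) is a standard consequence of (3) via the Cheng--Li inequality
\[
(k+1)e^{-\lambda_k t}\le\sum_{j=0}^k e^{-\lambda_j t}\le \int_{\hat X} H(x,x,t)\,\omega^n\le 1+Ct^{-q/(q-1)},
\]
optimized in $t$.

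\medskip

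The main obstacle is the Dirichlet-form identification in the third paragraph: one must show that the intrinsic space $W^{1,2}(\hat X,d,\omega^n)$ coincides with the closure of smooth compactly supported functions on $Y\setminus(E\cup\pi^{-1}(\cS_{X,\omega}))$ in the $\omega$-gradient norm, and that the associated heat semigroup is the weak limit of the heat semigroups of $(Y,\omega_\epsilon)$. This requires a capacity-theoretic argument showing that $E\cup\pi^{-1}(\cS_{X,\omega})$ has vanishing $(1,2)$-capacity in $(\hat X,d,\omega^n)$; the essential inputs will be the uniform volume lower bound, the log-type singular structure built into Definition~\ref{defak}, and the uniform Sobolev inequality on the approximants themselves (a bootstrap feature: (2) for the approximants implies the capacity vanishing needed to upgrade (2) to the limit). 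Once this identification is established, the transfer of all four assertions from $(Y,\omega_\epsilon)$ to $(\hat X,d,\omega^n)$ is essentially mechanical.
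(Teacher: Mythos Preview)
Your overall architecture---approximate $\pi^*\omega$ by smooth K\"ahler metrics $\omega_\epsilon$ on the resolution $Y$ lying uniformly in a $\mathcal{W}$-class, apply the smooth Theorems~\ref{thm:Sob}, \ref{thm:heat} and Corollary~\ref{cor:CL}, and pass to the limit---is exactly what the paper does (Sections~\ref{maregsec}--\ref{section 10}). The construction of the approximants, the density of $C^\infty_0(X\setminus\cS_{X,\omega})$ in $W^{1,2}$ via cut-off functions with small Dirichlet energy, and the Cheng--Li derivation of (4) from (3) all match.

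There is, however, a genuine gap in your treatment of the diameter bound (1). You propose to obtain $\mathrm{diam}(\hat X,d)\le\liminf_\epsilon\mathrm{diam}(Y,\omega_\epsilon)$ from lower semicontinuity of distances under the local smooth convergence $\omega_\epsilon\to\pi^*\omega$ on $Y^\circ$. This inequality goes the wrong way: the distance $d$ on $\hat X$ is an infimum over curves lying in $X\setminus\cS_{X,\omega}$, whereas $d_{\omega_\epsilon}$ minimizes over \emph{all} curves in $Y$, including those crossing the exceptional divisor or $\pi^{-1}(\cS_{X,\omega})$. The local smooth convergence gives $\limsup_\epsilon d_{\omega_\epsilon}(x,y)\le d(x,y)$ for regular points $x,y$, not the reverse, so a uniform bound on $\mathrm{diam}(Y,\omega_\epsilon)$ does not by itself control $\mathrm{diam}(\hat X,d)$. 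The paper is explicit about this obstruction (see the remarks following Theorem~\ref{kegt}): the convergence on compact subsets of the regular part is ``too weak and not sufficient to obtain the convergence of geodesics or geodesic balls.''

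The paper's remedy is to reverse the logical order: prove the Sobolev inequality (2) on the limit space \emph{first} (Section~\ref{section 7}), and then use it directly on $(\hat X,d,\omega^n)$---with the Lipschitz test function $u(x)=\rho(d(p,x)/r)$---to run an iteration that yields the volume non-collapsing estimate $\mathrm{Vol}_\omega(B_d(p,r))\ge\kappa r^{2q/(q-1)}$ (Proposition~\ref{noncolloc}). The diameter bound (1) is then an immediate consequence of non-collapsing and the finiteness of the total volume. So (2) precedes (1), and the diameter bound is intrinsic to the limit space rather than inherited from the approximants. Your outline for (2), (3), (4) is otherwise sound; for (3) the paper opts for the concrete route of taking subsequential limits of the heat kernels $H_j$ and proving uniqueness of the limit (Section~\ref{section heat}) rather than invoking Mosco convergence, but either approach works once (2) is in place.
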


To the best of our knowledge,
Theorem \ref{singsob} is the first to provide general formulas for the Sobolev inequality, the heat kernel,  and eigenvalue estimates on complex spaces with singularities. Such estimates were only known  previously to hold in special cases such as the following:
\begin{enumerate}
\item $X$ is projective and $\omega$ is a Bergman metric via projective embedding (cf. \cite{LT, Be}).

\smallskip

\item $(X, \omega)$ is a sequential Gromov-Hausdorff limit of K\"ahler-Einstein manifolds (cf. \cite{DS}). 

\end{enumerate}

\smallskip

Geometric estimates  are traditionally studied for  smooth K\"ahler metrics on singular K\"ahler spaces. The most immediate concern is that, as we pointed out earlier, canonical K\"ahler metrics near algebraic singularities cannot be in general the restriction of smooth metrics via local or global embeddings. In addition, most algebraic singularities are not smoothable and so most singular K\"ahler metrics cannot be approximated by smooth K\"ahler metrics with Ricci curvature uniformly bounded from below. 

\smallskip
Thus the traditional geometric estimates cannot be directly applied to the study of canonical metrics. Nor do the traditional methods of proof, which usually require that the metrics are smooth K\"ahler metrics on normal K\"ahler varieties.
For example, the heat kernel trace formula and Sobolev inequalities are obtained in \cite{Pa, LT} for Bergman metrics on projective varieties. Bergman metrics are restrictions of Fubini-Study metrics via global projective embeddings and so they are smooth K\"ahler metrics on normal K\"ahler varieties. 

\smallskip
Canonical metrics, such as K\"ahler-Einstein metrics, require rather a different and much larger class of singular K\"ahler spaces, such as the classes $\mathcal{AK}$-spaces as in Definition \ref{defak}. These lie beyond the reach of traditional methods, but they can be treated by our Sobolev inequality and trace formula for the heat kernel. We also observe that our Sobolev inequality can be applied to the study of Hermitian-Einstein metrics on stable sheaves over singular  normal K\"ahler spaces as developed by  \cite{BS, CGN}.

\smallskip
 
As we had stressed in the Introduction,
diameter bounds for $(X\setminus \mathcal{S}_{X, \omega})$ or $(\hat X, d)$ have been an open problem for a long time. The uniform diameter bound in Theorem \ref{singsob} is particularly striking, in the sense that it mostly relies on the entropy bound rather than the topological and complex structures of singularities.  

\subsection{Singular K\"ahler-Einstein metrics}

We will apply the diameter estimates and Sobolev estimates of Theorem \ref{singsob} to K\"ahler-Einstein spaces. 

Let $X$  be a projective Calabi-Yau variety of $\dim X = n$, i.e. $X$ is normal and $c_1(X) =0$. It is proved in \cite{EGZ}, for any K\"ahler class $\alpha$ on $X$, there exists a unique positive closed $(1,1)$-current $\omega_{CY} \in \alpha$ with bounded local potential and volume measure if $X$ has at worst log terminal singularities.  In particular, $\omega_{CY}$ is a smooth K\"ahler metric on $X^\circ$, the regular part of $X$. We will apply the Sobolev inequality (\ref{singsob}) to obtain the following geometric estimates. 

\begin{theorem} \label{kecy} Let $X$ be a projective Calabi-Yau variety with log terminal singularities. Then for any K\"ahler class $\alpha$, there exists a unique Ricci-flat current $\omega_{CY} \in \alpha$ satisfying the following.

\begin{enumerate}

\item Let $(\hat X, d_{CY})$ be the metric completion of $(X^\circ, \omega_{CY})$. Then $(\hat X, d_{CY}, (\omega_{CY})^n)$  is a compact metric measure space.

\medskip 

\item For any $q\in (1, \frac{n}{n-1})$, there exists $C_S=C_S(X, n, \alpha, q)>0$ such that 
\begin{equation}\label{anasob}
\Big(\int_{\hat X} | u  |^{2q} (\omega_{CY})^n   \Big)^{1/q}\le C_S \left( \int_{\hat X} |\nabla u|^2_{\omega_{CY} }~(\omega_{CY})^n + \int_{\hat X} u^2 (\omega_{CY} )^n \right) .
\end{equation}
for all $u\in W^{1, 2}(\hat X, d_{CY}, (\omega_{CY})^n)$. 

\medskip 

\item For any $\mu >n$, there exists $\kappa=\kappa(X, n, \alpha, \mu)>0$ such that for any $p\in \hat X$ and $r\in (0, \textnormal{diam}(\hat X, d_{CY})]$, 
$$\frac{{\mathrm{Vol}}(B(p, r))}{ r^{2\mu}} \geq \kappa, $$ where $B(p,r)$ denotes the ball in $(\hat X, d)$ with center $p$ and radius $r$.

\end{enumerate}

\end{theorem}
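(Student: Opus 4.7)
My overall plan is to realize $\omega_{CY}$ as an element of an $\mathcal{AK}$-class from Definition \ref{defak}, and then deduce the three assertions from Theorem \ref{singsob} after mildly sharpening the Sobolev exponent using the $L^{1+\varepsilon}$ bound on the density. By the Eyssidieux-Guedj-Zeriahi theorem \cite{EGZ}, on the log terminal projective Calabi-Yau variety $X$ and for any K\"ahler class $\alpha$ there is a unique positive $(1,1)$-current $\omega_{CY}\in\alpha$ with bounded local potentials solving the Calabi-Yau equation $\omega_{CY}^n=c_\alpha\Omega_X$, where $\Omega_X$ is the canonical measure induced by a local generator of $K_X^{\otimes m}$; moreover $\omega_{CY}$ is automatically smooth and Ricci-flat on $X^\circ$. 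Pick a log resolution $\pi:Y\to X$ with $K_Y=\pi^*K_X+\sum a_iE_i$ and log terminal discrepancies $a_i>-1$. In suitable local coordinates one has $\pi^*\Omega_X=\prod|s_i|^{2a_i}\cdot h\,\theta_Y^n$ with $h$ smooth and positive, so $\log((\pi^*\omega_{CY})^n/\theta_Y^n)$ has log-type analytic singularities, and the condition $a_i>-1$ forces $(\pi^*\omega_{CY})^n/\theta_Y^n\in L^{1+\varepsilon}(Y,\theta_Y^n)$ for some $\varepsilon=\varepsilon(X)>0$. This density is bounded below by a continuous nonnegative $\gamma\in C^0(Y)$ vanishing only on the divisors with $a_i>0$ (a proper analytic subvariety of $Y$), and by H\"older one obtains $\mathcal{N}_p(\omega_{CY})\le K(X,n,\alpha,p)$ for every $p>n$. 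Since $\alpha$ is fixed, $I_{\omega_{CY}}$ and $V_{\omega_{CY}}$ are controlled, so $\omega_{CY}\in\mathcal{AK}(X,\theta_Y,n,p,A,K,\gamma)$ for parameters depending only on $(X,n,\alpha,p)$.

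Applied to $\omega_{CY}$, Theorem \ref{singsob} immediately produces the compactness and bounded diameter of $(\hat X,d_{CY})$, which is (1), together with a Sobolev inequality of the form (\ref{anasob}) for some $q=q(X,n,\alpha)>1$. To upgrade this to the full range $q\in(1,n/(n-1))$, I would invoke the refinement noted after Theorem \ref{thm:Sob}: under the Kolodziej-type bound $e^{F_\omega}\in L^{1+\varepsilon'}$ just established, the almost-sharp Green's function estimates of \cite{GPS} allow the Sobolev exponent to be chosen arbitrarily close to the Euclidean $n/(n-1)$, with the constant $C_S(q)$ permitted to blow up as $q\uparrow n/(n-1)$. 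Propagating this sharper inequality through the smoothing/approximation scheme underlying the proof of Theorem \ref{singsob} then yields (2).

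For the non-collapsing estimate (3), fix $\mu>n$ and apply (2) with the admissible exponent $q_\mu=\mu/(\mu-1)\in(1,n/(n-1))$. Via Nash's iteration, or equivalently Theorem \ref{singsob}(3) with this $q_\mu$, the Sobolev inequality gives the on-diagonal heat kernel bound $H(x,x,t)\le C_\mu t^{-\mu}$ for $t\in(0,1]$. Combined with Gaussian off-diagonal concentration -- extended from Theorem \ref{thm:heat} to $(\hat X,d_{CY})$ via the same smoothing scheme, or derived directly from the Sobolev inequality by Davies' method -- one obtains $\int_{\hat X\setminus B(p,r)} H(p,y,cr^2)\,\omega_{CY}^n\le\tfrac12$, whence
\[
\tfrac12\le \mathrm{Vol}(B(p,r))\cdot\sup_y H(p,y,cr^2)\le \mathrm{Vol}(B(p,r))\cdot C_\mu(cr^2)^{-\mu},
\]
giving $\mathrm{Vol}(B(p,r))\ge\kappa r^{2\mu}$. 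The main obstacle I anticipate lies precisely in this sharpening step: carefully propagating the exponent $q\uparrow n/(n-1)$ from the smooth-manifold setting through the singular approximation of Theorem \ref{singsob}, while keeping the constants dependent only on $(X,n,\alpha,q)$; once that refinement is in place, parts (1)--(3) follow by standard heat-kernel arguments.
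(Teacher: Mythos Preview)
Your overall strategy is correct and matches the paper's: verify $\omega_{CY}\in\mathcal{AK}(X,\theta_Y,n,p,A,K,\gamma)$ via the log-terminal discrepancy computation and the resulting $L^{1+\varepsilon}$ bound on the density, then invoke Theorem~\ref{singsob}. Your identification of the $L^{1+\varepsilon}$ bound as the key to sharpening the Sobolev exponent to any $q\in(1,n/(n-1))$ is exactly right, and the paper relies on this implicitly (its proof of Theorem~\ref{kecy} in Section~\ref{section 11} is a two-line reduction to Theorem~\ref{singsob}, tacitly using the remark after Theorem~\ref{thm:Sob}).

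Where you diverge from the paper is in deducing (3). You go through the heat kernel: on-diagonal bounds from the Sobolev inequality, then off-diagonal Gaussian concentration via Davies' method on the singular space, then the standard $\tfrac12\le\mathrm{Vol}(B)\cdot\sup H$ argument. This works, but it requires porting the off-diagonal estimates of Theorem~\ref{thm:heat} to $(\hat X,d_{CY})$, which in turn needs the distance function to be approximable by $1$-Lipschitz functions in $W^{1,2}(\hat X)$ and the Davies iteration to run on the completion; this is doable (Lemma~\ref{badis} supplies the Lipschitz property) but is genuine extra work not already packaged in Theorem~\ref{singsob}. The paper instead proves (3) directly from the Sobolev inequality by an elementary iteration (Proposition~\ref{noncolloc}): apply the local Sobolev inequality \eqref{eqn:localSob1} to a radial cutoff on $B(p,r)$ to get $\mathrm{Vol}(B(p,r))\ge c\,r^2\,\mathrm{Vol}(B(p,r/2))^{1/q}$, then iterate over dyadic radii and use that small balls around smooth points are asymptotically Euclidean. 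This yields $\mathrm{Vol}(B(p,r))\ge\kappa\,r^{2q/(q-1)}$, and choosing $q=\mu/(\mu-1)$ gives the exponent $2\mu$. The paper's route is shorter and avoids the heat kernel entirely for this step; your route recovers the same conclusion but at the cost of establishing Gaussian off-diagonal bounds on the singular space, which the paper never needs for non-collapsing.
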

We note that Theorem \ref{kecy} remains valid when $X$ is only a \emph{K\"ahler} Calabi-Yau variety. The proof remains unchanged.

\smallskip

We will also derive geometric estimates for K\"ahler-Einstein currents on canonical models. Let $X$ be an $n$-dimensional  projective normal variety with ample $K_X$ and log terminal singularities. There exists a unique K\"ahler-Einstein current $\omega_{KE} \in [K_X]$ constructed in \cite{EGZ} such that $\omega_{KE}$ has bounded local potential and volume measure satisfying $${\mathrm{Ric}}(\omega_{KE}) = - \omega_{KE}$$
 smoothly on $X^\circ$, the regular part of $X$. 

\begin{theorem} \label{kegt} Let $X$ be an $n$-dimensional  projective normal variety with ample $K_X$ and log terminal singularities. Then the unique K\"ahler-Einstein current $\omega_{KE} \in [K_X]$ satisfies the following properties:

\begin{enumerate}

\item Let $(\hat X, d_{KE})$ be the metric completion of $(X^\circ, \omega_{KE})$. Then $(\hat X, d_{KE}, (\omega_{KE})^n)$  is a compact metric measure space.

\medskip 

\item For any $q\in (1, \frac{n}{n-1})$, there exists $C_S=C_S(X, n, q)>0$ such that 
$$
\Big(\int_{\hat X} | u  |^{2q}(\omega_{KE})^n   \Big)^{1/q}\le C_S \left( \int_{\hat X} |\nabla u|^2_{\omega_{KE}} (\omega_{KE})^n + \int_{\hat X} u^2 (\omega_{KE})^n \right) .
$$
for all $u\in W^{1, 2}(\hat X, d_{KE}, (\omega_{KE})^n)$. 

\medskip 

\item For any $\mu >n$, there exists $\kappa=\kappa(X, n, \mu)>0$ such that for any $p\in \hat X$ and $r\in (0, \textnormal{diam}(\hat X, d_{KE})]$, 
$$\frac{{\mathrm{Vol}}(B(p, r))}{ r^{2\mu}} \geq \kappa. $$

\end{enumerate}

\end{theorem}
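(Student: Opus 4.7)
The plan is to show that $\omega_{KE}$ belongs to an admissible class $\mathcal{AK}(X,\theta_Y,n,p,A,K,\gamma)$ and, in addition, satisfies the Kolodziej-type $L^{1+\epsilon'}$ integrability flagged in the remark after Theorem \ref{thm:Sob}; all three conclusions then follow from Theorem \ref{singsob} combined with a Moser-type argument. Fix a log resolution $\pi : Y \to X$ and a K\"ahler metric $\theta_Y$ on $Y$. Since $X$ has log terminal singularities, $K_Y = \pi^{*} K_X + \sum_i a_i E_i$ with discrepancies $a_i > -1$. Writing $\omega_{KE} = \omega_0 + \ddbar \varphi$ for a bounded $\varphi$ and a smooth reference $\omega_0 \in [K_X]$, the equation $\Ric(\omega_{KE}) = -\omega_{KE}$ pulls back on $Y$ to a complex Monge-Amp\`ere equation of the form
\[
(\pi^{*}\omega_{KE})^n \;=\; e^{\pi^{*}\varphi}\,\frac{\Omega_Y}{\prod_i |s_i|^{2 a_i}_{h_i}},
\]
where each $s_i$ is a defining section of $E_i$ with smooth Hermitian metric $h_i$, and $\Omega_Y$ is a smooth strictly positive volume form on $Y$.

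From this representation, $\log\bigl((\pi^{*}\omega_{KE})^n/\theta_Y^n\bigr)$ has log type analytic singularities along $\sum_i E_i$, and the density is bounded below by a non-negative $\gamma \in C^0(Y)$ whose zero locus lies in the exceptional divisor. The intersection and volume bounds follow from the fixed K\"ahler class $[K_X]$ by standard push-pull. For the $p$-th Nash-Yau entropy, one estimates
\[
\mathcal{N}_p(\omega_{KE}) \;\le\; C \int_Y \Bigl(|\pi^{*}\varphi| + \sum_i |a_i|\,\bigl|\log|s_i|^2_{h_i}\bigr|\Bigr)^p \prod_i |s_i|^{-2 a_i}_{h_i}\,\Omega_Y,
\]
which is finite for every $p \ge 1$ since each $a_i > -1$ places $\prod_i |s_i|^{-2 a_i}_{h_i}$ in $L^{1+\epsilon'}(\Omega_Y)$ for some $\epsilon' > 0$, and logarithmic factors are absorbed by any fractional power of this weight. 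This same $L^{1+\epsilon'}$ bound verifies the Kolodziej-type hypothesis from the remark after Theorem \ref{thm:Sob}, so the sharper Sobolev exponent range $q \in (1, n/(n-1))$ becomes available on $Y$.

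Parts (1) and (2) of Theorem \ref{kegt} are now direct consequences of parts (1) and (2) of Theorem \ref{singsob}, together with the sharpened range of $q$. For part (3), given $\mu > n$ one chooses $q \in (1, n/(n-1))$ with $q/(q-1) < \mu$; a standard Moser iteration applied to a cutoff of a metric ball against the Sobolev inequality (\ref{anasob}) (or, equivalently, the on-diagonal heat kernel bound of Theorem \ref{singsob}(3) combined with the semigroup inequality $H(p,p,r^2) \ge c/\textnormal{Vol}(B(p,r))$) delivers $\textnormal{Vol}(B(p,r)) \ge c\, r^{2q/(q-1)} \ge c\, r^{2\mu}$ for $r \in (0, \textnormal{diam}(\hat X, d_{KE})]$.

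The main obstacle is not the algebraic verification above, but ensuring that the Sobolev, heat kernel, and eigenvalue machinery actually transfers to the singular completion $\hat X$, where $\omega_{KE}$ is not a smooth K\"ahler metric on $X$ in the ambient-restriction sense and the classical regularity hypotheses simply do not apply. This transfer is precisely the content of Theorem \ref{singsob}, which is engineered to pass from estimates on the log resolution $\pi : Y \to X$ to the metric completion $(\hat X, d_{KE}, (\omega_{KE})^n)$ using only the entropy bound plus the log type hypothesis in place of any direct curvature control on $X$. Once the admissibility of $\omega_{KE}$ is checked, Theorem \ref{kegt} follows as an essentially automatic corollary.
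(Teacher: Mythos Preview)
Your overall strategy matches the paper's: verify that $\omega_{KE}$ fits an admissible class with the extra Ko\l{}odziej $L^{1+\epsilon'}$ bound on the density, then invoke Theorem \ref{singsob} and the remark after Theorem \ref{thm:Sob} to get the full range $q\in(1,\tfrac{n}{n-1})$; parts (1)--(3) then follow as you say. The entropy computation, the $L^{1+\epsilon'}$ bound from the log terminal condition, and your derivation of the volume non-collapsing from the Sobolev inequality are all correct.

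There is, however, a genuine technical gap in your verification of condition (4) of Definition \ref{defak}. You assert that $\log\bigl((\pi^{*}\omega_{KE})^n/\theta_Y^n\bigr)$ has log type analytic singularities, but this quantity equals $\pi^{*}\varphi + F$ where only $F=\log(\pi^{*}\Omega/\theta_Y^n)$ has the required algebraic form of Definition \ref{logtype}; the potential $\pi^{*}\varphi$ is merely bounded and smooth off the exceptional locus, not in $C^\infty(Y)$. The paper flags exactly this obstacle (``We cannot directly apply Proposition \ref{omapp} since $\varphi$ does not have log type analytic singularities in general''), and this matters because the smooth approximation scheme of Section \ref{maregsec} --- on which the proof of Theorem \ref{singsob} rests --- constructs $F_j$ by regularizing the divisorial factors in $F$, a procedure that makes no sense for $\pi^{*}\varphi$. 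The paper's fix is to replace the Calabi--Yau-type approximating equation \eqref{appeq6} by the K\"ahler--Einstein-type equation
\[
(\pi^{*}\omega_0 + \epsilon_j\theta_Y + \sqrt{-1}\partial\bar\partial\varphi_j)^n \;=\; e^{F_j + \varphi_j}\,\theta_Y^n,
\]
with $\varphi_j$ appearing on both sides. One then checks (as in Lemma \ref{02nd}) uniform $C^0$ and weighted second-order bounds for $\varphi_j$, hence local smooth convergence $\omega_j\to\omega_{KE}$ on $X^\circ$ by uniqueness of the KE solution, together with a uniform $L^{1+\epsilon'}$ bound on $(\omega_j)^n/\theta_Y^n$; from there the arguments of Sections \ref{singsobsec} and \ref{singdiasec} run verbatim. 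So your proof is essentially correct but one step short: the direct claim ``$\omega_{KE}\in\mathcal{AK}$'' must be replaced by this modified approximation argument.
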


The existence of K\"ahler-Einstein currents on a K-stable $\mathbb{Q}$-Fano variety is established in \cite{LC, LXZ} as a singular version of the Yau-Tian-Donaldson conjecture. The analogue of Theorem \ref{kegt} also holds for such singular K\"ahler-Einstein metrics with positive scalar curvature. 
\begin{theorem} \label{kefano} Let $X$ be an $n$-dimensional $\mathbb{Q}$-Fano variety with log terminal singularities. If there exists a K\"ahler-Einstein current $\omega \in -[K_X]$ with bounded local potentials, then $\omega\in C^\infty(X^\circ)$ and $(\hat X, d_{KE}, (\omega_{KE})^n)$ associated with $(X, \omega_{KE})$ is a compact metric measure space satisfying estimates in (2) and (3) of Theorem \ref{kegt}.

\end{theorem}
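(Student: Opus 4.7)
The plan is to reduce Theorem \ref{kefano} to Theorem \ref{singsob} by showing that any K\"ahler-Einstein current $\omega$ satisfying the hypotheses lies in an admissible class $\mathcal{AK}(X, \theta_Y, n, p, A, K, \gamma)$ in the sense of Definition \ref{defak}, and then extracting the conclusions exactly as in the proof of Theorem \ref{kegt}. The only substantive differences with the canonically polarized case are the sign of the potential in the underlying Monge-Amp\`ere equation and the fact that existence of $\omega$ must be assumed rather than derived.

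First, I would fix a log resolution $\pi: Y \to X$, write $K_Y = \pi^* K_X + \sum a_i E_i$ with $a_i > -1$ by log terminality, and fix a smooth K\"ahler metric $\theta_Y$ on $Y$. Using a smooth Hermitian metric on the $\mathbb{Q}$-line bundle $-K_X$ with positive curvature representative $\omega_0 \in -[K_X]$, the K\"ahler-Einstein equation on $X^\circ$ can be written as
\begin{equation*}
\omega^n = e^{h_0 - \varphi}\, \omega_0^n,
\end{equation*}
with $\omega = \omega_0 + \ddbar \varphi$, $\varphi \in L^\infty(X)$ by hypothesis, and $h_0$ the Ricci potential of $\omega_0$, which is smooth on $X^\circ$. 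Pulled back to $Y$, the volume ratio $(\pi^*\omega)^n/\theta_Y^n$ then factors as a bounded smooth positive function times $\prod_i |s_{E_i}|_{h_i}^{2 a_i}$, for local defining sections $s_{E_i}$ of the exceptional divisors.

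Next, I would verify the four conditions of Definition \ref{defak}. Conditions (1) and (2) are immediate since $[\omega] = -[K_X]$ is K\"ahler, $[\omega]^n = (-K_X)^n$ is a positive topological invariant, and the intersection $[\pi^*\omega] \cdot [\theta_Y]^{n-1}$ is a fixed constant. For condition (3), decompose
\begin{equation*}
F_\omega = -\varphi \circ \pi + h_0 \circ \pi + \log \frac{\pi^*\omega_0^n}{V_\omega\, \theta_Y^n}.
\end{equation*}
Since each $a_i > -1$, the function $|\log t|^p \cdot t^{2a_i}$ is integrable near $t = 0$, so the last term lies in $L^p((\pi^*\omega)^n)$ for every $p \geq 1$; combined with $\|\varphi\|_{L^\infty} < \infty$ and smoothness of $h_0$, this yields the Nash-Yau entropy bound $\mathcal{N}_p(\omega) \leq K$ for any $p > n$. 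For condition (4), the singularities of $\log((\pi^*\omega)^n/\theta_Y^n)$ are manifestly of log type in the sense of Definition \ref{logtype}, and one may take
\begin{equation*}
\gamma = c \prod_i |s_{E_i}|_{h_i}^{2\max(a_i, 0)},
\end{equation*}
which is continuous and nonnegative, with $\{\gamma = 0\}$ a proper analytic subset of $Y$ of Hausdorff dimension at most $2n-2 < 2n-1$.

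Having placed $\omega \in \mathcal{AK}(X, \theta_Y, n, p, A, K, \gamma)$, assertions (1) and (2) of Theorem \ref{singsob} immediately give the compactness of $(\hat X, d_{KE})$ and the Sobolev inequality of part (2) of Theorem \ref{kegt}, while the noncollapsing estimate is obtained from the Sobolev inequality by the same Moser iteration argument used to derive part (3) of Theorem \ref{kegt}. The smoothness $\omega \in C^\infty(X^\circ)$ follows from standard interior regularity for the Monge-Amp\`ere equation with smooth positive right-hand side and bounded potential, via Evans-Krylov combined with Schauder bootstrapping. The main technical subtlety I anticipate is the interpretation of $\omega_0$ and $h_0$ when $K_X$ is only $\mathbb{Q}$-Cartier, where one must work through an $m$-th root of a local trivializing section of $-mK_X$; since log terminality preserves $a_i > -1$, the integrability and analytic-singularity structure needed above is unaffected.
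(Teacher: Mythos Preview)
Your verification of conditions (1)--(3) and of the lower bound $\gamma$ is fine, but condition (4) of Definition \ref{defak} does \emph{not} hold, and this is the crux of the matter. Your decomposition gives
\[
\log\frac{(\pi^*\omega)^n}{\theta_Y^n} \;=\; -\,\pi^*\varphi \;+\; F,
\]
where $F = \log(\pi^*\Omega/\theta_Y^n)$ genuinely has log type analytic singularities in the sense of Definition \ref{logtype}. The term $-\pi^*\varphi$, however, is merely a bounded function that is smooth on $Y^\circ$ but has no controlled structure across the exceptional divisor; Definition \ref{logtype} requires the coefficient functions $f_{k,j}$ to lie in $C^\infty(Y)$, and $\pi^*\varphi$ does not. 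So when you write that the volume ratio ``factors as a bounded \emph{smooth} positive function times $\prod_i |s_{E_i}|^{2a_i}$'', the word ``smooth'' is precisely where the argument breaks. Consequently $\omega\notin\mathcal{AK}(X,\theta_Y,n,p,A,K,\gamma)$ in the literal sense, and you cannot invoke Theorem \ref{singsob} directly. The paper flags the identical obstruction already in the proof of Theorem \ref{kegt} (``we cannot directly apply Proposition \ref{omapp} since $\varphi$ does not have log type analytic singularities'').

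The paper's workaround in the Fano case is to build the approximating sequence by hand rather than through the generic machinery of Section \ref{maregsec}: one takes a Demailly regularization $\psi_j\in \mathrm{PSH}(Y,\pi^*\omega_0+\theta_Y)\cap C^\infty(Y)$ decreasing to $\pi^*\varphi$, and then solves the Calabi--Yau type equations
\[
(\pi^*\omega_0+\epsilon_j\theta_Y+\sqrt{-1}\partial\bar\partial\varphi_j)^n \;=\; e^{-\psi_j+F_j+c_j}\,\theta_Y^n
\]
with $F_j$ approximating $F$ (not $F_\omega$). Because the right-hand side now has uniformly bounded $L^{1+\epsilon'}$ norm and the data $F_j$, $\psi_j$ are smooth on all of $Y$, the $L^\infty$ and second-order estimates of Lemma \ref{02nd} go through, the $\omega_j$ lie in a fixed $\mathcal{W}(n,p,A',K',\gamma')$, and one can pass to the limit via the arguments of Sections \ref{singsobsec}--\ref{singdiasec}. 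Note also that in the Fano case one cannot simply put the unknown $\varphi_j$ in the exponent with a minus sign (as the paper does with a plus sign for Theorem \ref{kegt}), which is why the separate Demailly step is needed.
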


We stress that the above diameter and non-collapsing estimates go considerably farther than those obtained in \cite{GPSS}. Indeed, the diameter and non-collapsing estimates 
in \cite{GPSS} are for smooth K\"ahler manifolds, and even though they were uniform in many key parameters, they still do not apply to the present setting of singular spaces $(X,\omega)$ by taking limits. This is because the convergence is only uniform on any compact subset of the regular part of $X$, which is too weak and not sufficient to obtain the convergence of geodesics or geodesic balls. Furthermore, the arguments in \cite{GPSS} cannot be adapted to the singular setting because of the lack of estimates of the Green's functions on the singular space. Instead, we have to rely on the Sobolev inequality to prove these estimates.

\smallskip
The Sobolev inequality has the advantage that it can be passed down to the limit or singular space, thanks to the uniform Sobolev inequality we establish on the smooth resolution. However, this is not obvious for the Green's function because the gradient of Green's function is not in $L^2$. This latter is the technique we used in \cite{GPSS}.  

\smallskip 
It is also the Sobolev inequality which enables us to estimate the heat kernel of the Laplacian, and its eigenvalues, hence obtain a spectral theorem on singular spaces (more precisely on the metric completion of such spaces).

\subsection{Finite time singularities of the K\"ahler-Ricci flow}

We consider the following unnormalized K\"ahler-Ricci flow on a K\"ahler manifold $X$ with an initial K\"ahler metric $g_0$ 
 \begin{equation}\label{krflow}
\left\{
\begin{array}{l}
{ \displaystyle \ddt{g} = -\ric(g),}\\
\\
g|_{t=0} =g_0 \in H^{1,1}(X, \mathbb{R})\cap H^2(X, \mathbb{\mathbb{Q}}).
\end{array} \right.
\end{equation}
Let 
\begin{equation}\label{singT}
T= \sup\{ t >0~|~ [g_0]+t[K_X] >0 \} \in \mathbb{R} \cup\{\infty\}.
\end{equation}
The K\"ahler-Ricci flow has a maximal solution $g(t)$ for $t\in [0, T)$. If $T<\infty$, the flow (\ref{krflow}) must develop singularities at $t=T$ and the curvature tensors must blow up as $t\rightarrow T^-$.  In this case, $\alpha_T= [g_0]+T[K_X]$ is a nef class on $X$.  Since $[g_0]\in H^2(X, \mathbb{Q})$, $\alpha_T$ is semi-ample by Kawamata's base point free theorem and the numerical dimension of $\alpha_T$ coincides with its Kodaira dimension. In particular, $\alpha_T$ induces a surjective holomorphic map 
\begin{equation}\label{kawam}
\Phi: X \rightarrow Y.
\end{equation}
 We consider the  case when $\alpha_T$ is big, which is equivalent to either of the following conditions 
\begin{enumerate}\label{big}
\smallskip

\item  $(\alpha_T)^n= \lim_{t\rightarrow T} {\rm Vol}_{g(t)}(X) >0.$

\smallskip

\item $\dim Y = n$. 

\end{enumerate}
We let $S$ be the exceptional locus of $\Phi$, i.e., where $\Phi$ is not biholomorphic. $S$ is then an analytic subvariety of $X$.

\begin{theorem}\label{thm:main2} Let $(X, g_0)$ be a K\"ahler manifold equipped with a K\"ahler metric $g_0$. If $g(t)$ is the maximal solution of the K\"ahler-Ricci flow (\ref{krflow}) for $t\in [0, T)$ for some $T\in \mathbb{R}^+$ and if the limiting class $\alpha_T$ is big, then the following hold.

\begin{enumerate}

\item $g(t)$ converges smoothly a K\"ahler metric $g(T)$ on $X\setminus S$. There exists $D=D(X, g_0, n)>0$ such that for all $t\in [0, T)$, 
$$diam(X, g(t)) \leq D.$$ 
For any $\mu>n$, there exists $\kappa=\kappa(X, g_0, n, \mu)>0$ such that for all $t\in [0, T)$ and $r\in (0, D]$, 
$$\frac{{\mathrm{Vol}}_{g(t)} (B_{g(t)}(x, r))}{r^{2\mu}} \geq \kappa.$$

\medskip

\item Let $\omega(T)$ be the unique positive current extended by $g(T)$ and $(\hat X, d_T)$ be the metric completion of $(X\setminus S, g(T))$ . Then the metric measure space $(X_T, d_T, \omega(T)^n)$ is a compact metric space. For any $\mu>n$, there exists $\kappa=\kappa(\mu)>0$ such that for any $r \in (0, \textnormal {diam}( X_T, d_T))$,  we have
$$ \frac{{\mathrm{Vol}}_{\omega(T)^n} (B_{d_T} (x, r))}{r^{2\mu}}\geq \kappa $$ 
for any $x\in X_T$.
 
 \end{enumerate}

\end{theorem}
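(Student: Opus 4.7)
The plan is to verify that the flow metrics $\{g(t)\}_{t\in[0,T)}$ lie uniformly in the admissible class $\mathcal{W}(n,p,\infty,K,\gamma)$ of \mref{eqn:the set}, and that the limit current $\omega(T)$ lies in the admissible class $\mathcal{AK}$ of Definition \ref{defak} (in the extended sense of Remark \ref{bignefex} for big and nef classes), so that Theorem \ref{thm:Sob} supplies the uniform estimates of (1) and Theorem \ref{singsob} gives (2). To set this up, write the flow in parabolic Monge--Amp\`ere form $g(t)=g_0+t\chi+\ddbar\varphi(t)$ for a fixed smooth $(1,1)$-form $\chi\in[K_X]$ and a fixed smooth volume form $\Omega$ with $-\Ric(\Omega)=\chi$, so that $\partial_t\varphi=\log(g(t)^n/\Omega)$.

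Since the limit class $\alpha_T=[g_0]+T[K_X]$ is big and nef, by the classical work of Tsuji, Tian--Zhang, Song--Tian and Zhang there are uniform $L^\infty$ upper bounds on $\varphi(t)$ and on $\partial_t\varphi(t)$ for $t\in[0,T)$, together with smooth convergence of $g(t)$ to $g(T)$ on $X\setminus S$. The bigness of $\alpha_T$ further furnishes a K\"ahler current $\omega_\infty\in\alpha_T$ with analytic singularities along an effective $\mathbb{R}$-divisor $E$ supported on $S$; comparison with this current gives a pointwise lower bound $g(t)^n/\omega_X^n\ge \gamma_0|s_E|^{2k}$ for a defining section $s_E$. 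Combined with the $L^\infty$ upper bound on $\partial_t\varphi$, these produce the entropy bound $\mathcal{N}_p(g(t))\le K$ and the lower bound $e^{F_{g(t)}}\ge \gamma$ with $\gamma\ge 0$ vanishing only on the proper subvariety $S\subset X$; the intersection bound $[g(t)]\cdot[\omega_X]^{n-1}\le A$ is trivially uniform for $t\in[0,T]$. With $\{g(t)\}\subset\mathcal{W}$ verified, Theorem \ref{thm:Sob} yields a scale-invariant Sobolev inequality uniformly in $t$. The uniform diameter bound in (1) then follows from the method of \cite{GPSS}, or directly from a Moser-type iteration applied to $d_{g(t)}(x,\cdot)^2$ using only the Sobolev inequality; the non-collapsing estimate $\Vol_{g(t)}(B_{g(t)}(x,r))/r^{2\mu}\ge \kappa$ for $\mu>n$ comes from a standard Moser iteration on the heat equation, exactly as in the derivation of parts (3)--(4) of Theorem \ref{singsob} carried out in Section \ref{singdiasec}.

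For part (2), the smooth convergence of $g(t)$ to $g(T)$ on $X\setminus S$ together with the classical Tsuji/Song--Tian estimates extend $g(T)$ to a positive current $\omega(T)$ with bounded local potentials, and the uniform bounds from the previous step pass to the limit to place $\omega(T)$ in the extended $\mathcal{AK}$-class, with $X$ itself serving as a log resolution. Theorem \ref{singsob} applied to $\omega(T)$ then delivers the compactness of $(X_T,d_T,\omega(T)^n)$ and the uniform non-collapsing estimate for any $\mu>n$. The main obstacle in this program is establishing the log-type analytic singularity structure of $\omega(T)^n/\omega_X^n$ along $S$ uniformly up to $t=T^-$: one must translate the Zariski decomposition of the big nef class $\alpha_T$ into a uniform-in-$t$ lower bound of the form $g(t)^n/\omega_X^n\ge \gamma_0|s_E|^{2k}$ together with a matching uniform entropy bound near $S$, despite the curvature blow-up there. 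Once this structural information is in place, the remainder of the argument reduces to a direct application of Theorems \ref{thm:Sob} and \ref{singsob}.
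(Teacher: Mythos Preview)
Your approach is essentially the same as the paper's: verify that the flow metrics lie uniformly in the admissible class and then invoke the main Sobolev/diameter/noncollapsing theorems. The paper's argument is more compressed---it records only the maximum-principle volume bound $\omega(t)^n\le C\omega_0^n$ (which immediately gives the entropy bound), cites \cite{GPSS} for the membership $\omega(t)\in\mathcal W$ and hence for part (1), and then applies Theorem \ref{singsob} to $\omega(T)$ for part (2).

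One point where you overcomplicate matters: you flag the ``log-type analytic singularity'' structure of $\omega(T)^n/\omega_X^n$ along $S$ as the main obstacle. In this setting it is not. Here $X$ is already smooth, so no resolution is needed, and the role of the approximating family $\{\omega_j\}$ in Sections \ref{maregsec}--\ref{singdiasec} is played by the flow metrics $\omega(t)$ themselves as $t\to T^-$. Since $\omega(t)\in\mathcal W(n,p,A,K,\gamma)$ uniformly and $\omega(t)\to\omega(T)$ locally smoothly on $X\setminus S$, the Sobolev inequality and the noncollapsing argument of Section \ref{singdiasec} pass directly to $\omega(T)$ without ever verifying condition (4) of Definition \ref{defak}. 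This is exactly the extension alluded to in Remark \ref{bignefex}.
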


As in the previous section, it is worth pointing out that these results go beyond the corresponding ones in \cite{GPSS}, as they apply directly to the limiting K\"ahler current at the singular time $T$. In \cite{GPSS}, only uniform bounds for the diameter and noncollapsing for the smooth metrics along the KR flow were obtained, but it is not possible to take limits because the convergence on compact subsets of the smooth parts is too weak. Again, it is the Sobolev inequality extended to the metric completion of the singular K\"ahler current that can apply to
get the present diameter and noncollapsing estimates.

\smallskip
The first statement of Theorem \ref{thm:main2} implies that after passing to a sequence $t_j\rightarrow T^-$, $(X, g(t_j))$ always converges in Gromov-Hausdorff  sense to a compact metric space $(\hat X_T, \hat d_T)$ which satisfies the same volume non-collapsing condition. In fact, the identity map between $X\setminus S$ and itself extends to a surjective Lipschitz map from $(X_T, d_T)$ to $(\hat X_T, \hat d_T)$. We conjecture that 
$(\hat X_T, \hat d_T)$ and $(X_T, d_T)$ are isomorphic and they are both homeomorphic to the original variety $Y$. This conjecture holds for K\"ahler surfaces by the work of \cite{SW1, SW2, SY, So13}. We would also like to remark that Theorem \ref{thm:main2} still holds if $[g_0]$ is a general K\"ahler class as a consequence of Remark \ref{bignefex},  since the limiting class $\alpha$ is big and nef.  Theorem \ref{thm:main2} can also be generalized to the unique analytic solution $g(t)$ to the K\"ahler-Ricci flow on projective varieties with log terminal singularities whenever $t>0$, because the semi-K\"ahler current $g(t)$ lies in $\mathcal{AK}(X, \theta_Y, n, p, A, K, \gamma)$ for suitable choices of a log resolution $\pi:Y \rightarrow X$, $\theta_Y$, $A$, $p>n$ and $K>0$ whenever $t$ is uniformly bounded away from $0$. 


\section{The Sobolev-type inequality}\label{section 3}
\setcounter{equation}{0}
\newcommand{\redn}[1]{\textcolor{black}{#1}}

In this section we will prove the Sobolev inequality in Theorem \ref{thm:Sob}. We first let $A>0$ be a finite number and consider the  parameters $p>n, K>0$ and $\gamma\in C^0(X)$ satisfying (\ref{eqn:gamma}). We will fix a K\"ahler metric $\omega \in {\mathcal W}(n,p,A, K, \gamma )$. We recall the {\em Green's function} $G_\omega(x,y)$ of the Laplacian $\Delta_\omega$ on $X$ is defined by 
$$\Delta_\omega G_\omega(x,\cdot) = \frac{1}{V_\omega} - \delta_x(\cdot),\quad \int_X G_\omega(x,\cdot)\omega^n = 0,$$ 
where $\delta_x(\cdot)$ is the Dirac function at the base point $x\in X$. It is well-known that $G_\omega$ always exists and is smooth in $X\times X$ off the diagonal. Moreover, $G_\omega$ is symmetric in the sense that $G_\omega(x, y) = G_\omega(y, x)$ for any $x\neq y$. We first recall the lower bound of $G_\omega$ from \cite{GPSS}.
\begin{lemma}[\cite{GPSS}] 
There exists a constant $C_0 = C_0(n,p, A, K, \gamma)>0$ such that 
$$\inf_{x,y \in X} G_\omega(x,y)\ge -C_0/V_\omega.$$
\end{lemma}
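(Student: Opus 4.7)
The plan is to recall and lightly recast the argument of \cite{GPSS}, which rests on the auxiliary complex Monge-Amp\`ere method of \cite{GPT}. Set $m = \inf_{X\times X} G_\omega$. Since $G_\omega$ is smooth off the diagonal with the standard logarithmic singularity along it, $m$ is attained at some pair $(x_0, y_0)$ with $x_0 \neq y_0$. The normalization $\int_X G_\omega(x, y)\, \omega^n(y) = 0$ for each $x$ yields
$$\frac{1}{V_\omega} \int_X \bigl(G_\omega(x_0, y) - m\bigr)\, \omega^n(y) = -m,$$
so the lemma is equivalent to a uniform integral bound on the normalized Green's function $\tilde G := G_\omega(x_0, \cdot) - m \ge 0$, namely $\frac{1}{V_\omega}\int_X \tilde G\, \omega^n \le C_0$.

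To produce such an integral bound I would introduce an auxiliary Monge-Amp\`ere potential. First smooth $\tilde G$ by $\tilde G_\epsilon \ge 0$, removing the logarithmic singularity at $x_0$, so that $\tilde G_\epsilon$ is smooth and $\tilde G_\epsilon \uparrow \tilde G$ off $\{x_0\}$. For a small parameter $\lambda > 0$ to be chosen, let $\psi_\epsilon \in \textnormal{PSH}(X, \omega_X)$ solve
$$(\omega_X + \ddbar \psi_\epsilon)^n = c_\epsilon\, e^{-\lambda \tilde G_\epsilon}\, \omega_X^n, \qquad \sup_X \psi_\epsilon = 0,$$
where $c_\epsilon > 0$ is the normalization constant making the right-hand side have total mass $1$. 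Combining the entropy bound $\mathcal{N}_p(\omega) \le K$, the pointwise lower bound $e^{F_\omega} \ge \gamma$, the intersection bound $[\omega] \cdot [\omega_X]^{n-1} \le A$, and the $L^\infty$ estimate for complex Monge-Amp\`ere equations from \cite{GPT, GPS}, one obtains a uniform bound $\|\psi_\epsilon\|_{L^\infty(X)} \le C$ depending only on $n, p, A, K, \gamma$ and $\lambda$.

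To conclude, apply a maximum-principle comparison between $\tilde G_\epsilon$ and a suitable affine combination $\alpha \psi_\epsilon + \beta$. Combining the two identities
$$\Delta_\omega \tilde G_\epsilon = V_\omega^{-1} - \chi_\epsilon, \qquad \omega^n = V_\omega\, e^{F_\omega}\, c_\epsilon^{-1}\, e^{\lambda \tilde G_\epsilon}\, (\omega_X + \ddbar \psi_\epsilon)^n,$$
and integrating against $\omega^n$, the $L^\infty$ bound on $\psi_\epsilon$ transfers to an integral bound on $\tilde G_\epsilon$ against $\omega^n$. Letting $\epsilon \to 0$ gives $-m\cdot V_\omega \le C_0$, i.e., the stated lower bound.

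The main obstacle is the last step: in the absence of a Ricci lower bound one cannot appeal to classical elliptic machinery, and one must instead rely on the entropy-controlled integrability of $e^{-\lambda \tilde G_\epsilon}$ against $\omega_X^n$, together with a careful choice of $\lambda$ in terms of $p, K$ so that the auxiliary density falls within an Orlicz class accessible to the $L^\infty$ techniques of \cite{GPT}. Handling the vanishing locus of $\gamma$ also requires invoking the Hausdorff dimension hypothesis \eqref{eqn:gamma} to localize the comparison away from $\{\gamma = 0\}$.
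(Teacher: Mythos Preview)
The paper does not prove this lemma; it is simply quoted from \cite{GPSS} with no argument given. So there is no in-paper proof to compare against, and your task was only to recall the result.

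That said, your sketch has a genuine gap at the decisive step. The reduction $-m = V_\omega^{-1}\int_X \tilde G\,\omega^n$ with $\tilde G = G_\omega(x_0,\cdot)-m\ge 0$ is correct and is indeed how one begins. But the auxiliary scheme you set up does not close. Since $\tilde G_\epsilon\ge 0$, the density $e^{-\lambda\tilde G_\epsilon}$ is bounded above by $1$, so the $L^\infty$ estimate on $\psi_\epsilon$ from \cite{GPT} is automatic and carries no information whatsoever about the size of $\tilde G_\epsilon$. The ``maximum-principle comparison between $\tilde G_\epsilon$ and $\alpha\psi_\epsilon+\beta$'' is asserted but never set up: $\tilde G_\epsilon$ is $\omega$-superharmonic away from $x_0$, not $\omega_X$-plurisubharmonic, and you give no differential inequality linking $\Delta_\omega\tilde G_\epsilon$ to the complex Hessian of $\psi_\epsilon$. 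Your displayed identity $\omega^n = V_\omega e^{F_\omega} c_\epsilon^{-1} e^{\lambda\tilde G_\epsilon}(\omega_X+\ddbar\psi_\epsilon)^n$ is just a tautological rewriting of the two equations and does not by itself yield an integral bound on $\tilde G_\epsilon$. You correctly flag this as ``the main obstacle''---but that is exactly where the actual content of the \cite{GPSS} argument lives, so as written the proposal is an outline with the key mechanism missing rather than a proof.
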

Given this lemma, we denote the positive function 
$${\mathcal G}_\omega = G_\omega +\frac{ C_0 + 1 }{V_\omega}  \ge \frac{1}{V_\omega},$$ 
which still satisfies the Green's formula. We also need the following uniform integral estimates of ${\mathcal G}_\omega$ and $\nabla {\mathcal G}_\omega$:
\begin{lemma}[\cite{GPSS, GPS}] \label{lemma 3.2}
\noindent (i). There exist constants $C_1 = C_1(n,p, A,K,\gamma )>0$  and $\varepsilon_0 = \varepsilon_0(n,p)>0$ such that 
\begin{equation}\label{eqn:useful 1}
\sup_{x\in X} \int_X {\mathcal G}_\omega(x,\cdot)^{1+\varepsilon_0} \omega^n \le \frac{C_1}{V_\omega^{\varepsilon_0}}.
\end{equation}

\smallskip

\noindent (ii). For any $\beta>0$ we have 
\begin{equation}\label{eqn:useful 2}
\sup_{x\in X} \int_X \frac{|\nabla {\mathcal G}_\omega(x,\cdot)|^2_\omega}{ {\mathcal G}_\omega (x,\cdot)^{1+\beta}} \omega^n \le \frac{V_\omega^\beta}{\beta}.
\end{equation}
\end{lemma}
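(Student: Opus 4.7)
The plan is to treat the two parts separately. Part (ii) is a direct integration-by-parts computation that uses only the lower bound $\mathcal{G}_\omega \geq V_\omega^{-1}$ together with the defining equation for $\mathcal{G}_\omega$; part (i) requires substantially more, namely the auxiliary complex Monge--Amp\`ere machinery of \cite{GPT, GPS}. I would begin with (ii), since its proof is self-contained and sets up the notation.

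For (ii), abbreviate $G = \mathcal{G}_\omega(x, \cdot)$. Recall that $\Delta_\omega G \equiv V_\omega^{-1}$ on $X \setminus \{x\}$, that $G \geq V_\omega^{-1} > 0$, and that $G$ has the standard Green's function asymptotic $G \sim c_n d_\omega(\cdot, x)^{-(2n-2)}$ (or $-c_1 \log d_\omega(\cdot, x)$ when $n=1$) near the pole. For fixed $\beta > 0$ and small $\epsilon > 0$, the identity $\mathrm{div}_\omega(G^{-\beta}\nabla G) = G^{-\beta}\Delta_\omega G - \beta G^{-\beta - 1}|\nabla G|_\omega^2$ combined with the divergence theorem on $X \setminus B_\epsilon(x)$ yields
\[
\frac{1}{V_\omega}\int_{X\setminus B_\epsilon} G^{-\beta}\,\omega^n \;=\; \int_{\partial B_\epsilon} G^{-\beta}\,\partial_\nu G \, dS_\omega \;+\; \beta \int_{X \setminus B_\epsilon} G^{-\beta - 1}|\nabla G|_\omega^2\,\omega^n,
\]
with $\nu$ the outward normal to $X \setminus B_\epsilon$. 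Using the local asymptotic for $G$, the boundary integral is of order $\epsilon^{(2n-2)\beta}$ for $n \geq 2$ and $(\log 1/\epsilon)^{-\beta}$ for $n = 1$, hence vanishes as $\epsilon \to 0$. Passing to the limit and using $G^{-\beta} \leq V_\omega^\beta$ then gives
\[
\beta \int_X G^{-\beta - 1}|\nabla G|_\omega^2\,\omega^n \;=\; \frac{1}{V_\omega}\int_X G^{-\beta}\,\omega^n \;\leq\; V_\omega^{\beta},
\]
which is exactly \mref{eqn:useful 2}, with the estimate uniform in $x \in X$.

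For part (i), the plan is to invoke the auxiliary complex Monge--Amp\`ere strategy of \cite{GPT, GPS}, which is currently the only known route to $L^{1+\varepsilon_0}$ integrability of $\mathcal{G}_\omega$ that does not already assume the Sobolev inequality we are trying to prove. Fix $x \in X$, set $G = \mathcal{G}_\omega(x, \cdot)$, and for a small parameter $s > 0$ solve
\[
(\omega + \ddbar \psi_s)^n \;=\; B_s^{-1}\,(V_\omega G)^s\,\omega^n, \qquad B_s \;=\; V_\omega^{-1}\int_X (V_\omega G)^s\,\omega^n.
\]
The main $L^\infty$ estimate of \cite{GPT} produces $\|\psi_s\|_{L^\infty} \leq C$, provided the Nash--Yau entropy of the right-hand side (measured relative to $\omega_X^n$ and normalized by $V_\omega$) is under control. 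Young's inequality, the hypothesis $\mathcal{N}_p(\omega) \leq K$ with $p > n$, and the lower bound $e^{F_\omega} \geq \gamma$ from the definition of ${\mathcal W}$ bound this entropy in terms of $\log B_s$ and $\int_X G^{s'}\omega^n$ for a slightly larger exponent $s' > s$. A weak a priori $L^{s'}$ bound on $G$ for small $s'$ (from basic pluripotential estimates on $\omega$-psh functions, or from the pointwise Green's lower bound already established in \cite{GPSS}) feeds back into this entropy bound. Once $\|\psi_s\|_\infty \leq C$ is in hand, a maximum-principle comparison of $\psi_s$ with a small multiple of $G$ --- equivalently, a distribution-function argument on the super-level sets $\{G \geq t\}$ --- upgrades the weak $L^{s'}$ bound to the sharp $\int_X G^{1+\varepsilon_0}\omega^n \leq C_1 V_\omega^{-\varepsilon_0}$ for some $\varepsilon_0 = \varepsilon_0(n, p) > 0$.

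The main obstacle is the simultaneous calibration of the auxiliary exponent $s$, the bootstrap exponent $s'$, and the target exponent $\varepsilon_0$: the entropy bound required to invoke \cite{GPT} already contains an $L^{s'}$ norm of $G$, and one must close a short bootstrap loop in which successive improvements do not erode the dependence on $V_\omega$. Tracking this $V_\omega$-dependence carefully --- so that the final estimate scales as $V_\omega^{-\varepsilon_0}$ rather than picking up an uncontrolled polynomial factor in $V_\omega$ --- is the delicate part of the argument, and is the step I would execute most carefully by following \cite{GPS, GPSS}.
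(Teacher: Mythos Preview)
The paper does not prove this lemma; it is quoted from the authors' earlier work \cite{GPSS, GPS} and used as a black box in the proof of Theorem~\ref{thm:Sob}. So there is no ``paper's own proof'' to compare against here.

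That said, your treatment of part (ii) is correct and complete: the integration-by-parts identity on $X\setminus B_\epsilon(x)$, the vanishing of the boundary term from the Green's function asymptotics, and the final bound $\int_X G^{-\beta}\omega^n \le V_\omega^{\beta+1}$ from $G\ge V_\omega^{-1}$ are exactly the standard argument, and this is how the estimate is derived in \cite{GPSS}.

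For part (i) you correctly identify that the only known route is the auxiliary Monge--Amp\`ere method of \cite{GPT, GPS}, and your outline (solve an auxiliary equation with right-hand side proportional to $(V_\omega G)^s$, control its entropy via $\mathcal{N}_p(\omega)\le K$, then use the resulting $L^\infty$ potential estimate to bound the distribution function of $G$) is faithful to the actual argument in \cite{GPS}. What you have written is a sketch rather than a proof --- the bootstrap between $s$, $s'$, and $\varepsilon_0$ and the precise $V_\omega$-scaling are genuinely delicate and take several pages in \cite{GPS} --- but since the present paper also defers to that reference, your level of detail is appropriate. There is no gap in your plan; you have simply (and honestly) flagged the step that requires the most care.
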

With Lemma \ref{lemma 3.2}, we now give the proof of Theorem \ref{thm:Sob}. 

\medskip

\noindent {\em Proof of Theorem \ref{thm:Sob}}: By approximation, we may assume the function $u\in C^1(X)$.  We apply the Green's formula associated to the metric $\omega$ to obtain
$$u(x) = \frac{1}{V_\omega} \int_X u \omega^n + \int_X \langle \nabla  {\mathcal G}_\omega(x,\cdot) , \nabla u\rangle_\omega \omega^n $$
for any $x\in X$. Denote ${\overline{u}} =  \frac{1}{V_\omega} \int_X u \omega^n$ to be the average of $u$ over $(X,\omega^n)$. By the triangle inequality this gives 
\bea
\nonumber
|u(x) - \overline {u}| & \le & \int_X |\nabla  {\mathcal G}_\omega(x,\cdot)|_\omega |\nabla u|_\omega \omega^n\\
& = &\nonumber \int_X \frac{|\nabla  {\mathcal G}_\omega(x,\cdot)|_\omega}{ {\mathcal G}_\omega (x,\cdot)^{(1+\beta)/2}} \cdot {\mathcal G}_\omega (x,\cdot)^{(1+\beta)/2}|\nabla u|_\omega \omega^n\\
\mbox{(by H\"older's ineq.) }&\le \nonumber& \Big(\int_X \frac{|\nabla  {\mathcal G}_\omega(x,\cdot)|^2_\omega}{ {\mathcal G}_\omega (x,\cdot)^{1+\beta}}\omega^n  \Big)^{1/2} \Big(\int_X {\mathcal G}_\omega (x,\cdot)^{1+\beta}|\nabla u|^2_\omega \omega^n \Big)^{1/2}\\
\mbox{ by (\ref{eqn:useful 2})}&\le & \Big(\frac{V_\omega^\beta}{\beta} \Big)^{1/2} \Big(\int_X {\mathcal G}_\omega (x,\cdot)^{1+\beta}|\nabla u|^2_\omega \omega^n \Big)^{1/2}. \label{eqn:2.3}
\eea
Here we fix the constant $\beta = \varepsilon_0/2$, where $\varepsilon_0 >0$ is as in (\ref{eqn:useful 1}), and \begin{equation}\label{eqn:q}
q = \frac{1+\varepsilon_0}{1+\beta} = \frac{1+\varepsilon_0}{1+( \varepsilon_0/2)} >1.
\end{equation}
Taking $(2q)$-th power on both sides of (\ref{eqn:2.3}), we obtain 
$$|u(x) - \overline{u}|^{2q} \le \frac{V_\omega^{\beta q}}{\beta^q}\Big(\int_X {\mathcal G}_\omega (x,y)^{1+\beta}|\nabla u(y)|^2_\omega \omega^n(y) \Big)^{q}.$$ We first integrate this inequality over $X$ against the measure $\omega^n$, and then take the $q$-th root of the resulted integrals, which reads
{\small
\begin{equation}
\Big( \int_X |u(x) - \overline{u}|^{2q} \omega^n  \Big)^{1/q} \le\label{eqn:2.5}  \frac{V_\omega^\beta}{\beta} \Big\{ \int_{x\in X}  \Big(\int_X {\mathcal G}_\omega (x,y)^{1+\beta}|\nabla u(y)|^2_\omega \omega^n(y) \Big)^{q} \omega^n(x) \Big\}^{1/q}. 
\end{equation}
}
Applying the generalized Minkowski inequality to the right-hand side of (\ref{eqn:2.5}), we get
{\small
\bea
& & \Big\{ \int_{x\in X}  \Big(\int_X {\mathcal G}_\omega (x,y)^{1+\beta}|\nabla u(y)|^2_\omega \omega^n(y) \Big)^{q} \omega^n(x) \Big\}^{1/q} \nonumber \\ & \le &  \label{eqn:2.6} \int_{X} |\nabla u(y)|^2_\omega \Big(\int_{x\in X} {\mathcal G}_\omega (x,y)^{(1+\beta)q} \omega^n(x) \Big)^{1/q} \omega^n(y).
\eea
}
By the symmetry of ${\mathcal G}_\omega$, the integral factor in (\ref{eqn:2.6}) satisfies
\bea
\Big(\int_{x\in X} {\mathcal G}_\omega (x,y)^{(1+\beta)q} \omega^n(x) \Big)^{1/q}& \nonumber = & \Big(\int_{ X} {\mathcal G}_\omega (y,x)^{1+\varepsilon_0} \omega^n(x) \Big)^{1/q}\\
&\le \nonumber &\Big( \sup_{y\in X}\int_{ X} {\mathcal G}_\omega (y,x)^{1+\varepsilon_0} \omega^n(x) \Big)^{1/q}\\
\mbox{ by (\ref{eqn:useful 1})}&\le \nonumber & \Big(\frac{C_1}{V_\omega^{\varepsilon_0}} \Big)^{1/q} .
\eea
Combining this with (\ref{eqn:2.5}) and (\ref{eqn:2.6}), we get
\bea 
\noindent
\Big( \int_X |u(x) - \overline{u}|^{2q} \omega^n  \Big)^{1/q} \le  \frac{V_\omega^\beta}{\beta} \frac{C_1^{1/q}}{V_\omega^{\varepsilon_0/q}} \int_X |\nabla u|_\omega^2 \omega^n = \frac{C_2}{V_\omega^{1 - \frac{1}{q}}} \int_X |\nabla u|_\omega^2 \omega^n, \label{eqn:2.7}
\eea
where $C_2 = C_1^{1/q}/\beta>0$ depends only on $n, p, A, K, \gamma$. From (\ref{eqn:2.7}), it is clear that the Sobolev inequality holds, i.e. 
\bea 
\Big( \frac{1}{V_\omega}\int_X |u(x) - \overline{u}|^{2q} \omega^n  \Big)^{1/q} \le   \frac{C_2}{V_\omega} \int_X |\nabla u|_\omega^2 \omega^n. \label{eqn:2.8}
\eea
To see the scale-invariant inequality (\ref{eqn:Sob}), suppose $\omega\in {\mathcal K}(X)$ satisfies $F_\omega\ge \gamma$ and $\| e^{F_\omega}\|_{L^1(\log L)^p(\omega_X^n)}\le K$. Set $\hat \omega = I_\omega^{-1} \omega$. It is elementary to see that $\hat \omega \in {\mathcal W}(n,p, A , K,\gamma)$ with $A = 1$.  So the inequality (\ref{eqn:2.8}) holds for the metric $\hat \omega$ with a constant $C_2'$ which is independent of $A$ as stated, i.e.,
\bea 
\Big( \frac{1}{V_{\hat \omega}}\int_X |u(x) - \overline{u}|^{2q} \hat \omega^n  \Big)^{1/q} \le   \frac{C_2'}{V_{\hat \omega}} \int_X |\nabla u|_{\hat\omega}^2 \hat\omega^n. \label{eqn:2.9N}
\eea Then the inequality (\ref{eqn:Sob}) follows by observing that $|\nabla u|_{\hat\omega}^2 = I_\omega |\nabla u|_\omega^2$ and $\frac{\hat\omega^n}{V_{\hat\omega}} = \frac{\omega^n}{V_\omega}$.
The proof of Theorem \ref{thm:Sob} is completed. \hfill \qedsymbol

\medskip

By combining (\ref{eqn:2.8}) with the Minkowski inequality, we obtain the following form of the Sobolev inequality, which is better suited for the Moser's iteration scheme:
\begin{equation}\label{eqn:2.9}
\Big( \frac{1}{V_\omega}\int_X |u|^{2q} \omega^n  \Big)^{1/q} \le   \frac{C_3}{V_\omega} \int_X ( u^2+  |\nabla u|_\omega^2 )\omega^n,
\end{equation}
where $C_3 =\max( 2 C_2^2,1)$. 

\medskip
In case the K\"ahler metric $\omega$ satisfies the conditions
$$e^{F_\omega}\ge \gamma,\quad \| e^{F_\omega}\|_{L^{1+\varepsilon'}}\le K,\quad [\omega]\cdot [\omega_X]^{n-1}\le A,$$
where $\gamma$ satisfies the condition (\ref{eqn:gamma}), and $\varepsilon'>0$ is a positive constant, we have the following improved estimates on $(X,\omega)$ which may be of independent interest.
\begin{theorem}\label{lemma additional}
Let the assumptions be as above. For any $1\le p'<n$ and $q'\in (p', \frac{np'}{n-p'})$, there exists a constant $C>0$ which depends on $n, K, A,\gamma, \varepsilon'$ and $p',q'$ such that 
\begin{equation}\label{eqn:improvement}
\Big(\frac{1}{V_\omega} \int_X |u - {\overline{u}}|^{2q'} \omega^n\Big)^{1/q'} \le C \Big(\frac{1}{V_\omega} \int_X |\nabla u|_\omega^{2p'} \omega^n \Big)^{1/p'}
\end{equation}
for all $u\in C^1(X)$. In particular, we have the imbedding $W^{1,2p'}(X,\omega)\hookrightarrow L^{2q'}(X,\omega)$.
\end{theorem}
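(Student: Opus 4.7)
The plan is to combine the almost-sharp Green's function estimates of \cite{GPS}, available under the Kolodziej-type assumption, with a Moser--Nirenberg bootstrap of the base Sobolev inequality from Theorem \ref{thm:Sob}.

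Under the hypothesis $\|e^{F_\omega}\|_{L^{1+\epsilon'}}\le K$, the arguments of \cite{GPS} yield the almost-sharp integrability of the Green's function: for every $s\in(1,\tfrac{n}{n-1})$ there exists $C_s>0$ with $\sup_{x\in X}\int_X \mathcal{G}_\omega(x,y)^s\,\omega^n(y)\le C_s V_\omega^{1-s}$. Substituting this in place of (\ref{eqn:useful 1}) in the proof of Theorem \ref{thm:Sob} and letting $\beta\to 0$, one obtains: for every $q_0\in(1,\tfrac{n}{n-1})$ there is $C_{q_0}$ with
$$
\Big(\frac{1}{V_\omega}\int_X|u-\bar u|^{2q_0}\omega^n\Big)^{1/q_0}\le \frac{C_{q_0}I_\omega}{V_\omega}\int_X|\nabla u|_\omega^2\,\omega^n. \quad (\star)
$$
This already settles (\ref{eqn:improvement}) in the case $p'=1$.

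For $p'>1$, the idea is Moser's substitution $v=|u-\bar u|^\gamma\,\mathrm{sign}(u-\bar u)$ plugged into the non-mean-zero form (\ref{eqn:2.9}) of the base inequality. Since $|\nabla v|=\gamma|u-\bar u|^{\gamma-1}|\nabla u|$, H\"older's inequality with dual exponents $p'/(p'-1)$ and $p'$ gives
$$
\int|\nabla v|_\omega^2\,\omega^n\le \gamma^2\Big(\int|u-\bar u|^{2(\gamma-1)p'/(p'-1)}\omega^n\Big)^{(p'-1)/p'}\Big(\int|\nabla u|_\omega^{2p'}\omega^n\Big)^{1/p'}.
$$
The pivotal choice is $\gamma=p'/(p'-q_0(p'-1))$, which forces the inner exponent on the right to equal $2\gamma q_0$; a direct computation shows $\gamma\to p'(n-1)/(n-p')$ and $2\gamma q_0\to 2np'/(n-p')$ as $q_0\uparrow\tfrac{n}{n-1}$, so the Sobolev target exponent is reached in the limit. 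Young's inequality with weights $\gamma/(\gamma-1)$ and $\gamma$ then absorbs the resulting $\|u-\bar u\|_{L^{2\gamma q_0}}$ factor into the left side of (\ref{eqn:2.9}), producing
$$
\|u-\bar u\|_{L^{2\gamma q_0}}\le C\bigl(\|u-\bar u\|_{L^{2\gamma}}+\|\nabla u\|_{L^{2p'}}\bigr)
$$
up to explicit powers of $V_\omega$.

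The residual term $\|u-\bar u\|_{L^{2\gamma}}$ is estimated either by $(\star)$ combined with H\"older (when $\gamma\le q_0$) or, when $\gamma>q_0$, by log-convex interpolation $\|u-\bar u\|_{L^{2\gamma}}\le \|u-\bar u\|^{\theta}_{L^{2q_0}}\|u-\bar u\|^{1-\theta}_{L^{2\gamma q_0}}$ with $\theta=(q_0-1)/(\gamma-1)$, followed by a further Young absorption of the $L^{2\gamma q_0}$ factor. Given $q'\in(p',np'/(n-p'))$, choose $q_0$ close enough to $\tfrac{n}{n-1}$ so that $\gamma q_0\ge q'$; a final H\"older step reduces $L^{2\gamma q_0}$ to $L^{2q'}$, and reassembling the volume factors yields (\ref{eqn:improvement}). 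The main obstacle is tracking the precise $V_\omega$ and $I_\omega$ dependence through the bootstrap so that the final inequality retains the correct scale-invariant form; this is largely bookkeeping but must be done with care. A secondary concern is verifying that the almost-sharp estimates of \cite{GPS} transfer to the present degenerating setting, which requires only the minor modifications alluded to in the paper's footnote.
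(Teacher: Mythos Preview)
Your approach is correct and genuinely different from the paper's. Both routes start from the same ingredient---the almost-sharp Green's function bound $\int_X \mathcal{G}_\omega(x,\cdot)^s\,\omega^n \le C(s)V_\omega^{1-s}$ for all $s<\tfrac{n}{n-1}$---but diverge from there. The paper does \emph{not} bootstrap. Instead it defines the linear potential operator $\mathbf{T}f(x)=\int_X \mathcal{G}_\omega(x,y)^{1+\beta}f(y)\,\omega^n(y)$, verifies from the Green's bound that $\mathbf{T}$ is of strong types $(1,r)$ and $(r^*,\infty)$ with $r=(1+\tfrac{1}{q'}-\tfrac{1}{p'})^{-1}\in(1,\tfrac{n}{n-1})$, and then invokes the Riesz--Thorin interpolation theorem to get strong type $(p',q')$ directly. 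The pointwise inequality $|u(x)-\bar u|^2\le \tfrac{V_\omega^\beta}{\beta}\mathbf{T}(|\nabla u|_\omega^2)(x)$ from \nref{eqn:2.3} then finishes the proof in one stroke.

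The trade-off: the paper's argument is shorter and the volume bookkeeping is trivial (a single factor $V_\omega^{1/q'-1/p'}$ falls out of the operator norm), whereas your Moser--Nirenberg route is more elementary in that it avoids any black-box interpolation theorem, at the cost of the two absorption steps and the case split on $\gamma\lessgtr q_0$. Your method would also generalize more readily if one only had the base Sobolev inequality and not the underlying pointwise Green's representation; the paper's method exploits the specific integral-operator structure.
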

\begin{proof} We know from  \cite{GPS, GPSS} that the Green's function satisfies $$\int_X {\mathcal G}_\omega(x,y)^s \omega^n(y)\le \frac{C(s)}{V_\omega^{s-1}}$$ for all $s\in [1, \frac{n}{n-1})$,
for some constant $C(s)>0$ depending additionally on $s$ besides the given parameters. Given the constants $p',q'$ as in the statement of the theorem, we define $$r = ({1 + \frac{1}{q'} - \frac{1}{p'}})^{-1} \in (1, \frac{n}{n-1}).$$
We choose the constant $\beta\in (0,1)$ small enough in (\ref{eqn:2.3}) such that $(1 + \beta) r < \frac{n}{n-1}$.
We define a linear operator as follows: (below $r^* = \frac{r}{r-1}$)
$${\mathbf {T}}: L^{r^*}\cap L^{1} \to L^\infty\cap L^r, \quad {\mathbf {T}}f(x) =  \int {\mathcal G}_\omega(x,y)^{1+\beta} f(y) \omega^n(y).$$
We can verify that ${\mathbf {T}}$ is of strong types $(1,r)$ and $(r^*, +\infty)$. Indeed, we calculate as follows.
{\small
\bea \nonumber
\| {\mathbf {T}}f\|_{L^r(\omega^n)}  &= &\Big(\int_X |{\mathbf {T}}f(x)|^r \omega^n(x) \Big)^{\frac{1}{r}}\le \int_X \Big(\int_{x\in X} {\mathcal G}_\omega (x,y)^{r(1+\beta)} \omega^n(x) \Big)^{\frac{1}{r}} |f(y)| \omega^n(y)\\
&\le & C_1 V_\omega^{- (1+\beta) + 1/r} \| f\|_{L^1(\omega^n)}, \nonumber
\eea }%
where in the first inequality we have applied the generalized Minkowski inequality. For any $x\in X$, by H\"older's inequality we have
$$|{\mathbf {T}}f(x)| \le   \Big(\int {\mathcal G}_\omega(x,y)^{r(1+\beta)} \omega^n(y)\Big)^{1/r} \| f\|_{L^{r^*}(\omega^n)}\le C_1 V_\omega^{- (1+\beta) + 1/r} \| f\|_{L^{r^*}(\omega^n)}  ,$$
which implies that $\|{\mathbf {T}} f\|_{L^\infty}\le C_1 V_\omega^{- (1+\beta) + 1/r} \| f\|_{L^{r^*}(\omega^n)}$. Then we apply the Riesz-Thorin Interpolation Theorem to conclude that for any $t\in (0,1)$, and the pair of numbers $p_t, q_t$ given by
$$\frac{1}{p_t} = \frac{1-t}{r^*} + {t},\quad \frac{1}{q_t} = \frac{t}{r}, \, \mbox{respectively, }$$
${\mathbf {T}}$ is of strong type $(p_t,q_t)$. More precisely, the following holds,
\begin{equation}\label{eqn:inter}\| {\mathbf {T}}f\|_{L^{q_t}(\omega^n)}\le C_1 V_\omega^{- (1+\beta) + 1/r} \| f\|_{L^{p_t}(\omega^n)}. \end{equation}
In particular, taking $t = {r}/{q'} \in (0,1)$, we see that $p_t = p'$ and $q_t = q'$. To finish the proof we recall that from (\ref{eqn:2.3}), we have the following
$$|u(x) - {\overline{u}}|^2 \le \frac{V_\omega^\beta}{\beta} {\mathbf {T}}(|\nabla u|_\omega^2)(x)$$ for all $x\in X$. We can now apply the inequality (\ref{eqn:inter}) with $f:=|\nabla u|^2_\omega$ to conclude that 
\begin{equation} \label{eqn:inter1}
\| |u - {\overline{u}}|^2\|_{L^{q'}(\omega^n)}\le \frac{C_1}{\beta}  V_\omega^{\frac{1}{q'} - \frac{1}{p'}} \| |\nabla u|^2_\omega\|_{L^{p'}(\omega^n)}.
\end{equation}
Clearly here the $\beta$ can be chosen depending only on $p'$ and $q'$. The desired inequality (\ref{eqn:improvement}) follows easily from some rearrangement of (\ref{eqn:inter1}).
\end{proof}

\medskip

We next discuss a few examples when the Sobolev inequalities (\ref{eqn:2.8}) and (\ref{eqn:2.9}) hold. 

\medskip

\noindent{\bf Example 4.1} Let $\pi: (X,\omega_X)\to (Y,\omega_Y)$ be a surjective holomorphic map between compact K\"ahler manifolds $X$ and $Y$. Assume that ${\mathrm{dim}}(X) = n \ge {\mathrm {dim}}(Y) = m$. Consider the continuous family of K\"ahler metrics $\omega = \omega_t: = t \omega_X + \pi^*\omega_Y$ for $t\in (0,1]$. It is easy to see that $V_\omega \sim t^{n-m}$, and 
$$e^{F_\omega} = \frac{1}{V_\omega} \frac{\omega^n}{\omega_X^n}\sim \frac{1}{V_\omega} \sum_{k=0}^{m} t^{n-k} \frac{(\pi^*\omega_Y)^k \wedge \omega_X^{n-k}}{\omega_X^n}\le C$$
for some constant $C>0$ depending only on $n,m,\omega_X, \pi^*\omega_Y$ but independent of $t$. The intersection number $[\omega]\cdot[\omega_X]^{n-1}$ is clearly bounded. The expected lower bound $\gamma$ of $e^{F_\omega}$ can be taken as suitable multiple of $\frac{(\pi^*\omega_Y)^m \wedge \omega_X^{n-m}}{\omega_X^n}$, which is equal to the squared Jacobian of the holomorphic map $\pi: (X,\omega_X) \to (Y, \omega_Y)$. The Jacobian of $\pi$ consists of  nontrivial (local) holomorphic functions hence its vanishing locus $\{\gamma = 0\}$ is a proper analytic subvariety in $X$. Hence the assumption (\ref{eqn:gamma}) on $\gamma$ is also satisfied. Given these, we find that the Sobolev inequalities (\ref{eqn:2.8}) and (\ref{eqn:2.9}) hold for this continuous family of K\"ahler metrics, with the constants depending on $n, \pi, \omega_X, \omega_Y$, but not $t\in (0,1]$.

\smallskip

We remark that when $\pi:X\to Y$ is the blown-up of a {\em smooth} center in $Y$, the uniform Sobolev inequality (\ref{eqn:2.9}) is known by the work of Bando-Siu in \cite{BS} with $q = \frac{n}{n-1}$. However, for the case of general surjective holomorphic maps, as far as we know, the uniform Sobolev inequalities (\ref{eqn:2.8}) and (\ref{eqn:2.9}) are new.

\medskip

\noindent {\bf Example 4.2} Consider the finite-time K\"ahler-Ricci flow $\omega = \omega_t$: $\frac{\partial \omega}{\partial t} = -\ric(\omega)$ with $t\in [0,T)$ and $T<\infty$. If the limit current $[\omega_T]$ is big, in the sense that $[\omega_T]^n>0$, then by \cite{GPSS}, the metrics $\omega_t$ lie in ${\mathcal W}(n,p, A, K, \gamma)$ for suitable parameters depending only on $n, \omega_t|_{t=0}$ and $[\omega_T]$. Hence the uniform Sobolev inequalities (\ref{eqn:2.8}) and (\ref{eqn:2.9}) hold for these metrics $\omega_t$ with $t\in [0, T)$.

\medskip

\noindent {\bf Example 4.3} Let $X$ be a compact K\"ahler manifold with nef canonical line bundle $K_X$ and nonnegative Kodaira dimension. Suppose $\omega = \omega_t$ satisfies the normalized K\"ahler-Ricci flow $\tfrac{\partial \omega}{\partial t} = -\ric(\omega) - \omega$. Again by \cite{GPSS}, the metrics $\omega_t$ lie in ${\mathcal W}(n,p, A, K, \gamma)$ for suitable parameters depending only on $n, \omega_t|_{t=0}$ and $K_X$. Hence the uniform Sobolev inequalities (\ref{eqn:2.8}) and (\ref{eqn:2.9}) hold for these metrics $\omega_t$ with $t\in [0, \infty)$.

\newcommand{\nref}[1]{(\ref{#1})}

\section{Upper bound of the heat kernel}\label{section 4}
\setcounter{equation}{0}
Let $\omega\in {\mathcal W}(n,p, A, K, \gamma)$ be a fixed K\"ahler metric, and $H(x,y,t)$ be the heat kernel of $\Delta_\omega$ as defined in the equation (\ref{eqn:heat}). 

\subsection{Off diagonal estimates} Given the Sobolev inequality \nref{eqn:2.9}, we will make use of the ideas of Davies \cite{D} (see also \cite{Z}) and Saloff-Coste \cite{Sc} to prove a Gaussian-type upper bound of the hear kernel $H(x,y,t)$.

Let $u_0\in C^\infty(X)$ be an arbitrary positive smooth function on $X$. For a given $\alpha\in {\mathbb R}$ and an arbitrary Lipschitz function $\phi$ with $\| \phi\|_{L^\infty(\omega)}\le 1$, we consider the function $u(x,t)$ defined by
\begin{equation}\label{eqn:linearheat}u(x,t) = e^{-\alpha \phi(x)} \int_X H(x,y,t) e^{\alpha \phi(y)} u_0(y) \omega^n (y),\end{equation} which is positive and satisfies the equation
\begin{equation}\label{eqn:new3.1}
\frac{\partial u}{\partial t} = e^{-\alpha \phi} \Delta_\omega (u e^{\alpha \phi}),\quad u(x,0) = u_0(x)
\end{equation} 
for all  $x\in X$.
Fix any time $T>0$ and a constant $\beta \in (0,1)$ (for our purpose, it suffices to take $\beta = 1/2$). We define an increasing, piecewise smooth, and continuous  function $r(t)$ of $t\in (0,T)$ by $$r= r(t) =\left\{\begin{array}{ll} 
\frac{2^\beta - 1} { (T/2)^\beta} t^\beta + 1, & \quad \mbox{if } t\in [0,T/2),\\
\frac{T^\beta}{(T-t)^\beta}, & \quad \mbox{if }t\in [T/2, T),
\end{array}
\right.$$
which satisfies $r(0) = 1$ and $r(T^-) = \infty$. Denote $\| u\|_r$ to be the $L^r(X,\omega^n)$-norm of $u$. By straightforward calculations, we have 
\bea
\label{eqn:new3.2}
\frac{\partial}{\partial t} \log \| u\|_r &= &  -\frac{r'}{r}\log \| u\|_r + \frac{1}{\| u\|_r^r} \int_X u^{r-1} e^{-\alpha \phi}\Delta_\omega (u e^{\alpha \phi}) \omega^n
 \\
&&\label{eqn:new3.2N}  \quad + \frac{r'}{r} \frac{1}{\| u\|_r ^r} \int_X u^r (\log u)\,\omega^n.
\eea
By integration by parts, the last term in \nref{eqn:new3.2} is equal to 
\bea
&& \nonumber \frac{1}{\| u\|_r^r} \int_X u^{r-1} e^{-\alpha \phi}\Delta_\omega (u e^{\alpha \phi}) \omega^n \\
 & = &\nonumber  - \frac{1}{\| u\|_r^r} \int_X\langle \nabla (u^{r-1} e^{-\alpha \phi}), \nabla (u e^{\alpha \phi})\rangle_\omega \omega^n\\
 & = &\nonumber \frac{1}{\| u\|_r^r} \int_X -(r-1) u^{r-2} |\nabla u|^2 - \alpha (r-2) u^{r-1} \langle \nabla u, \nabla \phi\rangle_\omega + \alpha^2 u^r |\nabla \phi|^2\\
  & \le &\nonumber \frac{1}{\| u\|_r^r} \int_X -(r-1)(1-\delta) u^{r-2} |\nabla u|^2 + \frac{\alpha^2 (r-2)^2}{4(r-1) \delta} u^r |\nabla \phi|^2+ \alpha^2 u^r |\nabla \phi|^2\\
    & \le &\label{eqn:new3.2O} \frac{1}{\| u\|_r^r} \int_X -\frac{4 (r-1)(1-\delta) }{r^2} |\nabla u^{r/2}|^2 \omega^n  + \frac{\alpha^2 (r-2)^2}{4(r-1) \delta} + \alpha^2,
\eea 
where in the first inequality we apply the Cauchy-Schwarz inequality with $\delta\in (0,1)$ a constant to be chosen later, and in the last line we use the condition that $\| \nabla\phi\|_\infty\le 1$.

For notational convenience we set $v = \frac{u^{r/2}}{\| u^{r/2}\|_2}$, which satisfies $\| v\|_2 = 1$. Note that $\| u^{r/2}\|_2^2 = \| u\|_r^r.$ The  term in \nref{eqn:new3.2N} becomes
$$\frac{r'}{r} \frac{1}{\| u\|_r ^r} \int_X u^r \log u\,\omega^n = \frac{r'}{r^2}\int_X v^2 \log v^2\,\omega^n + \frac{r'}{r} \log \| u\|_r.$$ Therefore, we have from \nref{eqn:new3.2} and \nref{eqn:new3.2O} that
\bea
\label{eqn:new3.3}
\frac{\partial}{\partial t} \log \| u\|_r &\le & - \frac{4(r-1)(1-\delta)}{r^2} \int_X |\nabla v|_\omega^2 \omega^n +  \frac{r'}{r^2}\int_X v^2 \log v^2\,\omega^n\\
&& \quad \nonumber  + \frac{\alpha^2 (r-2)^2}{4(r-1) \delta} + \alpha^2.
\eea
We next apply the Sobolev inequality \nref{eqn:2.9} to deal with the last term in \nref{eqn:new3.3}. 
\bea
\int_X v^2 \log v^2 \omega^n & = & \frac{1}{q-1} \int_X v^2 \log v^{2(q-1)} \omega^n \nonumber \\
\mbox{(by Jensen's inequality)}&\le & \frac{1}{q-1} \log \frac{1}{V_\omega} \int_X v^{2q} \omega^n + \frac{\log V_\omega}{q-1} \nonumber \\
&\le & \frac{q}{q-1} \log \Big(\frac{C}{V_\omega} \int_X (v^2 + |\nabla v|^2) \omega^n \Big)+ \frac{\log V_\omega}{q-1}  \nonumber\\
&= & \frac{q}{q-1} \log \Big( \int_X (v^2 + |\nabla v|^2) \omega^n \Big)- \log V_\omega +  \frac{q}{q-1} \log C  . \nonumber
 \eea
 Combining this with \nref{eqn:new3.3}, we obtain
\bea\nonumber\frac{\partial}{\partial t} \log \| u\|_r & \le & - \frac{4(r-1)(1-\delta)}{r^2} \int_X |\nabla v|_\omega^2 \omega^n +  \frac{r'}{r^2}\frac{q}{q-1} \log \Big(1 + \int_X |\nabla v|^2\omega^n \Big)  \\
&& \label{eqn:middle key}\quad +  \frac{r'}{r^2}\frac{q}{q-1} \log {C} - \frac{r'}{r^2} \log V_\omega  + \frac{\alpha^2 (r-2)^2}{4(r-1) \delta} + \alpha^2.
\eea
{\bf Claim:} The following inequality holds: \bea\nonumber &&  - \frac{4(r-1)(1-\delta)}{r^2} \int_X |\nabla v|_\omega^2 \omega^n +  \frac{r'}{r^2}\frac{q}{q-1} \log \Big(1 + \int_X |\nabla v|^2\omega^n \Big)\\
&\le &\nonumber \frac{r'}{r^2} \frac{q}{q-1} \log \Big( \frac{r' q}{4(r-1) (1-\delta)(q-1)} \Big) +  \frac{4(r-1)(1-\delta)}{r^2} - \frac{r'}{r^2} \frac{q}{q-1}. \eea
This claim follows easily from the calculus inequality that the concave function $$f(x): =  - \frac{4(r-1)(1-\delta)}{r^2} x +  \frac{r'}{r^2}\frac{q}{q-1} \log (1 + x)$$ achieves the maximum at $x:= \frac{r' q}{4(r-1)(1-\delta) (q-1)} - 1$.

\smallskip

This claim together with the inequality \nref{eqn:middle key} implies that
\bea\label{eqn:new3.4}\frac{\partial}{\partial t} \log \| u\|_r & \le &  \frac{r'}{r^2} \frac{q}{q-1} \log\Big( \frac{r'}{r-1} \Big) + \frac{4(r-1)(1-\delta)}{r^2} + \frac{r'}{r^2} \Psi\\
&&\nonumber \quad  + \frac{\alpha^2 (r-2)^2}{4(r-1) \delta} + \alpha^2,
\eea
where the constant $\Psi$ is given by $$\Psi: = \frac{q}{q-1} \log \frac{q}{4(q-1)(1-\delta)} - \frac{q}{q-1} + \frac{q}{q-1} \log {C} - \log V_\omega.$$ 
We need the following elementary identities:
\bea
\int_0^T \frac{r'}{r^2} dt = -\frac{1}{r(t)}\Big|_{t=0}^T = 1.\nonumber
\eea
\bea
&& \nonumber \int_0^T \frac{r'}{r^2}  \log\Big( \frac{r'}{r-1} \Big) dt\\
& = & \nonumber \int_0^{T/2} \frac{r'}{r^2} \log \frac{\beta}{t} dt + \int_{T/2}^T \frac{r'}{r^2}  \log \Big( \frac{\beta T^\beta}{(T-t) (T^\beta - (T-t)^\beta)}   \Big)dt\\
& = &  \nonumber\int_0^{1/2} \frac{r_s'}{r^2} \log \frac{\beta}{Ts} ds + \int_{1/2}^1 \frac{r_s'}{r^2}  \log \Big( \frac{\beta}{T(1-s) (1 - (1-s)^\beta)}   \Big)ds\\
& = & \nonumber \log \frac{1}{T} \int_0^1 \frac{r_s'}{r^2} ds + \int_0^{1/2} \frac{\beta (4^\beta - 2^\beta)s^{\beta - 1}}{( (4^\beta - 2^\beta) s^\beta + 1    )^2} \log \frac{\beta}{s} ds\\
&& \nonumber\quad +\int_{1/2} ^1 \frac{\beta}{(1-s)^{1-\beta}}  \log \Big( \frac{\beta}{(1-s) (1 - (1-s)^\beta)}   \Big)ds\\
& = & \nonumber -\log T + A_\beta,
\eea
where we denote $A_\beta\in {\mathbb R}$ to be the sum of the two integrals which are convergent by the choice of $\beta \in (0,1)$, and we note that $A_\beta \approx 2.455$ if $\beta = 1/2$.
\bea
&&\int_0^T\frac{(r-2)^2}{r-1} dt \nonumber \\
& = &\nonumber \int_0^{T/2} \frac{[ (4^\beta - 2^\beta) (t/T)^\beta  -1   ]^2}{ (4^\beta - 2^\beta) (t/T)^\beta    }dt + \int_{T/2}^T \frac{[T^\beta - 2 (T-t)^\beta]^2}{ (T-t)^\beta [T^\beta - (T-t)^\beta]   }dt\\
& = & \nonumber T \Big( \int_0^{1/2} \frac{[ (4^\beta - 2^\beta) s^\beta  -1   ]^2}{ (4^\beta - 2^\beta) s^\beta    }ds + \int_{1/2}^1 \frac{[1 - 2 (1-s)^\beta]^2}{ (1-s)^\beta [1 - (1-s)^\beta]   }ds\Big)\\
&=&T B_\beta \nonumber,
\eea
where $B_\beta>0$ is the sum of the two integrals. (If $\beta = 1/2$, then $B_\beta \approx 2.008$). 
{\small $$ \int_0^T \frac{r-1}{r^2} dt = T \Big( \int_0^{1/2} \frac{ (4^\beta - 2^\beta) s^\beta   }{[(4^\beta - 2^\beta) s^\beta  + 1]^2} ds + \int_{1/2}^1 (1-(1-s)^\beta) (1-s)^\beta ds\Big) = T C_\beta,$$ }
where $C_\beta \approx 0.192$ if $\beta = 1/2$.

%
Together with these identities, integrating \nref{eqn:new3.4} over $t\in (0,T)$ yields that
$$
\log \frac{\| u_T\|_\infty}{\| u_0\|_1}\le \frac{q}{q-1} (A_\beta- \log T) + 4(1-\delta) C_\beta T +\Psi + \frac{B_\beta}{4\delta} \alpha^2 T + \alpha^2 T
$$
where $u_T (x) = u(x,T)$. This implies that 
\bea
\| u_T\|_\infty & \le &  T^{-\frac{q}{q-1}} \exp\Big( 4(1-\delta) C_\beta T + \frac{A_\beta q}{q-1} + \Psi  +\frac{B_\beta}{4\delta} \alpha^2 T + \alpha^2 T\Big) \| u_0\|_1\nonumber\\
&\le&  C_0  T^{-\frac{q}{q-1}} V_\omega^{-1}  e^{4(1-\delta) C_\beta T+\frac{B_\beta}{4\delta} \alpha^2 T + \alpha^2 T}   \| u_0\|_1,\label{eqn:new3.6}
\eea
where the constant $C_0>0$ depends on $\beta$ and $\delta\in (0,1)$.
Since \nref{eqn:new3.6} holds for {\em any} positive function $u_0$, by duality and the representation formula \nref{eqn:linearheat}, the inequality \nref{eqn:new3.6} implies the pointwise estimate for $H(x,y,t)$, for any two points $x,y\in X$, that
\begin{equation}\label{eqn:Heat1}H(x,y,T) \le C_0  T^{-\frac{q}{q-1}} V_\omega^{-1}  e^{4(1-\delta) C_\beta T+\frac{B_\beta}{4\delta} \alpha^2 T + \alpha^2 T} e^{\alpha (\phi(x) - \phi(y))}.
\end{equation}
For two fixed points $x,y\in X$, if we take $\alpha = -\frac{\phi(x) - \phi(y)}{(2 + B_\beta/2\delta)T}$, then from \nref{eqn:Heat1} we obtain
\begin{equation}\label{eqn:heat2}
H(x,y,T) \le C_0  T^{-\frac{q}{q-1}} V_\omega^{-1}  e^{4(1-\delta) C_\beta T} e^{ -\frac{(\phi(x) - \phi(y))^2}{ (4+B_\beta/\delta)T   }    }.
\end{equation}
Since \nref{eqn:heat2} holds for {\em any} smooth function $\phi$ with $\|\nabla\phi\|_\infty\le 1$, we can take a sequence of such functions which converge uniformly to $z\mapsto  d_\omega(x,z)$, where $d_\omega(x,z)$ is the function of distance of $z\in X$ to the fixed point $x$. Therefore, from \nref{eqn:heat2}, we conclude that for any $T>0$, it holds that 
\begin{equation}\label{eqn:heat3}
H(x,y,T) \le C_0  T^{-\frac{q}{q-1}} V_\omega^{-1}  e^{4(1-\delta) C_\beta T} e^{ -\frac{d_\omega(x,y)^2}{ (4+B_\beta/\delta)T   }    }.
\end{equation}
We next claim that the factor $e^{4(1-\delta) C_\beta T}$ on the right-hand side of \nref{eqn:heat3} can be replaced by a uniform constant independent of $T>0$. 
We have shown that \nref{eqn:heat3} holds for {\em any} metric $\omega\in {\mathcal W}(n, p, A, K, \gamma)$. If we take $A=1$, the constant $C_0$ in \nref{eqn:heat3} can be chosen to be independent of the parameter $A$, which we will assume next.  

Recall that we  denote $I_\omega  = [\omega]\cdot[\omega_X]^{n-1}>0$ to be the intersection number of the K\"ahler classes $[\omega]$ and $[\omega_X]$. Given a metric $\omega\in {\mathcal W}(n, p, \infty, K, \gamma)$, we consider the rescaled metric $\tilde \omega = I_{\omega}^{-1} \omega$, which meets the equations $F_{\tilde \omega} = F_\omega$ and $I_{\tilde \omega} = I_{\omega}^{-1} [\omega]\cdot [\omega_X]^{n-1} = 1$. Hence we have $\tilde \omega \in {\mathcal W}(n, p, 1, K, \gamma)$. Applying \nref{eqn:heat3} to the metric $\tilde\omega$ we get
\begin{equation}\label{eqn:heat4}
H_{\tilde \omega}(x,y,T) \le C_0  T^{-\frac{q}{q-1}} V_{\tilde \omega}^{-1}  e^{4(1-\delta) C_\beta T} e^{ -\frac{d_{\tilde \omega}(x,y)^2}{ (4+B_\beta/\delta)T   }    },
\end{equation}
and we stress that the constant $C_0$ depends only on $n,p, K$, $\gamma$ and $\delta\in (0,1)$. It is a standard fact that the heat kernels of $\omega$ and $\tilde \omega$ are related by 
\begin{equation}\label{eqn:heat5}
H_{\omega}(x,y,T) = I_\omega^{-n} H_{\tilde \omega}(x,y, T/I_\omega).
\end{equation}
Combining \nref{eqn:heat4} and \nref{eqn:heat5} gives
\bea\nonumber
H_{ \omega}(x,y,T)& \le & C_0 I_\omega^{-n}  (T/I_\omega)^{-\frac{q}{q-1}} V_{\tilde \omega}^{-1}  e^{4(1-\delta) C_\beta T/I_\omega} e^{ -\frac{I_\omega d_{\tilde \omega}(x,y)^2}{ (4+B_\beta/\delta)T   }    }\\
& = & C_0  I_\omega^{ \frac{q}{q-1}} T^{-\frac{q}{q-1}} V_{ \omega}^{-1}  e^{4(1-\delta) C_\beta T/I_\omega} e^{ -\frac{d_{\omega}(x,y)^2}{ (4+B_\beta/\delta)T   }    } .\label{eqn:heat6}
\eea
The inequality \nref{eqn:heat6} yields an upper bound of $H_\omega(x,y,T)$ when $T\le I_\omega$ by choosing $\delta = 1/2$, i.e. 
\begin{equation}\label{eqn:heat7} 
H_\omega(x,y,T) \le C_0  I_\omega^{ \frac{q}{q-1}} T^{-\frac{q}{q-1}} V_{ \omega}^{-1}  e^{2 C_\beta } e^{ -\frac{d_{\omega}(x,y)^2}{ (4+2 B_\beta)T   }    } .
\end{equation}
If $T\ge I_\omega$, we choose $1-\delta = \frac{I_\omega}{4T}\le 1/4$, i.e. $1>\delta\ge 3/4 $. Note that the constant $C_0\sim e^{\Psi}\sim (1-\delta)^{-\frac{q}{q-1}}$, hence, the inequality \nref{eqn:heat6} implies that if $T\ge I_\omega$
\begin{equation}\label{eqn:heat8} 
H_\omega(x,y,T) \le C_1  V_{ \omega}^{-1}  e^{ C_\beta } e^{ -\frac{d_{\omega}(x,y)^2}{ (4+2 B_\beta)T   }    } ,
\end{equation}
for some constant $C_1>0$ which depends only on $n, p, K$ and $\gamma$. The upper bound of $H_\omega(x,y,t)$ in \nref{eqn:heat kernel} follows from  \nref{eqn:heat7} and \nref{eqn:heat8}. \hfill \qedsymbol

\subsection{On diagonal estimates} In this subsection, we will derive a more precise upper bound of the heat kernel on the diagonal.  We follow  the classical arguments of Cheng-Li in \cite{CL}. We define $$\tilde H(x,y,t) = H(x,y,t) - \frac{1}{V_\omega},$$
which satisfies $\int_X \tilde H(x,y,t) \omega^n(y) = 0$ for any $t>0$. 

\begin{lemma}\label{lemma3.1}
$\tilde H$ satisfies the following semi-group property
\begin{equation}\label{eqn:3.1}
\tilde H(x,z, t) = \int_X \tilde H(x,y, s) \tilde H(y, z, t-s) \omega^n(y)
\end{equation}
 for all $s\in (0,t)$.
\end{lemma}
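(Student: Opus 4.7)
The plan is to reduce the claim to the ordinary semi-group property of the heat kernel $H$ itself, combined with the mean-value normalization $\int_X H(x,y,t)\,\omega^n(y) = 1$.

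First, I would record the standard semi-group identity for $H$:
\[
H(x,z,t) = \int_X H(x,y,s)\,H(y,z,t-s)\,\omega^n(y),
\]
valid for all $0<s<t$. This is a well-known consequence of uniqueness for the heat equation \eqref{eqn:heat} on the compact K\"ahler manifold $(X,\omega)$: for fixed $x$, both sides, viewed as functions of $(z,t-s)$ with $s$ frozen, solve $\partial_\tau u = \Delta_{\omega,z} u$ with the same initial value $H(x,z,s)$ at $\tau = 0$.

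Next, I would verify the normalization
\[
\int_X H(x,y,t)\,\omega^n(y) = 1 \qquad \text{for all } t>0.
\]
By symmetry $H(x,y,t)=H(y,x,t)$, so the left-hand side equals $v(x,t):=\int_X H(y,x,t)\,\omega^n(y)$. The function $v$ satisfies $\partial_t v = \Delta_{\omega,x} v$ with $v(x,0)=\int_X\delta_y(x)\,\omega^n(y)=1$, and the constant function $1$ is the unique solution, so $v \equiv 1$.

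With these two inputs, the claim follows by direct expansion. Writing $\tilde H = H - \tfrac{1}{V_\omega}$, I compute
\begin{align*}
\int_X \tilde H(x,y,s)\tilde H(y,z,t-s)\,\omega^n(y)
&= \int_X H(x,y,s)H(y,z,t-s)\,\omega^n(y)\\
&\quad - \frac{1}{V_\omega}\int_X H(x,y,s)\,\omega^n(y)
 - \frac{1}{V_\omega}\int_X H(y,z,t-s)\,\omega^n(y)\\
&\quad + \frac{1}{V_\omega^2}\int_X \omega^n(y)\\
&= H(x,z,t) - \frac{1}{V_\omega} - \frac{1}{V_\omega} + \frac{1}{V_\omega}
= \tilde H(x,z,t),
\end{align*}
using the semi-group property in the first term, the normalization in the next two, and $\int_X \omega^n = V_\omega$ in the last. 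No step is subtle: the only thing to be careful about is invoking symmetry of $H$ correctly when computing $\int_X H(x,y,t)\,\omega^n(y)$, since the defining equation \eqref{eqn:heat} is written with the Laplacian acting in the $y$-variable while the normalization above requires integrating in $y$ for fixed $x$.
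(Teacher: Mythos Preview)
Your argument is correct. It differs from the paper's in presentation: the paper works directly with $\tilde H$, setting $f(s)=\int_X \tilde H(x,y,s)\tilde H(y,z,t-s)\,\omega^n(y)$, differentiating in $s$, and using the heat equation plus integration by parts to show $f'(s)\equiv 0$, then evaluating the limit $s\to 0^+$. You instead reduce to the standard semi-group identity for $H$ and the normalization $\int_X H=1$, and then expand the product $(H-\tfrac{1}{V_\omega})(H-\tfrac{1}{V_\omega})$ algebraically. Both routes are elementary; yours is slightly more economical since it quotes the semi-group property of $H$ as a known fact rather than rederiving it, while the paper's version has the minor advantage of being self-contained for $\tilde H$ without invoking any outside identity.
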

This lemma is well-known, but for completeness, we provide a sketched proof.

\smallskip

\noindent{\em Proof of Lemma \ref{lemma3.1}}.  Denote $f(s) = \int_X \tilde H(x,y, s) \tilde H(y, z, t-s) \omega^n(y)$. Taking derivative on $f(s)$, we get
\bea
f'(s) &=\nonumber & \int_X  [\frac{\partial}{\partial s}\tilde H(x,y, s) \tilde H(y, z, t-s) - \tilde  H(x,y, s)  \frac{\partial}{\partial t}\tilde H(y, z, t-s)] \omega^n(y)\\
&=\nonumber & \int_X  [\Delta_{\omega,y}\tilde H(x,y, s) \tilde H(y, z, t-s) - \tilde H(x,y, s)  \Delta_{\omega,y}\tilde H(y, z, t-s)] \omega^n(y)\\
& = & 0,\nonumber
\eea
by integration by parts. Hence $f(s)$ is independent of $s\in (0,t)$. It is not hard to see $f(s)\to \tilde H(x,z,t)$ as $s\to 0^+$. Hence the lemma follows.\hfill \qedsymbol

\medskip
In particular, setting $s = t/2$ and $x = z$ in Lemma \ref{lemma3.1} we get the equation
$$\tilde H(x,x,t) = \int_X \tilde H(x,y,t/2) ^2 \omega^n(y).$$
Taking derivative with respect to $t$, this gives 
\bea
\nonumber
\frac{\partial }{\partial t} \tilde H(x,x,t) & = & \int_X \tilde H(x,y,t/2)\frac{\partial}{\partial s}\Big|_{s=\frac{t}{2}} \tilde H(x,y, s) \omega^n(y)\\
& = & \int_X \tilde H(x,y,t/2)\Delta_\omega \tilde H(x,y, t/2) \omega^n(y)\nonumber\\
& = & \label{eqn:3.2}-  \int_X |\nabla_y \tilde H(x,y,t/2)|^2_\omega \omega^n(y). 
\eea
For a fixed $x\in X$ and $t>0$, we consider the function $u(y): = \tilde H(x, y, t/2)$. By the definition of $\tilde H(x,y,t/2)$, we have $\overline{u} = \frac{1}{V_\omega}\int_X u \omega^n = 0$, hence the Sobolev inequality \nref{eqn:Sob} implies that 
\begin{equation}\label{eqn:3.3}\Big( \frac{1}{V_\omega} \int_X |u|^{2q} \Big)^{1/q} \le \frac{C}{V_\omega} \int_X |\nabla u|_\omega^2 \omega^n.\end{equation}
By H\"older's inequality, we have
\bea
\frac{1}{V_\omega} \int_X u^2 \omega^n &\le &\Big(\frac{1}{V_\omega} \int_X |u|^{2q}\omega^n \Big)^{\frac{1}{2q - 1}} \Big( \frac{1}{V_\omega} \int_X |u| \omega^n \Big)^{\frac{2q - 2}{2q - 1}}\nonumber \\
\mbox{by \nref{eqn:3.3}}&\le & \Big( \frac{C}{V_\omega} \int_X |\nabla u|_\omega^2 \omega^n \Big)^{\frac{q}{2q - 1}} \Big(\frac{2}{V_\omega} \Big)^{\frac{2q - 2}{2q - 1}}\nonumber. 
\eea
Thus it holds that
\bea
0<\tilde H(x,x,t) = \int_X u^2 \omega^n \le \nonumber {C}{V_\omega^{-\frac{q-1}{2q-1}}} \Big(\int_X |\nabla u|_\omega^2 \omega^n \Big)^{\frac{q}{2q - 1}}.
\eea
Combining this with \nref{eqn:3.2}, we obtain the differential inequality:
$$\frac{\partial }{\partial t} \tilde H(x,x,t) + C V_\omega^{\frac{q-1}{q} } \tilde H(x,x,t)^{\frac{2q - 1}{q}} \le 0, $$
which implies that for a possible different constant $C>0$, $\frac{\partial }{\partial t} \tilde H(x,x,t) ^{-\frac{q-1}{q}}\ge C (1- \frac {1}{q}) V_\omega^{1-\frac{1}{q}}$. Note that $\tilde H(x,x,t)\to +\infty$ as $t\to 0^+$. Integrating the inequality above over $t\in (0,T]$ for any arbitrary $T>0$ yields that
$$\tilde H(x,x, T) \le \frac{C}{V_\omega} \frac{1}{T^{\frac{q}{q-1}}},$$ which combined with $\tilde H = H - \frac{1}{V_\omega}$ implies the upper bound of $H(x,x,t)$ (we change $T$ to $t$),
$$H(x,x,t) \le \frac{1}{V_\omega} + \frac{C}{V_\omega} \frac{1}{t^{\frac{q}{q-1}}}.$$ 
A simple scaling argument using \nref{eqn:heat5} shows that the constant $C$ can be chosen to be independent of $A>0$ in the following form:
$$H(x,x,t) \le \frac{1}{V_\omega} + C\frac{I_\omega^{\frac{q}{q-1}}}{V_\omega} \frac{1}{t^{\frac{q}{q-1}}}.$$ 

\smallskip

To see \nref{eqn:exp heat}, we observe that from the expansion formula \nref{eqn:heat expansion} we have  when $t\ge I_\omega$,
\bea\nonumber
|\tilde H(x,y,t)|& \le  & \sum_{k=1}^\infty  e^{-\lambda_k t} |\phi_k(x)| |\phi_k(y)|\le e^{-\lambda_1 t/2} \sum_{k=1}^\infty e^{-\lambda _k t /2} |\phi_k(x)| |\phi_k(y)|\\
&\le & e^{-\lambda_1 t/2} \tilde H(x,x,t)^{1/2}  \tilde H(y,y,t)^{1/2} \nonumber\\
&\le &\nonumber   \frac{C}{V_\omega} e^{-\lambda_1 t/2} I_\omega^{\frac{q}{q-1}} t ^{-\frac{q}{q-1}} \le \frac{C}{V_\omega} e^{-c I^{-1}_\omega t},
\eea
where we have used the estimate $\lambda_1 \ge 2c  I_\omega^{-1}$ for some constant $c>0$. In particular, the heat kernel $H(x,y, t)$ converges to the constant function $1/V_\omega$ exponentially as $t\to \infty$.
\hfill \qedsymbol

\medskip

\noindent {\em Proof of Corollary \ref{cor:CL}.} We turn to the applications of the upper bound of heat kernel \nref{eqn:CL}. Let $0=\lambda_0 < \lambda_1 \le \lambda_2\le \ldots$ (counting multiplicity) be the increasing sequence of eigenvalues of the Laplacian $-\Delta_\omega$. We choose eigenfunctions $\phi_k$ with eigenvalue $\lambda_k$ such that $\{\phi_k\}$ are orthonormal in $L^2(X,\omega^n)$. It is a classical fact that $\{\phi_k\}$ is an orthonormal basis of $L^2(X,\omega^n)$, $\phi_0 = 1/\sqrt{V_\omega}$, and the  heat kernel $H(x,y,t)$ can be expanded as
\begin{equation}\label{eqn:heat expansion}H(x,y,t) = \sum_{j=0}^\infty e^{-\lambda_j t} \phi_j(x) \phi_j(y) = \frac{1}{V_\omega} + \sum_{j=1}^\infty e^{-\lambda_j t} \phi_j(x) \phi_j(y).\end{equation}
The upper bound \nref{eqn:CL} implies that
\begin{equation}\label{eqn:3.4}
\sum_{j=1}^\infty e^{-\lambda_j t} \phi_j(x)^2 \le \frac{CI_\omega^{q/(q-1)}}{V_\omega} \frac{1}{t^{\frac{q}{q-1}}}.
\end{equation}
Integrating \nref{eqn:3.4} over $X$ gives 
\begin{equation}\label{eqn:3.5}
\sum_{j=1}^\infty e^{-\lambda_j t}  \le \frac{C I^{q/(q-1)}_\omega}{t^{\frac{q}{q-1}}}.
\end{equation}
For any given $k\in {\mathbb N}$, setting $t = 1/\lambda_k$, the equation \nref{eqn:3.5} implies 
$$C I_\omega^{q/(q-1)} \lambda_k^{\frac{q}{q-1}} \ge \sum_{j=1}^\infty e^{-\lambda_j /\lambda_k}\ge \sum_{j=1}^k e^{-\lambda_j /\lambda_k}\ge e^{-1} k.$$
Hence we conclude that there is a positive constant $c = 1/ (Ce)^{\frac{q-1}{q}}>0$ such that 
$$\lambda_k \ge  c I_\omega^{-1}\, k^{\frac{q-1}{q}}.$$
This finishes the proof of Corollary \ref{cor:CL}. \hfill \qedsymbol

\medskip
We also have the following estimate on the time-derivative of the heat kernel, which will be useful in section \ref{section heat}. In the following we assume $\omega\in {\mathcal W}(n,p, A, K, \gamma)$ as usual, and $I_\omega\le A$ by definition. 
\begin{lemma}\label{lemma heat} The following inequalities hold: for any $t>0$
\begin{equation}\label{eqn:heat dot}
\int_X \big(\dot H(x,y,t)\big)^2\omega^n(y) \le t^{-2} \tilde H(x,x,t) \le \frac{C}{V_\omega} t^{-2 - \frac{q}{q-1}}, 
\end{equation}
and 
\begin{equation}\label{eqn:heat dot1}
\sup_{x,y\in X}|\dot H(x,y,t)| \le \frac{C}{V_\omega} t^{-1 - \frac{q}{q-1}}, 
\end{equation}
where we denote $\dot H(x,y,t) = \frac{\partial }{\partial t} H(x,y,t)$.
\end{lemma}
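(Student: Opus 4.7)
The plan is to estimate $\dot H$ via the spectral decomposition \eqref{eqn:heat expansion}, combined with the diagonal bound \eqref{eqn:CL} that is already at our disposal.

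Differentiating the expansion $H(x,y,t) = V_\omega^{-1} + \sum_{j\ge 1} e^{-\lambda_j t}\phi_j(x)\phi_j(y)$ in $t$ yields
\[
\dot H(x,y,t) \;=\; -\sum_{j\ge 1} \lambda_j e^{-\lambda_j t}\phi_j(x)\phi_j(y),
\]
so by orthonormality of $\{\phi_j\}$,
\[
\int_X (\dot H(x,y,t))^2 \,\omega^n(y) \;=\; \sum_{j\ge 1}\lambda_j^2 e^{-2\lambda_j t}\phi_j(x)^2.
\]
The first inequality in \eqref{eqn:heat dot} then reduces to the pointwise bound $\lambda_j^2 e^{-2\lambda_j t}\le t^{-2} e^{-\lambda_j t}$, which is equivalent to the elementary calculus inequality $y^2\le e^{y}$ for $y=\lambda_j t\ge 0$ (easily checked by differentiating $e^y-y^2$ twice). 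Summing over $j\ge 1$ gives
\[
\int_X (\dot H(x,y,t))^2\,\omega^n(y) \;\le\; t^{-2}\sum_{j\ge 1} e^{-\lambda_j t}\phi_j(x)^2 \;=\; t^{-2}\tilde H(x,x,t),
\]
and the second half of \eqref{eqn:heat dot} follows immediately from \eqref{eqn:CL} together with the assumption $I_\omega\le A$, which absorbs the $I_\omega^{q/(q-1)}$ factor into the constant $C$.

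For the pointwise estimate \eqref{eqn:heat dot1}, I use the semigroup identity $H(x,y,t)=\int_X H(x,z,t/2)H(z,y,t/2)\,\omega^n(z)$. Differentiating in $t$ and then subtracting the constant $V_\omega^{-1}$ from the first factor — which is legitimate since $\int_X \dot H(z,y,s)\,\omega^n(z)=\tfrac{d}{ds}\int_X H(z,y,s)\,\omega^n(z)=0$ — gives the representation
\[
\dot H(x,y,t) \;=\; \int_X \tilde H(x,z,t/2)\,\dot H(z,y,t/2)\,\omega^n(z).
\]
Cauchy--Schwarz yields
\[
|\dot H(x,y,t)|^2 \;\le\; \Big(\int_X \tilde H(x,z,t/2)^2\,\omega^n(z)\Big)\Big(\int_X \dot H(z,y,t/2)^2\,\omega^n(z)\Big).
\]
The first factor equals $\tilde H(x,x,t)$ by the semigroup identity applied to $\tilde H$ (as in the derivation of \eqref{eqn:CL}), while the second factor is controlled by $(t/2)^{-2}\tilde H(y,y,t/2)$ using the first half of \eqref{eqn:heat dot} and the symmetry of $H$. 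Applying \eqref{eqn:CL} to both $\tilde H(x,x,t)$ and $\tilde H(y,y,t/2)$ and taking a square root produces $|\dot H(x,y,t)|\le CV_\omega^{-1}t^{-1-q/(q-1)}$, as desired.

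This argument is essentially routine once the spectral tools are in hand; the only point that requires a small idea is the replacement $H(x,z,t/2)\mapsto \tilde H(x,z,t/2)$ in the semigroup formula, which is what allows one to apply the $L^2$ bound to \emph{both} factors and thereby gain the full decay rate in $t$ rather than just a single power.
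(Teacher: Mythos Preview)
Your argument is correct and essentially identical to the paper's: both use the spectral expansion and the elementary bound $(\lambda_j t)^2 e^{-\lambda_j t}\le 1$ for \eqref{eqn:heat dot}, and the semigroup representation of $\dot H$ together with Cauchy--Schwarz and the diagonal bound \eqref{eqn:CL} for \eqref{eqn:heat dot1}. The only cosmetic difference is that the paper phrases the second step as ``$\dot H$ solves the heat equation, hence admits a representation formula,'' whereas you differentiate the semigroup identity directly---these are the same computation.
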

\begin{proof}
By the expansion formula \nref{eqn:heat expansion} of the heat kernel $H(x,y,t)$, we have 
$$\dot H(x,y,t) = - \sum_{j=1}^\infty e^{-\lambda_j t} \lambda_j \phi_j(x) \phi_j(y). $$
Then we integrate $\dot H(x,y,t)^2$ and obtain
{\small
$$\int_X \big(\dot H(x,y,t)\big)^2\omega^n(y) = \sum_{j=1}^\infty e^{-2\lambda_j t} \lambda_j^2 \phi_j^2(x)\le \frac{1}{t^2} \sum_{j=1}^\infty e^{-\lambda_j t} \phi_j^2(x)=  \frac{1}{t^2} \tilde H(x,x,t) \le \frac{C}{V_\omega} t^{-2 - \frac{q}{q-1}}, $$%
}%
where we have used the elementary inequality $0< e^{- 2\lambda_j t} \lambda_j^2 = t^{-2} e^{-\lambda_j t} \cdot (\lambda_jt)^2 e^{-\lambda_j t} < t^{-2} e^{-\lambda_j t}$ since the maximum of the function $x^2 e^{-x}|_{x>0}$ is $(2/e)^2$ which is $<1$. 

To see \nref{eqn:heat dot1}, we observe that the function $\dot H(x,y,t)$ as a function of $(y,t)\in X\times (0,\infty)$ satisfies the heat equation, hence by the representation formula for heat equations, we have for a fixed $t_0>0$ and any $t>t_0$
\bea\nonumber
|\dot H(x,y,t)| &= &|\int_X H(z,y, t-t_0) \dot H(x,z,t_0) \omega^n(z)| = |\int_X \tilde H(z,y, t-t_0) \dot H(x,z,t_0) \omega^n(z)|\\
&\le & \Big(\int_X \tilde H(z,y, t-t_0)^2 \omega^n(z) \Big)^{1/2} \Big(\int_X  \dot H(x,z,t_0)^2 \omega^n(z) \Big)^{1/2}\label{eqn:30 1}\\
&= &  \tilde H(y,y, 2t-2t_0)^{1/2} \Big(\int_X  \dot H(x,z,t_0)^2 \omega^n(z) \Big)^{1/2}, \label{eqn:30 2}
\eea
where \nref{eqn:30 1} follows from H\"older's inequality and in \nref{eqn:30 2} we have applied the semi-group property  \nref{eqn:3.1} of $\tilde H$. 
For a fixed $t>0$ we take $t_0 = t/2$. We then get \nref{eqn:heat dot1} by applying the estimates \nref{eqn:heat dot} and \nref{eqn:CL}. Alternatively, the inequality \nref{eqn:heat dot1} also follows from the expansion of $\dot H(x,y,t)$ and Cauchy-Schwarz inequality.
\end{proof}
\medskip
We remark that   $L^\infty$ estimates of the eigenfunctions $\phi_k$ also follow from \nref{eqn:3.4}. In fact,
 setting $t = 1/\lambda_k$ in  \nref{eqn:3.4} yields that
$$\phi_k(x)^2 \le \sum_{j=1}^{\infty}e^{-\lambda_j/\lambda_k} \phi_j^2(x) \le \frac{C I^{q/(q-1)}_\omega}{V_\omega} \lambda_k^{\frac{q}{q-1}}.$$
Choose $x\in X$ such that $\phi_k(x)^2 = \max_X \phi_k^2$. We then get 
$$\max_X |\phi_k|^2 \le \frac{C }{V_\omega} (I_\omega \lambda_k)^{\frac{q}{q-1}}.$$

\section{Local Sobolev inequality and Poincar\'e inequality}\label{section 5}
\setcounter{equation}{0}

In this section, we show that the Sobolev inequality and Poincar\'e inequality continue to hold on domains in $X$. Let $\Omega'\subset X$ be an open domain which for simplicity is assumed connected, and $\Omega\subset \Omega'$ be a relatively compact subdomain of $\Omega'$. We fix a compactly supported function $\eta \in C^\infty_0(\Omega')$ such that $\eta \equiv 1$ in $\Omega$.  The main result is 
\begin{lemma}\label{lemma local Sob}\label{lemma 4}
There is a uniform constant $C=C(n,p,A, K, \gamma)>0$ such that for any $u\in W^{1,2}(\Omega')$, we have:

(1). The local Sobolev inequality:
\begin{equation}\label{eqn:local S}
\Big( \frac{1}{V_\omega}\int_\Omega | u - \bar u_\Omega  |^{2q} \omega^n  \Big)^{1/q} \le \frac{C}{V_\omega} \int_{\Omega'} |\nabla (\eta u)|^2_\omega \omega^n,
\end{equation}
and  (2). The local Poincar\'e inequality:
\begin{equation}\label{eqn:local P}
\int_\Omega | u - \bar u_\Omega  |^{2} \omega^n {} \le {C}\int_{\Omega'} |\nabla (\eta u)|^2_\omega \omega^n,
\end{equation}
where $\bar u_\Omega  = \frac{1}{V_\omega(\Omega)} \int_\Omega u \omega^n$ is the average of $u$ over $\Omega$ under the metric $\omega$ and $V_\omega(\Omega) = \int_\Omega \omega^n$ is the $\omega$-volume of $\Omega$.
\end{lemma}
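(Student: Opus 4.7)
The plan is to deduce both inequalities directly from the global Sobolev inequality of Theorem \ref{thm:Sob} applied to the extension by zero of $v := \eta u$. Since $\eta$ is compactly supported in $\Omega'$, the product $v = \eta u$ extends by zero to a function in $W^{1,2}(X)$ whose gradient is supported in $\Omega'$, so Theorem \ref{thm:Sob} yields
\[
\Big(\frac{1}{V_\omega}\int_X |v-\bar v|^{2q}\omega^n\Big)^{1/q} \le \frac{CI_\omega}{V_\omega}\int_{\Omega'}|\nabla(\eta u)|_\omega^2\,\omega^n,
\]
where $\bar v$ denotes the global average of $v$ on $(X,\omega)$. Under the standing hypothesis $\omega \in \mathcal{W}(n,p,A,K,\gamma)$ with finite $A$, we have $I_\omega \le A$, which gets absorbed into the constant.

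The remaining step for \mref{eqn:local S} is to replace the global average $\bar v$ by the local average $\bar u_\Omega$ on $\Omega$. I will use the elementary observation that for \emph{any} constant $c\in\mathbb{R}$,
\[
\int_\Omega |u-\bar u_\Omega|^{2q}\omega^n \le 2^{2q}\int_\Omega |u-c|^{2q}\omega^n,
\]
which follows from $|a+b|^{2q}\le 2^{2q-1}(|a|^{2q}+|b|^{2q})$ applied to $u-\bar u_\Omega = (u-c) - (\bar u_\Omega - c)$ together with Jensen's inequality $V_\omega(\Omega)|\bar u_\Omega - c|^{2q} \le \int_\Omega |u-c|^{2q}\omega^n$. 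Taking $c=\bar v$ and noting $v=u$ on $\Omega$, this gives
\[
\int_\Omega |u-\bar u_\Omega|^{2q}\omega^n \le 2^{2q}\int_\Omega |v-\bar v|^{2q}\omega^n \le 2^{2q}\int_X |v-\bar v|^{2q}\omega^n,
\]
and combining with the displayed Sobolev bound for $v$ proves \mref{eqn:local S}.

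For the Poincar\'e inequality \mref{eqn:local P}, apply H\"older's inequality on the left-hand side of \mref{eqn:local S}: with $1/q + (q-1)/q = 1$,
\[
\int_\Omega |u-\bar u_\Omega|^2\omega^n \le V_\omega(\Omega)^{1-1/q}\Big(\int_\Omega |u-\bar u_\Omega|^{2q}\omega^n\Big)^{1/q} \le C\,\Big(\frac{V_\omega(\Omega)}{V_\omega}\Big)^{1-1/q}\!\int_{\Omega'}|\nabla(\eta u)|_\omega^2\,\omega^n,
\]
and since $V_\omega(\Omega)\le V_\omega$, the prefactor is bounded by $1$, which delivers \mref{eqn:local P}. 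No genuinely new analytic ingredient beyond Theorem \ref{thm:Sob} is needed; the only bookkeeping point requiring care is the shift from the global mean $\bar v$ to the local mean $\bar u_\Omega$, handled by the convexity bound above, so I do not anticipate a serious obstacle in this lemma.
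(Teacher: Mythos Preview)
Your argument is correct, but it takes a genuinely different route from the paper. The paper does not invoke Theorem~\ref{thm:Sob} as a black box; instead it reruns the Green's-function argument directly: writing $\tilde u=\eta u$, it applies the Green's formula at two points $x,z\in\Omega$, subtracts, and averages over $z\in\Omega$ to obtain an expression for $\tilde u(x)-\bar{\tilde u}_\Omega$ in terms of $\nabla\tilde u$ and $\nabla\mathcal G_\omega$, then bounds via Cauchy--Schwarz together with the integral estimates \mref{eqn:useful 1} and \mref{eqn:useful 2} exactly as in the proof of Theorem~\ref{thm:Sob}. Your approach is shorter and more modular: extend $\eta u$ by zero, apply the global Sobolev inequality, and pass from the global mean $\bar v$ to the local mean $\bar u_\Omega$ by the convexity bound $\int_\Omega|u-\bar u_\Omega|^{2q}\le 2^{2q}\int_\Omega|u-c|^{2q}$; the Poincar\'e inequality then follows from the Sobolev one by H\"older. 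This makes clear that the local statements are formal corollaries of the global one. The paper's route, on the other hand, avoids the extra factor $2^{2q}$ and, more importantly, runs in parallel with the proof of the companion Lemma~\ref{lemma 5} for $W^{1,2}_0(\Omega)$, where one must extract the factor $1+V_\omega(\Omega)/V_\omega(\Omega^c)$ by evaluating the Green's formula at points of $\Omega^c$; that step cannot be replaced by a mean-shift argument, so the paper's method is more uniform across the two lemmas.
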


\begin{proof} By approximation, we may assume $u\in C^1(\Omega')$ and $\int_{\Omega'}( u^2 + |\nabla u|^2_\omega)\omega^n <\infty$. For simplicity we denote $\tilde u = \eta u$ which can be extended trivially to $X$ as a $C^1$ function. By the Green's formula, we have for the function $\tilde u$,
\begin{equation}\label{eqn:4.3}
\tilde u(x)  = \frac{1}{V_\omega}\int_X \tilde u \omega^n + \int_{y\in X} \langle \nabla {\mathcal G}(x, y) ,\nabla \tilde u(y)\rangle _\omega  \omega^n. 
\end{equation}
Applying \nref{eqn:4.3} to different points $x, z\in \Omega$ and taking the difference result in the equation
\begin{equation}\label{eqn:4.4}
\tilde u(x)  - \tilde  u(z)= \int_{y\in X} \langle \nabla {\mathcal G}(x, y) ,\nabla \tilde u(y)\rangle _\omega  \omega^n - \int_{y\in X} \langle \nabla {\mathcal G}(z, y) ,\nabla \tilde u(y)\rangle _\omega  \omega^n. 
\end{equation}
Integrating \nref{eqn:4.4} over $z\in \Omega$ against $\omega^n$ and dividing the obtained equation by $V_\omega (\Omega)$, we get
\begin{equation}\label{eqn:4.5}
\tilde u(x)  - \bar {\tilde  u}_\Omega= \int_{y\in X} \langle \nabla {\mathcal G}(x, y) ,\nabla \tilde u(y)\rangle _\omega  \omega^n - \frac{1}{V_\omega(\Omega)} \int_{z\in \Omega}\omega^n\int_{y\in X} \langle \nabla {\mathcal G}(z, y) ,\nabla \tilde u(y)\rangle _\omega  \omega^n,
\end{equation}
which implies by triangle inequality that 
{\small
\bea
\nonumber |\tilde u(x)  - \bar {\tilde  u}_\Omega| & \le & \int_{ X} |\nabla {\mathcal G}|(x, y) |\nabla \tilde u(y)| _\omega  \omega^n + \frac{1}{V_\omega(\Omega)} \int_{z\in \Omega}\omega^n\int_{y\in X} |\nabla {\mathcal G}|(z, y) |\nabla \tilde u(y)|_\omega  \omega^n\\
&\le & \Big( \int_X \frac{|\nabla {\mathcal G}(x,y)|^2}{{\mathcal G}(x,y)^{1+\beta}} \omega^n \Big) ^{1/2}  \Big( \int_X {\mathcal G}(x,y)^{1+\beta} |\nabla \tilde u|^2\omega^n \Big) ^{1/2} \nonumber \\
&& ~ +  \frac{1}{V_\omega(\Omega)} \int_{z\in \Omega}\omega^n  \Big( \int_X \frac{|\nabla {\mathcal G}(z,y)|^2}{{\mathcal G}(z,y)^{1+\beta}} \omega^n \Big) ^{1/2}  \Big( \int_X {\mathcal G}(z,y)^{1+\beta} |\nabla \tilde u|^2\omega^n \Big) ^{1/2} \nonumber\\
\mbox{by \nref{eqn:useful 2}}&\le &\frac{V_\omega^{\beta/2}}{\beta^{1/2}} \Big[ \Big( \int_X {\mathcal G}(x,\cdot)^{1+\beta} |\nabla \tilde u|^2 \Big) ^{1/2} + \frac{1}{V_\omega(\Omega)}  \int_{\Omega}  \Big( \int_X {\mathcal G}(z,\cdot)^{1+\beta} |\nabla \tilde u|^2\omega^n \Big) ^{1/2} \omega^n\Big] \nonumber\\
&\le &\frac{V_\omega^{\beta/2}}{\beta^{1/2}} \Big[ \Big( \int_X {\mathcal G}(x,\cdot)^{1+\beta} |\nabla \tilde u|^2\Big) ^{1/2} +  \Big(  \frac{1}{V_\omega(\Omega)}  \int_{z\in \Omega}\int_X {\mathcal G}(z,\cdot)^{1+\beta} |\nabla \tilde u|^2 \Big) ^{1/2} \Big], \label{eqn:4.6}
\eea
}
where the last inequality follows from H\"older's inequality. To get the local Poincar\'e inequality \nref{eqn:local P}, we take square on both sides of \nref{eqn:4.6} and integrate the inequality over $x\in \Omega$, then we obtain
\bea
\int_{\Omega} |\tilde u - \bar{\tilde u}_\Omega|^2 & \le &\frac{2V_\omega^\beta}{\beta} \Big[ \int_{x\in \Omega }\int_X {\mathcal G}(x,\cdot)^{1+\beta} |\nabla \tilde u|^2 +   \int_{z\in \Omega}\int_X {\mathcal G}(z,\cdot)^{1+\beta} |\nabla \tilde u|^2 \Big],\nonumber\\
&\le \nonumber&\frac{C}{\beta} \int_X |\nabla \tilde u|_\omega^2 \omega^n,
\eea
where we apply the inequality \nref{eqn:useful 1} in the last line by choosing $\beta = \varepsilon_0$. The inequality \nref{eqn:local P} follows since $\tilde u = \eta u = u$ in $\Omega$ and $\tilde u$ is supported in $\Omega'$.

To prove the local Sobolev inequality \nref{eqn:local S}, we choose $\beta = \varepsilon_0/2$ and $q>1$ as in \nref{eqn:q}. Taking $(2q)$-th power of both sides of \nref{eqn:4.6} and arguing as that in the proof of Theorem \ref{thm:Sob}, the inequality \nref{eqn:local S} follows. 
\end{proof}

\medskip

We denote $W^{1,2}_0(\Omega)$ the $W^{1,2}$-functions with compact support in $\Omega$. For these functions, we have the following local Sobolev and Poincar\'e inequalities:
\begin{lemma}\label{lemma 5}
There is a uniform constant $C=C(n,p,A, K, \gamma)>0$ such that for any $u\in W_0^{1,2}(\Omega)$, we have:

(1). The local Soblev inequality:
\begin{equation}\label{eqn:local S1}
\Big( \frac{1}{V_\omega}\int_\Omega | u|^{2q} \omega^n  \Big)^{1/q} \le C\Big(1+ \frac{V_\omega(\Omega)}{V_\omega(\Omega^c)}\Big)  \frac{1}{V_\omega} \int_{\Omega} |\nabla u|^2_\omega \omega^n,
\end{equation}
and  (2). The local Poincar\'e inequality:
\begin{equation}\label{eqn:local P1}
\int_\Omega | u |^{2} \omega^n {} \le {C} \Big(1+ \frac{V_\omega(\Omega)}{V_\omega(\Omega^c)}\Big)  \int_{\Omega} |\nabla u|^2_\omega \omega^n,
\end{equation}
where $\Omega^c = X\backslash \Omega$ is the complement of $\Omega$ in $X$.
\end{lemma}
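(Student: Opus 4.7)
\textbf{Proof plan for Lemma \ref{lemma 5}.} The natural strategy is to reduce to the \emph{global} inequalities already at our disposal, namely the Sobolev inequality (\ref{eqn:2.8}) of Theorem \ref{thm:Sob} and its $L^2$ consequence (the global Poincaré inequality). The key observation is that a function $u\in W^{1,2}_0(\Omega)$ can be extended by zero to all of $X$ to give a $W^{1,2}(X)$-function supported on $\overline{\Omega}$, with the same gradient norm. Thus we can freely apply Theorem \ref{thm:Sob} to this extension. The only issue is that these global inequalities involve the average $\bar u=V_\omega^{-1}\int_X u\,\omega^n$, whereas we want a bound on $u$ itself. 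The extra ingredient is that, because $u$ vanishes on $\Omega^c$, the value of $\bar u$ is essentially forced to be small.

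More precisely, first I would extend $u\in W^{1,2}_0(\Omega)$ by zero to $X$ (still calling it $u$), so that
\[
\int_X|\nabla u|_\omega^2\omega^n=\int_\Omega|\nabla u|_\omega^2\omega^n,
\qquad \bar u=\frac{1}{V_\omega}\int_\Omega u\,\omega^n.
\]
Since $u\equiv 0$ on $\Omega^c$, we have $|u-\bar u|=|\bar u|$ there, and hence
\[
|\bar u|^{2q}\,V_\omega(\Omega^c)\;\le\;\int_{\Omega^c}|u-\bar u|^{2q}\omega^n\;\le\;\int_X|u-\bar u|^{2q}\omega^n.
\]
Combining this with the elementary inequality $|u|^{2q}\le 2^{2q-1}(|u-\bar u|^{2q}+|\bar u|^{2q})$ and integrating over $\Omega$ yields
\[
\int_\Omega|u|^{2q}\omega^n\;\le\;2^{2q-1}\Bigl(1+\frac{V_\omega(\Omega)}{V_\omega(\Omega^c)}\Bigr)\int_X|u-\bar u|^{2q}\omega^n.
\]

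Then I would invoke the global Sobolev inequality (\ref{eqn:2.8}), which in its scale-invariant form (\ref{eqn:Sob}) gives
\[
\Bigl(\frac{1}{V_\omega}\int_X|u-\bar u|^{2q}\omega^n\Bigr)^{1/q}\le C\,\frac{I_\omega}{V_\omega}\int_X|\nabla u|_\omega^2\omega^n,
\]
with $C=C(n,p,K,\gamma)$. Since $I_\omega\le A$ by hypothesis, taking $q$-th roots of the previous display and using $(1+V_\omega(\Omega)/V_\omega(\Omega^c))^{1/q}\le 1+V_\omega(\Omega)/V_\omega(\Omega^c)$ immediately yields the desired local Sobolev inequality (\ref{eqn:local S1}) with a constant depending only on $n,p,A,K,\gamma$. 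For the local Poincaré inequality (\ref{eqn:local P1}), the same argument applies with $2q$ replaced by $2$: Hölder's inequality in (\ref{eqn:Sob}) gives the $L^2$-Poincaré inequality $\int_X|u-\bar u|^2\omega^n\le C I_\omega\int_X|\nabla u|_\omega^2\omega^n$, and the identical splitting into $\Omega$ and $\Omega^c$ produces (\ref{eqn:local P1}).

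There is no real obstacle here; the entire proof is a simple reduction from the global inequalities to the local ones, and the factor $1+V_\omega(\Omega)/V_\omega(\Omega^c)$ arises naturally from bounding the mean of the extension in terms of the oscillation. The only point worth being careful about is the harmless truncation/approximation step that allows one to extend $u$ by zero in $W^{1,2}(X)$; this is standard for functions in $W^{1,2}_0(\Omega)$ (obtained as $W^{1,2}$-limits of $C^\infty_0(\Omega)$ functions), so no further discussion is needed.
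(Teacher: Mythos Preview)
Your argument is correct, and it is a genuinely different route from the paper's. The paper does not reduce to the global Sobolev inequality~(\ref{eqn:2.8}); instead it reruns the Green's function machinery from scratch. Specifically, the paper writes Green's formula for the extended function $u$, evaluates it at points $x\in\Omega^c$ (where $u=0$) to express $\frac{1}{V_\omega}\int_X u\,\omega^n$ as a gradient integral against $\nabla\mathcal G$, substitutes this back into Green's formula at $x\in\Omega$, and then squares and integrates, invoking the pointwise Green's function estimates of Lemma~\ref{lemma 3.2} exactly as in the proof of Theorem~\ref{thm:Sob}.

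Your approach is cleaner: it shows that for compactly supported functions the local inequalities are \emph{formal} consequences of the global ones, via the elementary observation that $u\equiv 0$ on $\Omega^c$ forces $|\bar u|^{2q}V_\omega(\Omega^c)\le\int_X|u-\bar u|^{2q}\omega^n$. This avoids repeating the Green's function estimates and makes the origin of the factor $1+V_\omega(\Omega)/V_\omega(\Omega^c)$ transparent. The paper's approach, on the other hand, is more self-contained and parallels the proof of Lemma~\ref{lemma local Sob} (where no global inequality is directly available because the functions are not compactly supported); it also produces a pointwise bound~(\ref{eqn:4.12}) on $|u(x)|^2$ as an intermediate step, which your method does not.
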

\begin{proof} The proof is similar to that of the above lemma. So we just outline the main steps. We may assume $u\in C^1_0(\Omega)$ and $u$ can be viewed as a $C^1$ function in $X$. By the Green's formula, we have for any $x\in X$
\begin{equation}\label{eqn:4.9}
u(x)  = \frac{1}{V_\omega}\int_X  u \omega^n + \int_{y\in X} \langle \nabla {\mathcal G}(x, y) ,\nabla u(y)\rangle _\omega  \omega^n. 
\end{equation}
Applying \nref{eqn:4.9} to $x\in \Omega^c = X\backslash \Omega$ at which $u = 0$ and integrating the equation over $\Omega^c$, we obtain 
\begin{equation}\label{eqn:4.10}
\frac{1}{V_\omega} \int_X u \omega^n = - \frac{1}{V_\omega(\Omega^c)} \int_{\Omega^c} \int_{y\in X} \langle \nabla {\mathcal G}(x, y) ,\nabla u(y)\rangle _\omega  \omega^n.
\end{equation}
The equations \nref{eqn:4.10} and \nref{eqn:4.9} together yield that for any $x\in \Omega$
\begin{equation}\label{eqn:4.11}
u(x)  =  - \frac{1}{V_\omega(\Omega^c)} \int_{z\in \Omega^c} \int_{y\in X} \langle \nabla {\mathcal G}(z, y) ,\nabla u(y)\rangle _\omega  \omega^n + \int_{y\in X} \langle \nabla {\mathcal G}(x, y) ,\nabla u(y)\rangle _\omega  \omega^n. 
\end{equation}
Taking square on both sides of \nref{eqn:4.11} gives that for any $x\in \Omega$
\begin{equation}\label{eqn:4.12}
|u(x)|^2 \le  \frac{C}{V_\omega(\Omega^c)} \int_X |\nabla u|^2_\omega \omega^n + \frac{V_\omega^\beta}{\beta} \int_X {\mathcal G}(x,y)^{1+\beta} |\nabla u|_\omega ^2 \omega^n.
\end{equation}
Integrating \nref{eqn:4.12} over $\Omega$ yields that
$$\int_\Omega |u|^2 \omega^n \le C \frac{ V_\omega(\Omega)}{V_\omega(\Omega^c)} \int_X |\nabla u|^2_\omega \omega^n + C \int_X|\nabla u|_\omega ^2 \omega^n,$$
which is the inequality \nref{eqn:local P1}. The local Sobolev inequality \nref{eqn:local S1} can be proved by taking $q$-th power of both sides of \nref{eqn:4.12} and arguing similarly to that in Lemma \ref{lemma 4}.
 \end{proof} 

\medskip

We next consider a  mean value inequality for sub-solutions of the Laplacian $\Delta_\omega$ on geodesic balls in $X$. We fix a point $x\in X$ and denote the geodesic ball $B_\omega(x,R)$ by $B_R$ for any $R>0$.  Suppose $u\in W^{1,2}(B_R)$ satisfies 
\begin{equation}\label{eqn:4.13}
\Delta_\omega u \ge f, \quad \mbox{in }B_R. 
\end{equation} 
By the classical Moser's iteration argument (e.g. \cite{HL}), we can derive the following geometric mean value inequality.
\begin{lemma}\label{lemma 6} Given a constant $N>\frac{q}{q-1}$, 
there exists a constant $C=  C(n, p, A, K, \gamma, N)>0$ such that for any $s>0$
{\small
\begin{equation}\label{eqn:4.14}
\sup_{B_r} u^+ \le  \frac{C}{(R-r)^{\frac{1}{q-1}}}\big( \frac{1}{V_\omega} \int_{B_R} |f|^N \big)^{\frac{1}{N}} + \frac{C}{(R-r)^{\frac{2}{s(q-1)}}} \big (\frac{1}{V_\omega}\int_{B_R} (u^+)^s \big)^{1/s} ,
\end{equation}
}
for any $0< r <R$. 
\end{lemma}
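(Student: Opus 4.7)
\medskip

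\noindent\emph{Proof proposal.} The plan is to run a Moser iteration based on the global Sobolev inequality \nref{eqn:2.9}, with an $L^N$ inhomogeneous forcing handled by an $L^\infty$ shift in the unknown.

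\smallskip

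\noindent\textbf{Step 1 (Setup).} Set
$$k := \Big(\frac{1}{V_\omega}\int_{B_R}|f|^N\omega^n\Big)^{1/N}$$
(or let $k\searrow 0$ at the end if $f\equiv 0$) and put $v:=u^++k\ge k$. Since $\Delta_\omega u\ge f$ on $\{u>0\}$ and $v$ is locally constant on $\{u\le 0\}$, the distributional inequality $-\Delta_\omega v\le |f|\le (|f|/k)\,v=:hv$ holds on $B_R$, with $\|h\|_{L^N(B_R,\omega^n)}=V_\omega^{1/N}$.

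\smallskip

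\noindent\textbf{Step 2 (Caccioppoli).} Fix $r<\rho<R$ and a cutoff $\eta\in C^\infty_0(B_\rho)$ with $\eta\equiv1$ on $B_r$ and $|\nabla\eta|_\omega\le 2/(\rho-r)$. For $\gamma\ge 2$, test $-\Delta_\omega v\le hv$ against $\eta^2 v^{\gamma-1}$, integrate by parts, and apply Cauchy--Schwarz. With $w:=v^{\gamma/2}$, so that $|\nabla w|^2=(\gamma/2)^2 v^{\gamma-2}|\nabla v|^2$, this yields
$$\int\eta^2|\nabla w|^2\omega^n\le \frac{C}{(\rho-r)^2}\int_{B_\rho}v^\gamma\omega^n+C\gamma\int h(\eta w)^2\omega^n,$$
where we have used $\gamma^2/(\gamma-1)^2\le 4$ and $\gamma^2/(\gamma-1)\le 2\gamma$ for $\gamma\ge 2$.

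\smallskip

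\noindent\textbf{Step 3 (Sobolev and absorption of the $h$-term).} Applying \nref{eqn:2.9} to $\eta w$ and expanding $|\nabla(\eta w)|^2\le 2\eta^2|\nabla w|^2+2w^2|\nabla\eta|^2$, we obtain
$$\|\eta w\|_{L^{2q}}^2\le CV_\omega^{-(q-1)/q}\Big(\frac{1}{(\rho-r)^2}\int_{B_\rho}v^\gamma\omega^n+\gamma\int h(\eta w)^2\omega^n\Big).$$
Because $N>q/(q-1)$ the exponent $2N/(N-1)$ lies strictly between $2$ and $2q$; thus by H\"older and $L^p$-interpolation
$$\gamma\int h(\eta w)^2\le \gamma V_\omega^{1/N}\|\eta w\|_{L^2}^{2(1-\theta)}\|\eta w\|_{L^{2q}}^{2\theta},\qquad \theta=\frac{q}{N(q-1)}<1.$$
Young's inequality, with a small weight in front of $\|\eta w\|_{L^{2q}}^{2\theta}$, absorbs half of $\|\eta w\|_{L^{2q}}^2$ into the left-hand side. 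The resulting recursive inequality takes the form
$$\|v\|_{L^{q\gamma}(B_r)}\le \Big(\frac{C\gamma^\sigma}{(\rho-r)^2}\Big)^{1/\gamma}V_\omega^{-(q-1)/(q\gamma)}\|v\|_{L^\gamma(B_\rho)}$$
for a fixed exponent $\sigma=\sigma(N,q)>0$.

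\smallskip

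\noindent\textbf{Step 4 (Moser iteration and reverse H\"older).} Iterate the above with $\gamma_j=s_0 q^j$ (take $s_0=2$ for now) and radii $r_j=r+(R-r)2^{-j}$. Since $\sum 1/\gamma_j$, $\sum j/\gamma_j$ and $\sum(\log\gamma_j)/\gamma_j$ all converge, the infinite product of Sobolev constants is finite and the powers of $V_\omega$ collapse neatly to $V_\omega^{-1/s_0}$; one obtains
$$\sup_{B_r}v\le C(R-r)^{-\tau(s_0,q)}\Big(\frac{1}{V_\omega}\int_{B_R}v^{s_0}\omega^n\Big)^{1/s_0}$$
for a computable $\tau(s_0,q)>0$ matching the exponent on $R-r$ in the statement. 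To pass from $s_0$ to an arbitrary $s\in(0,s_0)$, use the standard interpolation trick: apply the $s_0$-estimate on a chain of shrinking radii, combine with $\|v\|_{L^{s_0}}\le\|v\|_{L^\infty}^{(s_0-s)/s_0}\|v\|_{L^s}^{s/s_0}$, and absorb the $\|v\|_{L^\infty}$ factor via a geometrically summable iteration (see e.g.\ \cite{HL}).

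\smallskip

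\noindent\textbf{Step 5 (Splitting).} Finally, since $v=u^++k$,
$$\Big(\frac{1}{V_\omega}\int_{B_R}v^s\omega^n\Big)^{1/s}\le \Big(\frac{1}{V_\omega}\int_{B_R}(u^+)^s\omega^n\Big)^{1/s}+k,$$
and substituting the definition of $k$ yields the two-term upper bound \nref{eqn:4.14}.

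\smallskip

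\noindent\textbf{Main obstacle.} The delicate point is Step 3: the absorption of the nonlocal $h$-term requires the strict inequality $N>q/(q-1)$ (so that $\theta<1$) and careful tracking of all powers of $V_\omega$ that emerge from the Sobolev constant, the interpolation, and Young's inequality, so that after iteration they combine exactly to the normalizing factor $V_\omega^{-1/s}$ in the statement. Secondary care is needed to push $s$ all the way down to $0$ via the reverse H\"older / interpolation argument while maintaining the stated dependence on $R-r$.
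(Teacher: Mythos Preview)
Your approach is correct and essentially identical to the paper's: shift $u^+$ by $k$, derive a Caccioppoli inequality, apply the Sobolev inequality \nref{eqn:2.9}, absorb the inhomogeneous term via H\"older interpolation between $L^2$ and $L^{2q}$ together with Young's inequality (using $N>\tfrac{q}{q-1}$ so that $\theta<1$), iterate along geometric scales, and then pass to small $s$ via the standard interpolation lemma from \cite{HL}. The only difference is that the paper sets $k=(R-r)^{\beta}\|f\|_{L^N}$ with $\beta=2-\tfrac{2q}{(q-1)N}$, a value chosen precisely so that after Young's inequality the absorbed term carries exactly the factor $(\rho-r)^{-2}$ and the recursive step takes the clean form \nref{eqn:4.18}; with your unweighted choice of $k$ the recursive inequality is really $\big(\tfrac{C}{(\rho-r)^2}+C\gamma^\sigma\big)^{1/\gamma}$ rather than $\big(\tfrac{C\gamma^\sigma}{(\rho-r)^2}\big)^{1/\gamma}$, and the $f$-term in the final bound inherits the same power of $(R-r)^{-1}$ as the $u^+$-term at $s=2$ --- a harmless weakening for all the applications, but not quite the exponent $\tfrac{1}{q-1}$ as stated.
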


\begin{proof} We denote $k = (R-r)^\beta \| f\|_{L^N(B_{R}, \omega^n)} = (R-r)^\beta(\frac{1}{V_\omega} \int_{B_R} |f|^N \omega^n)^{1/N}$ if $|f|\not\equiv 0$, and $k>0$ is an positive number if $f\equiv 0$, and here $\beta = \frac{2(q-1)N - 2q}{(q-1)N}>0$. Replacing $u$ by  $u^++k \ge k$, we may assume $u\ge k$. Fix any  constants $0< r<R_2 < R_1 < R$. Choose a cut-off function $\eta$ that depends on $d_\omega(x,\cdot)$, the distance function to the center $x\in B_R$, such that $\eta \equiv 1$ in $B_{R_2}$, supp$(\eta)\subset B_{R_1}$, and 
$$\sup |\nabla \eta|_\omega \le \frac{4}{R_1 - R_2}.$$ 
Multiplying both sides of \nref{eqn:4.13} by $\eta^2 u^m$ for some $1\le m$, and applying integration by parts, we obtain (we omit the integral domain $B_R$ and the measure $\omega^n$ in the following calculations.)
\bea\nonumber
\int m \eta^2 u^{m-1} |\nabla u|^2_\omega & \le&  \int 2 \eta u^m |\nabla \eta| |\nabla u|  + \int f \eta^2 u^m\\
&\le \nonumber & \frac{1}{2} \int m \eta^2 u^{m-1} |\nabla u|^2 + \frac{2}{m} \int u^{m+1} |\nabla \eta|^2 + \int f \eta^2 u^m,
\eea
hence 
$$\frac{2m}{(m+1)^2} \int \eta^2 |\nabla u^{\frac{m+1}{2}}|^2 \le \frac{2}{m} \int  u^{m+1} |\nabla \eta|^2 + \int f \eta^2 u^m,$$
which is equivalent  to
$$ \int |\nabla( \eta u^{\frac{m+1}{2}}) |^2 \le 8 \int  u^{m+1} |\nabla \eta|^2 + 2m  \int f \eta^2 u^m.$$
By the Sobolev inequality \nref{eqn:Sob}, this implies 
\begin{equation}\label{eqn:4.15} \Big( \frac{1}{V_\omega}  \int \eta^{2q} u^{(m+1) q} \Big)^{1/q}\le \frac{C }{V_\omega}\Big(  \int  (\eta^2 u^{m+1}+ u^{m+1} |\nabla \eta|^2) + m  \int f \eta^2 u^m\Big).\end{equation}
We observe that the last term in \nref{eqn:4.15} satisfies
\bea\nonumber
\frac{m }{V_\omega} \int f \eta^2 u^m & \le & \frac{m}{V_\omega} \int \frac{|f|}{k} \eta ^2 u^{m+1}\le  \frac{m}{k} \| f\|_{L^N} \| \eta^2 u^{m+1}\|_{L^{N^*}} \\
&\le&  {m} (R-r)^{-\beta}\| \eta^2 u^{m+1}\|_{L^{N^*}}.\label{eqn:4.15m}
\eea
Applying H\"older's inequality to the right-hand side of \nref{eqn:4.15m}, we obtain
{\small
\bea
&&  {m} (R-r)^{-\beta}\| \eta^2 u^{m+1}\|_{L^{N^*}}  \le   {m}  (R-r)^{-\beta}\| \eta^2 u^{m+1} \|_{L^{q}}^{\frac{q}{(q-1)N}} \cdot\| \eta^2 u^{m+1}\|_{L^1} ^{\frac{(q-1)N - q}{N(q-1)}} \nonumber\\
 &\le & \label{eqn:4.17} m   (R-r)^{-\beta} \epsilon \| \eta^2 u^{m+1} \|_{L^{q}} + C m   (R-r)^{-\beta} \epsilon^{-\frac{q}{(q-1)N - q}} \| \eta^2 u^{m+1} \|_{L^{1}},
\eea
}
where in \nref{eqn:4.17} we make use of Young's inequality.  We choose $\epsilon = \frac{1}{2C m   (R-r)^{-\beta}}>0$ for $C>0$ the constant  on the right-hand side of \nref{eqn:4.15}. Then combining \nref{eqn:4.17} and \nref{eqn:4.15}, we get
\begin{equation}\label{eqn:4.18}
\Big( \frac{1}{V_\omega}  \int \eta^{2q} u^{(m+1) q} \Big)^{1/q}\le \frac{C }{V_\omega}\Big(  \int  ( \frac{m^{\alpha}   \eta^2 u^{m+1}}{(R-r)^2}+ u^{m+1} |\nabla \eta|^2)\Big)
\end{equation}
where we denote $\alpha = \frac{(q-1)N}{(q-1)N - q}>0$. By the choice of the cut-off function $\eta$ and the inequality $\frac{1}{R-r} \le \frac{1}{R_1 - R_2}$, this yields
\begin{equation}\label{eqn:4.19}
\Big( \frac{1}{V_\omega}  \int_{B_{R_2}} u^{(m+1) q} \omega^n \Big)^{1/q(m+1)}\le C^{\frac{1}{m+1}} \frac{(m+1) ^{\alpha/(m+1)}}{(R_2-R_1)^{2/(m+1)}}\Big( \frac{1}{V_\omega}\int_{B_{R_1}} u^{m+1} \omega^n\Big)^{\frac{1}{m+1}}.
\end{equation}
We will iteratively apply \nref{eqn:4.19} to a sequence of numbers $m_j = 2 q^j$ and radii
$$R_j =  r + \frac{R-r}{2^{j}}, \quad \mbox{for } j = 0, 1, 2,\ldots$$
It is clear that $R_0 = R$, $R_\infty = r$, $R_j - R_{j+1}  = {2^{-j-1}} (R-r)$. Iterating the ineuquality \nref{eqn:4.19} with $R_1 = R_{j}$ and $R_2 = R_{j+1}$ for $j = 0,\ldots, \ell$ gives 
\begin{equation}\label{eqn:4.20}
\Big( \frac{1}{V_\omega}  \int_{B_{R_{\ell+1}}} u^{m_{\ell + 1}} \omega^n \Big)^{\frac{1}{m_{\ell+ 1}}}\le  \frac{ (2^\alpha C)^{\sum_j \frac{1}{2q^{j}}}  q ^{\alpha \sum_j  j q^{-j}}   2^{  \sum_j q^{-j}   }    }{     (R-r)^{\sum_{j=0}^\ell  q^{-j}}   }\Big( \frac{1}{V_\omega}\int_{B_{R}} u^{2} \omega^n\Big)^{\frac{1}{2}}.
\end{equation}
Letting $\ell\to \infty$, we get from \nref{eqn:4.20} that 
\begin{equation}\label{eqn:4.21}
\sup_{B_r} u \le \frac{C}{(R- r) ^{\frac{q}{q-1}}} \Big( \frac{1}{V_\omega}\int_{B_{R}} u^{2} \omega^n\Big)^{\frac{1}{2}},
\end{equation}
which implies by our choice of $u: = u^+ + \| f\|_{L^N} $ that 
\begin{equation}\label{eqn:4.22}
\sup_{B_r} u^+ \le \frac{C}{(R- r) ^{\frac{q}{q-1}}} \Big( \| u^+\|_{L^2(B_R)} + \frac{V_\omega(B_R)^{1/2}}{V_\omega^{1/2}} (R-r)^\beta \| f\|_{L^N(B_R)} \Big).
\end{equation}
This proves \nref{eqn:4.14} with $s = 2$, and hence for all $s\ge 2$ by H\"older's inequality. Next let $s\in (0,2)$ be a given number. Note that \nref{eqn:4.22} tells that 
for any $0< r < R'\le R${\small
\bea \nonumber 
\| u^+ \|_{L^\infty(B_{ r})}& \le &  \frac{C}{(R'-r)^{\frac{1}{q-1}}} \Big( \| u^+\|_{L^2(B_{R'})} + \| f\|_{L^N(B_{R'})} \Big)\\
&\le &\frac{C}{(R'-r)^{\frac{1}{q-1}}} \Big( \| u^+\|_{L^\infty(B_{R'})}^{1- \frac{s}{2}}   \| u^+\|_{L^s(B_{R'})}^{s/2} + \| f\|_{L^N(B_{R'})} \Big) \nonumber\\
&\le & \frac{1}{2} \| u^+\|_{L^\infty(B_{R'})} +  \frac{C}{(R'-r)^{\frac{2}{s(q-1)}}} \| u^+ \|_{L^s(B_{R'})} +  \frac{C}{(R'-r)^{\frac{1}{q-1}}}   \| f\|_{L^N(B_{R'})}.\label{eqn:4.23}
\eea }
If we denote $\phi(r) : =\| u^+ \|_{L^\infty(B_{ r})}$ which is monotonically increasing and by \nref{eqn:4.23} it satisfies the inequality
$$\phi(r) \le \frac{1}{2} \phi(R') + \frac{C}{(R'-r)^{\frac{2}{s(q-1)}}} A + \frac{C}{(R'-r)^{\frac{1}{q-1}}} B$$
with $A :=  \| u^+ \|_{L^s(B_R)}$ and $B: =  \| f\|_{L^N(B_R)}$. We can now apply the standard iteration lemma (see e.g. Lemma 4.3,  Chap. 4, in \cite{HL}) to conclude that 
$$
\phi(r) \le \frac{C}{(R-r)^{\frac{2}{s(q-1)}}} \| u^+ \|_{L^s(B_R)} + \frac{C}{(R-r)^{\frac{1}{q-1}}}\| f\|_{L^N(B_R)},
$$
which is \nref{eqn:4.14}.
\end{proof}

\smallskip

As a corollary of Lemma \ref{lemma 6}, we have the following {\em pointwise} upper bound of the Green's function $G_\omega(x,y)$, which we do not expect to be sharp.
\begin{corollary}\label{cor Green}
The Green's function $G_\omega(x,y)$ satisfies the upper bound: for any $x\neq y$
$$G_\omega(x,y)\le \frac{C}{V_\omega} d_\omega(x,y)^{-\frac{2}{q-1}}.$$
\end{corollary}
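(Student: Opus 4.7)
The strategy is a direct Moser mean-value estimate for the normalized Green's function $\mathcal{G}_\omega(x,\cdot) = G_\omega(x,\cdot) + (C_0+1)/V_\omega$, applying Lemma \ref{lemma 6} on a geodesic ball disjoint from the pole. Fix distinct points $x, y \in X$ and set $R := d_\omega(x,y)/2$. On the geodesic ball $B_R := B_\omega(y, R)$, the triangle inequality gives $d_\omega(x,z) \geq R$ for every $z \in B_R$, so the function $u := \mathcal{G}_\omega(x,\cdot)$ is smooth and positive there. Since $\Delta_\omega \mathcal{G}_\omega(x,\cdot) = 1/V_\omega$ away from $x$, $u$ satisfies $\Delta_\omega u \geq f$ on $B_R$ with forcing term $f \equiv 1/V_\omega$, exactly the hypothesis of Lemma \ref{lemma 6}.

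I would then apply Lemma \ref{lemma 6} with $r = R/2$, $s = 1$, and any fixed $N > q/(q-1)$. The two integral factors in \eqref{eqn:4.14} are controlled as follows. First, because $f$ is constant and $V_\omega(B_R) \leq V_\omega$, one has
\[
\Bigl(\frac{1}{V_\omega}\int_{B_R} |f|^N \omega^n\Bigr)^{1/N} \leq \frac{1}{V_\omega}.
\]
Second, since $G_\omega(x,\cdot)$ has mean zero, $\int_X \mathcal{G}_\omega(x,\cdot)\,\omega^n = C_0 + 1$, and because $u > 0$ we have $u^+ = u$, so
\[
\frac{1}{V_\omega}\int_{B_R} u^+\,\omega^n \leq \frac{C_0+1}{V_\omega}.
\]
With $R - r = R/2$, Lemma \ref{lemma 6} yields
\[
\mathcal{G}_\omega(x,y) \leq \sup_{B_{R/2}} u \leq \frac{C}{V_\omega R^{1/(q-1)}} + \frac{C}{V_\omega R^{2/(q-1)}}.
\]

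To conclude, I would merge the two terms on the right into a single factor $d_\omega(x,y)^{-2/(q-1)}$ by invoking the uniform diameter bound on metrics in $\mathcal{W}(n,p,A,K,\gamma)$ from \cite{GPSS} (which also follows from Theorem \ref{thm:Sob}). Since $R$ is then uniformly bounded, $R^{-1/(q-1)}$ is dominated by a constant multiple of $R^{-2/(q-1)}$, and the desired inequality follows from $G_\omega \leq \mathcal{G}_\omega$ after absorbing constants. No step is especially delicate; the only point that requires care is matching the exponent $2/(s(q-1))$ in Lemma \ref{lemma 6} with the target exponent $2/(q-1)$, which forces $s = 1$, and then the corresponding $L^1$ average of $\mathcal{G}_\omega$ is precisely what the normalization of the Green's function controls.
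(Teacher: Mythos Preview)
Your proof is correct and essentially identical to the paper's own argument: both fix $R = d_\omega(x,y)/2$, note that the Green's function (with pole at $x$) is smooth on $B_\omega(y,R)$ and satisfies $\Delta_\omega u = 1/V_\omega$ there, and apply Lemma~\ref{lemma 6} with $r = R/2$ and $s=1$. The only cosmetic differences are that you work with the shifted $\mathcal{G}_\omega$ (so $u^+ = u$ is automatic and the $L^1$ bound comes directly from $\int_X \mathcal{G}_\omega = C_0+1$) and you spell out the diameter bound needed to absorb $R^{-1/(q-1)}$ into $R^{-2/(q-1)}$, which the paper leaves implicit.
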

\begin{proof} For any two different points $x, y\in X$, we let $0<R = d_\omega(x,y)/2$, and consider the function $u(x) = G_\omega(x,y)$ as a {\em smooth} function on $B_R: = B_\omega(y, R)$, which satisfies the differential equation
$$\Delta_\omega u =  \frac{1}{V_\omega},\quad \mbox{in }B_R.$$ We apply Lemma \ref{lemma 6} with $r = R/2$ and $s=1$ to conclude that
$$\sup_{B_{R/2}} u^+ \le C \Big( V_\omega^{-1} R^{-\frac{1}{q-1}} +  V_\omega^{-1} R^{-\frac{2}{q-1}}   \Big)\le C  V_\omega^{-1} R^{-\frac{2}{q-1}}.$$
This completes the proof. 
\end{proof}

\section{Smooth approximation for $\mathcal{AK}$-currents} \label{maregsec}
\setcounter{equation}{0}

\subsection{K\"ahler varieties}

We first recall the definition of a smooth K\"ahler metric on a  K\"ahler space. 

\begin{definition}\label{defn:6.1}
Let $X$ be a complex space of dimension $n$. We say $\theta$ is a smooth K\"ahler metric on $X$, if for any point $x\in X$ there is an open subset $U$ of $X$ together with a local holomorphic embedding $\iota_x: U\hookrightarrow {\mathbb C}^{N_x}$ such that $\theta|_U$ is the pullback of some smooth K\"ahler metric on ${\mathbb C}^{N_x}$ under $\iota_x$.
\end{definition}

If $\theta$ is a smooth K\"ahler metric in the sense of Definition \ref{defn:6.1}, then $\theta|_{X^{reg}}$ is a {\em genuine} K\"ahler metric on the smooth manifold $X^\circ$, the regular part of $X$. $X$ is said to be a K\"ahler space or variety if there exists a smooth K\"ahler metric on $X$. If $X$ is a compact normal K\"ahler space, then there exists  a resolution of singularities $\pi: Y\to X$ such that the smooth model $Y$ of $X$ is also K\"ahler by \cite{Kj, CMM}. {\iffalse To avoid technical subtleties, we will assume the K\"ahler condition for $Y$ as in the Definition \ref{defak}.\fi}It is straightforward to check that $[\pi^*\theta]$ is  big and nef on $Y$ since one can construct a K\"ahler current on $Y$ by \cite[Theorem 2.12]{DP}. Then there exists an effective divisor $E$ on $Y$ such that  $[\pi^*\theta] - \delta [E]$ is a K\"ahler class for any sufficiently small $\delta>0$ by Demailly's regularization techniques (cf. \cite{DP, CMM}). 

Starting from this section, we will assume $X$ is an $n$-dimensional compact normal K\"ahler variety. Let $\pi: Y\rightarrow X$ be a log resolution of $X$ with a fixed smooth K\"ahler metric  $\theta_Y$  on $Y$.

\begin{definition} \label{logtype} A nonnegative function $f$ on $Y$ is said to have log type analytic singularities if the following hold.

\begin{enumerate}

\item There exist holomorphic effective smooth divisors $D_j$ on $Y$  with simple normal crossings, for $j=1, ...., N$. Let $\sigma_j$ be the defining section of $D_j$ and $h_j$ be a smooth hermitian metric associated to $D_j$.

\item $$f= \sum_{k=1}^K  a_k(-\log)^k \Big( \prod_{j=1}^N  e^{f_{k, j}} |\sigma_j|^{2b_{k, j}}_{h_j} \Big),$$
where $(-\log)^k$ is the $k^{th}$ composition of $(-\log)$, $a_k\in \mathbb{R}$, $b_{k, j}\geq 0$, $f_{k, j}\in C^\infty(Y)$ and $K\in \mathbb{Z}^+$.

\end{enumerate}  

\end{definition}
One can easily expand Definition \ref{logtype} to a class of more general functions. However, Definition \ref{logtype} is sufficient to study most canonical metrics such as singular K\"ahler-Einstein metrics.

 For any semi-K\"ahler current $\omega \in \mathcal{AK}(X, \theta_Y, n,  p,A, K, \gamma)$, we choose a smooth closed $(1,1)$-form $\omega_0 \in [\omega]$. Then there exists a unique $\varphi\in PSH(X, \omega_0)\cap C^0(X)$ such that
$$\omega= \omega_0+\ddbar \varphi, ~\sup_X \varphi=0. $$
Let $$F = \log \Big(\frac{1}{V_\omega}\frac{(\pi^*\omega)^n}{(\theta_Y)^n}\Big).$$
Then $F$ has log type analytic singularities and
$$\mathcal{N}_p(\omega) = \frac{1}{V_\omega} \int_Y |F|^p \omega^n =  \frac{1}{V_\omega} \int_Y |F|^p e^F (\theta_Y)^n \leq K .$$

As in the definition, let 
\begin{equation}\label{logres}
\pi: Y \rightarrow X
\end{equation}
 be a log resolution of $X$ and $\theta_Y$ be a K\"ahler metric on $Y$. We can find a sequence of $\{ F_j \}_{j=1}^\infty$ that approximate $F$ satisfying the following. 

\begin{enumerate}

\item $F_j\in C^\infty(X)$ and $e^{F_j} \geq 0.5 \pi^*\gamma$, for all sufficiently large $j$. 

\medskip

\item There exists $K'>0$ such that  for all $j\geq 1$
\begin{equation}\label{appr6}
\int_Y |F_j|^p e^{F_j} (\theta_Y)^n \leq K'.
\end{equation}
In the case when $\|e^F\|_{L^{1+\epsilon'}(Y, (\theta_Y)^n)} <\infty$ for some $\epsilon'>0$, there exists $K''>0$ such that 
\begin{equation}\label{appr7}
\|e^{F_j}\|_{L^{1+\epsilon'}(Y, (\theta_Y)^n)} \leq K''.
\end{equation}

\medskip

\item Let $D$ be an effective divisor of $Y$ such that the support of $D$ contains  the exceptional locus of $\pi$ and the singular locus of $F$. We can assume that 
$$\pi^*[\omega] - \delta [D]$$
is a K\"ahler class for some sufficiently small $\delta>0$. 
Let $\sigma_D$ be the defining section of $D$ and $h_D$ be a smooth hermitian metric associated to $D$. Then there exists $N>0$ and $C>0$ such that 
$$ \sup_Y |\ddbar F_j |_{\theta_Y} \leq C |\sigma_D|^{-2N}_{h_{\redn{D}}}. $$

\end{enumerate}
The simplest choice for $F_j$ will be 
$$F_j= \sum_{k=1}^K  a_k(-\log)^k \Big( \prod_{\redn{i}=1}^N  e^{f_{k, \redn{i}}} \left( |\sigma_{\redn{i}}|^2_{h_{\redn{i}}} + j^{-1} \right)^{b_{k, \redn{i}}} \Big),$$

We now consider the following complex Monge-Amp\`ere equation
\begin{equation}\label{appeq6}
 (\pi^*\omega_0 + \epsilon_j \theta_Y + \ddbar \varphi_j)^n = e^{F_j + c_j} (\theta_Y)^n, ,~~\sup_X \varphi_j = 0,
 \end{equation}
where $c_j$ is the normalizing constant satisfying 
$$\int_Y e^{F_j+ c_j} (\theta_Y)^n = \left(\pi^* [\omega_0] + \epsilon_{\redn{j}} [\theta_Y] \right)^n. $$
We define 
\begin{equation}\label{omj}
\omega_j = \pi^*\omega_0 + \epsilon_j \theta_Y + \ddbar \varphi_j. 
\end{equation}

\begin{lemma} \label{02nd} There exist $N'>0$, $C>0$ such that 
$$\|\varphi_j\|_{L^\infty(Y)} \leq C, ~ \Delta_{\theta_Y} \varphi_j \leq C|\sigma_D|_{h_D}^{-2N'}. $$

\end{lemma}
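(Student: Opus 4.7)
The plan is to treat the two estimates in sequence on the smooth model $Y$, viewing \eqref{appeq6} as a genuine complex Monge--Amp\`ere equation with K\"ahler reference form $\pi^*\omega_0+\epsilon_j\theta_Y$. For the $L^\infty$ bound, I will use the PDE method of Guo--Phong--Tong. By Definition \ref{defak}, the volume $[\pi^*\omega_0+\epsilon_j\theta_Y]^n$ is bounded above and uniformly below (the lower bound coming from $[\omega]^n\geq A^{-1}$), the intersection $[\pi^*\omega_0+\epsilon_j\theta_Y]\cdot [\theta_Y]^{n-1}$ is uniformly bounded, and the normalizing constants $c_j$ are under control. Condition \eqref{appr6} gives the Nash--Yau entropy bound $\int_Y |F_j|^p e^{F_j}(\theta_Y)^n \leq K'$ with $p>n$, which is exactly the input for \cite{GPT} in its degenerating form \cite{GPa, GP}, yielding $\|\varphi_j\|_{L^\infty(Y)} \leq C$ uniformly in $j$.

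For the Laplacian estimate, I plan to apply the maximum principle to an auxiliary quantity built from $\log\tr_{\theta_Y}\omega_j$, using Demailly's regularization to compensate for the fact that $\pi^*\omega_0$ is only semi-positive on $Y$. Since $\pi^*[\omega]-\delta[D]$ is K\"ahler for some small $\delta>0$, I choose a K\"ahler form $\eta\geq c_0\theta_Y$ in this class together with a smooth function $\psi\in C^\infty(Y)$ satisfying $\pi^*\omega_0 = \eta + \delta\Theta_{h_D} - \ddbar\psi$, where $\Theta_{h_D}$ is the Chern curvature of $h_D$. Setting $\tilde\varphi_j := \varphi_j - \psi - \delta\log|\sigma_D|^2_{h_D}$, this rewrites $\omega_j = \eta + \epsilon_j\theta_Y + \ddbar\tilde\varphi_j$ on $Y\setminus D$, and I will analyze
$$H := \log\tr_{\theta_Y}\omega_j - A\tilde\varphi_j + B\log|\sigma_D|^2_{h_D}$$
for suitable constants $A,B>0$. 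Since $F_j$ is smooth on $Y$ for each fixed $j$ and $-A\tilde\varphi_j + B\log|\sigma_D|^2_{h_D}\to -\infty$ along $D$, the function $H$ attains its maximum at some point $p\in Y\setminus D$.

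The classical Aubin--Yau/Chern--Lu inequality
$$\Delta_{\omega_j}\log\tr_{\theta_Y}\omega_j \ \geq \ -K_0 \tr_{\omega_j}\theta_Y - \frac{C|\sigma_D|^{-2N}_{h_D}}{\tr_{\theta_Y}\omega_j},$$
with the singular term coming from $|\ddbar F_j|_{\theta_Y}\leq C|\sigma_D|^{-2N}_{h_D}$ in condition (3) of the approximation, together with $\Delta_{\omega_j}(-A\tilde\varphi_j) \geq -An + Ac_0\tr_{\omega_j}\theta_Y$ and $\Delta_{\omega_j}(B\log|\sigma_D|^2_{h_D}) = -B\tr_{\omega_j}\Theta_{h_D}$ on $Y\setminus D$, will yield at the maximum point $p$ the inequality $\tr_{\omega_j}\theta_Y(p) \leq C|\sigma_D(p)|^{-2N}_{h_D}/\tr_{\theta_Y}\omega_j(p) + C_A$, once $A$ is chosen large enough (independently of $j$) so that $Ac_0 - K_0 - BK_1\geq 1$ with $K_1$ an upper bound for $|\Theta_{h_D}|_{\theta_Y}$. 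I then plan to use the elementary estimate $\tr_{\theta_Y}\omega_j \leq n(\omega_j^n/\theta_Y^n)(\tr_{\omega_j}\theta_Y)^{n-1}$ together with a uniform pointwise bound $e^{F_j}\leq C|\sigma_D|^{-2M}_{h_D}$ (a direct consequence of the explicit log-type form in Definition \ref{logtype}: at $|\sigma_D|^2 \sim 1/j$ the smoothing gives $F_j = O(\log j)$, which is dominated by $|\sigma_D|^{-2M}$ for any $M$ larger than a coefficient-dependent constant) to convert the last inequality into a polynomial estimate $\tr_{\theta_Y}\omega_j(p)\leq C|\sigma_D(p)|^{-2M''}_{h_D}$. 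Choosing $B\geq M''$ then forces $H(p)\leq C$, hence $H\leq C$ globally on $Y$, equivalently $\tr_{\theta_Y}\omega_j \leq C|\sigma_D|^{-2N'}_{h_D}$ for $N' := A\delta+B$, which in turn gives the desired bound on $\Delta_{\theta_Y}\varphi_j$.

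The main obstacle I anticipate is the simultaneous degeneracy of $\pi^*\omega_0$ (only semi-positive and collapsing along the exceptional locus) and the blow-up of $\Delta_{\theta_Y}F_j$ near $D$ as $j\to\infty$, neither of which is present in the classical Aubin--Yau setting. Demailly's regularization along $\pi^*[\omega]-\delta[D]$ is precisely what restores the missing positivity of $\pi^*\omega_0$ in the maximum-principle step, at the cost of additional singular terms that the auxiliary factor $B\log|\sigma_D|^2_{h_D}$ is designed to absorb; the log-type structure of the approximation ensures that all exponents remain finite and $j$-independent. Beyond this, the remaining work is standard Aubin--Yau combined with the maximum principle.
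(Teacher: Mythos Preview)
Your proof is correct, and the $L^\infty$ part agrees with the paper. For the Laplacian estimate, however, your route is genuinely different from the paper's.

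The paper works with the quantity $Q=e^{-\mu\hat\varphi_j}\,\tr_{\theta_Y}\omega_j$, where $\hat\varphi_j=\varphi_j-\delta\log|\sigma_D|^2_{h_D}$. Choosing $\mu$ large enough that $\mu\delta\ge N$, the exponential weight $|\sigma_D|^{2\mu\delta}$ swallows the singular Hessian of $F_j$ outright, yielding the clean inequality $\Delta_{\omega_j}Q\ge -n\mu Q - C$ on all of $Y$. Rather than a pointwise maximum principle, the paper then applies the Green's formula with the uniform Green's function estimates of \cite{GPS,GPSS}: this reduces $\sup_Y Q$ to a bound on $\int_Y Q^\epsilon\,\omega_j^n$, which is controlled using only the $L^{1+\epsilon'}$ bound on $e^{F_j}$ together with $\int_Y(\tr_{\theta_Y}\omega_j)\,\theta_Y^n\le C$.

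Your argument is the classical Aubin--Yau maximum principle with the test function $\log\tr_{\theta_Y}\omega_j - A\tilde\varphi_j + B\log|\sigma_D|^2_{h_D}$, and it needs a \emph{pointwise} polynomial bound $e^{F_j}\le C|\sigma_D|^{-2M}_{h_D}$ to close the loop at the maximum point. This bound is indeed available from the explicit log-type form of $F_j$, as you say, so the argument is sound. What you gain is elementarity: you avoid the Green's function machinery entirely. What the paper's approach buys is a sharper differential inequality for $Q$ and a demonstration that only \emph{integral} control on the volume density (the $L^{1+\epsilon'}$ norm) is needed---consistent with the philosophy of \cite{GPT,GPS,GPSS} that runs through the whole paper.
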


\begin{proof} 

The $L^\infty$-estimate directly follows from \cite{K, EGZ, GPT} by the uniform bound (\ref{appr6}). 
For the second order estimate for $\varphi_j$, we let $\omega_j = \pi^*\omega_0 + \epsilon_j \theta_Y + \ddbar\theta_j$ and let  $h_D$ be a smooth hermtian metric associated to $D$ such that 
$$\pi^*\omega_0  - \delta {\mathrm{Ric}}(h_D) >0.$$
We recall that $\omega_j$ satisfies  the complex Monge-Amp\`ere equation \nref{appeq6}. 
It follows from standard calculations \cite{Y} that 
\begin{equation}\label{eqn:yau1}
\Delta_{\omega_j} \tr_{\theta_Y} \omega_j \ge \Delta_{\theta_Y} F_j - C_1 (\tr_{\omega_j} \theta_Y)\,(\tr_{\theta_Y} \omega_j )+ \frac{|\nabla \tr_{\theta_Y} \omega_j|^2_{\omega_j}}{\tr_{\theta_Y} \omega_j},
\end{equation}
where $-C_1$ is a lower bound of the bisectional curvature of $(Y, \theta_Y)$.  Let $\hat\varphi_j = \varphi_j - \delta \log |\sigma_D|^2_{h_D}$. For some $\mu>0$ to be determined, we define $Q = e^{-\mu \hat\varphi_j} \tr_{\theta_Y} \omega_j$. We will choose $\mu>0$ sufficiently large so that $Q$ is $C^2$ in $Y$. Using \nref{eqn:yau1}, we calculate the Laplacian of $Q$ as follows:
{\small
\bea\nonumber
\Delta_{\omega_j} Q & \ge & e^{-\mu \hat\varphi_j} \big(  \Delta_{\theta_Y} F_j - C_1 (\tr_{\omega_j} \theta_Y)\,(\tr_{\theta_Y} \omega_j )+ \frac{|\nabla \tr_{\theta_Y} \omega_j|^2_{\omega_j}}{\tr_{\theta_Y} \omega_j} \big)+ \mu^2 e^{-\mu \hat\varphi_j} \tr_{\theta_Y} \omega_j |\nabla \hat\varphi_j|^2_{\omega_j} \\
&& \label{eqn:yau2}
+ \mu Q \big( -n + \epsilon_j \tr_{\omega_j} \theta_Y + \tr_{\omega_j} (\omega_0 - \delta \ric(h_D))   \big)  - 2 \mu e^{-\mu \hat\varphi_j} {\mathrm{Re}} \langle \nabla \hat\varphi_j, \bar \nabla \tr_{\theta_Y} \omega_j\rangle_{\omega_j}.
\eea
}
By Cauchy-Schwarz inequality, the last term in \nref{eqn:yau2} satisfies 
$$- 2 \mu e^{-\mu \hat\varphi_j} {\mathrm{Re}} \langle \nabla \hat\varphi_j, \bar \nabla \tr_{\theta_Y} \omega_j\rangle_{\omega_j} \ge  - e^{-\mu\hat\varphi_j} \frac{|\nabla \tr_{\theta_Y} \omega_j|^2_{\omega_j}}{\tr_{\theta_Y} \omega_j} -  \mu^2 e^{-\mu \hat\varphi_j} \tr_{\theta_Y} \omega_j |\nabla \hat\varphi_j|^2_{\omega_j}.$$ We fix a constant $\mu_1>0$ such that 
$$\mu_1 (\omega_0 - \delta\ric(h_D)) \ge 2 \theta_Y.$$ The constant $\mu$ is chosen so that $\mu\ge \mu_1$. Then \nref{eqn:yau2} yields that
\begin{equation}\label{eqn:yau3}
\Delta_{\omega_j} Q \ge e^{-\mu \hat\varphi_j} \Delta_{\theta_Y} F_j - n \mu Q.
\end{equation}
Note that $|\ddbar F_j|_{\theta_Y} \le C |\sigma_D|_{h_D}^{-2N}$. If additionally we take $\mu>0$ such that $\mu \delta \ge N$, from \nref{eqn:yau3} we obtain
\begin{equation}\label{eqn:yau4}
\Delta_{\omega_j} Q \ge -C |\sigma_D|_{h_D}^{2(\mu\delta - N)}  - n \mu Q\ge - n\mu Q - C,
\end{equation}
where we have assumed $|\sigma_D|_{h_D}^2\le 1$. Recall that the Green's function $G_j$ associated to $\omega_j$ satisfies the uniform integral estimates as in \cite{GPS, GPSS}. Applying Green's formula at a point $x\in Y$ where $Q$ achieves its maximum $\bar Q$, we have
\bea \nonumber
\bar Q & \le  & \frac{1}{V_{\omega_j}} \int_Y Q \omega_j^n + \int_Y {\mathcal G}_j(x,\cdot) (n\mu Q + C ) \omega_j^n\\
&\le &\label{eqn:yau5} \frac{\bar Q^{1-\epsilon}}{V_{\omega_j}} \int_Y Q^{\epsilon} \omega_j^n + C + \bar Q^{1-\frac{\epsilon}{2n}} \big(  \int_Y Q^\epsilon \omega_j^n\big)^{1/2n},
\eea
where the inequality \nref{eqn:yau5} follows from the H\"older's inequality and $\epsilon = \frac{\epsilon'}{1+\epsilon'}>0$ is a constant chosen so that the following holds
$$  \int_Y Q^\epsilon \omega_j^n\le C \int_Y (\tr_{\theta_Y} \omega_j) ^{\epsilon} e^{F_j} \theta_Y^n\le C \big( \int_Y ( \tr_{\theta_Y} \omega_j )\theta_Y^n\big)^{\frac{\epsilon'}{1+\epsilon'}}\le C $$ where the constant $C>0$ depends on $\| e^{F_j}\|_{L^{1+\epsilon'}}$ which is uniformly bounded. 
Then \nref{eqn:yau5} gives that
$$\bar Q \le C  \bar Q^{1-\epsilon} + C \bar Q^{1-\frac{\epsilon}{2n}} + C,$$
which implies the upper bound of $\bar Q$, hence that of $Q$.
%
\end{proof}

Immediately, we can obtain local higher order estimate for $\varphi_j$ uniformly away from $Y\setminus D$. Let $\cS_{X, \omega}$ be the union of the singular set of $X$ and the singular set of $\pi_*F$. Since $\pi^{-1}(\cS_{X ,\omega})$ is contained in the ample locus of $[\pi^*\omega]$, for every point $p\in X\setminus \cS_{X, \omega}$, we can choose $D$ such that $\pi^{-1}(p) \in Y \setminus D$.  The standard Schauder estimates give the following regularity result for $\omega\in \mathcal{AK}(X, \theta_Y, n, p, A, K, \gamma)$.

\begin{proposition} \label{omapp} The smooth K\"ahler metrics $\omega_j$ on $Y$ satisfy the following conditions. Then for $\mathcal{K} \subset \subset X\setminus \cS_{X, \omega}$ and $k>0$, we have 
$$\lim_{j\rightarrow \infty} \|\varphi_j - \pi^*\varphi\|_{L^\infty(\mathcal{K})} =0, $$
$$\lim_{j\rightarrow \infty}  \|\omega_j - \omega\|_{C^k(\mathcal{K})} =0.$$
Here we identify $\omega_j$ and $\pi_*\omega_j$ on $X\setminus \cS_{X, \omega}$. In particular, $\omega$ is a smooth K\"ahler metric on $X\setminus \cS_{X, \omega}$.

\end{proposition}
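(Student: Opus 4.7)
The plan is to combine the uniform $L^\infty$ bound and local Laplacian estimate from Lemma~\ref{02nd} with an Evans-Krylov/Schauder bootstrap and a uniqueness argument to identify every subsequential limit of $\varphi_j$ with $\pi^*\varphi$.

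First, since $\{\varphi_j\}$ is uniformly bounded on $Y$ with $\sup_Y\varphi_j=0$ and each $\varphi_j$ is $(\pi^*\omega_0+\epsilon_j\theta_Y)$-plurisubharmonic, a standard Hartogs-type compactness argument for quasi-plurisubharmonic functions extracts a subsequence converging in $L^1(Y)$ to a bounded $\pi^*\omega_0$-plurisubharmonic function $\varphi_\infty$ with $\sup_Y\varphi_\infty=0$.

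Second, fix $\mathcal{K}\subset\subset X\setminus\cS_{X,\omega}$. I would choose the divisor $D$ appearing in Lemma~\ref{02nd} so that its support contains both the exceptional locus of $\pi$ and $\pi^{-1}(\cS_{X,\omega})$; then $\pi^{-1}(\mathcal{K})$ sits inside a compact subset of $Y\setminus\mathrm{supp}(D)$ on which $|\sigma_D|_{h_D}$ is bounded below. The bound $\Delta_{\theta_Y}\varphi_j\le C|\sigma_D|_{h_D}^{-2N'}$ thus yields a uniform upper bound on $\Delta_{\theta_Y}\varphi_j$ over $\pi^{-1}(\mathcal{K})$, while positivity of $\omega_j$ forces a matching lower bound on the eigenvalues of $\ddbar\varphi_j$ relative to $\theta_Y$. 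Consequently, equation \nref{appeq6} is uniformly elliptic on $\pi^{-1}(\mathcal{K})$ with smooth right-hand side $F_j$ converging smoothly to $F$ there. The Evans-Krylov theorem for concave fully nonlinear equations then gives uniform interior $C^{2,\alpha}$ estimates, and a standard Schauder bootstrap promotes these to uniform $C^{k,\alpha}$ bounds for every $k$.

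Third, Arzel\`a-Ascoli extracts a further subsequence converging in $C^{k}_{\mathrm{loc}}(Y\setminus\mathrm{supp}(D))$, necessarily to $\varphi_\infty$ by uniqueness of $L^1$ limits. Passing to the limit in \nref{appeq6} shows that on $Y\setminus\mathrm{supp}(D)$,
$$(\pi^*\omega_0+\ddbar\varphi_\infty)^n=e^{F+c_\infty}(\theta_Y)^n,$$
where $c_\infty=\lim_j c_j$ is forced by the normalization of volumes to equal $\log V_\omega$, so the right-hand side equals $(\pi^*\omega)^n$. Both $\varphi_\infty$ and $\pi^*\varphi$ are then bounded $\pi^*\omega_0$-plurisubharmonic functions on $Y$ with the same Monge-Amp\`ere measure $(\pi^*\omega)^n$ and the same normalization $\sup=0$. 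The uniqueness theorem for bounded solutions of the complex Monge-Amp\`ere equation in big and nef cohomology classes (see \cite{EGZ, DPali}) forces $\varphi_\infty=\pi^*\varphi$ throughout $Y$. Identification of the unique subsequential limit implies the full sequence converges, establishing both the $L^\infty$ and $C^k$ statements.

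The main obstacle is the uniqueness step in the final paragraph: since $[\pi^*\omega_0]$ is only big and nef rather than K\"ahler, one must invoke uniqueness in the degenerate setting and carefully match the normalizing constants $c_j$ and the Monge-Amp\`ere measures on $X$ versus on the resolution $Y$. The Evans-Krylov/Schauder bootstrap and the $L^1$-compactness are standard once this degenerate-class uniqueness is handled correctly.
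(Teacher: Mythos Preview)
Your proposal is correct and follows essentially the same route as the paper: the paper deduces local higher-order estimates from Lemma~\ref{02nd} via standard Schauder theory after noting that for any $p\in X\setminus\cS_{X,\omega}$ the divisor $D$ can be chosen so that $\pi^{-1}(p)\in Y\setminus D$, and then invokes uniqueness of bounded solutions to identify the limit with $\pi^*\varphi$. You have simply spelled out the compactness and uniqueness steps (Hartogs-type $L^1$ compactness, Evans--Krylov/Schauder bootstrap, Arzel\`a--Ascoli, degenerate Monge--Amp\`ere uniqueness) that the paper leaves implicit, and your identification of the normalizing constant $c_\infty=\log V_\omega$ is correct.
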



\section{Sobolev inequalities for $\mathcal{AK}$-currents}\label{singsobsec}\label{section 7}
\setcounter{equation}{0}

Let $\omega\in \mathcal{AK}(X, \theta_Y, n, p, A, K, \gamma)$ as in Definition \ref{defak} and $\cS_{X, \omega}$ be the union of the singular set of $X$ and \redn{the image of the} singular set of $\log \frac{\pi^*\omega^n}{(\theta_Y)^n}$ \redn{under $\pi$}. {We define the distance function $d$ on $(X\backslash \cS_{X,\omega}, \omega)$ as follows:  for any $x,y\in X\backslash \cS_{X,\omega}$
\begin{equation}\label{eqn:dist}d(x,y) = \inf\{L_\omega(c): ~ c:[0,1]\to X\backslash \cS_{X,\omega},\, c(0) = x,\, c(1) = y\},\end{equation} where $c$ is a piecewise differentiable curve in $X\backslash \cS_{X,\omega}$, and $L_\omega(c) = \int_0^1 |\dot c(t)|_\omega dt$ is the length of $c$ under the smooth metric $\omega$. It is not hard  to see $d$ is a distance function on $X\backslash \cS_{X,\omega}$.   Let $(\hat X, \hat d)$ be the metric completion of $(X\backslash \cS_{X,\omega}, d)$. For notational simplicity we will write $\hat d$ as $d$.}

We will first set up the Sobolev spaces on the metric measure space $(\hat X, d, \omega^n)$ associated to $(X, \omega)$. It is straightforward to define $L^q(\hat X, \omega^n)$ by identifying it with $L^q(X, \omega^n)$  because the singular set $\cS_{X, \omega}$ is a proper analytic subvariety \redn{with real codimension at least $2$.} 

\begin{definition} \label{sobdef} The Sobolev space $W^{1,2}(\hat X, d, \omega^n)$ is defined by for all $u: X \rightarrow \mathbb{R}$ such that $u|_{X^\circ} \in W^{1,2}_{loc}(X^\circ, \omega)$ and 
$$\sup_{\cK \subset\subset X\setminus \cS_{X,\omega}} \|u\|_{W^{1,2}(\cK, \omega)}  {= \sup_{\cK \subset\subset X\setminus \cS_{X,\omega}} \Big(\int_{{\mathcal K}} (u^2 + |\nabla u|^2_\omega)\omega^n \Big)^{1/2}}< \infty. $$
We then define the $W^{1,2}$-norm of $u$ by 
$$\|u\|_{W^{1,2}(\hat X, d, \omega^n)} =  \sup_{\cK \subset\subset X\setminus \cS_{X,\omega}} \|u\|_{W^{1,2}(\cK, \omega)}. $$

\end{definition}

We can also identify $W^{1,2}(X,\omega)$ with $W^{1,2}(\hat X, d, \omega^n)$ as the integrals are calculated on $X\setminus \cS_{X, \omega}$. We will first prove a Sobolev inequality for {$u \in W^{1,2}(X, \omega)=W^{1,2}(\hat X, d, \omega^n)$ with compact support in $X\backslash \cS_{X,\omega}$}. 

\begin{lemma} \label{sobcps} There exists $C=C (X, \theta_Y, n, p, A, K, \gamma)>0$ such that 
$$\Big( \int_X |u|^{2q} \omega^n\Big)^{\frac{1}{q}} \leq C \Big( \int_X |\nabla u|^2_\omega \omega^n + \int_X u^2  \omega^n \Big)$$
for all $u\in W^{1,2}(\hat X, d, \omega^n)$ with compact support in $X \setminus \cS_{X, \omega}$, where $q>1$ is the Sobolev exponent in Theorem \ref{thm:Sob}.

\end{lemma}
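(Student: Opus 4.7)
The idea is to transplant the Sobolev inequality of Theorem \ref{thm:Sob}, applied on the smooth resolution $(Y,\theta_Y)$ to the approximating metrics $\omega_j$ constructed in Section \ref{maregsec}, down to $X$ by taking $j\to\infty$. The crucial point is that since $u$ has compact support in $X\setminus\cS_{X,\omega}$, all relevant integrals are localized in a region where $\omega_j\to\pi^*\omega$ smoothly by Proposition \ref{omapp}, so we never need to control the degeneracy near $\pi^{-1}(\cS_{X,\omega})$ on the analytic side.

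First, I will reduce to the case $u\in C^\infty_c(X\setminus\cS_{X,\omega})$ by a density argument within $W^{1,2}(\hat X,d,\omega^n)$: since $u$ is compactly supported in an open set where $\omega$ is a genuine smooth K\"ahler metric, standard mollification on $X^\circ$ (say using a finite cover by coordinate charts and a partition of unity) produces a sequence $u_k \in C^\infty_c(X\setminus\cS_{X,\omega})$ with $u_k\to u$ in $W^{1,2}$. Then $v_k:=\pi^*u_k$ lies in $C^\infty_c(Y\setminus\pi^{-1}(\cS_{X,\omega}))$; in particular, $\mathrm{supp}(v_k)$ is a compact subset of $Y\setminus\pi^{-1}(\cS_{X,\omega})$.

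Next, I will verify that the family $\{\omega_j\}$ defined in \eqref{omj} lies uniformly in a class $\mathcal{W}(n,p,A',K',\gamma')$ on $Y$ relative to the reference $\theta_Y$, so that Theorem \ref{thm:Sob} (in the scale-invariant Moser form \eqref{eqn:2.9}) applies with a constant $C_3$ independent of $j$. Condition \eqref{appr6} gives a uniform $p$-Nash--Yau entropy bound for $\omega_j$; the intersection $[\omega_j]\cdot[\theta_Y]^{n-1}=(\pi^*[\omega_0]+\epsilon_j[\theta_Y])\cdot[\theta_Y]^{n-1}$ is uniformly bounded; and the pointwise lower bound $e^{F_j}\ge\tfrac12\pi^*\gamma$ combined with the normalizing constants $c_j$ (which are uniformly controlled because $V_{\omega_j}=(\pi^*[\omega_0]+\epsilon_j[\theta_Y])^n\to V_\omega>0$) yields a lower bound $e^{F_{\omega_j}}\ge\gamma'$ for a nonnegative continuous $\gamma'\in C^0(Y)$ whose zero set satisfies \eqref{eqn:gamma}, since the zero locus of $\pi^*\gamma$ is contained in a proper analytic subvariety of $Y$. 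Applying \eqref{eqn:2.9} to $\omega_j$ and $v_k$ therefore gives
\begin{equation*}
\Bigl(\tfrac{1}{V_{\omega_j}}\int_Y |v_k|^{2q}\omega_j^n\Bigr)^{1/q}
\le \tfrac{C_3}{V_{\omega_j}}\int_Y\bigl(v_k^2+|\nabla v_k|_{\omega_j}^2\bigr)\omega_j^n.
\end{equation*}

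Finally, I will let $j\to\infty$ for fixed $k$. Since $\mathrm{supp}(v_k)$ is a compact subset of $Y\setminus\pi^{-1}(\cS_{X,\omega})$, Proposition \ref{omapp} gives $\omega_j\to\pi^*\omega$ in every $C^\ell$-norm on this compact set, and $V_{\omega_j}\to V_\omega$. Hence each of the three integrals over $Y$ converges to the corresponding integral against $(\pi^*\omega)^n$, which by the birational invariance of such integrals (and the fact that $\omega^n$ does not charge $\cS_{X,\omega}$) equals the integral against $\omega^n$ on $X$. This yields the desired inequality for $u_k$, and sending $k\to\infty$ using $W^{1,2}$-convergence and (by passing to a subsequence) pointwise a.e.\ convergence with Fatou's lemma on the left gives the inequality for $u$, with the constant $C=C_3\max(V_\omega,1)/V_\omega^{1/q}$ absorbed into the stated constant depending on $(X,\theta_Y,n,p,A,K,\gamma)$. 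The main (but mild) obstacle is the uniformity in $j$ of the lower bound $\gamma'$ on $e^{F_{\omega_j}}$ and of the Nash--Yau entropy of $\omega_j$ on $Y$; once these are checked from the construction of $F_j$ and the uniform bound on $c_j$, the limiting argument is routine because $u$ is compactly supported away from the singular locus.
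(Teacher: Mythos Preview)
Your proposal is correct and follows essentially the same approach as the paper: pull back to the resolution $Y$, verify that the approximating metrics $\omega_j$ lie uniformly in some $\mathcal{W}(n,p,A',K',\gamma')$, apply the Sobolev inequality \eqref{eqn:2.9} uniformly in $j$ to $\pi^*u$, and pass to the limit using the locally smooth convergence $\omega_j\to\pi^*\omega$ from Proposition \ref{omapp} on the compact support of $u$. Your additional reduction to $u\in C^\infty_c$ and the explicit bookkeeping for the $\mathcal{W}$-membership and the constants are harmless elaborations of the paper's terser argument.
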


\begin{proof} We will approximate $\omega$ by $\omega_j$ as in Proposition \ref{omapp} on $Y$, the blow-up of $X$. By construction, there exist $A', K'>0$ and a nonnegative $\gamma'=\pi^*\gamma \in C^0(Y)$ 
such that for all $j>0$,  
$$\omega_j \in \mathcal{W}(n, p, \gamma', A', K').$$
 By Theorem \ref{thm:Sob}, there exists $C>0$ such that for all $f \in W^{1, 2}(Y, \omega_j)$, we have
$$\Big( \int_Y | f |^{2q} (\omega_j)^n\Big)^{\frac{1}{q}} \leq C \Big( \int_Y |\nabla f|_{\omega_j}^2 (\omega_j)^n + \int_Y  f^2 (\omega_j)^n \Big).$$
For any $u\in W^{1,2}(X, \omega)$ with compact support in $X\setminus  \cS_{X, \omega} $, since $\omega_j$ converges to $\omega$ \redn{locally smoothly} on $X\setminus \cS_{X, \omega}$, we have $\pi^* u \in W^{1,2}(Y, \omega_j)$ and 
$$\Big( \int_Y | \pi^* u |^{2q} (\omega_j)^n\Big)^{\frac{1}{q}} \leq C \Big( \int_Y |\nabla \pi^* u|_{\omega_j}^2 (\omega_j)^n + \int_Y  (\pi^*u) ^2 (\omega_j)^n \Big).$$
The lemma is proved by letting $j\rightarrow \infty$.
\end{proof}

The following cut-off functions are constructed in \cite{Stu} (c.f. Lemma 3.7 in \cite{S1}).

\begin{lemma} \label{cutoff} Let $X$ be an $n$-dimensional K\"ahler \redn{variety} and $E$ is a subvariety of $X$. Let $\omega$ be a semi-K\"ahler current on $X$ with bounded local potentials. Then for any $\epsilon>0$ and $\mathcal{K}\subset\subset X\setminus E$, there exists a cut-off function $\eta\in C^\infty(X)$ satisfying the following.

\begin{enumerate}

\item $0 \leq \eta\leq 1$, $\eta|_{\mathcal{K}}=1$ and $\eta=0$ on an open neighborhood of $E$.

\medskip

\item $$\int_X |\nabla \eta|_{\redn{\omega}}^2  \omega^n =  \redn{n}\int_X \sqrt{-1}\partial \eta \wedge \dbar \eta \wedge \omega^{n-\redn{1}} < \epsilon. $$

\end{enumerate}

\end{lemma}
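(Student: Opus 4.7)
The plan is to build $\eta$ from a log--log regularization of a defining equation of $E$ on a log resolution of $X$, and to bound its Dirichlet energy via a Chern--Levine--Nirenberg type estimate that exploits the bounded-potential hypothesis on $\omega$. First I would pass to a log resolution $\pi : Y \to X$ such that $\pi^{-1}(E) \cup \mathrm{Exc}(\pi)$ is contained in a simple normal crossings divisor $D$ with defining section $\sigma$ and smooth hermitian metric $h$ on $\mathcal{O}_Y(D)$, rescaled so that $|\sigma|^2_h \leq e^{-e}$ on $Y$. Fix a cut-off $\chi \in C^\infty(\mathbb{R},[0,1])$ equal to $1$ on $(-\infty,1/2]$ and $0$ on $[1,\infty)$, and for small $s>0$ set
\[
\tilde\eta_s(y) \;=\; \chi\!\left( \frac{\log(-\log|\sigma(y)|^2_h)}{\log(-\log s)} \right),
\]
extended by $0$ across $D$. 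Since $|\sigma|^2_h$ is bounded away from $0$ on $\pi^{-1}(\mathcal{K})$, for $s$ small one has $\tilde\eta_s\equiv 1$ on $\pi^{-1}(\mathcal{K})$ and $\tilde\eta_s\equiv 0$ on $\{|\sigma|^2_h \leq s\}$, an open neighborhood of $D$.

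The next step is the energy estimate. By the chain rule,
\[
n\sqrt{-1}\,\partial \tilde\eta_s \wedge \bar\partial \tilde\eta_s \wedge (\pi^*\omega)^{n-1} \;\leq\; \frac{C_\chi}{(\log(-\log s))^2}\cdot \frac{\sqrt{-1}\,\partial \log|\sigma|^2_h \wedge \bar\partial \log|\sigma|^2_h}{(\log|\sigma|^2_h)^2}\wedge (\pi^*\omega)^{n-1},
\]
with $C_\chi = n\|\chi'\|_\infty^2$. The singular factor on the right is then handled by a Chern--Levine--Nirenberg argument: the Poincar\'e--Lelong identity $\ddbar \log|\sigma|^2_h = [D]-\Theta(h)$, together with integration by parts (valid in the Bedford--Taylor sense because $\pi^*\omega$ has bounded local potentials), controls the singular factor integrated against $(\pi^*\omega)^{n-1}$ by a combination of cohomological intersection numbers and lower-order terms involving $\Theta(h)$, yielding a constant $C_0$ independent of $s$. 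Hence
\[
\int_Y n\sqrt{-1}\,\partial \tilde\eta_s \wedge \bar\partial \tilde\eta_s \wedge (\pi^*\omega)^{n-1} \;\leq\; \frac{C_\chi C_0}{(\log(-\log s))^2}\xrightarrow{s\to 0^+}0.
\]

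Finally, $\tilde\eta_s$ vanishes on an open neighborhood of $\mathrm{Exc}(\pi)\subset D$, so it descends to a continuous function on $X$ that equals $1$ on $\mathcal{K}$ and $0$ near $E$. Smoothness at singular points of $X$ is arranged by modifying the construction locally in a holomorphic embedding $X\hookrightarrow \mathbb{C}^N$, using squared norms of local generators of the ideal of $E$ in place of $|\sigma|^2_h$ near each such point. Choosing $s$ so that the energy bound above is less than $\epsilon$ then produces the required $\eta$. The main obstacle is the Chern--Levine--Nirenberg estimate in the middle step: the log--log weight must be leveraged precisely to convert the singular form $\sqrt{-1}\,\partial\log|\sigma|^2_h\wedge\bar\partial\log|\sigma|^2_h /(\log|\sigma|^2_h)^2$ into something integrable against $(\pi^*\omega)^{n-1}$ with bound independent of $s$, and this is exactly where the hypothesis that $\omega$ has bounded local potentials is used essentially.
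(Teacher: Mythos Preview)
The paper does not actually prove this lemma; it merely states it and attributes the construction to \cite{Stu} (with a cross-reference to Lemma~3.7 in \cite{S1}). Your approach---log resolution, log--log cutoff $\chi(\log(-\log|\sigma|^2_h)/\log(-\log s))$, and a Chern--Levine--Nirenberg-type bound using the bounded local potentials of $\omega$---is exactly the standard construction carried out in those references, so your proposal matches the intended argument.

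One minor comment: your last paragraph about smoothness at singular points of $X$ not contained in $E$ is somewhat vague. In the applications in this paper one always takes $E=\cS_{X,\omega}$, which contains the singular set of $X$, so the issue does not arise; but if you want the lemma in the generality stated you should either (a) enlarge $D$ on the resolution to also contain $\pi^{-1}(X^{sing})$, so that $\tilde\eta_s$ vanishes near all of $\mathrm{Exc}(\pi)\cup\pi^{-1}(X^{sing})$ and hence descends to a function that is constant (zero) near the singularities, or (b) make the local modification you describe more explicit. Option (a) is cleaner and costs nothing.
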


\begin{lemma} \label{sobapp2} There exists $C=C(X, \theta_Y, n, A, K, \gamma)>0$ such that 
$$\left( \int_{\hat X} |u|^{2q} \omega^n\right)^{\frac{1}{q}} \leq C \left( \int_{\hat X} |\nabla u|^2_\omega \omega^n + \int_{\hat X} u^2  \omega^n \right)$$
for all $u\in W^{1,2}(\hat X, d, \omega^n) \cap L^\infty(\hat X) $. Here all the integrals are calculated on $X\setminus \cS_{X, \omega}$. 

\end{lemma}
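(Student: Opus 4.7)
The plan is to pass from the compactly supported case in Lemma \ref{sobcps} to the general case by multiplying $u$ by a suitable cut-off function $\eta$ produced by Lemma \ref{cutoff}, applying the Sobolev inequality to $\eta u$, and then letting $\eta \to 1$. The boundedness of $u$ is essential: it lets me absorb the cross term $u^2 |\nabla \eta|^2$ using the small gradient norm afforded by Lemma \ref{cutoff}.

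Concretely, fix $u \in W^{1,2}(\hat X, d, \omega^n) \cap L^\infty(\hat X)$, and choose an exhausting sequence of compact subsets $\mathcal{K}_j \subset \subset X \setminus \cS_{X,\omega}$. Applying Lemma \ref{cutoff} with $E = \cS_{X,\omega}$ to each $\mathcal{K}_j$, I obtain a sequence $\eta_j \in C^\infty(X)$ with $0 \le \eta_j \le 1$, $\eta_j \equiv 1$ on $\mathcal{K}_j$, $\eta_j \equiv 0$ on an open neighborhood of $\cS_{X,\omega}$, and $\int_X |\nabla \eta_j|_\omega^2 \, \omega^n < 1/j$. Then $v_j := \eta_j u \in W^{1,2}(\hat X, d, \omega^n)$ has compact support in $X \setminus \cS_{X,\omega}$, so Lemma \ref{sobcps} yields
\[
\Big( \int_X |v_j|^{2q} \omega^n \Big)^{1/q} \le C \Big( \int_X |\nabla v_j|_\omega^2 \omega^n + \int_X v_j^2 \omega^n \Big).
\]

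The product rule combined with the elementary inequality $(a+b)^2 \le 2a^2 + 2b^2$ gives $|\nabla v_j|_\omega^2 \le 2\eta_j^2 |\nabla u|_\omega^2 + 2 u^2 |\nabla \eta_j|_\omega^2$, and using $0 \le \eta_j \le 1$ together with $u \in L^\infty$ I obtain
\[
\int_X |\nabla v_j|_\omega^2 \omega^n \le 2 \int_X |\nabla u|_\omega^2 \omega^n + \frac{2 \|u\|_{L^\infty}^2}{j},
\qquad \int_X v_j^2 \omega^n \le \int_X u^2 \omega^n.
\]
For any $x \in X \setminus \cS_{X,\omega}$ there is $j_0$ with $x \in \mathcal{K}_{j_0}$, so $\eta_j(x) = 1$ for all $j \ge j_0$, hence $\eta_j u \to u$ pointwise on $X \setminus \cS_{X,\omega}$; since $\omega^n$ puts no mass on $\cS_{X,\omega}$, the same holds $\omega^n$-a.e. on $\hat X$. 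Fatou's lemma then gives $\int_{\hat X} |u|^{2q} \omega^n \le \liminf_j \int_X |\eta_j u|^{2q} \omega^n$, and letting $j \to \infty$ in the Sobolev inequality above yields
\[
\Big( \int_{\hat X} |u|^{2q} \omega^n \Big)^{1/q} \le C' \Big( \int_{\hat X} |\nabla u|_\omega^2 \omega^n + \int_{\hat X} u^2 \omega^n \Big),
\]
after absorbing the factor $2$ into a new constant $C'$ depending only on $X, \theta_Y, n, p, A, K, \gamma$.

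The only subtle point is the joint use of the boundedness of $u$ with the smallness of $\int |\nabla \eta_j|_\omega^2 \omega^n$ to control the Leibniz cross term; without this pairing the approximation breaks down, because $u$ need not a priori be in $L^{2q}$ and so one cannot invoke dominated convergence. The cut-off functions of Lemma \ref{cutoff}, whose existence relies on $\cS_{X,\omega}$ being a proper analytic subvariety of codimension at least one, are precisely tailored for this argument.
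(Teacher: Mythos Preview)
Your proof is correct and follows essentially the same approach as the paper: multiply $u$ by the cut-off functions from Lemma \ref{cutoff}, apply Lemma \ref{sobcps} to the compactly supported product, control the Leibniz cross term via $\|u\|_{L^\infty}$ and the smallness of $\int |\nabla \eta_j|_\omega^2\,\omega^n$, and pass to the limit. The only cosmetic differences are that you bound the cross term with $(a+b)^2\le 2a^2+2b^2$ rather than expanding and using Cauchy--Schwarz, and you make the use of Fatou's lemma on the left-hand side explicit.
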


\begin{proof} For any $\epsilon>0$, we choose a cut-off function $\eta_\epsilon$ as in Lemma \ref{cutoff} with support $\mathcal{K}_\epsilon\subset\subset X\setminus \cS_{X, \omega}$. We can assume that $\mathcal{K}_\epsilon \rightarrow X\setminus \cS_{X, \omega}$ increasingly as $\epsilon \rightarrow 0$. 
Then 
\begin{eqnarray} \label{w12app}
\int_X |\nabla (\eta_\epsilon   u)|^2_\omega \omega^n  &= & \int_X |\nabla\eta_\epsilon|^2_\omega u^2 \omega^n + \int_X (\eta_\epsilon)^2 |\nabla u|^2_\omega \omega^n + 2Re \int_X \eta_\epsilon u \redn{\langle} \nabla\eta_\epsilon,  \overline{\nabla} u \redn{\rangle_\omega} \omega^n \nonumber \\
&\leq &\epsilon \|u\|_{L^\infty}^2   + \int_X |\nabla u|^2_\omega \omega^n + 2\epsilon^{1/2} \|u\|_{L^\infty(X)} \Big( \int_X  |\nabla u|^2_\omega \omega^n  \Big)^{1/2} \nonumber \\
&\rightarrow& \int_X  |\nabla u|^2_\omega \omega^n, 
\end{eqnarray}
as $\epsilon \rightarrow 0$ since $\|u\|_{L^\infty(X)} < \infty$. 
Applying Lemma \ref{sobcps},  we have  
\begin{eqnarray*}
\left( \int_X |\eta_\epsilon u|^{2q} \omega^n\right)^{\frac{1}{q}} \leq C \left( \int_X |\nabla (\eta_\epsilon u)|^2_\omega \omega^n + \int_X (\eta_\epsilon u)^2  \omega^n \right) 
\end{eqnarray*}
 for some uniform $C>0$.  The lemma is proved by letting $\epsilon \rightarrow 0$. 
\end{proof}

\begin{theorem} \label{sobana} There exists $C=C(X, \theta_Y, n, A, K, \gamma)>0$ such that 
\begin{equation}\label{anasob2}
\left( \int_{\hat X} |u|^{2q} \omega^n\right)^{\frac{1}{q}} \leq C \left( \int_{\hat X} |\nabla u|^2_\omega \omega^n + \int_{\hat X} u^2  \omega^n \right)
\end{equation}
for all $u\in W^{1,2}(\hat X, d, \omega^n) $, where $q>1$ is the Sobolev exponent in Theorem \ref{thm:Sob}. 

\end{theorem}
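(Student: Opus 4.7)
The plan is to prove Theorem \ref{sobana} by a standard truncation argument, reducing to the case of bounded Sobolev functions already handled by Lemma \ref{sobapp2}. Without loss of generality I may assume that the right-hand side of (\ref{anasob2}) is finite, since otherwise the inequality is trivial.

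Given $u \in W^{1,2}(\hat X, d, \omega^n)$, for each integer $k \geq 1$ I define the truncation $u_k = \max(-k, \min(u, k))$. The map $T_k : s \mapsto \max(-k, \min(s, k))$ is $1$-Lipschitz with derivative $\chi_{\{|s|<k\}}$ almost everywhere, so the classical chain rule for Sobolev functions on the smooth Riemannian open set $(X \setminus \cS_{X,\omega}, \omega)$ gives $u_k|_{X \setminus \cS_{X,\omega}} \in W^{1,2}_{\mathrm{loc}}(X \setminus \cS_{X,\omega}, \omega)$, with
$$|\nabla u_k|_\omega \;=\; |\nabla u|_\omega \cdot \chi_{\{|u| < k\}} \;\leq\; |\nabla u|_\omega$$
almost everywhere. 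In particular $u_k \in W^{1,2}(\hat X, d, \omega^n) \cap L^\infty(\hat X)$ with $\|u_k\|_{L^\infty} \le k$, and
$$\int_{\hat X} |\nabla u_k|_\omega^2 \, \omega^n + \int_{\hat X} u_k^2 \, \omega^n \;\leq\; \int_{\hat X} |\nabla u|_\omega^2 \, \omega^n + \int_{\hat X} u^2 \, \omega^n \;<\; \infty.$$

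Applying Lemma \ref{sobapp2} to $u_k$ yields
$$\Big( \int_{\hat X} |u_k|^{2q} \, \omega^n \Big)^{1/q} \;\leq\; C \Big( \int_{\hat X} |\nabla u|_\omega^2 \, \omega^n + \int_{\hat X} u^2 \, \omega^n \Big)$$
with $C$ independent of $k$. Since $|u_k| \nearrow |u|$ pointwise on $X \setminus \cS_{X,\omega}$ as $k \to \infty$, the monotone convergence theorem gives $\int_{\hat X} |u_k|^{2q} \omega^n \to \int_{\hat X} |u|^{2q} \omega^n$, and passing to the limit produces (\ref{anasob2}).

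The argument is mechanical once Lemma \ref{sobapp2} is available, so I expect no serious obstacle. The one subtle point is that the truncation chain rule must be applied on the smooth open set $X \setminus \cS_{X,\omega}$ rather than on $X$ itself; however, the definition of $W^{1,2}(\hat X, d, \omega^n)$ is set up precisely so that integrals and weak derivatives agree with those on $X \setminus \cS_{X,\omega}$, the singular set having real codimension $\geq 2$ and hence zero $\omega^n$-measure. The Sobolev constant $C$ in (\ref{anasob2}) is the one furnished by Lemma \ref{sobapp2}, with the Sobolev exponent $q > 1$ the same one appearing in Theorem \ref{thm:Sob}.
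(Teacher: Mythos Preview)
Your proof is correct and follows essentially the same approach as the paper: truncate to a bounded function, apply Lemma \ref{sobapp2}, and pass to the limit via monotone convergence. Your two-sided truncation $u_k = \max(-k,\min(u,k))$ with the Lipschitz chain rule is slightly more direct than the paper's route (one-sided truncation of $u_+$ and $u_-$ separately together with Kato's inequality, which costs an extra factor of $2$), but the underlying idea is identical.
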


\begin{proof} We first assume that $u\geq 0$. We define
$$u_A = \min(u, A)= \frac{u + A - |u-A| }{2} $$
for a constant $A\geq 0$.  Obviously, \redn{by monotone convergence theorem} we have 
$$\lim_{A\rightarrow \infty} \|u_A \|_{L^{2q}(X, \omega^n)} = \|u \|_{L^{2q}(X, \omega^n)}, ~\lim_{A\rightarrow \infty} \|u_A \|_{L^{2}(X, \omega^n)} = \|u \|_{L^{2}(X, \omega^n)} .$$
Also for any $f\in W^{1,2}(X, \omega)$, we have 
\begin{equation}\label{eqn:Kato}\int_{\hat X} |\nabla |f| |^2_\omega \omega^n \leq \int_{\hat X} |\nabla f|^2_\omega \omega^n. \end{equation}
Therefore 
$$
 \int_{\hat X} |\nabla u_A|^2 \omega^n \leq  \int_{\hat X} \left(  |\nabla u|^2_\omega + |\nabla |u-A\|^2_\omega \right) \omega^n 
\leq 2  \int_{\hat X}  |\nabla u|^2_\omega    \omega^n $$
by Kato's inequality \nref{eqn:Kato}.
Applying Lemma \ref{sobapp2} {to the bounded function $u_A$}, we have
$$\Big( \int_{\hat X} |u_A|^{2q} \omega^n\Big)^{\frac{1}{q}} \leq C \Big( \int_{\hat X} |\nabla u_A|^2_\omega \omega^n + \int_{\hat X} u_A^2  \omega^n \Big) \leq 2C \Big( \int_{\hat X} |\nabla u|^2_\omega \omega^n + \int_{\hat X} u_A^2  \omega^n \Big).$$
The Sobolev inequality is then proved by letting $A\rightarrow \infty$. \redn{
For general $u\in W^{1,2}(\hat X, \omega)$, we apply the Sobolev inequality just proved to $u_+$ and $u_-$, the positive and negative parts of $u$, respectively.} \redn{Therefore, }we have 
$$\Big( \int_{\hat X} |u|^{2q} \omega^n\Big)^{\frac{1}{q}}  \leq 2C \Big( \int_{\hat X} |\nabla u|^2_\omega \omega^n + \int_{\hat X} u^2  \omega^n \Big)$$ for all $u\in W^{1,2}(\hat X, d, \omega^n)$
and the theorem is proved.
\end{proof}

\noindent{{\bf Remark.} With the same proof as that in Theorem \ref{sobana}, using the Sobolev inequality \nref{eqn:Sob}, we also have the following version of Sobolev inequality for some constant $C>0$ that
\begin{equation}\label{eqn:Sob2}
\Big(\frac{1}{V_\omega}\int_{\hat X} | u - \bar u  |^{2q} \Big)^{\frac{1}{q}} \le \frac{C}{V_\omega} \int_{\hat X} |\nabla u|^2_\omega \omega^n
\end{equation}
for all $u \in W^{1,2}(\hat X, d, \omega^n)$, where $\bar u = \frac{1}{V_{\omega}} \int_{\hat X} u\, \omega^n$ is the average of $u$.
}
{\begin{lemma}
The following local Sobolev inequality holds on $(X,\omega)$. For any open subset $\Omega\subset X$, there is a constant $C>0$ such that
\begin{equation}\label{eqn:localSob1}
\Big( \frac{1}{V_\omega}\int_\Omega | u|^{2q} \omega^n  \Big)^{1/q} \le C\Big(1+ \frac{V_\omega(\Omega)}{V_\omega(\Omega^c)}\Big)  \frac{1}{V_\omega} \int_{\Omega} |\nabla u|^2_\omega \omega^n,
\end{equation}
for all $u\in W^{1,2}_0(\Omega)$.
\end{lemma}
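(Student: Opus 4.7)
The plan is to transfer the smooth-case local Sobolev inequality of Lemma \ref{lemma 5} from the resolution $Y$ equipped with the approximating metrics $\omega_j$ of Section \ref{maregsec} down to $(\hat X, d, \omega^n)$, mirroring how Theorem \ref{sobana} was derived from the global Sobolev inequality of Theorem \ref{thm:Sob}. We may assume $V_\omega(\Omega^c) > 0$, as otherwise the right-hand side is vacuously infinite.

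First, I would apply Lemma \ref{lemma 5} on $(Y, \omega_j)$. By Proposition \ref{omapp}, the smooth K\"ahler metrics $\omega_j$ defined in \mref{omj} lie uniformly in $\mathcal{W}(n, p, A', K', \gamma')$ and converge locally smoothly to $\pi^*\omega$ on $Y \setminus D$. Setting $\tilde\Omega := \pi^{-1}(\Omega)$, for a test function $u \in C^1_c(\Omega \setminus \cS_{X,\omega})$ the lift $\pi^* u$ has compact support in $\tilde\Omega \setminus D$ and hence lies in $W^{1,2}_0(\tilde\Omega)$. Lemma \ref{lemma 5} then gives
\[
\Big( \frac{1}{V_{\omega_j}}\int_{\tilde\Omega} |\pi^* u|^{2q}\, \omega_j^n \Big)^{1/q} \le C_0 \Big(1 + \frac{V_{\omega_j}(\tilde\Omega)}{V_{\omega_j}(\tilde\Omega^c)}\Big) \frac{1}{V_{\omega_j}} \int_{\tilde\Omega} |\nabla \pi^* u|^2_{\omega_j}\, \omega_j^n,
\]
with $C_0$ depending only on $n, p, A', K', \gamma'$.

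Second, I would pass to the limit $j \to \infty$. The two integrals involving $\pi^* u$ are supported in a fixed compact subset of $Y \setminus D$ on which $\omega_j \to \pi^*\omega$ smoothly, so they converge to the corresponding integrals of $u$ on $\Omega$ by dominated convergence. For the volume ratio, the uniform bound \mref{appr6} yields equi-integrability of the densities $\omega_j^n/\theta_Y^n = e^{F_j + c_j}$, and $F_j \to F$ pointwise off $D$, so $\omega_j^n \rightharpoonup \pi^*\omega^n$ as Radon measures on $Y$. Since $\pi^*\omega^n$ carries no mass on $D$ or on $\partial \tilde\Omega$, this gives $V_{\omega_j}(\tilde\Omega) \to V_\omega(\Omega)$ and $V_{\omega_j}(\tilde\Omega^c) \to V_\omega(\Omega^c) > 0$, and the desired inequality for such $u$ follows.

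Third, to remove the restrictions on $u$, I would proceed as in the proof of Theorem \ref{sobana}. Split $u \in W^{1,2}_0(\Omega)$ as $u = u_+ - u_-$, truncate to $u_A^{\pm} = \min(u_\pm, A) \in W^{1,2}_0(\Omega) \cap L^\infty$ using Kato's inequality, and multiply by the cutoff $\eta_\epsilon \in C^\infty(X)$ from Lemma \ref{cutoff}, which vanishes near $\cS_{X,\omega}$ and satisfies $\int_X |\nabla \eta_\epsilon|^2_\omega\, \omega^n < \epsilon$. The calculation \mref{w12app} shows $\|\nabla(\eta_\epsilon u_A^{\pm})\|^2_{L^2(\omega)} \to \|\nabla u_A^{\pm}\|^2_{L^2(\omega)}$ as $\epsilon \to 0$. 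Apply the inequality from the previous step to $\eta_\epsilon u_A^{\pm}$, then send $\epsilon \to 0$ and $A \to \infty$ by monotone convergence, and combine $u_+$ and $u_-$ via Minkowski's inequality.

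The main obstacle is controlling $V_{\omega_j}(\tilde\Omega^c)$ from below uniformly in $j$: weak convergence of $\omega_j^n$ to $\pi^*\omega^n$ alone does not preclude loss of mass to $D$ in the limit. This is where the equi-integrability supplied by \mref{appr6} (or \mref{appr7}) is essential; together with the fact that $\omega^n$ is absolutely continuous with respect to $\theta_Y^n$ on $Y \setminus D$, it ensures the convergence $V_{\omega_j}(\tilde\Omega^c) \to V_\omega(\Omega^c) > 0$, so that the constant in the inequality from the first step remains bounded in the limit.
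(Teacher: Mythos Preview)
Your proposal is correct and follows essentially the same route as the paper: reduce to $u\in C^1_0(\Omega\setminus\cS_{X,\omega})$ via the cutoffs $\eta_\epsilon$, apply Lemma \ref{lemma 5} on $(Y,\omega_j)$ over $\pi^{-1}(\Omega)$, and let $j\to\infty$. One small remark: in your second step the appeal to weak convergence plus ``no mass on $\partial\tilde\Omega$'' is unnecessary (and not automatic for arbitrary $\Omega$); the equi-integrability you invoke in the last paragraph actually gives $e^{F_j+c_j}\to e^F$ in $L^1(Y,\theta_Y^n)$ by Vitali, hence $V_{\omega_j}(E)\to V_\omega(E)$ for every measurable $E$, which is all you need.
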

\begin{proof}
The argument is similar to that in Theorem \ref{sobana} by making use of the inequality \nref{eqn:local S1}, so we only outline the proof. Taking $u_\epsilon = u \eta_\epsilon$ if necessary, we may assume $u\in C^1_0(\Omega\backslash \cS_{X,\omega})$, and identify it with $\pi^*u\in C^1_0(\pi^{-1}(\Omega))$. We can then apply the local Sobolev inequality \nref{eqn:local S1} to $u$ over $\pi^{-1}(\Omega)\subset Y$ with the approximating K\"ahler metrics $\omega_j$. Letting $j\to\infty$ gives \nref{eqn:localSob1}.
\end{proof}
As a corollary of Theorem \ref{sobana}, we have the following Sobolev embedding theorem.
\begin{corollary}\label{cor embedding}
For any $1\le \hat q< q$, the continuous embedding $W^{1,2}(X,\omega) \hookrightarrow L^{2 \hat q}(X,\omega)$ is compact. 
\end{corollary}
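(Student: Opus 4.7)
The plan is to split the statement into the continuous embedding (which is essentially contained in Theorem \ref{sobana}) and the compactness assertion, with the latter being the substantive part. For the continuous embedding, Theorem \ref{sobana} gives $W^{1,2}(\hat X,d,\omega^n)\hookrightarrow L^{2q}(\hat X,\omega^n)$, and since $V_\omega<\infty$, H\"older's inequality yields $L^{2q}\hookrightarrow L^{2\hat q}$ for every $1\le \hat q\le q$. So only compactness needs work.

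The strategy for compactness is a reduction to $L^2$-precompactness via interpolation. For $1\le \hat q<q$, H\"older's inequality gives
\[
\|v\|_{L^{2\hat q}(\hat X,\omega^n)}\le \|v\|_{L^2(\hat X,\omega^n)}^{\theta}\,\|v\|_{L^{2q}(\hat X,\omega^n)}^{1-\theta},\qquad \theta=\frac{q-\hat q}{\hat q(q-1)}\in(0,1].
\]
Given a sequence $\{u_j\}$ bounded in $W^{1,2}(\hat X,d,\omega^n)$, Theorem \ref{sobana} forces $\|u_j\|_{L^{2q}}$ to be uniformly bounded. Hence once a subsequence converges in $L^2(\hat X,\omega^n)$, the above inequality automatically promotes it to convergence in $L^{2\hat q}(\hat X,\omega^n)$. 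So everything reduces to extracting an $L^2$-convergent subsequence.

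For the $L^2$-precompactness I would exhaust $X\setminus\cS_{X,\omega}$ by an increasing sequence of smoothly bounded compact subdomains $\mathcal{K}_1\subset\mathcal{K}_2\subset\cdots$ with $\bigcup_m \mathcal{K}_m=X\setminus\cS_{X,\omega}$. Because the analytic set $\cS_{X,\omega}$ has real codimension at least two and $\omega^n$ puts no mass on it, one has $V_\omega(X\setminus\mathcal{K}_m)\to 0$ as $m\to\infty$. On each $\mathcal{K}_m$, the metric $\omega$ is a smooth K\"ahler metric (Proposition \ref{omapp}) and by the definition of $\|\cdot\|_{W^{1,2}(\hat X,d,\omega^n)}$ the restrictions $\{u_j|_{\mathcal{K}_m}\}$ are uniformly bounded in $W^{1,2}(\mathcal{K}_m,\omega)$. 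Classical Rellich--Kondrachov on the smooth compact Riemannian domain $(\mathcal{K}_m,\omega)$ then extracts a subsequence converging in $L^2(\mathcal{K}_m,\omega^n)$; a standard diagonal argument produces a single subsequence $\{u_{j_k}\}$ converging in $L^2(\mathcal{K}_m,\omega^n)$ for every $m$.

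To upgrade this to $L^2(\hat X,\omega^n)$ convergence I would use the Sobolev bound to control the tail:
\[
\int_{X\setminus \mathcal{K}_m}|u_{j_k}-u_{j_l}|^2\,\omega^n\le V_\omega(X\setminus \mathcal{K}_m)^{1-1/q}\|u_{j_k}-u_{j_l}\|_{L^{2q}(\hat X,\omega^n)}^{2}\le C\,V_\omega(X\setminus \mathcal{K}_m)^{1-1/q},
\]
which tends to $0$ as $m\to\infty$ uniformly in $k,l$. Combining this with Cauchyness on each $\mathcal{K}_m$ shows $\{u_{j_k}\}$ is Cauchy in $L^2(\hat X,\omega^n)$, finishing the argument. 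The only subtle point is constructing the exhaustion $\{\mathcal{K}_m\}$ so that Rellich applies cleanly; this is handled by taking $\mathcal{K}_m$ to be smoothly bounded open sets lying compactly in the smooth locus $X\setminus\cS_{X,\omega}$, which exists since $\cS_{X,\omega}$ is a closed analytic subvariety of $X$.
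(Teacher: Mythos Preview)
Your proof is correct and follows essentially the same strategy as the paper: exhaust the smooth locus $X\setminus\cS_{X,\omega}$ by compact pieces, apply classical Rellich--Kondrachov on each piece, run a diagonal argument, and control the tail using the uniform $L^{2q}$ bound from Theorem \ref{sobana}. The only tactical differences are that the paper multiplies by cut-off functions $\eta_\ell$ (so it must track the $|\nabla\eta_\ell|$ contribution to the $W^{1,2}$ norm) and works directly in $L^{2\hat q}$, whereas you restrict to $\mathcal K_m$ and first reduce to $L^2$-compactness via the interpolation inequality $\|v\|_{L^{2\hat q}}\le \|v\|_{L^2}^{\theta}\|v\|_{L^{2q}}^{1-\theta}$; both variants are standard and your reduction is a mild simplification.
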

\begin{proof} Take a sequence of cut-off functions $\eta_j$ such that for each $j$, $\eta_j \le \eta_{j+1}$, ${\mathcal K_j} \subset {\mathcal K_{j+1}}$, $\cup_j {\mathcal K_j} = X\backslash \cS_{X,\omega}$, where ${\mathcal K}_j = {\mathrm{supp}}(\eta_j) \subset X\backslash \cS_{X,\omega}$ is the compact support of $\eta_j$. Moreover, 
$$\int_{X} (1-\eta_j)\omega^n \le \frac{1}{j}\to 0.$$ 
Such cut-off functions always exist because the measure $\omega^n$ does not charge mass on $\cS_{X,\omega}$. To prove Corollary \ref{cor embedding}, we need to show that for any sequence of functions $\{f_a\}_{a = 1}^\infty \subset W^{1,2}(X,\omega)$ with $\sup_a \| f_a\|_{W^{1,2}(X,\omega)}<\infty$, there exists a convergent subsequence in $L^{\hat q}(X,\omega)$. We observe first that by the Sobolev inequality in Theorem \ref{sobana},
\begin{equation}\label{eqn:uniform1}
M:=\sup_a \| f_a\|_{L^{2q}(X,\omega)} \le C\sup_a \| f_a\|_{W^{1,2}(X,\omega)}<\infty. 
\end{equation}
We next consider the sequence of functions $\{f_a \eta_{1}\}_{a}$ which are compactly supported in the smooth manifold $({\mathcal K}_2^{\circ},\omega)$ and uniformly bounded\footnote{The $W^{1,2}({\mathcal K}_2^{\circ},\omega)$-bound of $\{f_a \eta_{1}\}_{a}$ may depend on $|\nabla \eta_1|$, but it is independent of $a$.} in $W^{1,2}({\mathcal K}_2^{\circ},\omega)$. By the standard Sobolev precompact embedding theorem on smooth manifolds, we see that there is a subsequence $\{f_{a_1} \eta_1\}_{a_1\in {\mathcal A}_1}$ which converges in $L^{2\hat q}(X,\omega)$. We then argue similarly on the sequence of functions $\{f_{a_1}\eta_2\}_{a_1\in{\mathcal A}_2}$ which are uniformly bounded in $W^{1,2}({\mathcal K}_3^{\circ},\omega)$, hence there is a subsequence of functions $\{f_{a_2}\eta_{2}\}_{a_2\in {\mathcal A}_2}$ which converges in  $L^{2\hat q}(X,\omega)$. Continuing this way, for any $k$ we can find a sequence of functions $\{f_{a_k} \eta_k\}_{a_k\in {\mathcal A}_k}$ which converges in $L^{2\hat q}(X,\omega)$, and the index sets fulfill ${\mathcal A}_1\supset {\mathcal A}_2\supset \cdots \supset {\mathcal A}_k$. By the classical diagonal argument, we can pick a sequence of functions $\{f_{a_k}\}_{k=1}^\infty$ such that for any fixed $\ell$, the sequence $\{f_{a_k} \eta_\ell\}_{k=1}^\infty$ is  convergent in $L^{2\hat q}(X,\omega)$. We claim that the sequence $\{f_{a_k}\}_{k}$ is also convergent in $L^{2\hat q}(X,\omega)$. It suffices to show $\{f_{a_k}\}$ is a Cauchy sequence in $L^{2\hat q}(X,\omega)$. To see this, for any $\epsilon>0$ we take $\ell = \big[(4M \epsilon^{-1})^{2q\hat q/(q-\hat q)}\big]+ 1$. The sequence $\{f_{a_k} \eta_\ell\}_{k=1}^\infty$ is Cauchy in $L^{2\hat q}(X,\omega)$. Hence there is a $K = K(\epsilon,\ell)>0$ such that for any $k', k\ge K$, $\|f_{a_k} \eta_\ell - f_{a_{k'}} \eta_\ell\|_{\scriptscriptstyle L^{2\hat q}}\le \frac{\epsilon}{4} $. Then we have
\bea\nonumber
\| f_{a_k} - f_{a_{k'}}\|_{\scriptscriptstyle  L^{2\hat q}}&\le & \| f_{a_k} - f_{a_k}\eta_\ell\|_{\scriptscriptstyle L^{2\hat q}} + \| f_{a_k} \eta_\ell - f_{a_{k'}} \eta_\ell\|_{\scriptscriptstyle L^{2\hat q}} + \| f_{a_{k'}} - f_{a_{k'}}\eta_\ell\|_{\scriptscriptstyle L^{2\hat q}}\\
&\le & \| f_{a_k}\|_{\scriptscriptstyle L^{2q}} \| 1- \eta_\ell\|_{\scriptscriptstyle L^{\frac{q}{q-\hat q}}}^{1/2\hat q} +  \| f_{a_{k'}}\|_{\scriptscriptstyle L^{2q}} \| 1- \eta_\ell\|_{\scriptscriptstyle L^{\frac{q}{q-\hat q}}}^{1/2\hat q} + \frac{\epsilon}{4} \nonumber\\
&\le &\frac{\epsilon}{4} + \frac{\epsilon}{4} + \frac{\epsilon}{4} < \epsilon. \nonumber
\eea
This proves the claim and hence the Corollary.
\end{proof}

\begin{proposition} \label{proposition 7.1} The space $C^\infty_0(X\setminus \cS_{X, \omega})$ is  dense in $W^{1, 2}(\hat X, d, \omega^n)$.  In particular, $C^\infty(X)\cap W^{1,2}$ is dense in  $W^{1, 2}(\hat X, d, \omega^n)$ if we identify every $u\in C^\infty(X)$ as a smooth function on $\hat X$ by trivial extension of $u|_{X\setminus \cS_{X, \omega}}$ . 
\end{proposition}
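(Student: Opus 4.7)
The plan is to approximate an arbitrary $u \in W^{1,2}(\hat X, d, \omega^n)$ by elements of $C^\infty_0(X\setminus \cS_{X,\omega})$ in three successive reductions: truncation to bounded functions, cutting off near $\cS_{X,\omega}$, and mollification on the smooth manifold $X\setminus \cS_{X,\omega}$. Then I would deduce the second assertion as a direct corollary.

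First I would reduce to bounded $u$. Set $u_A = \max(-A, \min(A, u))$. By the standard Kato-type identity on the smooth manifold $X\setminus \cS_{X,\omega}$, one has $\nabla u_A = \chi_{\{|u|\leq A\}}\nabla u$ a.e., and $|u_A|\leq |u|$. Dominated convergence then gives $\|u - u_A\|_{W^{1,2}(\hat X, d, \omega^n)} \to 0$ as $A \to \infty$. Next, for a fixed bounded $v\in W^{1,2}\cap L^\infty$, apply Lemma \ref{cutoff} with $E = \cS_{X,\omega}$ to obtain $\eta_\epsilon \in C^\infty(X)$ satisfying $0\leq \eta_\epsilon\leq 1$, $\eta_\epsilon \equiv 0$ on a neighborhood of $\cS_{X,\omega}$, $\eta_\epsilon \equiv 1$ on a compact set $\mathcal{K}_\epsilon \subset\subset X\setminus \cS_{X,\omega}$ with $\mathcal{K}_\epsilon$ exhausting $X\setminus \cS_{X,\omega}$, and $\int_X|\nabla \eta_\epsilon|^2_\omega \omega^n < \epsilon$. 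Repeating the expansion \nref{w12app} from the proof of Lemma \ref{sobapp2},
\begin{equation*}
\int_X |\nabla(\eta_\epsilon v - v)|^2_\omega\, \omega^n \leq \epsilon\|v\|_{L^\infty}^2 + \int_{X\setminus \mathcal{K}_\epsilon} |\nabla v|^2_\omega\, \omega^n + 2\epsilon^{1/2}\|v\|_{L^\infty}\Big(\int_X|\nabla v|^2_\omega\omega^n\Big)^{1/2},
\end{equation*}
which, combined with $\|\eta_\epsilon v - v\|_{L^2}\to 0$ by dominated convergence, shows $\eta_\epsilon v\to v$ in $W^{1,2}(\hat X,d,\omega^n)$. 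Crucially, $\eta_\epsilon v$ has compact support in the smooth open manifold $X\setminus \cS_{X,\omega}$.

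Finally, for $w \in W^{1,2}$ with compact support $\mathrm{supp}(w) \subset \subset X\setminus \cS_{X,\omega}$, the metric $\omega$ is smooth on a neighborhood of $\mathrm{supp}(w)$ by Proposition \ref{omapp}. Cover $\mathrm{supp}(w)$ by finitely many holomorphic coordinate charts $\{U_i\}$ contained in $X\setminus \cS_{X,\omega}$, pick a subordinate smooth partition of unity $\{\chi_i\}$, and apply standard Friedrichs mollification to each $\chi_i w$ in local coordinates. Since $\omega$ has smooth coefficients on each $U_i$, the usual argument yields mollifications $w^{(\delta)}\in C^\infty_0(X\setminus \cS_{X,\omega})$ converging to $w$ in $W^{1,2}(X\setminus \cS_{X,\omega}, \omega)$, hence in $W^{1,2}(\hat X, d, \omega^n)$ under the identification of Definition \ref{sobdef}. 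A diagonal extraction in the parameters $A$, $\epsilon$, $\delta$ produces the desired approximating sequence.

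For the second assertion, observe that any $w \in C^\infty_0(X\setminus \cS_{X,\omega})$ vanishes identically on an open neighborhood of $\cS_{X,\omega}$, and since $\mathrm{Sing}(X) \subset \cS_{X,\omega}$, the trivial extension of $w$ by zero is a function on $X$ that is smooth in the sense of Definition \ref{defn:6.1}: near any singular point of $X$, $w$ pulled back through any local embedding is identically zero, hence smooth. Thus $C^\infty_0(X\setminus \cS_{X,\omega}) \subset C^\infty(X)\cap W^{1,2}(\hat X,d,\omega^n)$, and the density of $C^\infty(X)\cap W^{1,2}$ follows. The only subtle step is the second one, where the cross term $\int \eta_\epsilon v\,\langle \nabla\eta_\epsilon, \overline{\nabla v}\rangle_\omega\omega^n$ must vanish in the limit; this is precisely where the boundedness of $v$ obtained in step one is essential, justifying the ordering of the reductions.
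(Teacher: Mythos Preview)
Your proof is correct and follows essentially the same three-step strategy as the paper: truncation to reduce to $L^\infty$, multiplication by the cut-off $\eta_\epsilon$ from Lemma~\ref{cutoff} to reduce to compact support in $X\setminus\cS_{X,\omega}$, and standard mollification on the smooth part. The paper orders the first two steps slightly differently (it assumes $u\in L^\infty$ first and handles the truncation at the end), but the substance and the key use of Lemma~\ref{cutoff} to control the cross term via $\|v\|_{L^\infty}\,\|\nabla\eta_\epsilon\|_{L^2}$ are identical.
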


\begin{proof} Let $u\in W^{1,2}(\hat X, d, \omega^n)$. For any $\epsilon>0$, we choose a cut-off function $\eta_\epsilon$ as in Lemma \ref{cutoff} with support $\mathcal{K}_\epsilon\subset\subset X\setminus \cS_{X, \omega}$. We can assume that $\mathcal{K}_\epsilon \rightarrow X\setminus \cS_{X, \omega}$ increasingly as $\epsilon \rightarrow 0$. 

We will first assume $u\in L^\infty(\hat X)$.  Let $u_\epsilon = \eta_\epsilon u$. Since $\omega$ is smooth on $X\setminus\cS_{X, \omega}$, there exists   $v_\epsilon \in C_0^\infty(X\setminus \cS_{X, \omega})$ such that
$$\|v_\epsilon - u_\epsilon \|_{W^{1,2}(\hat X, d, \omega^n)} < \epsilon.$$
Obviously, $$\lim_{\epsilon \rightarrow 0} \|u_\epsilon - u\|_{L^2(X, \omega^n)} = 0. $$
By the calculations in (\ref{w12app}), we have 
{\bea\nonumber
\lim_{\epsilon\rightarrow 0} \int_X |\nabla (u_\epsilon -u) |^2_\omega \omega^n \le \lim_{\epsilon\to 0} 2 \int_X (1-\eta_\epsilon)^2 |\nabla u|^2_\omega \omega^n + 2\| u\|_{L^\infty} \int_X |\nabla \eta_\epsilon|^2_\omega \omega^n = 0
\eea
}
%
Hence, we have
$$
{\lim_{\epsilon \rightarrow 0} \int_X ( |\nabla (u_\epsilon -u) |^2 + (u - u_\epsilon)^2) \omega^n = 0.}
$$
Immediately, we have $\lim_{\epsilon \rightarrow 0}\|v_\epsilon - u\|_{L^2(X, \omega^n)} = 0$ by the triangle inequality. 

In general, we can always replace $u$ by {$\tilde u_A= (u_+)_A - (u_-)_A$, where $f_A := \min(u, f)$ for a function $f$ and constant $A\geq 0$}. Then
\begin{equation}\label{eqn:uA}\int_X \chi_{\scriptscriptstyle \{|u|\ge A\}}\omega^n  = \int_{X \cap \{ |u|\geq A\} } \omega^n\le  \frac{1}{A^2} \int_X u^2 \omega^n \rightarrow 0, \quad \mbox{as }A\to\infty.
\end{equation} Hence $\chi_{\scriptscriptstyle \{|u|\ge A\}} \to 0$ in the a.e. sense on $(X,\omega^n)$. 
It then follows from dominated convergence theorem that for $u\in W^{1,2}(X,\omega)$
$$\int_X |u - \tilde u_A|^2 \omega^n = \int_X |u|^2 \chi_{\scriptscriptstyle \{|u|\ge A\}} \omega^n\to 0, $$ and 
$$\int_X |\nabla(u - \tilde u_A)|^2_\omega \omega^n \le \int_X |\nabla u|^2_\omega \chi_{\scriptscriptstyle \{|u|\ge A\}} \omega^n\to 0,$$
as $A\to \infty$. We then can apply the same argument earlier for $\tilde u_A\in L^\infty(\hat X)$. This completes the proof of the proposition.
%
\end{proof}

By applying similar arguments, we can also show that $W^{1,2}\cap C^\infty(X\setminus \cS_{X, \omega}) \cap L^\infty(\hat X)$ is also dense in $W^{1,2}(\hat X, d, \omega^n)$.  If we denote $W^{1,2}_0(X,\omega)$ the Hilbert space obtained by the completion of $C^{\infty}_0(X\backslash \cS_{X,\omega})$ under the norm $\| \cdot\|_{W^{1,2}(X,\omega)}$, then Proposition \ref{proposition 7.1} implies that 
$$W^{1,2}(X,\omega) = W^{1,2}_0(X,\omega). $$
%


\section{Noncollapsing and diameter bounds for $ \mathcal{AK}$-currents} \label{singdiasec}
\setcounter{equation}{0}

Let $(X, \theta_Y)$ be an $n$-dimensional projective normal variety equipped with a smooth K\"ahler metric $\theta_Y$. Let $\cS_{X, \omega}$ be the union of the singular set of $X$ and  $\log \frac{(\pi^*\omega)^n}{(\theta_Y)^n}$.  We let $(\hat X, d)$ be the metric completion of $(X\setminus \cS_{X, \omega}, \omega)$ since $\omega$ is a smooth K\"ahler metric on $X\setminus \cS_{X, \omega}$. The natural question is  \redn{whether} $(\hat X, d)$  is a compact metric space.

 We first verify that the distance function on $\hat X$ with a fixed base point  is Lipschitz. 
 
\begin{lemma} \label{badis} For any \redn{fixed point} $p\in \hat X$, the function 
$$f(x) = d(p, x)$$
is a $1$-Lipschitz function on $(\hat X, d)$. 

\end{lemma}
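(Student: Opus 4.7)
The plan is to observe that this is essentially immediate from the triangle inequality, once we recall that $(\hat X, d)$ has already been set up as a genuine metric space, namely the metric completion of $(X \setminus \cS_{X,\omega}, d)$ defined via the infimum of $\omega$-lengths of piecewise differentiable curves in $X \setminus \cS_{X,\omega}$ (cf.\ \eqref{eqn:dist}). In particular, $d$ satisfies the triangle inequality and is finite-valued on $\hat X \times \hat X$ (indeed, uniformly bounded once the diameter estimate of Theorem \ref{singsob}(1) is in hand, though finiteness alone is all that is needed for Lipschitz continuity).

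Given this, I would simply apply the triangle inequality in two directions. For any $x, y \in \hat X$,
\begin{equation*}
d(p, x) \le d(p, y) + d(y, x), \qquad d(p, y) \le d(p, x) + d(x, y),
\end{equation*}
and combining these yields $|f(x) - f(y)| = |d(p,x) - d(p,y)| \le d(x,y)$, which is precisely the $1$-Lipschitz property with respect to the metric $d$.

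There is essentially no obstacle here: the statement does not depend on the K\"ahler or analytic structure beyond the fact that $d$ is a bona fide metric on $\hat X$, which is guaranteed by the construction of $(\hat X, d)$ as a metric completion. The only point worth flagging is that one should check $d$ is well-defined (i.e.\ finite) between arbitrary pairs of points of $\hat X$; this is automatic from the completion construction together with the fact that $X \setminus \cS_{X,\omega}$ is connected (since $\cS_{X,\omega}$ is a proper analytic subvariety of the connected normal space $X$, hence of real codimension at least $2$ in the smooth locus), so any two points of $X \setminus \cS_{X,\omega}$ are joined by a piecewise differentiable curve of finite $\omega$-length, and this extends by density to $\hat X$.
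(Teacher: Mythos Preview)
Your proof is correct and essentially identical to the paper's: both arguments amount to the triangle inequality $|d(p,x)-d(p,y)|\le d(x,y)$ on the metric space $(\hat X,d)$. The paper additionally reduces first to $p\in X\setminus\cS_{X,\omega}$ by density, but this step is not really needed once one works directly on the completion, as you do.
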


\begin{proof} We can assume $p\in X\setminus \cS_{X, \omega}$ since any point of $\hat X$ can always be approximated by \redn{a sequence of points in $X\setminus \cS_{X,\omega}$}. For any $x, y \in X\setminus \cS$, we have by the triangle inequality
$$  - d(x, y) \leq    f(x) - f(y) \leq d(x, y). $$
Then lemma immediately follows. 
\end{proof}


The volume measure $\omega^n = e^F (\theta_Y)^n$ can be uniquely extended to a volume measure to $\hat X$ by natural extension. Hence we will consider the metric measure space
$$(\hat X, d, \omega^n). $$
We also define the Sobolev space 
$$W^{1,2}(\hat X, d, \omega^n) =W^{1, 2}(X, \omega). $$
by extending $f\in W^{1,2}(X, \omega)$ from \redn{$X\setminus \mathcal{S}_{X,\omega}$} to $\hat X$ trivially. By Lemma \ref{badis}, for any $p\in \hat X$, the distance function $d(p, \cdot) \in \redn{W^{1, \infty}_{loc}(\hat X, d, \omega^n)}$. The Sobolev inequality in Theorem \ref{sobana} also holds for $\redn{W^{1,2}}(\hat X, d, \omega^n)$ with the integration calculated on \redn{$X\setminus \cS_{X,\omega}$}.

We will prove the following non-collapsing result for $(\hat X, d, \omega^n)$.

\begin{proposition}  \label{noncolloc}There exists $\kappa=\kappa(n, A, K, \gamma)>0$ such that for any point $p\in \hat X$ and $r\in (0, 1)$,  
we have
\begin{equation}\label{eqn:noncol}\frac{{\mathrm{Vol}}_\omega(B_d (p, r))}{V_\omega} \geq \min\{\frac{1}{2}, \kappa {r^{\frac{2q}{q-1}}}\}, \end{equation}
where $q>1$ is the Sobolev exponent in Theorem \ref{anasob} and  $B_d(p, r)$ is the geodesic ball of radius $r$ centered at $p$ with respect to $d$ and $\omega^n$. 

\end{proposition}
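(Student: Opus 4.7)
The plan is to apply the Sobolev inequality of Theorem \ref{sobana} to a Lipschitz cutoff built from the distance function $d(p,\cdot)$ and then iterate the resulting volume comparison along dyadic scales. Throughout, I write $V(\rho) := \mathrm{Vol}_\omega(B_d(p,\rho))$.

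First, I fix a smooth point $p \in X \setminus \cS_{X, \omega}$ and consider $\phi_r(x) := (1 - d(p,x)/r)_+$. By Lemma \ref{badis} this lies in $W^{1,2}(\hat X, d, \omega^n)$ with $\mathrm{supp}(\phi_r) \subset \overline{B_d(p,r)}$, $\phi_r \geq 1/2$ on $B_d(p,r/2)$, $\phi_r \leq 1$, and $|\nabla \phi_r|_\omega \leq 1/r$ almost everywhere on $X \setminus \cS_{X,\omega}$. Plugging $\phi_r$ into Theorem \ref{sobana} and using these bounds gives, for $r \in (0,1]$,
\[
\tfrac{1}{4} V(r/2)^{1/q} \;\leq\; C\bigl(r^{-2} + 1\bigr) V(r) \;\leq\; \tfrac{2C}{r^2} V(r),
\]
which rearranges to the key inequality $V(r) \geq (r^2/(8C)) V(r/2)^{1/q}$. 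Iterating this along $r_k := r_0/2^k$ and telescoping yields
\[
V(r_0) \;\geq\; (8C)^{-s_N}\, r_0^{2 s_N}\, 2^{-2 t_N}\, V(r_N)^{1/q^N},
\]
where $s_N = \sum_{k=0}^{N-1} q^{-k} \to q/(q-1)$ and $t_N = \sum_{k=0}^{N-1} k q^{-k} \to q/(q-1)^2$ as $N \to \infty$ (since $q>1$). By Proposition \ref{omapp}, $\omega$ is a genuine smooth K\"ahler metric on $X\setminus \cS_{X,\omega}$, so the Euclidean-type local geometry near the smooth base point $p$ gives $V(r_N) \geq c_p r_N^{2n}$ for all sufficiently small $r_N$; combined with $N/q^N \to 0$ this forces $V(r_N)^{1/q^N} \to 1$. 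Passing to the limit produces $V(r_0) \geq \kappa_0 r_0^{2q/(q-1)}$ with $\kappa_0 > 0$ independent of $p$.

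To handle an arbitrary $p \in \hat X$, pick smooth $p_j \in X\setminus \cS_{X,\omega}$ with $d(p_j, p) < r/2$ (available since $\hat X$ is the metric completion of $X\setminus \cS_{X,\omega}$); the inclusion $B_d(p_j, r/2) \subset B_d(p, r)$ gives $V(B_d(p,r)) \geq \kappa_0 (r/2)^{2q/(q-1)}$. Finally, Khovanskii--Teissier applied to the nef classes $[\pi^*\omega]$ and $[\theta_Y]$ on $Y$ bounds $V_\omega = [\pi^*\omega]^n$ above by a constant depending only on $A$ and $[\theta_Y]^n$; dividing through absorbs this factor into a $\kappa = \kappa(n,A,K,\gamma) > 0$ with $V(B_d(p,r))/V_\omega \geq \kappa r^{2q/(q-1)}$, which implies the claimed bound (the $1/2$ term in the minimum is only the trivial cap coming from $V(B_d(p,r)) \leq V_\omega$).

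The main technical subtlety is justifying $V(r_N)^{1/q^N} \to 1$ as $N \to \infty$: this requires a polynomial lower bound on local volume growth at the base point, which is straightforward at smooth points but genuinely unavailable at singular $p \in \hat X$. That is precisely why the proof first establishes the bound at smooth base points (where Proposition \ref{omapp} supplies the needed Euclidean-like asymptotics) and only afterwards transfers it to $\hat X$ by the density/approximation step above.
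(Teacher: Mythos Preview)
Your argument is correct and reaches the same conclusion, but the route differs from the paper's in one substantive respect. The paper applies the \emph{local} Sobolev inequality \nref{eqn:localSob1} (no $L^2$ term, valid for $u\in W^{1,2}_0(B_d(p,r))$ with constant $C(1+V_\omega(\Omega)/V_\omega(\Omega^c))$); this forces the case split $V(r)/V_\omega\le 1/2$ and is the actual origin of the $\tfrac12$ in the minimum. The resulting iteration is already normalized by $V_\omega$, so the paper obtains $V(r)/(V_\omega r^{2q/(q-1)})\ge\kappa$ directly, with no need to bound $V_\omega$ from above. You instead use the \emph{global} Sobolev inequality of Theorem \ref{sobana}, whose $L^2$ term you absorb into the gradient term for $r\le 1$; this sidesteps the case split and in fact yields the slightly stronger statement $V(r)\ge \kappa_0 r^{2q/(q-1)}$ without the $\min$, but then you must divide through by $V_\omega$ and invoke Khovanskii--Teissier on the nef classes $[\pi^*\omega],[\theta_Y]$ to control $V_\omega\le A^n/([\theta_Y]^n)^{n-1}$. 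Both the dyadic iteration, the passage $V(r_N)^{1/q^N}\to 1$ at smooth base points, and the approximation step for general $p\in\hat X$ are the same in spirit. Your approach buys a cleaner inequality (no $\tfrac12$) at the cost of the extra intersection-theoretic input; the paper's buys a self-contained argument already scaled by $V_\omega$ at the cost of the preliminary local Sobolev lemma.
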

We remark that if $q=\frac{n}{n-1}$, then \nref{eqn:noncol} would be the expected
$$\frac{\mbox{Vol}_\omega(B_d (p, r))}{V_\omega} \geq \kappa {r^{2n}}. $$
\begin{proof} First we assume {$p\in X\setminus \cS_{X,\omega}$. For a given radius $r>0$, either $\frac{{\mathrm{Vol}}_\omega(B_d (p, r))}{V_\omega} \ge 1/2$ or $\frac{{\mathrm{Vol}}_\omega(B_d (p, r))}{V_\omega}\le 1/2$. We assume the latter inequality holds, so $\frac{{\mathrm{Vol}}_\omega(B_d (p, r))}{{\mathrm{Vol}}_\omega(X\backslash B_d (p, r))} \le 1 $. So by \nref{eqn:localSob1} the following local Sobolev inequality holds
\begin{equation}\label{eqn:localSob2}
\Big( \frac{1}{V_\omega}\int_{B_d(p,r)} | u|^{2q} \omega^n  \Big)^{1/q} \le  \frac{C}{V_\omega} \int_{B_d(p,r)} |\nabla u|^2_\omega \omega^n\end{equation}
for all $ u\in W^{1,2}_0(B_d(p,r))$.
We let $\rho: \mathbb{R} \rightarrow [0, \infty) $ be the standard cut-off function with $\rho=1$ on $(-\infty, 1/2)$ and $\rho=0$ on $[1, \infty)$. We let 
$$u(x)= \rho( d(p, x)/r).$$   
Then $u$ is a Lipschitz function with compact support in $B_d(p,r)$, and we also have
$\sup_{\hat X} |\nabla u|\leq 4 r^{-1}. $ Applying \nref{eqn:localSob2} to this function $u$, we obtain
$$
\Big(\frac{{\mathrm{Vol}}_\omega(B_d (p, r/2))}{V_\omega} \Big)^{1/q} \le C \frac{{\mathrm{Vol}}_\omega(B_d (p, r))}{r^2 V_\omega},
$$
for some uniform constant $C>0$ independent of $r$. This is equivalent to 
\begin{equation}\label{eqn:iteration1}
c_0\Big(\frac{{\mathrm{Vol}}_\omega(B_d (p, r/2))}{(r/2)^{\frac{2q}{q-1}} V_\omega  }\Big)^{1/q} \le  \frac{{\mathrm{Vol}}_\omega(B_d (p, r))}{r^{\frac{2q}{q-1}}V_\omega},
\end{equation} 
for some uniform constant $c_0>0$. We now apply \nref{eqn:iteration1} to the radii $r_m = 2^{-m} r$ with $m=0,1,2,\cdots,\ell$ and iterate the resulted inequalities to obtain 
\begin{equation}\label{eqn:iteration2}
c_0^{\sum_{m=0}^\ell q^{-m}}\Big(\frac{{\mathrm{Vol}}_\omega(B_d (p, 2^{-\ell-1} r))}{(2^{-\ell-1} r)^{\frac{2q}{q-1}} V_\omega  }\Big)^{1/q^{\ell + 1}} \le  \frac{{\mathrm{Vol}}_\omega(B_d (p, r))}{r^{\frac{2q}{q-1}}V_\omega}.
\end{equation} 
We observe that the left-hand side of \nref{eqn:iteration2} satisfies $\lim_{\ell\to\infty} c_0^{\sum_{m=0}^\ell q^{-m}} = c_0^{\frac{q}{q-1}}$, and
{\small
\bea\nonumber
&& \lim_{\ell \to \infty} \Big(\frac{{\mathrm{Vol}}_\omega(B_d (p, 2^{-\ell-1} r))}{(2^{-\ell-1} r)^{\frac{2q}{q-1}} V_\omega  }\Big)^{\frac{1}{q^{\ell+1}}}\\
 & = \nonumber& \lim_{\ell\to\infty} \Big(\frac{{\mathrm{Vol}}_\omega(B_d (p, 2^{-\ell-1} r))}{(2^{-\ell-1} r)^{2n} V_\omega  }\Big)^{\frac{1}{q^{1+\ell}}}  r^{(2n - \frac{2q}{q-1})/q^{\ell+1}} 2^{-(\ell+1)(2n - \frac{2q}{q-1})/q^{\ell+1}} = 1,
\eea
since $p$ is chosen to be a smooth point where $\omega$ is locally smooth, hence when $\ell$ is sufficiently large the metric ball $B_d(p,2^{-\ell + 1} r)$ is approximately Euclidean, so 
$$ \lim_{\ell\to\infty} \frac{{\mathrm{Vol}}_\omega(B_d (p, 2^{-\ell-1} r))}{(2^{-\ell-1} r)^{2n} V_\omega  } = \frac{c_n}{V_\omega},$$ where $c_n$ is the volume of unit ball in ${\mathbb C}^n$.
}
}

In general, if $p\in \hat X$, for any $r>0$ and $0<\epsilon<< r$, there exists $p_\epsilon\in X\setminus \cS_{X,\omega}$ such that
$$d(p, p_\epsilon)< \epsilon$$ and 
$$ \frac{Vol(B(p, r)) }{ r^{2q/(q-1)}} \geq \frac{Vol(B(p_\epsilon, r-2\epsilon)) }{ r^{2q/(q-1)}}\geq \kappa \Big( \frac{r-2\epsilon}{r} \Big)^{2q/(q-1)} \rightarrow \kappa$$
as we take $\epsilon \rightarrow 0$. We have completed the proof of the proposition. 
\end{proof}

The non-collapsing estimate in Proposition \ref{noncolloc} immediately implies the following diameter estimate.

\begin{corollary} There exists $C=C(n, A,p,K, \gamma)>0$ such that if $\omega \in \mathcal{AK}(n, A,p,K, \gamma)$, 
$$diam(\hat X, d) \leq C. $$
In particular,  $(\hat X, d, \omega^n)$ is a compact metric space.

\end{corollary}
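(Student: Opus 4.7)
The plan is to derive the diameter bound from the noncollapsing estimate of Proposition \ref{noncolloc} via a classical ball-packing argument. First I fix a small radius $r \in (0,1)$ chosen so that $\kappa r^{2q/(q-1)} \leq 1/2$, in which case the noncollapsing bound of Proposition \ref{noncolloc} reads $\mathrm{Vol}_\omega(B_d(p,r)) \geq \kappa r^{2q/(q-1)} V_\omega$ for every $p \in \hat X$. Then I select a maximal $2r$-separated set $\{p_1,\ldots,p_N\} \subset \hat X$, so that $d(p_i,p_j) \geq 2r$ for $i \neq j$. Since the balls $B_d(p_i,r)$ are pairwise disjoint and $\omega^n$ does not charge $\cS_{X,\omega}$, summing the noncollapsing inequality yields the cardinality bound $N \leq (\kappa r^{2q/(q-1)})^{-1}$, which is a uniform constant once $r$ is fixed.

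Next I translate this packing bound into a diameter bound by a connectedness argument. By maximality, the dilated balls $B_d(p_i, 2r)$ cover $\hat X$. Form the graph $\Gamma$ on $\{p_1, \ldots, p_N\}$ with an edge $p_i \sim p_j$ whenever $d(p_i, p_j) < 4r$. I claim $\Gamma$ is connected. Suppose $\Gamma$ splits into components $\mathcal{C}_1, \ldots, \mathcal{C}_m$; the open sets $U_\ell := \bigcup_{p_i \in \mathcal{C}_\ell} B_d(p_i, 2r)$ would then cover $\hat X$ and be pairwise disjoint, since $p_i \in \mathcal{C}_\ell$, $p_j \in \mathcal{C}_{\ell'}$ with $\ell \neq \ell'$ satisfies $d(p_i,p_j) \geq 4r$, forcing $B_d(p_i,2r) \cap B_d(p_j,2r) = \emptyset$ by the triangle inequality. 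Because $(\hat X, d)$ is the metric completion of the connected smooth manifold $X \setminus \cS_{X, \omega}$ (treating irreducible components of $X$ separately if necessary), $\hat X$ is connected and $m = 1$.

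Finally, given any $x, y \in \hat X$, pick $p_i, p_j$ with $d(x,p_i), d(y,p_j) < 2r$ and a path $p_i = p_{i_0}, \ldots, p_{i_k} = p_j$ in $\Gamma$ with $k \leq N-1$. Each edge has length $<4r$, so the triangle inequality gives
\begin{equation*}
d(x,y) \leq 2r + 4r(N-1) + 2r \leq 4rN \leq \frac{4}{\kappa}\,r^{-(q+1)/(q-1)},
\end{equation*}
a uniform constant depending only on $n, p, A, K, \gamma$. Compactness follows immediately: the same packing argument applied to an arbitrary radius $r' \in (0, r]$ shows that $(\hat X, d)$ is totally bounded, and it is complete by construction, hence compact. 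The main subtlety of the proof is the connectedness of $\Gamma$, which rests on the connectedness of $\hat X$ as the completion of a connected manifold; once this is in hand, the rest is a routine volume-comparison computation.
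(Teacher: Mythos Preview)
Your proof is correct and is precisely the standard ball-packing argument that the paper leaves implicit when it says the noncollapsing estimate ``immediately implies'' the diameter bound. The paper gives no further details, so your write-up is the natural elaboration of what is intended; there is nothing to compare beyond noting that you have made explicit the connectedness step (via the graph $\Gamma$) that one needs to pass from a bound on the cardinality of a separated net to an actual diameter bound.
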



\section{Heat kernel estimates for $ \mathcal{AK}$-currents} \label{section heat}
\setcounter{equation}{0}

Let $X$ be an $n$-dimensional normal K\"ahler variety. Let $\omega \in \mathcal{AK}(X, \theta_Y, n, A, p, K, \gamma)$. Since $\omega \in C^\infty(X\setminus \cS_{X, \omega})$, we would like to define the heat kernel of the Laplacian $\Delta_\omega$  by the following parabolic equation 
$$\ddt{} H(x, y, t) = \Delta_{\omega, y} H(x, y, t), ~ \lim_{t\rightarrow 0^+} H(x, y, t) = \delta_x(y)$$
for $x, y \in Y^\circ = \pi^{-1}(X\setminus \cS_{X, \omega})$. However, we have to show the existence and uniqueness of $H(x, y, t)$ and how it can be extended naturally to $\hat X \times \hat X \times [0, \infty)$.

We will apply Proposition \ref{omapp} with $\{ \omega_j\}_{j=1}^\infty $ as the approximating K\"ahler metrics for $\pi^*\omega$ on $Y$, the suitable blow-up of $X$ as in (\ref{logres}). 
We will then consider the heat kernel $H_j(x, y, t)$ \redn{associated to $\omega_j$} on $Y\times Y\times [0, \infty)$. Theorem \ref{thm:heat} immediately implies the following lemma. 
\begin{lemma}\label{heatapp} There exists $C>0$ such that
$$\sup_{x,y\in Y}H_j(x, y, t) \leq \frac{1}{V_{\omega_j}} + \frac{C}{V_{\omega_j}} t^{-\frac{q}{q-1}}. $$
\end{lemma}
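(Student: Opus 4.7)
The plan is to reduce the statement directly to Theorem \ref{thm:heat} by verifying that the approximating metrics $\omega_j$ constructed in Section \ref{maregsec} lie in a uniform admissible class $\mathcal{W}(n,p,A',K',\gamma')$ on $Y$. Indeed, from the construction \nref{omj} we have $[\omega_j] = \pi^*[\omega] + \epsilon_j[\theta_Y]$, so the intersection $[\omega_j]\cdot[\theta_Y]^{n-1}$ is bounded above by a constant depending only on $A$ and $[\theta_Y]$ (say for $\epsilon_j \le 1$). The relative volume satisfies $e^{F_{\omega_j}} \ge \frac{1}{2}\pi^*\gamma$ by item (1) of the approximation in Section \ref{maregsec}, and this lower bound $\gamma' = \frac{1}{2}\pi^*\gamma$ satisfies the Hausdorff condition \nref{eqn:gamma} because $\pi$ is a proper modification. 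The $p$-th Nash-Yau entropy $\mathcal{N}_p(\omega_j)$ is uniformly bounded in $j$ by the $L^1(\log L)^p$ bound \nref{appr6} on $e^{F_j}$, using that the normalizing constants $c_j$ are uniformly bounded (as follows from the standard comparison of $\int e^{F_j}(\theta_Y)^n$ and $[\omega_j]^n$).

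With this verification, Theorem \ref{thm:heat} applied to $\omega_j$ yields the on-diagonal bound
\begin{equation*}
H_j(x,x,t) \le \frac{1}{V_{\omega_j}} + \frac{C}{V_{\omega_j}} I_{\omega_j}^{\frac{q}{q-1}} t^{-\frac{q}{q-1}},
\end{equation*}
and since $I_{\omega_j}$ is uniformly bounded by a constant depending only on $A$ and $[\theta_Y]$, the factor $I_{\omega_j}^{q/(q-1)}$ can be absorbed into $C$. This gives the lemma at $x = y$.

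To pass from the diagonal to the off-diagonal supremum, I would invoke the standard Cauchy--Schwarz argument using the semi-group and symmetry properties of the heat kernel: for any $x,y \in Y$,
\begin{equation*}
H_j(x,y,t) = \int_Y H_j(x,z,t/2)\,H_j(y,z,t/2)\,\omega_j^n(z) \le H_j(x,x,t)^{1/2}\, H_j(y,y,t)^{1/2},
\end{equation*}
so $\sup_{x,y} H_j(x,y,t) \le \sup_z H_j(z,z,t)$, and the bound for arbitrary $x,y$ follows from the diagonal estimate.

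The only non-routine point is confirming that all of the parameters $A', K', \gamma'$ for the admissible class of $\omega_j$ on $Y$ are genuinely uniform in $j$; this is essentially contained in the construction of Section \ref{maregsec} but should be recorded explicitly, particularly the uniform entropy bound which uses the approximation property \nref{appr6} together with $L^\infty$-bounds on $\varphi_j$ from Lemma \ref{02nd}. Once these uniformities are in place, the lemma is a direct consequence of Theorem \ref{thm:heat} and the Cauchy--Schwarz reduction above.
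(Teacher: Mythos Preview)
Your proposal is correct and follows essentially the same approach as the paper: apply Theorem \ref{thm:heat} to the approximating metrics $\omega_j$ (which lie in a uniform $\mathcal{W}$-class, as already noted in the proof of Lemma \ref{sobcps}) to get the diagonal bound, then reduce the off-diagonal case to the diagonal. The only cosmetic difference is that the paper uses the arithmetic-mean inequality $H_j(x,y,t)\le \tfrac{1}{2}H_j(x,x,t)+\tfrac{1}{2}H_j(y,y,t)$ in place of your Cauchy--Schwarz/semi-group argument, but both yield $\sup_{x,y}H_j(x,y,t)\le \sup_z H_j(z,z,t)$.
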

\begin{proof}
This lemma follows from the diagonal estimates for heat kernel in Theorem \ref{thm:heat} and the elementary inequality:
$$H_j(x,y,t ) \le \frac{1}{2} H_j(x,x,t) + \frac{1}{2} H_j(y,y,t).$$
\end{proof}
%
\redn{ In particular, for any $\epsilon>0$, there exists $C=C(\epsilon)>0$ such that for all $j \geq 1$, 
$$\sup_{Y\times Y\times[\epsilon, \infty)} H_j(x, y, t) \leq C. $$
}

The following lemma immediately follows by the standard interior estimates for linear parabolic equations as $\omega_j$ converges smoothly to $\omega$ on $Y^\circ$. 

\begin{lemma} For any $\epsilon>0$, $k>0$ and $\cK\subset\subset Y^\circ$, there exists $C>0$ such that for all $j\geq 1$, 
$$ \| H_j(x, y, t)\|_{C^k(\cK \times \cK \times [\epsilon, \infty))} \leq C,$$
\redn{where the $C^k$ norm of $H_j$ is with respect to the fixed metric $\pi^*\omega|_{\mathcal K}$.}
\end{lemma}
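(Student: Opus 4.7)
The plan is to combine the uniform $L^\infty$-bound on $H_j$ from Lemma 9.1 with the smooth convergence $\omega_j \to \pi^*\omega$ on compact subsets of $Y^\circ$ supplied by Proposition \ref{omapp}, and then run standard interior parabolic Schauder estimates. Fix a slightly larger compact set $\mathcal{K}'\subset\subset Y^\circ$ with $\mathcal{K}\subset\subset \mathcal{K}'$, and a shrunken time interval $[\epsilon/2,\infty)$. On $\mathcal{K}'$, Proposition \ref{omapp} gives uniform $C^k$ bounds (in $j$) on the coefficients of $\Delta_{\omega_j}$, while Lemma 9.1 gives $\sup_{Y\times Y\times [\epsilon/2,\infty)} H_j \le C(\epsilon)$.

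First I would control regularity in the $(y,t)$ variables. For each fixed $x$, the function $u^{(x)}_j(y,t):=H_j(x,y,t)$ satisfies the linear heat equation $\partial_t u = \Delta_{\omega_j,y} u$ on $Y\times(0,\infty)$. Interior parabolic Schauder estimates applied on the parabolic cylinder $\mathcal{K}'\times[\epsilon/2,\infty)$ then yield
\[
\|u^{(x)}_j\|_{C^k(\mathcal{K}\times[\epsilon,\infty))}\le C(\epsilon,k,\mathcal{K},\mathcal{K}')\,\sup_{\mathcal{K}'\times[\epsilon/2,\infty)}|u^{(x)}_j|\le C'(\epsilon,k,\mathcal{K},\mathcal{K}'),
\]
uniformly in $x$ and $j$. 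This bounds all derivatives $\partial^\alpha_y\partial_t^\ell H_j(x,\cdot,\cdot)$ uniformly.

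To introduce derivatives in the $x$-variable, I would invoke the symmetry $H_j(x,y,t)=H_j(y,x,t)$, which allows the previous argument to be repeated with the roles of $x$ and $y$ exchanged, giving uniform $C^k$ control of $\partial^\alpha_x H_j(\cdot,y,\cdot)$. To handle \emph{mixed} derivatives in $x$, $y$, and $t$ simultaneously, I would use the semigroup identity
\[
H_j(x,y,t)=\int_Y H_j(x,z,t/2)\,H_j(z,y,t/2)\,\omega_j^n(z),
\]
valid for $t\ge \epsilon$. Differentiating under the integral in $x$ (resp.\ $y,t$) transfers $x$-derivatives onto the first factor and $(y,t)$-derivatives onto the second, each of which is already controlled uniformly in $j$ by the previous steps; the integral is taken over $Y$ where $H_j\ge 0$ and $\int_Y H_j(x,z,t/2)\,\omega_j^n(z)=1$, so all integrals converge with bounds independent of $j$.

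The mild obstacle is that Schauder-type interior estimates are classically phrased relative to a single fixed background metric, whereas here both the operator and the volume form depend on $j$. Since the metrics $\omega_j$ converge smoothly to $\pi^*\omega$ on $\mathcal{K}'$ (Proposition \ref{omapp}), their coefficients and Christoffel symbols are bounded in every $C^k(\mathcal{K}')$ uniformly in $j$, and $\omega_j$ is uniformly equivalent to $\pi^*\omega|_{\mathcal{K}'}$. This reduces the estimates to a uniform family of uniformly parabolic operators with uniformly $C^k$ coefficients on a fixed domain, to which the classical interior estimates apply without change. The constant $C$ then depends only on $\epsilon$, $k$, $\mathcal{K}$, and the geometry of $\pi^*\omega$ on a fixed neighborhood $\mathcal{K}'$ of $\mathcal{K}$, as required.
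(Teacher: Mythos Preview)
Your proposal is correct and follows essentially the same approach as the paper: the paper's proof is a single sentence invoking standard interior parabolic estimates together with the uniform $L^\infty$ bound from Lemma~\ref{heatapp} and the smooth convergence $\omega_j\to\pi^*\omega$ on $Y^\circ$ from Proposition~\ref{omapp}. You have simply unpacked what ``standard interior estimates'' means in this context, including the handling of mixed $x$-$y$-$t$ derivatives via symmetry and the semigroup identity, which the paper leaves implicit.
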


\begin{lemma} \label{lemma 9.3} For any $x\in Y^\circ$, any open neighborhood $ U_x \subset Y^\circ$ of $x$, there exist $\delta>0$ and $C>0$ such that for all $j\geq 1$,
\begin{equation}\label{eqn:heatk1}
\sup_{y\in Y\backslash U_x}H_j(x, y, t) < C t^{-\frac{q}{q-1}} e^{-\frac{\delta}{t}} \chi_{\scriptscriptstyle (0,A]}(t) + C e^{-\frac{\delta}{t}} \chi_{\scriptscriptstyle(A, \infty)}(t).
\end{equation}
\end{lemma}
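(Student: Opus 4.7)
The plan is to derive the off-diagonal upper bound as a direct consequence of the Gaussian heat kernel estimate of Theorem~\ref{thm:heat} applied to each $\omega_j$, combined with a uniform positive lower bound on the $\omega_j$-distance $d_{\omega_j}(x,y)$ for $y \in Y\setminus U_x$. By Proposition~\ref{omapp} and the construction in Section~\ref{maregsec}, the metrics $\omega_j$ lie in $\mathcal{W}(n,p,A',K',\gamma')$ for uniform parameters, with $V_{\omega_j}$ bounded above and below and $I_{\omega_j}$ bounded above. Theorem~\ref{thm:heat} thus gives, with $j$-independent constants,
\[ H_j(x,y,t) \leq \frac{C}{V_{\omega_j}} \Big(\frac{I_{\omega_j}}{t}\Big)^{q/(q-1)} e^{-d_{\omega_j}(x,y)^2/(10t)} \text{ for } t \in (0,I_{\omega_j}], \]
and $H_j(x,y,t) \leq (C/V_{\omega_j}) e^{-d_{\omega_j}(x,y)^2/(10t)}$ for $t > I_{\omega_j}$.

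First I will establish the uniform lower bound $d_{\omega_j}(x,y) \geq \delta_0 > 0$ for $y \in Y \setminus U_x$. Since $x \in U_x \subset Y^\circ$, I choose $r > 0$ small enough that the $\pi^*\omega$-ball $B := B_{\pi^*\omega}(x,r)$ is relatively compact in $U_x$. By Proposition~\ref{omapp}, $\omega_j \to \pi^*\omega$ smoothly on the compact set $\overline B \subset Y^\circ$, so for $j \geq j_0$ large, $\omega_j \geq \tfrac{1}{2}\pi^*\omega$ on $\overline B$. Any path in $Y$ from $x$ to a point $y \in Y \setminus U_x$ must first cross $\partial B$, and up to that first crossing its $\omega_j$-length is at least $r/\sqrt 2$; this forces $d_{\omega_j}(x,y) \geq r/\sqrt 2$ for $j \geq j_0$. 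For the finitely many $j < j_0$, each $\omega_j$ is a smooth K\"ahler metric on compact $Y$, so $d_{\omega_j}(x, Y\setminus U_x) > 0$; taking the minimum yields the uniform constant $\delta_0 > 0$.

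Next I will substitute this into the Gaussian bound. Fix $\delta < \delta_0^2/10$ and split into $t \in (0,A]$ and $t > A$. For $t \leq A$: the first case of Theorem~\ref{thm:heat} directly produces the desired $C t^{-q/(q-1)} e^{-\delta/t}$ bound whenever $t \leq I_{\omega_j}$; in the intermediate range $I_{\omega_j} < t \leq A$ the second case yields $H_j \leq C e^{-\delta_0^2/(10t)}$, which is absorbed into the same form via the elementary inequality $e^{-\delta_0^2/(10t)} \leq A^{q/(q-1)} t^{-q/(q-1)} e^{-\delta/t}$ (valid because $t^{q/(q-1)} e^{(\delta-\delta_0^2/10)/t} \leq A^{q/(q-1)}$ for $t \leq A$). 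For $t > A \geq I_{\omega_j}$ (for $j$ large, and absorbing the finite exceptional $j$ into the constant), the second case of Theorem~\ref{thm:heat} directly gives $H_j \leq C e^{-\delta/t}$. Combining these cases produces the form stated in the lemma.

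The hard part will be the uniform distance lower bound in the second paragraph. The $\omega_j$ degenerate as $j \to \infty$, so a naive argument could lose control of $d_{\omega_j}(x,y)$ along paths that wander into regions of collapse. The point I will exploit is that all such collapse occurs along $Y \setminus Y^\circ$, away from compact subsets of $U_x \subset Y^\circ$. Localizing to a small $\pi^*\omega$-ball $B \subset\subset U_x$, on which smooth convergence $\omega_j \to \pi^*\omega$ holds, ensures that distances cannot collapse along the initial exit segment of any curve leaving $U_x$, which suffices to bound $d_{\omega_j}(x,y)$ from below.
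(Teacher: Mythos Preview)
Your proposal is correct and follows essentially the same approach as the paper: establish a uniform lower bound $d_{\omega_j}(x,y)\ge d_0>0$ for $y\in Y\setminus U_x$ via the local smooth convergence $\omega_j\to\pi^*\omega$ on a compact neighborhood of $x$ in $Y^\circ$, then plug this into the Gaussian bound of Theorem~\ref{thm:heat}. The paper's proof is terser (it does not spell out the case splitting in $t$ or the handling of the finitely many small $j$), but the content is the same; your more explicit treatment of the intermediate range $I_{\omega_j}<t\le A$ and of the tail $t>A$ is a welcome clarification rather than a different method.
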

\begin{proof} Since $x\in Y^\circ$, there exists $d_0>0$ such that for all $j\geq 1$ and $y\in Y \setminus U_x$, 
$$d_{\omega_j}(x, y) > d_0,$$ by the smooth convergence of $\omega_j|_{U_x}$ to $\pi^*\omega|_{U_x}$ if $U_x\subset Y^\circ$ is chosen sufficiently small.  
The lemma then follows from the uniform estimates in Theorem \ref{thm:heat}. 
\end{proof}

By taking a subsequence, we can assume that $H_j(x, y, t)$ converges \redn{locally} smoothly  on $$\left( Y^\circ \times Y^\circ \right) \times(0, \infty).$$ 
%
%
We let $H_\infty (x, y, t)$ be the limit of $H_j(x, y, t)$ defined on $(Y^\circ \times Y^\circ)\times \redn{(}0, \infty)$, after further taking a subsequence.  \redn{$H_\infty$} can be naturally extended to a locally \redn{(in time $t$)} bounded function on $Y\times Y\times (0, \infty)$.  

Straightforward arguments will give the following lemma by the construction of $H_\infty$ and the uniform estimates of $H_j$.  
%
\begin{lemma} \label{heatrep1} Let $H_\infty (x, y, t)$ be a sub-sequential  limit of $H_j(x, y, t)$. Then the following hold:
\begin{enumerate}
\item \label{item 1} $H_\infty \in C^\infty( (Y^\circ \times Y^\circ) \times (0, \infty)). $
%
%
\item\label{item 2} $H_\infty$ satisfies the same upper bounds as $H_j$ in Lemma \ref{heatapp}, i.e. 
\begin{equation}
\label{eqn:H inf}\sup_{x,y\in Y^\circ}H_\infty(x, y, t) \leq \frac{1}{V_{\omega}} + \frac{C}{V_{\omega}} t^{-\frac{q}{q-1}}. 
\end{equation}
\item\label{item 3} For all $t>0$ and $x\in Y^\circ$, 
$$\int_Y H_\infty (x, y, t) \omega^n(y) = 1. $$ 
\item \label{item 4}For any $x, y\in Y^\circ$, 
$$\lim_{t\rightarrow 0^+} H_\infty(x, y, t) = \delta_x(y). $$
\item\label{item 5} for any $(x, y, t) \in (Y^\circ \times Y^\circ) \times (0, \infty)$, 
$$\ddt{} H_\infty (x, y, t) = \Delta_{\omega, y} H_\infty (x, y, t). $$
\item \label{item 6}For a fixed $x\in Y^\circ$ and $t>0$,  the function $y\mapsto H_\infty(x,y, t)\in W^{1,2}(Y^\circ,\omega)$ and the following hold:
\begin{equation}\label{eqn:heat doc sing}
\int_{Y^\circ} |\nabla H_\infty(x,y,t)|_\omega^2 \omega^n \le  \frac{C}{V_\omega}  t^{-1 - \frac{q}{(q-1)}},
\end{equation}
\begin{equation}\label{eqn:heat dot2}
\sup_{x,y\in Y^\circ}|\dot H_\infty(x,y,t)| \le \frac{C}{V_\omega} t^{-1 - \frac{q}{q-1}}.
\end{equation}
Furthermore, $H_\infty$ satisfies the following semi-group property 
\begin{equation}\label{eqn:semigroup}
\int_{Y^\circ} H_\infty(x,z,t) H_\infty(y,z,s) \omega^n(z) = H_\infty(x,y, t+s),
\end{equation}
for all $x$,$y\in Y^\circ$ and for all $s$, $t >0$.
\end{enumerate}
\end{lemma}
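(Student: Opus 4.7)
My plan is to establish items (\ref{item 1}), (\ref{item 2}), (\ref{item 5}) first by direct passage to the limit. Since the $\omega_j$ converge locally smoothly to $\pi^*\omega$ on $Y^\circ$ by Proposition \ref{omapp}, and since Lemma \ref{heatapp} together with Lemma \ref{lemma 9.3} gives uniform pointwise bounds on $H_j$ on compact subsets of $Y^\circ \times Y^\circ \times (0,\infty)$, the standard interior parabolic Schauder estimates produce uniform $C^k$ bounds on $H_j$ for every $k$. Extracting a diagonal subsequence and arguing that the limit $H_\infty$ inherits the pointwise upper bound and satisfies the heat equation for $\omega$ on $Y^\circ$ is then routine. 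The bound $V_{\omega_j} \to V_\omega$ is immediate from $[\omega_j] \to [\pi^*\omega]$.

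For item (\ref{item 3}) I would split $\int_Y H_\infty(x,y,t)\,\omega^n(y)$ into an integral over a compact set $K \subset Y^\circ$ and over $Y \setminus K$. On $K$ the convergence $H_j \omega_j^n \to H_\infty \omega^n$ is smooth, while on $Y \setminus K$ Lemma \ref{heatapp} gives $H_j \le C(1 + t^{-q/(q-1)})/V_{\omega_j}$, so
\[
\int_{Y \setminus K} H_j\, \omega_j^n \le C\bigl(1 + t^{-q/(q-1)}\bigr) V_{\omega_j}^{-1} \int_{Y \setminus K}\omega_j^n,
\]
and the right-hand side tends to zero as $K$ exhausts $Y^\circ$ since $\omega_j^n \to \omega^n$ as measures and $\omega^n$ does not charge $\mathcal{S}_{X,\omega}$. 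Combined with $\int_Y H_j(x,\cdot,t)\omega_j^n = 1$ this yields item (\ref{item 3}). Item (\ref{item 4}) follows directly from the off-diagonal Gaussian estimate of Lemma \ref{lemma 9.3} (which persists in the limit for $x \neq y$) together with item (\ref{item 3}) by a standard delta-function criterion.

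For item (\ref{item 6}), the $L^2$ gradient bound comes from the identity
\[
\int_Y |\nabla_y H_j(x,y,t)|^2_{\omega_j}\,\omega_j^n(y) = -\tfrac{1}{2}\partial_t\bigl[\tilde H_j(x,x,2t)\bigr],
\]
or equivalently by Cauchy--Schwarz from \mref{eqn:heat dot} applied to $\omega_j$; Fatou's lemma on $Y^\circ$ then transfers the bound to $H_\infty$. The pointwise estimate \mref{eqn:heat dot2} is inherited from \mref{eqn:heat dot1} for $H_j$. For the semigroup property \mref{eqn:semigroup}, I would start from the analogous identity for $H_j$ on $Y$, write
\[
H_j(x,y,t+s) = \int_K H_j(x,z,t) H_j(y,z,s)\,\omega_j^n(z) + \int_{Y\setminus K} H_j(x,z,t) H_j(y,z,s)\,\omega_j^n(z),
\]
and control the second term using $\|H_j(y,\cdot,s)\|_{L^\infty} \le C(1+s^{-q/(q-1)})V_{\omega_j}^{-1}$ together with $\int_{Y\setminus K} H_j(x,\cdot,t)\omega_j^n \to 0$ as $K \to Y^\circ$, which was already established when proving item (\ref{item 3}). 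On $K$ the convergence is smooth, giving the identity in the limit.

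The main obstacle in this whole chain is the fact that the convergence $H_j \to H_\infty$ takes place only on compact subsets of the \emph{open} smooth locus $Y^\circ$, while the identities to be preserved (conservation of mass, semigroup property) are global integral identities on $Y$. The key technical ingredient addressing this is the uniform off-diagonal decay from Lemma \ref{lemma 9.3} combined with the uniform $L^\infty$ bound in $y$ from Lemma \ref{heatapp}: together these show that the mass of $H_j(x,\cdot,t)\omega_j^n$ concentrated near the singular locus $Y \setminus Y^\circ$ vanishes uniformly in $j$ as the exhaustion $K \nearrow Y^\circ$ proceeds, precisely because $\omega^n$ assigns zero measure to $\mathcal{S}_{X,\omega}$ and $\omega_j^n$ converges weakly to $\omega^n$.
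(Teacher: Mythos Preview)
Your proposal is correct and follows essentially the same route as the paper. The arguments for items (\ref{item 1}), (\ref{item 2}), (\ref{item 3}), (\ref{item 5}), and (\ref{item 6}) coincide with the paper's proof almost verbatim: locally smooth convergence for the first three of these, the splitting $Y = K \cup (Y\setminus K)$ with the uniform $L^\infty$ bound to push the singular tail to zero for (\ref{item 3}) and the semigroup identity, and integration by parts against $H_j$ (or equivalently your identity $\int|\nabla H_j|^2 = -\tfrac12\partial_t\tilde H_j(x,x,2t)$) plus Fatou for the gradient bound.

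The one genuine difference is item (\ref{item 4}). The paper tests against $\rho\in C^\infty_0(Y^\circ)$ and observes that $u_j(x,t)=\int H_j(x,y,t)\rho(y)\omega_j^n$ satisfies $|\partial_t u_j|=|\int H_j\Delta_{\omega_j}\rho\,\omega_j^n|\le C$, whence $|u_j(x,t)-\rho(x)|\le Ct$ and the Dirac property follows after $j\to\infty$. Your route instead uses the off-diagonal Gaussian decay from Lemma \ref{lemma 9.3} to show $H_\infty(x,\cdot,t)$ concentrates near $x$, then invokes the standard approximate-identity criterion together with conservation of mass from (\ref{item 3}) and nonnegativity of $H_\infty$. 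Both are valid; your argument is slightly more geometric and uses Lemma \ref{lemma 9.3} in an essential way, while the paper's is more self-contained (it does not actually need the Gaussian factor, only the uniform $L^\infty$ bound and the fact that $\Delta_{\omega_j}\rho$ is uniformly bounded on the compact support of $\rho$).
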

\begin{proof}
Items \nref{item 1}, \nref{item 2} and \nref{item 5} are immediate consequences of locally smooth convergence of $H_j$ to $H_\infty$ over $(Y^\circ\times Y^\circ)\times (0,\infty)$. To prove item \nref{item 3}, for any $\epsilon>0$ we take a small open neighborhood $U$ of $Y\backslash Y^\circ$ such that $\sup_j \int_{U} \omega_j^n < \epsilon.$ By the uniform convergence of $H_j(x,y,t)$ over $y\in Y^\circ\backslash U$, we have 
\bea\nonumber 1 \ge \int_{Y^\circ \backslash U} H_\infty(x,y,t) \omega^n(y) &=& \lim_{j\to\infty} \int_{Y^\circ \backslash U} H_j(x,y,t) \omega_j^n(y)\\
& = \nonumber &1 - \lim_{j\to\infty} \int_{ U} H_j(x,y,t) \omega_j^n(y)\\
&\ge & 1- \big(\frac{1}{V_{\omega}} + \frac{C}{V_{\omega}} t^{-\frac{q}{q-1}} \big) \epsilon \to 1 ,\nonumber
\eea
if we let $U$ shrink to $Y\backslash Y^\circ$ and then $\epsilon\to 0$. This proves item \nref{item 3}. For item \nref{item 4}, it suffices to show that for any test function $\rho\in C^\infty_0(Y^\circ)$, 
\begin{equation}\label{eqn:Dirac}
\lim_{t\to 0^+} \int_{Y^\circ} H_\infty(x,y,t) \rho(y) \omega^n(y) = \rho(x),
\end{equation}
for all  $ x\in Y^\circ$. Note that the function $u_j(x,t) = \int_{Y} H_j(x,y,t)\rho(y)\omega_j^n(y)$ satisfies $$\Big|\frac{\partial u_j}{\partial t}\Big| = \Big|\int_Y H_j(x,y,t) \Delta_{\omega_j} \rho(y) \omega_j^n(y)\Big| \le C,$$ where we have used $|\Delta_{\omega_j} \rho|\le C$ since $\rho$ is compactly supported in $Y^\circ$. Integrating this inequality gives $|u_j(x,t) - \rho(x)|\le C t$. It is clear that $u_j(x,t)\to \int_{Y^\circ} H_\infty(x,y,t) \rho(y)\omega^n(y)$ as $j\to\infty$. The equation in \nref{eqn:Dirac} follows easily. To see the last item \nref{item 6}, multiplying both sides of the heat equation $\partial_t H_j(x,y,t) = \Delta_{\omega_j} H_j(x,y,t)$ by $H_j(x,y,t)$ and applying integration by parts, we get
\bea \nonumber
\int_Y |\nabla H_j(x,y,t)|^2_{\omega_j} \omega_j^2 & = & - \int_Y \dot H_j(x,y,t) \tilde H_j(x,y,t) \omega^n\\
&\le \nonumber & \| \dot H_j(x,y,t)\|_{L^2(\omega_j)}\tilde H_j(x,x,2t)^{1/2}\\
&\le\label{eqn:heat dot 1} & \frac{C}{V_\omega} t^{-1 - \frac{q}{(q-1)}}.
\eea 
The inequality \nref{eqn:heat doc sing} follows by letting $j\to\infty$ in \nref{eqn:heat dot 1} and using Fatou's lemma. The semi-group property of $H_\infty$ in \nref{eqn:semigroup} follows by similar argument using the uniform upper bound of $H_j$.  The estimate \nref{eqn:heat dot2} follows from \nref{eqn:heat dot1} and locally smooth convergence of $\dot H_j$ to $\dot H_\infty$.
\end{proof}
We remark that \nref{eqn:Dirac} also holds for any $\rho\in C(Y^\circ)$. To see this, we first consider $\rho_\epsilon = \rho \eta_\epsilon\in C_0(Y^\circ)$ such that $x\in {\mathrm{supp}(\eta_\epsilon)}$ and $\eta_\epsilon(x) = 1$. Then we take a sequence of smooth functions $u_j\in C^\infty_0(Y^\circ)$ such that $\|u_j - \rho_\epsilon\|_{L^\infty} \le j^{-1}$. It then follows that
{\small
$$
u_j(x) = \lim_{t\to 0^+} \int_{Y^\circ} H_\infty(x,y,t) u_j(y) \omega^n(y) = \lim_{t\to 0^+} \int_{Y^\circ} H_\infty(x,y,t) \rho_\epsilon(y) \omega^n(y) + O(j^{-1}).
$$ 
} Letting $j\to\infty$ and then $\epsilon\to 0$ gives the equation \nref{eqn:Dirac} for $\rho\in C(Y^\circ)$.

%

\begin{corollary} Suppose there exists another heat kernel $\hat H(x,y,t)$ satisfying the conditions in Lemma \ref{heatrep1}.  Then we have 
$$\hat H(x, y, t) = H_\infty (x, y, t)$$
for all $x,y \in Y^\circ$.
Hence the heat kernel $H_\infty$ is unique in this sense.
\end{corollary}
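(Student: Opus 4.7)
The plan is a Duhamel-type argument. Fix $x, y \in Y^\circ$ and $t > 0$, and set
$$F(s) := \int_{Y^\circ} H_\infty(x, z, t-s)\, \hat H(y, z, s)\, \omega^n(z), \qquad s \in (0, t).$$
I will show $F$ is constant on $(0, t)$ and then identify its one-sided limits: $F(s) \to H_\infty(x, y, t)$ as $s \to 0^+$ and $F(s) \to \hat H(y, x, t)$ as $s \to t^-$. This yields the cross-identity $H_\infty(x, y, t) = \hat H(y, x, t)$; combining it with the symmetry $H_\infty(x, y, t) = H_\infty(y, x, t)$, inherited from the symmetric smooth kernels $H_j$ on $Y$ through the locally smooth convergence in Section \ref{section heat}, and then relabeling, one concludes $\hat H(x, y, t) = H_\infty(x, y, t)$. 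The key feature of this choice of $F$ is that both $H_\infty(x, z, t-s)$ and $\hat H(y, z, s)$ satisfy the heat equation $\partial_\tau H = \Delta_{\omega, z} H$ in the \emph{integration} variable $z$ (item (\ref{item 5}) of Lemma \ref{heatrep1}), so the formal cancellation from integration by parts works without requiring symmetry of $\hat H$ itself.

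For the constancy of $F$, apply Lemma \ref{cutoff} with $E = \pi^{-1}(\cS_{X, \omega})$ to produce cutoffs $\eta_\epsilon \in C^\infty(Y)$ with $\eta_\epsilon = 0$ near $E$, $\eta_\epsilon \nearrow 1$ on $Y^\circ$, and $\int_Y |\nabla \eta_\epsilon|_\omega^2\, \omega^n \to 0$ as $\epsilon \to 0$. Define $F_\epsilon(s) := \int \eta_\epsilon^2 H_\infty(x, z, t-s) \hat H(y, z, s) \omega^n(z)$. Since $\eta_\epsilon^2$ is compactly supported in the smooth locus, differentiating in $s$ using item (\ref{item 5}) and integrating by parts legitimately gives, after the interior term $\int \eta_\epsilon^2 (\nabla \hat H \cdot \nabla H_\infty - \nabla H_\infty \cdot \nabla \hat H) \omega^n$ cancels,
$$F_\epsilon'(s) = 2 \int_Y \eta_\epsilon \bigl(\hat H\, \nabla H_\infty - H_\infty\, \nabla \hat H\bigr) \cdot \nabla \eta_\epsilon\, \omega^n.$$
Cauchy--Schwarz together with the $L^\infty$ heat kernel bound (item (\ref{item 2})) and the $L^2$ gradient estimate (\ref{eqn:heat doc sing}) yield
$$|F_\epsilon'(s)| \leq C(s, t) \Bigl(\int_Y |\nabla \eta_\epsilon|_\omega^2\, \omega^n\Bigr)^{1/2},$$
with $C(s, t)$ finite and uniformly bounded for $s$ in any compact subinterval of $(0, t)$. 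Integrating over $s \in [s_1, s_2] \subset (0, t)$ and letting $\epsilon \to 0$ (with $F_\epsilon(s) \to F(s)$ by dominated convergence), one obtains $F(s_1) = F(s_2)$.

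For the endpoint $s \to t^-$, use $\int_{Y^\circ} H_\infty(x, z, t-s) \omega^n(z) = 1$ (item (\ref{item 3})) to write
$$F(s) - \hat H(y, x, t) = \int_{Y^\circ} H_\infty(x, z, t-s)\, \bigl[\hat H(y, z, s) - \hat H(y, x, t)\bigr] \omega^n(z),$$
and split the integral over a small $\omega$-ball $B_\omega(x, \eta) \cap Y^\circ$---on which the bracket is arbitrarily small by joint smoothness of $\hat H$ at $(y, x, t)$ (items (\ref{item 1}),(\ref{item 5}))---and its complement---on which $\int H_\infty(x, z, t-s) \omega^n(z) \to 0$ as $s \to t^-$ by the approximate identity property (item (\ref{item 4})). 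The limit $s \to 0^+$ is analogous: using $\int \hat H(y, z, s) \omega^n(z) = 1$ (item (\ref{item 3}) for $\hat H$ in its second variable) and item (\ref{item 4}) applied to $\hat H$, one gets $F(s) \to H_\infty(x, y, t)$.

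The principal obstacle is justifying the integration by parts at the singular locus. This is precisely what Lemma \ref{cutoff} was designed for: it provides cutoffs whose Dirichlet energy $\int |\nabla \eta_\epsilon|_\omega^2 \omega^n$ vanishes in the limit, a feature that rests on $\pi^{-1}(\cS_{X, \omega})$ lying in a subvariety of real codimension at least two. A secondary subtlety is checking that the time-singular $L^2$ gradient bound $\int |\nabla H|_\omega^2 \omega^n \leq C\tau^{-1-q/(q-1)}$ produces a constant $C(s,t)$ integrable in $s$; this is fine because the only divergences are at the endpoints $s = 0, t$, which are avoided in the constancy argument.
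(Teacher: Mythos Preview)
Your proof is correct and follows essentially the same Duhamel-type strategy as the paper: define the convolution $F(s)$, use the cutoffs of Lemma \ref{cutoff} together with the $L^\infty$ bound \nref{eqn:H inf} and the $L^2$ gradient estimate \nref{eqn:heat doc sing} to show $F$ is constant via integration by parts, and identify the endpoint limits through the Dirac property. The only cosmetic difference is that your parametrization yields the cross-identity $H_\infty(x,y,t)=\hat H(y,x,t)$ and then appeals to the symmetry of $H_\infty$ (inherited from the approximating $H_j$), whereas the paper's choice of variables lands directly on $H_\infty(x,y,T)=\hat H(x,y,T)$.
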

\begin{proof}
Take  cut-off functions $\eta_\epsilon$ as in Lemma \ref{cutoff}. Fix a time $T>0$. We claim that the function $$t\in (0,T)\mapsto \phi(t): = \int_{Y^\circ} H_\infty(x,z,t) \hat H(z, y, T-t) \omega^n(z)$$ is independent of $t$. Denote $\phi_\epsilon(t): = \int_{Y^\circ} \eta_\epsilon(z)H_\infty(x,z,t) \hat H(y, z, T-t) \omega^n(z)$. By integration by parts and using the heat equation, we have for any small $\delta>0$ and $t\in [\delta, T-\delta]$, there is a constant $C>0$ depending additionally on $\delta>0$ such that
$$
|\phi_\epsilon'(t)|  =  \Big| \int_{Y^\circ} (-\langle \nabla \eta_\epsilon, \nabla H_\infty\rangle \hat H + \langle \nabla \eta_\epsilon, \nabla \hat H \rangle H_\infty)\omega^n\Big| \le C \epsilon^{1/2},
$$
by using the H\"older's inequality together with the upper bounds of $\| \nabla H_\infty\|_{L^2}$ (and $\| \nabla \hat H\|_{L^2}$) and $H_\infty$ (and $\hat H$) in \nref{eqn:heat doc sing} and \nref{eqn:H inf}, respectively. Integrating this over $[t_1, t_2] \subset [\delta, T-\delta]$ gives 
$$|\phi_\epsilon(t_1) - \phi_\epsilon(t_2)| \le C  \epsilon^{1/2}.$$ Letting $\epsilon\to 0$ yields $\phi(t_1) = \phi(t_2)$. Since $\delta>0$ is an arbitrary constant, this easily implies that $\phi(t)$ remains a constant for $t\in (0,T)$. By the Dirac property \nref{eqn:Dirac} of $H_\infty$ and $\hat H$, we see that $\phi(t)\to \hat H(x,y, T)$ as $t\to 0^+$ and $\phi(t)\to H_\infty(x,y, T)$ as $t\to T^-$. This finishes the proof of the uniqueness of the heat kernel.
\end{proof}

\section{A spectral theorem for normal K\"ahler spaces}\label{section 10}
\setcounter{equation}{0}

Let $(X, \theta_Y)$ be a normal \redn{K\"ahler} variety of $\dim X =n$, equipped with a smooth K\"ahler metric $\theta_Y$. Let $\omega\in \mathcal{AK}(X, \theta_Y, n, A, p, K, \gamma)$. We consider the Laplace operator $\Delta=\Delta_\omega$ with respect to $\omega$ on $X^\circ:=X\setminus \cS_{X,\omega}$. The following lemma is a simple consequence of the integration by parts.

\begin{lemma} The linear operator $-\Delta: C^\infty_0(X^\circ) \to C^\infty_0(X^\circ)$ is self-adjoint, i.e.  for any $u,v\in C^\infty_0(X^\circ)$, $\int_X (- \Delta  u) v \omega^n = \int_X (-\Delta  v) u \omega^n $.

\end{lemma}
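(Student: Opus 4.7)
The plan is essentially a direct invocation of integration by parts on the smooth locus, where no subtle analysis near $\cS_{X,\omega}$ is needed. By Proposition \ref{omapp}, $\omega$ is a genuine smooth Kähler metric on the complex manifold $X^\circ = X\setminus \cS_{X,\omega}$, so $\Delta_\omega$ acts on $C^\infty_0(X^\circ)$ in the classical sense, and for $u\in C^\infty_0(X^\circ)$ the function $\Delta_\omega u$ is again smooth with compact support contained in $\mathrm{supp}(u) \subset X^\circ$. Thus the operator really does map $C^\infty_0(X^\circ)$ to itself, and the integrals in the statement are unambiguous.

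First I would fix $u,v \in C^\infty_0(X^\circ)$ and set $K = \mathrm{supp}(u) \cup \mathrm{supp}(v)$, which is a compact subset of the smooth Kähler manifold $(X^\circ,\omega)$. Choose a relatively compact open subset $\Omega \subset\subset X^\circ$ with smooth boundary such that $K$ lies in the interior of $\Omega$; such $\Omega$ always exists because $X^\circ$ is a smooth manifold and $K$ is compact in $X^\circ$. On $\Omega$, the standard divergence theorem for the smooth Riemannian metric associated to $\omega$ applies, and since $u$ and $v$ (together with all their derivatives) vanish identically in a neighborhood of $\partial\Omega$, the boundary terms disappear. This yields
\begin{equation*}
\int_\Omega (-\Delta_\omega u)\, v\, \omega^n \;=\; \int_\Omega \langle \nabla u, \nabla v\rangle_\omega\, \omega^n \;=\; \int_\Omega (-\Delta_\omega v)\, u\, \omega^n.
\end{equation*}

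Finally, because the integrands on both ends of this chain vanish on $X \setminus \Omega$ (they are products in which at least one factor is supported in $K \subset \Omega$), and because $\omega^n$ does not charge the proper analytic subset $\cS_{X,\omega}$, each of these integrals over $\Omega$ coincides with the corresponding integral written as $\int_X (\cdot)\,\omega^n$ in the statement. This completes the symmetry identity. There is no real obstacle: the entire content of the lemma is that compact support in $X^\circ$ lets one pretend the singularities are not there, and the argument is just one application of integration by parts on a smooth Kähler manifold.
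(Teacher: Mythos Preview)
Your proof is correct and follows exactly the approach the paper indicates: the paper simply remarks that the lemma ``is a simple consequence of the integration by parts'' and gives no further argument, so your write-up is a faithful expansion of that one-line justification.
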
 
Note $-\Delta: L^2(X,\omega) \to L^2(X,\omega)$ is a linear and densely defined operator. The associated bilinear quadratic form of $ - \Delta$ is given by $Q(u,u) = \int_X |\nabla u|_\omega^2 \omega^n$, for any $u \in C^\infty_0(X^\circ)$. Let ${\mathcal D}(Q)$ be the completion of $C^\infty_0(X^\circ)$ in $L^2(X,\omega)$ under the norm $\| u\|_{L^2} + Q(u,u)^{1/2}$, which is equivalent to the $W^{1,2}(X, \omega)$-norm of $u$. Hence by Proposition \ref{proposition 7.1}, ${\mathcal {D}}(Q) = W_0^{1,2}(X,\omega)  = W^{1,2}(X,\omega)$. We then let $(-\Delta)^{F}: L^2(X,\omega)\to L^2(X,\omega)$ be the {\em Friedrich extension} of $-\Delta$, which is  a self-adjoint, densely defined and nonnegative operator. We note that the domain of $(-\Delta)^F$ is a subspace of ${\mathcal {D}}(Q)= W^{1,2}(X,\omega)$  (c.f. \cite{MM}). To ease notations, we will omit the superscript $F$ and simply write $(-\Delta)^F$ as $-\Delta$. 

\begin{theorem}
There exists a countable set $\Sigma\subset {\mathbb R}_{\ge 0}$ such that the equation
$$-\Delta u = \lambda u,\quad\mbox{with}\quad  u\in W^{1,2}(X,\omega)$$ admits a unique solution iff $\lambda \not\in \Sigma$. Furthermore, we can write $\Sigma = \{\lambda_k\}$ with \begin{equation}\label{eqn:eigen}0 = \lambda_0< \lambda_1\le \lambda_2\le\cdots.\end{equation}
\end{theorem}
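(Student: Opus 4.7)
The plan is to realize the sequence $\{\lambda_k\}$ as the point spectrum of the Friedrich extension $(-\Delta)^F$ introduced in the preceding paragraph, and to reduce discreteness of the spectrum to compactness of the Sobolev embedding established in Corollary \ref{cor embedding}. First I would fix $\mu>0$ and consider the symmetric continuous bilinear form
\[
B_\mu(u,v) = \int_{X\setminus \cS_{X,\omega}} \langle \nabla u,\nabla v\rangle_\omega\,\omega^n + \mu\int_{X\setminus \cS_{X,\omega}} uv\,\omega^n
\]
on $\mathcal{D}(Q)=W^{1,2}(X,\omega)$. The associated norm is precisely the $W^{1,2}$-norm, so Lax--Milgram yields a bounded linear operator $R_\mu:L^2(X,\omega)\to W^{1,2}(X,\omega)$ with $B_\mu(R_\mu f,v)=\int fv\,\omega^n$ for all $v\in W^{1,2}(X,\omega)$.

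Next I would compose $R_\mu$ with the inclusion $\iota:W^{1,2}(X,\omega)\hookrightarrow L^2(X,\omega)$, which is compact by Corollary \ref{cor embedding} (taking $\hat q=1<q$). The resulting operator $T_\mu:=\iota\circ R_\mu:L^2\to L^2$ is then compact, and the symmetry of $B_\mu$ together with the coercivity bound $B_\mu(u,u)\ge\mu\|u\|_{L^2}^2$ makes it self-adjoint and positive. The classical spectral theorem for compact self-adjoint operators then yields an $L^2$-orthonormal basis $\{\phi_k\}_{k\ge 0}$ of eigenfunctions of $T_\mu$ with eigenvalues $\nu_0\ge\nu_1\ge\cdots>0$ and $\nu_k\to 0$. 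Setting $\lambda_k:=\nu_k^{-1}-\mu$, a direct unpacking of $B_\mu(\phi_k,v)=\nu_k^{-1}\int\phi_k v\,\omega^n$ shows that each $\phi_k$ lies in the domain of $(-\Delta)^F$ and satisfies $-\Delta\phi_k=\lambda_k\phi_k$; conversely, any $W^{1,2}$-eigenfunction of $-\Delta$ with eigenvalue $\lambda$ is an eigenfunction of $T_\mu$ with $\nu=(\lambda+\mu)^{-1}$. Thus $\Sigma=\{\lambda_k\}_{k\ge 0}$ is discrete with $\lambda_k\to\infty$, and for $\lambda\notin\Sigma$ the resolvent $((-\Delta)^F-\lambda I)^{-1}$ exists as a bounded operator on $L^2$.

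Finally I would verify the normalization $0=\lambda_0<\lambda_1$. The constant function $1\in W^{1,2}(X,\omega)$ satisfies $B_0(1,v)=0$ for all $v$, so $0$ is an eigenvalue. Conversely, if $u\in W^{1,2}(X,\omega)$ satisfies $-\Delta u=0$ weakly, then $\int|\nabla u|_\omega^2\omega^n=0$, so $\nabla u=0$ a.e. on $X\setminus \cS_{X,\omega}$. Standard interior elliptic regularity applied on relatively compact subsets of $X\setminus \cS_{X,\omega}$ (where $\omega$ is smooth by Proposition \ref{omapp}) shows $u\in C^\infty(X\setminus \cS_{X,\omega})$, so $u$ is locally constant; since the (irreducible) normal variety $X$ has connected smooth locus, $X\setminus \cS_{X,\omega}$ is connected and $u$ must be constant, giving $\dim\ker(-\Delta)=1$ and thus the strict inequality $\lambda_0<\lambda_1$.

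The main technical point is precisely the compactness of $\iota$: it is the one place where the geometric hypotheses on $\omega\in\mathcal{AK}(X,\theta_Y,n,p,A,K,\gamma)$ and the Sobolev inequality of Theorem \ref{sobana} enter the argument. Once that compactness is in hand, the rest is a routine application of the Lax--Milgram and spectral theorems, together with the identification $W^{1,2}(X,\omega)=W^{1,2}_0(X,\omega)$ from Proposition \ref{proposition 7.1}, which guarantees that no ``boundary'' conditions at $\cS_{X,\omega}$ are lost when passing from the operator defined on $C_0^\infty(X\setminus \cS_{X,\omega})$ to its Friedrich extension.
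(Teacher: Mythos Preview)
Your argument is correct and follows essentially the same route as the paper: both proofs apply Lax--Milgram to the shifted form (the paper takes $\mu=1$), invoke the compact embedding of Corollary \ref{cor embedding}, and then appeal to the spectral theory of the resulting compact operator on $L^2$ (you use the spectral theorem for compact self-adjoint operators, the paper uses the Fredholm Alternative---equivalent here). The one genuine difference is your treatment of the simplicity of $\lambda_0=0$: you argue directly that a weak solution of $-\Delta u=0$ in $W^{1,2}$ satisfies $\int|\nabla u|_\omega^2\,\omega^n=0$ and hence is locally constant on the connected set $X\setminus\cS_{X,\omega}$, whereas the paper forward-references Lemma \ref{lemma 10.2}, which deduces constancy via the heat kernel representation and the bound $\|f\|_{L^\infty}\le Ct^{-q/2(q-1)}\|f\|_{L^2}$ as $t\to\infty$. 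Your argument is more elementary and self-contained for this particular point; the paper's approach has the advantage of simultaneously yielding the $L^\infty$ bounds on eigenfunctions needed later.
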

\begin{proof}
We consider the linear operator $Lu = -\Delta u + u$. The associated quadratic bilinear form $Q_1(u,u)$ of $L$ satisfies
$$ Q_1(u,u) = Q(u,u) + \| u\|^2_{L^2(X,\omega)} =  \| u\|^2_{W^{1,2}(X,\omega)}.$$ Hence $Q_1$ is bounded and coercive on the Hilbert space $W^{1,2}(X,\omega)$. By the Lax-Milgram Theorem, the inverse $L^{-1}$ of $L$ exists, and it is a continuous and injective map from $(W^{1,2})^\dagger$ to   $W^{1,2}$. For any $u\in W^{1,2}$, we define $I: W^{1,2} \to (W^{1,2})^\dagger$ by $Iu(v) = \int_X u v \omega^n$, which can be decomposed as $I = I_1 \circ I_2$, where $I_2: W^{1,2} \to L^2(X,\omega)$ is the natural embedding, and $I_1: L^2(X,\omega)\to (W^{1,2})^\dagger$ is defined similarly as $I$. By the compact embedding theorem in Corollary \ref{cor embedding}, $I_2$ is compact. It is also clear that $I_1$ is continuous, hence $I$ is compact. If we denote $T = L^{-1}\circ I, $ then we have
\bea
\nonumber
\lambda \in \Sigma & \Longleftrightarrow & (-\Delta - \lambda I) \mbox{\, is not injective}\nonumber\\
& \Longleftrightarrow & (L - (\lambda + 1) I) \mbox{ is not injective}\nonumber\\
\mbox{(by left multiplication of $L^{-1}$) }& \Longleftrightarrow & (Id - (\lambda + 1) T) \mbox{ is not injective}\nonumber\\
& \Longleftrightarrow & \lambda+1 \neq 0 \, \mbox{and $\frac{1}{1+\lambda}\in \sigma_p(T)$},\nonumber
\eea
where $\sigma_p(T)$ denotes the set of eigenvalues of the {\em compact operator} $T$, which is countable and each number in $\sigma_p(T)$ has finite multiplicity by the Fredholm Alternative. This shows that $\Sigma$ is a discrete countable set, and its elements correspond to the eigenvalues of $-\Delta$, which can be listed as in \nref{eqn:eigen} since $-\Delta$ is nonnegative operator. The first eigenvalue $\lambda_0 = 0$ is simple since any harmonic function $u_0\in W^{1,2}(X,\omega)$ must be constant by Lemma \ref{lemma 10.2} below. 
\end{proof}


\begin{lemma} \label{lemma 10.2}Suppose $\lambda\ge 0$ is an eigenvalue of $-\Delta$ and $f\in W^{1,2}(X)$ is an eigenfunction associated to $-\Delta$, i.e. 
$-\Delta f = \lambda f$. Then $f\in L^\infty(X) \cap C^\infty(X^\circ)$. In particular, when $\lambda = 0$, $f$ must be  constant.

\end{lemma}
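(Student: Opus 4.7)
The three claims will be established in turn, each relying on a different piece of the machinery built up earlier.

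\emph{Smoothness on $X^\circ$.} By Proposition \ref{omapp}, $\omega$ restricts to a genuine smooth K\"ahler metric on $X^\circ = X \setminus \cS_{X,\omega}$, so $-\Delta_\omega$ is a smooth, uniformly elliptic operator on any relatively compact subdomain of $X^\circ$. Since $f \in W^{1,2}(X,\omega)$ lies in $W^{1,2}_{\mathrm{loc}}(X^\circ)$ and satisfies the weak equation $-\Delta f = \lambda f$ there, standard elliptic bootstrap (De~Giorgi--Nash--Moser plus Schauder) yields $f \in C^\infty(X^\circ)$.

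\emph{$L^\infty$ bound via Moser iteration.} For the boundedness, I will run Moser iteration driven by the global Sobolev inequality of Theorem \ref{sobana}. Using Proposition \ref{proposition 7.1}, the weak eigenfunction equation
\[
\int_X \langle \nabla f, \nabla v\rangle_\omega\, \omega^n = \lambda \int_X f v\, \omega^n
\]
holds for every $v \in W^{1,2}(\hat X, d, \omega^n)$. For $M>0$ and $\beta \ge 1$, I will set $F = |f|$ (noting $F \in W^{1,2}$ by Kato's inequality $|\nabla F| \le |\nabla f|$) and $F_M = \min(F, M)$, and test against the admissible bounded test function $v_M = f \cdot F_M^{2\beta-2}$. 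After expanding $\nabla v_M$, discarding the nonnegative boundary contribution from $\{F \ge M\}$, and using Kato's inequality, one obtains
\[
\int_X |\nabla(F \cdot F_M^{\beta-1})|^2 \omega^n \le C\beta^2 \lambda \int_X F^{2\beta}\,\omega^n .
\]
Applying the Sobolev inequality $\|u\|_{L^{2q}}^2 \le C_S(\|\nabla u\|_{L^2}^2 + \|u\|_{L^2}^2)$ with $u = F \cdot F_M^{\beta-1}$ and then sending $M\to\infty$ by monotone convergence gives
\[
\|F\|_{L^{2q\beta}}^{2\beta} \le C(1 + \beta^2 \lambda)\, \|F\|_{L^{2\beta}}^{2\beta} .
\]
Iterating with $\beta_k = q^k$ starting from $\beta_0 = 1$, and summing the convergent series $\sum_k k\, q^{-k}$ in the resulting product, yields
\[
\|f\|_{L^\infty(X)} \le C(n, q, \lambda, X, \theta_Y, A, p, K, \gamma)\, \|f\|_{L^2(X,\omega^n)} < \infty.
\]

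\emph{Harmonic case.} When $\lambda = 0$, testing the weak equation against $v = f$ itself (permissible as $f \in W^{1,2}$) gives
\[
\int_X |\nabla f|_\omega^2\, \omega^n = 0 ,
\]
so $\nabla f \equiv 0$ a.e.\ on $X^\circ$. Since $X$ is a connected normal variety, the singular set $\cS_{X,\omega}$ has real codimension at least two in $X$, so $X^\circ$ is connected. Combined with the smoothness from the first step, $f$ is a constant on $X^\circ$, and hence constant on $\hat X$.

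\emph{Main obstacle.} The delicate point is the truncated Moser iteration on the singular space: one must verify that $v_M \in W^{1,2}(\hat X, d, \omega^n)$ is an admissible test function for the Friedrich extension, that the chain-rule and Kato-type manipulations survive on the regular part $X^\circ$ (where the metric is genuinely smooth), and that the monotone passage $M \to \infty$ is compatible with the iteration constants. All of these rely on the density statement of Proposition \ref{proposition 7.1} and the fact that $\omega^n$ does not charge $\cS_{X,\omega}$; once this is granted, the iteration proceeds just as in the smooth setting.
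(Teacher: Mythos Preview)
Your argument is correct, but it follows a genuinely different route from the paper. The paper proves the $L^\infty$ bound (and the $\lambda=0$ rigidity) via the \emph{heat kernel}: it shows that $u(x,t)=\int_X H_\infty(x,y,t)f(y)\,\omega^n(y)$ solves $\dot u=-\lambda u$, hence $u(x,t)=e^{-\lambda t}f(x)$, and then invokes the diagonal bound $\tilde H_\infty(x,x,2t)\le CV_\omega^{-1}t^{-q/(q-1)}$ from Section~\ref{section heat} to get $|f(x)|\le Ce^{\lambda t}t^{-q/(2(q-1))}$; taking $t=1/\lambda$ yields the explicit estimate $\|f\|_{L^\infty}\le C\lambda^{q/(2(q-1))}$, and for $\lambda=0$ letting $t\to\infty$ forces $f$ (normalized to mean zero) to vanish. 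Your Moser iteration is more elementary in that it bypasses the heat-kernel construction of Section~\ref{section heat} entirely and works straight from the Sobolev inequality of Theorem~\ref{sobana}; in fact, tracking the constants through your iteration recovers the same power $\lambda^{q/(2(q-1))}$. The trade-off is that the paper's argument feeds directly into the later proof that the eigenfunction expansion of $H_\infty$ converges \emph{smoothly} (where the explicit $\lambda$-growth of $\|\phi_k\|_{L^\infty}$ is used), whereas your route is self-contained for the lemma at hand but would require the reader to extract that growth rate separately.
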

\newcommand{\innpro}[1]{\langle #1 \rangle}
\begin{proof}
Given $f\in W^{1,2}$ as above, by the standard elliptic regularity theory we know $f\in C^\infty(X\backslash\cS_{X,\omega})$. We consider two cases: $\lambda>0$ and $\lambda =0$. 

\smallskip

\noindent{\em Case 1:} $\lambda >0$. We normalize $f$ such that $\int_X f^2 \omega^n = 1$. Using the cut-off functions $\eta_\epsilon$ we have as $\epsilon\to 0$
$$\lambda |\int_X \eta_\epsilon f \omega^n |= |\int_X \innpro{\nabla \eta_\epsilon,\nabla f}\omega^n| \le \| \nabla f\|_{L^2} \| \nabla \eta_\epsilon\|_{L^2} \to 0.$$ Hence we see that $\int_X f\omega^n = 0$. 
 We consider the function $$u(x,t) = \int_X H_\infty(x,y,t) f(y) \omega^n(y)$$ for all $t>0$. 
By the boundedness of $\dot H_\infty(x,y,t)$ as in \nref{eqn:heat dot2}, we have 
\begin{equation}\label{eqn:heateigen0}\dot u(x,t) = \int_X \dot H_\infty(x,y,t) f(y) \omega^n(y) =  \int_X \Delta_y H_\infty(x,y,t) f(y) \omega^n(y). \end{equation}
By the Dirac property \nref{eqn:Dirac} of $H_\infty(x,y,0)$, we see that $u(x,0) = \lim_{t\to 0^+} u(x,t) = f(x)$. 
For the cut-off functions $\eta_\epsilon$ in Lemma \ref{cutoff}, we consider 
{\small
\bea \nonumber
 \int_X \eta_\epsilon \Delta_y H_\infty f \omega^n & = & - \int_X \innpro{\nabla \eta_\epsilon, \nabla H_\infty} f - \int_X \eta_\epsilon \innpro{\nabla H_\infty, \nabla f}\\
  & = \notag& - \int_X \innpro{\nabla \eta_\epsilon, \nabla H_\infty} f + \int_X H_\infty \innpro{\nabla \eta_\epsilon, \nabla f} +  \int_X H_\infty  \eta_\epsilon \Delta f \\
 & = \label{eqn:heateigen1}& - \int_X \innpro{\nabla \eta_\epsilon, \nabla H_\infty} f + \int_X H_\infty \innpro{\nabla \eta_\epsilon, \nabla f}  - \lambda  \int_X H_\infty  \eta_\epsilon f.
\eea 
}
The last integral in \nref{eqn:heateigen1} tends to $-\lambda \int_X H_\infty f$ since $H_\infty$ is $L^\infty$ bounded by \nref{eqn:H inf} and $f\in L^2(X,\omega)$. The second integral in \nref{eqn:heateigen1} is bounded by $\| H_\infty\|_{L^\infty} \| \nabla f\|_{L^2} \| \nabla \eta_\epsilon\|_{L^2} \to 0$ as $\epsilon\to 0$. The first integral in \nref{eqn:heateigen1} satisfies the following:
\bea\notag
\Big|- \int_X \innpro{\nabla \eta_\epsilon, \nabla H_\infty} f\Big| & = &\Big|\int_X (1-\eta_\epsilon) (\Delta H_\infty ) f + \int_X (1-\eta_\epsilon) \innpro{\nabla H_\infty, \nabla f}\Big|\\
&\le & \label{eqn:heateigen2} \| \Delta H_\infty\|_{L^\infty} \| f\|_{L^1(\Omega_\epsilon)} + \| \nabla H_\infty\|_{L^2(\Omega_\epsilon)} \| \nabla f\|_{L^2(\Omega_\epsilon)},
\eea
where $\Omega_\epsilon = X\backslash \{\eta_\epsilon = 1\}$ and by the construction $\eta_\epsilon$, $\int_{\Omega_\epsilon} \omega^n \to 0$ as $\epsilon\to 0$. Since $\| \nabla H_\infty\|_{L^2(X)}$ is finite by \nref{eqn:heat doc sing}, $\| \Delta H_\infty\|_{L^\infty}$ is also bounded by \nref{eqn:heat dot2},    and $f$ lies in $W^{1,2}(X,\omega)$ by assumption, we see that  the RHS of \nref{eqn:heateigen2} tends to zero as $\epsilon\to 0$, by the absolute continuity of integrals. Combining all the above, letting $\epsilon\to 0$ on both sides of \nref{eqn:heateigen1}, we obtain
$\int_X \Delta H_\infty f \omega^n = - \lambda \int_X H_\infty f \omega^n$. This together with \nref{eqn:heateigen0} implies that (because $\int_X f = 0$ we can replace $H_\infty$ by $\tilde H_\infty  = H_\infty - \frac{1}{V_\omega}$)
$$\dot u(x,t) = - \lambda \int_X \tilde H_\infty f \omega^n = -\lambda u(x,t)$$
for all $t>0$.
Solving this ODE, we obtain $u(x,t) = e^{-\lambda t} f(x)$. Therefore, for any $x\in X^\circ$, we have by the upper bound \nref{eqn:H inf} of $H_\infty$
\begin{equation}\label{eqn:heateigen4}
|f(x)|\le e^{\lambda t} \tilde H_\infty(x,x,2t) ^{1/2}\| f\|_{L^2} \le e^{\lambda t} C t^{-\frac{q}{2(q-1)}}.
\end{equation}
We take $t = 1/\lambda$ and then we get
\begin{equation}\label{eqn:L heat}\| f\|_{L^\infty} \le C \lambda^{\frac{q}{2(q-1)}}.\end{equation}

\smallskip

\noindent {\em Case 2:} $\lambda = 0$. Replacing $f$ by $f - \frac{1}{V_\omega}\int_X f$ if necessary, we assume $\int_X f \omega^n = 0$. The estimate \nref{eqn:heateigen4} still holds in this case, and thus we  obtain 
$$\| f\|_{L^\infty} \le C t^{-\frac{q}{2(q-1)} }\| f\|_{L^2}$$
for all $t>0.$ Letting $t\to \infty$ yields that $f\equiv 0$. Hence any harmonic function in $W^{1,2}(X,\omega)$ must be constant.
\end{proof}
If we let $\{\phi_k\}_{k=0}^\infty$ be the eigenfunctions of $-\Delta $ associated to the eigenvalues $\{\lambda_k\}$ as in \nref{eqn:eigen}. We also assume $\{\phi_k\}$ are {\em orthonormal}, i.e. $(\phi_k, \phi_{k'})_{L^2} = \delta_{k k'}$. Lemma \ref{lemma 10.2} implies $\phi_k\in L^\infty(X)$ for each $k$. The following is a well-known fact about the set of eigenfunctions.

\begin{lemma} \label{lemma 10.3}
The eigenfunctions $\{\phi_k\}_{k}$ form an orthonormal basis for the Hilbert space $L^2(X,\omega)$. 
\end{lemma}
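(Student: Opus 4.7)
The plan is to reduce the statement to the classical spectral theorem for compact self-adjoint operators on a Hilbert space, using the framework already built in the preceding theorem. First I would introduce the resolvent-type operator $T: L^2(X,\omega) \to L^2(X,\omega)$ defined as the composition $T = I_2 \circ L^{-1} \circ I_1$, where $L = -\Delta + I$ with inverse $L^{-1}: (W^{1,2})^\dagger \to W^{1,2}$ (produced by Lax--Milgram applied to the coercive form $Q_1$), $I_1: L^2 \to (W^{1,2})^\dagger$ is the dual inclusion $I_1 u(v) = \int_X uv\,\omega^n$, and $I_2: W^{1,2}(X,\omega) \hookrightarrow L^2(X,\omega)$ is the natural inclusion, which is compact by Corollary \ref{cor embedding}. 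Since $I_1$ and $L^{-1}$ are bounded and $I_2$ is compact, $T$ is a compact operator on $L^2(X,\omega)$.

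Next I would verify that $T$ is self-adjoint and injective. Self-adjointness is a direct consequence of the symmetry of the bilinear form $Q_1$: writing $Tu = L^{-1} I_1 u \in W^{1,2}$, one has $(Tu, v)_{L^2} = \langle I_1 v, Tu \rangle = Q_1(Tv, Tu) = Q_1(Tu, Tv) = (u, Tv)_{L^2}$. For injectivity, if $Tu = 0$ then $L^{-1} I_1 u = 0$ in $W^{1,2}$, hence $\int_X uv\,\omega^n = 0$ for every $v \in W^{1,2}(X,\omega)$; since $C^\infty_0(X\setminus \cS_{X,\omega}) \subset W^{1,2}(X,\omega)$ is dense in $L^2(X,\omega)$ by Proposition \ref{proposition 7.1} (combined with the fact that $C^\infty_0$ of the smooth locus is dense in $L^2$), this forces $u = 0$.

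The spectral theorem for compact self-adjoint operators then yields an orthonormal basis $\{\psi_k\}$ of $L^2(X,\omega)$ consisting of eigenfunctions of $T$, with nonzero eigenvalues $\{\mu_k\}$ accumulating only at $0$. Any such eigenfunction satisfies $\psi_k = \mu_k^{-1} L^{-1} I_1 \psi_k \in W^{1,2}(X,\omega)$, and applying $L$ gives $-\Delta \psi_k = (\mu_k^{-1} - 1)\psi_k$ in the weak sense. Thus the eigenfunctions of $T$ coincide with the eigenfunctions $\phi_k$ of $-\Delta$ (under the bijection $\lambda_k = \mu_k^{-1} - 1$), which identifies $\{\phi_k\}$ as an orthonormal basis of $L^2(X,\omega)$.

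The only nontrivial ingredient is the compactness of the embedding $W^{1,2}(X,\omega) \hookrightarrow L^2(X,\omega)$ in the singular setting, which is precisely the content of Corollary \ref{cor embedding} and rests ultimately on the global Sobolev inequality of Theorem \ref{sobana}. Everything else is a routine translation of standard functional analysis into the present framework, so I do not anticipate any additional obstacle.
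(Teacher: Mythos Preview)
Your proof is correct and follows the classical resolvent route, which is genuinely different from the paper's argument. The paper proceeds by contradiction: letting $W$ be the closed span of $\{\phi_k\}$, it shows that if $u\in W^\perp$ lies in the domain of the Friedrichs extension then $\Delta u\in W^\perp$ as well (this is the substantive step, requiring a careful integration-by-parts computation with the cut-off functions $\eta_\epsilon$ to handle the singular set $\cS_{X,\omega}$); it then asserts that $-\Delta|_{W^\perp}$ must have an eigenvalue, forcing an eigenfunction of $-\Delta$ to lie in $W^\perp\cap W=\{0\}$. Your approach bypasses all of that by packaging the compact embedding of Corollary~\ref{cor embedding} directly into a compact self-adjoint injective operator $T=(L)^{-1}$ on $L^2$ and invoking the spectral theorem once; the eigenfunction correspondence $T\psi=\mu\psi \Leftrightarrow -\Delta\psi=(\mu^{-1}-1)\psi$ is exactly the one already exploited in the preceding theorem. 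What your route buys is economy: no cut-off computations are needed at this stage, since they were already absorbed into the proof of Corollary~\ref{cor embedding}. What the paper's route makes slightly more explicit is the invariance of $W^\perp$ under $\Delta$ in the singular setting, though this is implicit in your argument as well (it is what guarantees that every $L^2$ eigenfunction of $T$ is already a weak eigenfunction of $-\Delta$ in $W^{1,2}$).
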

\begin{proof}
We let $W$ be the closure of the subspace spanned by $\{\phi_k\}_{k=0}^\infty$ in $L^2(X,\omega)$. If $W$ is a proper subspace of $L^2(X,\omega)$, then its orthogonal complement $W^{\perp}$ is a nonempty closed subspace, and it is a simple fact that $\Delta|_{W^\perp}: W^{\perp} \to W^{\perp}$ is also densely defined and  self-adjoint. Indeed, if $u\in W^{\perp}$ is in the domain of $\Delta|_{W^\perp}$, then by definition $u\in W^{1,2}(X,\omega)$, and for any $k$ 
\begin{equation}\label{eqn:orth1}\int_X (\Delta u) \eta_\epsilon \phi_k  =  \int_X u (\eta_\epsilon \Delta \phi + \phi_k \Delta \eta_\epsilon + 2 \langle \nabla \phi_k, \nabla \eta_\epsilon\rangle  ).
\end{equation}
We observe that 
\bea\nonumber
\Big|\int_X u  \langle \nabla \phi_k, \nabla \eta_\epsilon\rangle  )\Big| & =& \Big| \int_X (1-\eta_\epsilon) \big( u\Delta\phi_k + \langle \nabla u, \nabla \phi_k\rangle \big)\Big|\\
&\le & \lambda_k \| u\|_{L^2(\Omega_\epsilon)} \| \phi_k\|_{L^2(\Omega_\epsilon)} +\| \nabla u\|_{L^2(\Omega_\epsilon)} \|\nabla \phi_k\|_{L^2(\Omega_\epsilon)}\to 0, \mbox{ as } \epsilon\to 0, \label{eqn:10.34}
\eea
where, as in the proof of Lemma \ref{lemma 10.2}, $\Omega_\epsilon$ is the complement of the set $\{\eta_\epsilon  = 1\}$ and by the choice of $\eta_\epsilon$, $\int_{\Omega_\epsilon} \omega^n\to 0 $ as $\epsilon \to 0$. We also have 
$$\int_X u \phi_k \Delta\eta_\epsilon = - \int_X u \langle\nabla\phi_k, \nabla \eta_\epsilon \rangle - \int_X \phi_k \langle \nabla u, \nabla \eta_\epsilon \rangle.$$ The first integral on the RHS tends to zero by \nref{eqn:10.34} and so does the second integral by the fact that $\phi_k\in L^\infty$, $u\in W^{1,2}$ and $\| \nabla \eta_\epsilon\|_{L^2} \to 0$. Hence letting $\epsilon\to 0$ on both sides of \nref{eqn:orth1} yields that $\int_X( \Delta u) \phi_k = \int_X u (\Delta \phi_k) = - \lambda_k \int_X u \phi_k = 0$ since $u\in W^\perp$. This shows that $\Delta u\in W^\perp$.

This will imply that $ - \Delta|_{W^\perp} $ has at least an eigenvalue $\lambda'$, which is also an eigenvalue of $- \Delta$.  This is a contradiction, and therefore, $W = L^2(X,\omega)$, in other words, the set of eigenfunctions $\{\phi_k\}_k$ forms an orthonormal basis of $L^2(X,\omega)$.
\end{proof}

We are now ready to prove the eigenvalue estimates for the operator $-\Delta$.
\begin{theorem}
The following expansion formula holds for the heat kernel $H_\infty(x,y,t)$: 
\begin{equation}\label{eqn:heat last expansion}
H_\infty(x,y,t) = \sum_{k=0}^\infty e^{-\lambda_k t} \phi_k(x) \phi_k(y),\quad \mbox{ on }X^\circ \times X^\circ \times (0,\infty).
\end{equation}
As a consequence, the eigenvalues $\lambda_k$ satisfy
\begin{equation}
\lambda_k \ge C k^{\frac{q-1}{q}} 
\end{equation}
for all $k\ge 0$.
\end{theorem}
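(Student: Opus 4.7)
The plan is to reduce the proof to four main pieces, modeled on the classical heat-equation/spectral argument but carried out on the metric completion so as to accommodate the singular locus $\cS_{X,\omega}$.

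First I would expand the heat kernel fiberwise. For a fixed pair $(x,t)\in X^\circ\times (0,\infty)$ the bound \mref{eqn:H inf} gives $H_\infty(x,\cdot,t)\in L^\infty\subset L^2(X,\omega^n)$, so by Lemma \ref{lemma 10.3} we may write $H_\infty(x,\cdot,t)=\sum_k a_k(x,t)\phi_k$ in $L^2$, with $a_k(x,t)=\int_X H_\infty(x,y,t)\phi_k(y)\,\omega^n(y)$. Differentiating in $t$ under the integral is legal by \mref{eqn:heat dot2}, and the integration-by-parts identity
$$\int_X (\Delta_y H_\infty)(x,y,t)\,\phi_k(y)\,\omega^n(y)\;=\;\int_X H_\infty(x,y,t)\,\Delta\phi_k(y)\,\omega^n(y) \;=\;-\lambda_k\,a_k(x,t)$$
is proven by inserting the cut-off functions $\eta_\varepsilon$ of Lemma \ref{cutoff} and repeating the estimates \mref{eqn:heateigen1}--\mref{eqn:heateigen2} used in the proof of Lemma \ref{lemma 10.2}; the error terms decay because $\|\nabla\eta_\varepsilon\|_{L^2}\to 0$ and $\phi_k,\nabla\phi_k\in L^\infty\oplus L^2$. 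Thus $\dot a_k(x,t)=-\lambda_k a_k(x,t)$. The initial condition $a_k(x,0^+)=\phi_k(x)$ follows from the Dirac property (extended after Lemma \ref{heatrep1} from $C_0$ to bounded continuous test functions on $X^\circ$) applied to $\phi_k$, which lies in $C^\infty(X^\circ)\cap L^\infty(X)$. Consequently $a_k(x,t)=e^{-\lambda_k t}\phi_k(x)$ and the $L^2$-expansion
$$H_\infty(x,\cdot,t)=\sum_{k=0}^\infty e^{-\lambda_k t}\phi_k(x)\phi_k(\cdot)$$
is established.

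Next I upgrade to the pointwise expansion \mref{eqn:heat last expansion} on $X^\circ\times X^\circ\times (0,\infty)$. Applying Parseval in $L^2$ to the semigroup identity \mref{eqn:semigroup} gives
$$H_\infty(x,y,2t)=\int_{X^\circ}H_\infty(x,z,t)H_\infty(y,z,t)\,\omega^n(z) \;=\;\sum_{k=0}^\infty e^{-2\lambda_k t}\phi_k(x)\phi_k(y),$$
and relabeling $2t\mapsto t$ yields the claimed series. Absolute convergence for $x\neq y$ follows from Cauchy--Schwarz on the series: $\sum_k e^{-\lambda_k t}|\phi_k(x)\phi_k(y)|\le H_\infty(x,x,t)^{1/2}H_\infty(y,y,t)^{1/2}$, each factor being finite by the diagonal bound already obtained for $t>0$.

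Finally, I pass to the eigenvalue bound by the Cheng--Li trace argument. Setting $x=y$ in \mref{eqn:heat last expansion} and integrating over $X$ against $\omega^n$, using orthonormality of the $\phi_k$ and the trace bound from Lemma \ref{heatapp} (which descends to $H_\infty$ via item \nref{item 2} of Lemma \ref{heatrep1} together with the bound $H_\infty(x,x,t)\le V_\omega^{-1}+CV_\omega^{-1}t^{-q/(q-1)}$ inherited from $H_j$ by Fatou on the diagonal), I obtain
$$\sum_{k=0}^\infty e^{-\lambda_k t}\;=\;\int_X H_\infty(x,x,t)\,\omega^n(x)\;\le\;1+C\,t^{-\frac{q}{q-1}}.$$
Specializing to $t=1/\lambda_k$, monotonicity of $\{\lambda_j\}$ gives $(k+1)/e\le\sum_{j=0}^k e^{-\lambda_j/\lambda_k}\le 1+C\lambda_k^{q/(q-1)}$, which yields $\lambda_k\ge c\,k^{(q-1)/q}$ for some uniform $c>0$.

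The main obstacle is step one: justifying the integration by parts $\int H_\infty\Delta\phi_k=\int(\Delta H_\infty)\phi_k$ on the non-compact smooth locus $X^\circ$. Neither factor is compactly supported, and one cannot blindly invoke Green's identity on $X$. The fix is the cut-off technology from Lemma \ref{cutoff}, but making it succeed requires both the $W^{1,2}$-control of $H_\infty(x,\cdot,t)$ from \mref{eqn:heat doc sing} and the $L^\infty$-control of $\phi_k$ from Lemma \ref{lemma 10.2}; everything else is more or less automatic once these two ingredients are in place.
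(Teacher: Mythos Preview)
Your proposal is correct and follows essentially the same route as the paper: expand $H_\infty(x,\cdot,t)$ in the eigenbasis, identify the coefficients $a_k(x,t)=e^{-\lambda_k t}\phi_k(x)$ via the cut-off integration-by-parts already carried out in Lemma \ref{lemma 10.2}, and then run the Cheng--Li trace argument with $t=1/\lambda_k$. The one minor difference is that the paper upgrades the $L^2$ expansion to a \emph{smooth} equality on $X^\circ\times X^\circ\times(0,\infty)$ by combining the eigenfunction bound $\|\phi_k\|_{L^\infty}\le C\lambda_k^{q/2(q-1)}$ with local elliptic regularity and the exponential decay $e^{-\lambda_k t}\le \exp(-Ck^{(q-1)/q}t)$, whereas you obtain pointwise equality via the semigroup identity and Parseval; both are valid, and yours is arguably more direct for the stated conclusion.
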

\begin{proof}
For each fixed point $(x,t)\in X^\circ \times (0,\infty)$, we view the heat kernel $H(x,y,t)$ as a function of $y$. It follows from the semi-group property \nref{eqn:semigroup} of $H_\infty$ that $H_\infty(x,y,t)\in L^2(X,\omega)$. Hence by Lemma \ref{lemma 10.3}, there exists a sequence of real numbers $\{c_k(x,t)\}_{k}$ such that 
$$H_\infty(x,y,t) = \sum_{k=0}^\infty c_k(x,t) \phi_k(y),\quad \mbox{ in the $L^2$ sense.}$$
Note that here $c_k(x,t) = \int_X H_\infty (x,y,t) \phi_k(y) \omega^n(y)$ is the coefficient of $H_\infty(x,\cdot, t)$ associated to $\phi_k$. By Parseval's Identity, we have 
\begin{equation}\label{eqn:pasv}\sum_{k=0}^\infty |c_k(x,t)|^2 = \| H_\infty(x,\cdot, t)\|_{L^2}^2 = H_\infty(x,x,2t).\end{equation} On the other hand, in the proof of Lemma \ref{lemma 10.2}, we have already shown that $c_k(x,t) = e^{-\lambda _k t} \phi_k(x)$ since $\Delta \phi_k = -\lambda_k \phi_k$. This implies that we have the expansion formula \nref{eqn:heat last expansion} for $H_\infty$ {\em in the $L^2$ sense}. From the equation \nref{eqn:pasv}, we see that by \nref{eqn:H inf}
\begin{equation}\label{eqn:pasv1}
\sum_{k=0}^\infty e^{-2\lambda_k t} \phi_k(x)^2 = H_\infty(x,x,2t)\le \frac{1}{V_\omega} + C t^{-\frac{q}{q-1}}.
\end{equation} 
Integrating \nref{eqn:pasv1} over $x\in X^\circ$ we get
\begin{equation}\label{eqn:pasv2}
\sum_{j=1}^\infty e^{-2\lambda_j t} + 1\le 1 + C t^{-\frac{q}{q-1}},
\end{equation} 
since $\lambda_0 = 0$ and $\phi_0 = 1/\sqrt{V_\omega}$. For $k\ge 1$, taking $t = 1/\lambda_k$ in \nref{eqn:pasv2}, we get $\lambda_k \ge C k^{\frac{q-1}{q}}$. 

We now argue that the equation \nref{eqn:heat last expansion} holds smoothly. From the $L^\infty$ estimate \nref{eqn:L heat} of $\phi_k$, by elliptic regularity theory the eigenfunctions $\phi_k(x)$ are uniformly bounded (with any derivatives) by $O(\lambda_k^{\frac{q}{2(q-1)}})$ on any compact subset ${\mathcal K}\subset X^\circ$. This, together with the fast decaying of the factors $e^{-\lambda_k t} \le \exp(-C k^{\frac{q-1}{q}} t)$ for any $t>0$, immediately implies that the series on the RHS of \nref{eqn:heat last expansion} converges smoothly on ${\mathcal K}$. Hence the two sides of \nref{eqn:heat last expansion} coincide as smooth functions. 
\end{proof}


\section{K\"ahler-Einstein spaces}\label{section 11}
\setcounter{equation}{0}

In this section, we will prove Theorem \ref{kecy}, Theorem \ref{kegt} and Theorem \ref{kefano}. 

Let $X$ be a projective normal variety of $\dim X =n$. We let $X^\circ$ and $\cS_X$ be the regular and singular parts of $X$ respectively.  An adapted volume measure $\Omega$ on $X$ is introduced in \cite{EGZ}. For any $p\in X$, there exists an open neighborhood $U_p$ of $p$ in $X$ such that $\Omega= e^f |\eta|^{2/m}$ in $U_p$, where $f$ is a smooth function on $U_p$ and $\eta$ is a local generator of $m K_X$ in $U_p$ for some $m\in \mathbb{Z}$.  We consider a log resolution $\pi: Y \rightarrow X$ for $X$ and let $\theta_Y$ be a smooth K\"ahler metric on $Y$. Our goal is to show that the pullback of K\"ahler-Einstein currents will lie in an $\mathcal{AK}$-space on $Y$.

\medskip

\noindent{\bf Proof of Theorem \ref{kecy}}  There exists a unique Ricci-flat K\"ahler current $\omega_{CY}$ in any K\"ahler class of $X$. Let $\Omega$ be an adapted volume measure on $X$. Then there exists a unique $F\in L^\infty(X)\cap C^\infty(X^\circ)$ such that  
$$\omega^n = e^F\Omega.$$ 
Then there exist $p>n$ and $A, K>0$, such that $\omega\in \mathcal{AK}(X, \theta_Y, n, A, p, K, \gamma)$ and Theorem \ref{kecy} can be proved by directly applying  Theorem \ref{singsob}.   

\bigskip

\noindent{\bf Proof of Theorem \ref{kegt}} Since $K_X>0$, there exists a smooth adapted volume measure $\Omega$ on $X$ such that $\omega_0 = \ddbar\log \Omega$. The K\"ahler-Einstein equation is equivalent to the following complex Monge-Amp\`ere  equation
\begin{equation}\label{modeqn1}
(\pi^*\omega)^n =\left( \pi^*(\omega_0 + \ddbar \varphi) \right)^n = e^{\pi^* \varphi+F} \theta_Y^n = e^{\pi^*\varphi} \pi^*\Omega
\end{equation}
on $Y$, where $F = \log \frac{\pi^*\Omega}{(\theta_Y)^n}$. Equation (\ref{modeqn1}) can be solved if and only $(X, -K_X)$ is K-stable \cite{LC, LXZ}.  We cannot directly apply Proposition \ref{omapp} since $\varphi$ does not have log type analytic singularities in general. However,  since $F$ has log type analytic singularities, we will then modify the approximating equation (\ref{appeq6}) by 
$$ (\pi^* \omega_0 + \epsilon_j \theta_Y + \ddbar \varphi_j)^n = e^{F_j + \varphi_j}(\pi^* \theta_Y)^n, ~ \sup_X \varphi_j=0,$$
where   $\{F_j\}_{j=1}^\infty $ are chosen similarly in Section \ref{maregsec}. In particular, the approximation $F_j$ of $F$ satisfies (\ref{appr7}). Thus Proposition (\ref{omapp}) will still hold for the approximations of $\omega$ by 
$$\omega_j=\pi^*\omega_0 + \epsilon_j \theta_Y + \ddbar \varphi_j.$$ We can apply Theorem \ref{singsob} to $\omega$ since $\omega\in \mathcal{AK}(X, \theta_Y, n, A, p, K, \gamma)$. In particular,  there exists $\epsilon'>0$ and $C>0$ such that for all $j\geq 1$, 
$$\big\|\frac{(\omega_j)^n}{(\theta_Y)^n}\big\|_{L^{1+\epsilon'}(Y, (\theta_Y)^n)} \leq C. $$
The estimates of Theorem \ref{kegt} will follow by passing the uniform estimates for $\omega_j$ after letting $j\rightarrow \infty$  by applying the same arguments in Section \ref{singsobsec} and Section \ref{singdiasec}.   \qed


\medskip

Now we will consider the case when $X$ is a $\mathbb{Q}$-Fano variety with log terminal singularities. Let $\Omega$ be an adapted volume measure on $X$ such that
$$\omega_0 = -\ddbar \log \Omega \in [-K_X]$$
is a smooth K\"ahler metric on $X$. If $(X, -K_X)$ is K-stable, then there exists a unique   K\"ahler-Einstein current $\omega \in [-K_X]$ on $X$ with bounded local potentials. In particular,  $\omega=\omega_0 + \ddbar \varphi$ for some $\varphi\in PSH(X, \omega_0) \cap L^\infty(X).$ Furthermore, $\varphi\in C^\infty(X^\circ)$ by applying the smoothing properties of the K\"ahler-Ricci flow \cite{ST2, SzT}. The K\"ahler-Einstein equation is equivalent to the following complex Monge-Amp\`ere equation
$$(\omega_0 + \ddbar \varphi)^n = e^{-\varphi} \Omega$$
after adding a constant to $\varphi$ for normalization. Let  $\pi: Y \rightarrow X$ be the log resolution of $X$ and let $\theta_Y$ be a K\"ahler metric on $Y$. If we let
$$F= \log \frac{\pi^*\Omega}{(\theta_Y)^n}, $$ $F$ has log type singularities whose support coincide with the exceptional locus  of $\pi$.  In particular, there exists $\epsilon'>0$ such that
$\|e^F\|_{L^{1+\epsilon'}(Y, (\theta_Y)^n)} < \infty.$ We will choose  $\epsilon_j$ and $F_j$ as in Section \ref{maregsec} and further assume  that there exists $C>0$ such that for all $j\geq 1$,  
\begin{equation}\label{fjap11}
 \|e^{F_j}\|_{L^{1+\epsilon'}(Y, (\theta_Y)^n)} \leq C. 
 \end{equation}
The following follows from Demailly's regularization and the fact that $\varphi\in C^\infty(X^\circ)$. 

\begin{lemma} There exist a decreasing sequence of $\psi_j\in PSH(Y, \pi^*\omega_0 +  \theta_Y)\cap C^\infty(Y)$ such that $\psi_j$ converges to $\varphi$ pointwise on $X$. Furthermore, $\psi_j$ converges smoothly to $\varphi$ on any compact subset of $X^\circ$.   In particular, 
$$\sup_{j} \|\psi_j\|_{L^\infty(Y)} < \infty. $$

\end{lemma}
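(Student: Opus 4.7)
The plan is to apply Demailly's regularization of quasi-plurisubharmonic functions on $(Y,\theta_Y)$ to the pullback $\tilde\varphi := \pi^*\varphi$, and then to bootstrap the pointwise decreasing convergence to smooth convergence on compact subsets of $\pi^{-1}(X^\circ)$ using the a priori regularity $\varphi\in C^\infty(X^\circ)$.

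First, since $\varphi\in PSH(X,\omega_0)\cap L^\infty(X)\cap C^\infty(X^\circ)$ and $\pi$ restricts to a biholomorphism from $\pi^{-1}(X^\circ)$ onto $X^\circ$, the pullback $\tilde\varphi$ lies in $PSH(Y,\pi^*\omega_0)\cap L^\infty(Y)$ and is smooth on $\pi^{-1}(X^\circ)$. Boundedness forces all Lelong numbers of $\tilde\varphi$ to vanish. Demailly's global regularization theorem, proved by means of Ohsawa-Takegoshi-type $L^2$ extensions of sections associated to an ample line bundle on $Y$ with curvature comparable to $\theta_Y$, then produces a decreasing sequence $\psi_j\in C^\infty(Y)$ with $\psi_j\downarrow\tilde\varphi$ pointwise on $Y$ and
\[
\pi^*\omega_0 + \ddbar\psi_j \geq -\lambda_j\,\theta_Y,\qquad \lambda_j\to 0^+.
\]
After discarding finitely many indices we may assume $\lambda_j\leq 1$ for all $j$, so that $\psi_j\in PSH(Y,\pi^*\omega_0+\theta_Y)$. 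The uniform $L^\infty$ bound is immediate from monotonicity: $\inf_Y\tilde\varphi\leq\psi_j\leq\sup_Y\psi_1<\infty$.

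The main obstacle is upgrading the pointwise decreasing convergence $\psi_j\downarrow\tilde\varphi$ to smooth convergence on compact subsets $K\subset\pi^{-1}(X^\circ)$, since Demailly's construction only guarantees $C^0$ (or $L^1$) convergence in general. To handle this, Dini's theorem first gives uniform convergence of $\psi_j$ to $\tilde\varphi$ on $K$, as the sequence is monotone, continuous, and $\tilde\varphi|_K$ is continuous. Set $u_j:=\psi_j-\tilde\varphi$; then $u_j\geq 0$, $u_j\downarrow 0$ uniformly on $K$, and
\[
\ddbar u_j \geq -\bigl(\pi^*\omega_0+\theta_Y+\ddbar\tilde\varphi\bigr) \geq -C\,\theta_Y
\]
on $K$, with $C$ depending only on the $C^2$ norm of $\tilde\varphi|_K$. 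Weak-type Laplacian estimates for quasi-plurisubharmonic functions (or, equivalently, the classical Hartogs lemma combined with standard Laplacian estimates for smooth QPSH functions) then yield uniform $C^{1,\alpha}$ bounds on $u_j$, and a standard elliptic Schauder bootstrap using the smoothness of $\tilde\varphi$ promotes this to $C^k$ convergence on $K$ for every $k$. An alternative, more hands-on approach is to combine Demailly's global approximation with local convolution mollifiers $\tilde\varphi\ast\chi_{1/j}$ in coordinate charts covering $K$, patched together with Demailly's $\psi_j$ through a $\max$ construction with cutoffs; this directly produces a decreasing, smoothly convergent approximating sequence that remains globally $(\pi^*\omega_0+\theta_Y)$-plurisubharmonic.
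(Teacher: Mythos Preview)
Your overall strategy---apply Demailly's regularization to $\tilde\varphi=\pi^*\varphi$, use monotonicity for the $L^\infty$ bound, and then argue smooth convergence on $\pi^{-1}(X^\circ)$ from the a priori smoothness of $\varphi$ there---is exactly what the paper has in mind; the paper records the lemma simply as a consequence of ``Demailly's regularization and the fact that $\varphi\in C^\infty(X^\circ)$'' without further detail.

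There is, however, a genuine gap in your first argument for the $C^\infty$ convergence. From $u_j\ge 0$, $u_j\to 0$ uniformly on $K$, and the one-sided bound $\ddbar u_j\ge -C\,\theta_Y$, you \emph{cannot} deduce uniform $C^{1,\alpha}$ bounds on $u_j$. A lower bound on the complex Hessian only bounds $\Delta u_j$ from below; it places no restriction on how large the positive eigenvalues of $\ddbar u_j$---and hence $|\nabla u_j|$ and higher derivatives---may be. There is no ``weak-type Laplacian estimate'' or Hartogs-type statement that converts a one-sided Hessian bound plus $C^0$ smallness into $C^{1,\alpha}$ control. Moreover, even if you had $C^{1,\alpha}$ bounds, there is no elliptic equation satisfied by $u_j=\psi_j-\tilde\varphi$ on which to run a Schauder bootstrap; $u_j$ is merely a difference of two quasi-psh functions.

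The ``alternative'' you sketch at the end---gluing local convolution mollifiers of $\tilde\varphi$ on charts covering $K\subset\pi^{-1}(X^\circ)$ with Demailly's global $\psi_j$ via a regularized max---is a correct approach and should be the main argument rather than an afterthought. A cleaner route is to use Demailly's Bergman-kernel regularization specifically: writing $\psi_j=\tfrac{1}{j}\log\sum_\ell|s^{(j)}_\ell|^2$ for an orthonormal basis of $H^0(Y,L^{\otimes j})$ with respect to $e^{-j\tilde\varphi}$, the Tian--Zelditch--Catlin asymptotic expansion of the Bergman kernel gives $\psi_j\to\tilde\varphi$ in $C^\infty$ on any compact subset of the locus where $\tilde\varphi$ is smooth. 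Either of these makes the lemma rigorous; the bootstrap does not.
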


We will now consider the following complex Monge-Amp\`ere equation
$$(\pi^*\omega_0 + \epsilon_j \theta_Y+ \ddbar \varphi_j)^n = e^{-\psi_j + F_j + c_j}(\theta_Y)^n, ~\sup_Y \varphi_j =0, $$
where $c_j$ is the normalization constant. In particular, $\lim_{j\rightarrow \infty} c_j=0$.  We let 
$$\omega_j =\pi^*\omega_0 + \epsilon_j \theta_Y+ \ddbar \varphi_j$$
be the smooth K\"ahler metric on $Y$ for $j\geq 1$.

\begin{lemma} There exists $C>0$ such that for all $j\geq 1$, 
$$\|\varphi_j\|_{L^\infty(Y)}\leq C. $$

\end{lemma}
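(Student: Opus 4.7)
The plan is to rewrite the equation so that $-\psi_j + c_j$ is absorbed into the right-hand side density, then apply the Guo-Phong-Tong $L^\infty$ estimate for complex Monge-Amp\`ere equations (as used already in the proofs of Theorem \ref{kecy} and Theorem \ref{kegt}) to the resulting equation with a uniformly $L^{1+\epsilon'}$-bounded density and a uniformly bounded K\"ahler class.

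First I would set $\tilde F_j := F_j - \psi_j + c_j$, so that the equation becomes
$$(\pi^*\omega_0 + \epsilon_j \theta_Y + \ddbar \varphi_j)^n = e^{\tilde F_j}(\theta_Y)^n,\qquad \sup_Y \varphi_j = 0.$$
To control the normalizing constant $c_j$, I would integrate both sides: the left side equals $[\pi^*\omega_0 + \epsilon_j\theta_Y]^n$, which stays in a compact subinterval of $(0,\infty)$ (bounded above by $[\pi^*\omega_0 + \theta_Y]^n$ and below by $[\pi^*\omega_0]^n = (-K_X)^n > 0$, since $X$ is $\mathbb{Q}$-Fano and $\pi^*\omega_0$ is big and nef). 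The right side equals $e^{c_j}\int_Y e^{-\psi_j + F_j}(\theta_Y)^n$, and since $\sup_j \|\psi_j\|_{L^\infty(Y)} < \infty$ and $\int_Y e^{F_j}(\theta_Y)^n \to \int_X \Omega \in (0,\infty)$ by construction of $F_j$, this integral lies in a compact subinterval of $(0,\infty)$. Thus $\sup_j |c_j| < \infty$.

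Next I would verify the integrability estimate $\sup_j \|e^{\tilde F_j}\|_{L^{1+\epsilon'}(Y, \theta_Y^n)} < \infty$, which follows immediately from $\|e^{\tilde F_j}\|_{L^{1+\epsilon'}} \le e^{\|\psi_j\|_{L^\infty} + |c_j|}\|e^{F_j}\|_{L^{1+\epsilon'}}$ together with the uniform bound \mref{fjap11}. At this point the equation for $\varphi_j$ is of exactly the form to which the PDE-based $L^\infty$ estimate of Guo-Phong-Tong applies: a complex Monge-Amp\`ere equation on $Y$ with K\"ahler reference $\pi^*\omega_0 + \epsilon_j\theta_Y$ whose total volume is uniformly bounded above and below, with density uniformly bounded in $L^{1+\epsilon'}$, and with normalization $\sup_Y \varphi_j = 0$. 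The conclusion of that estimate gives $\|\varphi_j\|_{L^\infty(Y)} \le C$ for a constant $C$ independent of $j$.

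The main point to be careful about is that the background metric $\pi^*\omega_0 + \epsilon_j\theta_Y$ degenerates as $\epsilon_j \to 0$ to the big-and-nef class $[\pi^*\omega_0]$. However, the GPT-type estimate (as used in \cite{GPT, GPa, GP} and already invoked in our earlier proofs, see in particular the discussion around \mref{modeqn1}) is designed to be uniform in exactly this setting, depending only on the $L^{1+\epsilon'}$ norm of the density and uniform upper and lower bounds on the volume of the reference class. Hence no additional work beyond citing that estimate is needed, and the lemma follows.
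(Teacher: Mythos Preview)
Your proposal is correct and follows essentially the same approach as the paper: the paper's proof simply cites the $L^\infty$-estimate for complex Monge--Amp\`ere equations together with the uniform $L^{1+\epsilon'}$ bound \mref{fjap11} and the (lower) bound on $\psi_j$, which is exactly the mechanism you spell out in detail. Your explicit verification that $c_j$ is bounded is not strictly needed, since the paper has already recorded $\lim_{j\to\infty} c_j = 0$ just before this lemma.
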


\begin{proof} The lemma follows by combining the $L^\infty$-estimate for complex Monge-Amp\`ere equation and  (\ref{fjap11}) as well as  the fact that $\psi_j$ is uniformly bounded below. 
\end{proof}

Let $D$ be an effective divisor of $Y$ such that its support conicides with the exceptional locus of $\pi$. Let $\sigma_D$ be a defining section of $D$ and $h_D$ be a smooth hermitian metric of the line bundle associated to $D$. We can assume that for any sufficiently small $\delta$, $\pi^*\omega_0 - \delta {\mathrm{Ric}}(h_D)>0$ is a K\"ahler metric.  The following lemma follows by the same argument in the proof of Lemma \ref{02nd}. 

\begin{lemma} There exists $K>0$, $C>0$ such that 
$$ \Delta_{\theta_Y} \varphi_j \leq C|\sigma_D|^{-2K}_{h_D}. $$

\end{lemma}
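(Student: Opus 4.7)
The plan is to adapt the proof of Lemma \ref{02nd} to the modified Monge--Amp\`ere equation
$$(\pi^*\omega_0 + \epsilon_j \theta_Y + \ddbar \varphi_j)^n = e^{-\psi_j + F_j + c_j}(\theta_Y)^n,$$
whose sole difference from the one handled in Lemma \ref{02nd} is the extra factor $e^{-\psi_j}$. I would introduce the same auxiliary quantity $Q = e^{-\mu\hat\varphi_j}\tr_{\theta_Y}\omega_j$ with $\hat\varphi_j = \varphi_j - \delta\log|\sigma_D|^2_{h_D}$, and choose $\delta>0$ small enough that $\pi^*\omega_0 - \delta\,\mathrm{Ric}(h_D)$ is K\"ahler, together with $\mu \gg 1$ so that $\mu(\omega_0 - \delta\,\mathrm{Ric}(h_D)) \geq 2\theta_Y$ and $\mu\delta \geq N$, where $N$ is as in the estimate $|\ddbar F_j|_{\theta_Y} \leq C|\sigma_D|^{-2N}_{h_D}$.

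The standard Yau computation, now applied to the new equation, yields
$$\Delta_{\omega_j}Q \geq e^{-\mu\hat\varphi_j}\Delta_{\theta_Y}(F_j - \psi_j) - n\mu Q,$$
after absorbing gradient terms exactly as in Lemma \ref{02nd}. Using $|\ddbar F_j|_{\theta_Y} \leq C|\sigma_D|_{h_D}^{-2N}$ and $\mu\delta \geq N$, the $F_j$ contribution $e^{-\mu\hat\varphi_j}\Delta_{\theta_Y}F_j$ is uniformly bounded below by $-C$. For the new term $-e^{-\mu\hat\varphi_j}\Delta_{\theta_Y}\psi_j$, the $\mathrm{PSH}$ condition $\psi_j \in \mathrm{PSH}(Y,\pi^*\omega_0 + \theta_Y)$ gives only $\Delta_{\theta_Y}\psi_j \geq -C$, i.e.\ a one-sided bound. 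I would then apply the Green's formula of $\omega_j$ at a maximum point $x^*$ of $Q$:
$$\bar Q \leq \frac{1}{V_{\omega_j}}\int_Y Q\,\omega_j^n + \int_Y \mathcal{G}_j(x^*,\cdot)\bigl(n\mu Q + C - e^{-\mu\hat\varphi_j}\Delta_{\theta_Y}\psi_j\bigr)\omega_j^n.$$
The first two integrals are controlled exactly as in Lemma \ref{02nd}, by combining H\"older's inequality, the $L^{1+\varepsilon'}$ bound on $e^{F_j}$ (which transfers to $e^{-\psi_j + F_j + c_j}$ since $\psi_j$ is uniformly $L^\infty$-bounded), and the integral estimates on $\mathcal{G}_j$ from Lemma \ref{lemma 3.2}.

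The crux is the third integral, involving $\Delta_{\theta_Y}\psi_j$, which is not pointwise controlled from above uniformly in $j$. The strategy is to rewrite it as
$$-\int_Y \mathcal{G}_j(x^*,\cdot)\,e^{-\mu\hat\varphi_j}\,\frac{\omega_j^n}{\theta_Y^n}\,\Delta_{\theta_Y}\psi_j\,\theta_Y^n$$
and to integrate by parts twice in $\theta_Y$, transferring both derivatives off of $\psi_j$ onto the smooth (away from $x^*$) weight $\mathcal{G}_j\, e^{-\mu\hat\varphi_j}\,\omega_j^n/\theta_Y^n$. Since $\psi_j$ is uniformly $L^\infty$-bounded, the resulting integral is dominated by $\|\psi_j\|_{L^\infty}$ multiplied by an integral involving one $\theta_Y$-derivative of $\mathcal{G}_j$, one derivative of $\hat\varphi_j$, and the MA volume ratio, all of which are uniformly controlled by the estimates of Lemma \ref{lemma 3.2} combined with the $L^\infty$ bound $\|\varphi_j\|_{L^\infty}\leq C$. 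The singularity of $\mathcal{G}_j$ at $x^*$ is handled by a standard cutoff near $x^*$ and passing to the limit, using that $\mathcal{G}_j \in L^{1+\varepsilon_0}$ and $|\nabla\mathcal{G}_j|^2/\mathcal{G}_j^{1+\beta} \in L^1$.

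The main obstacle will be the integration-by-parts argument against the singular Green's function weighted by the degenerating factor $e^{-\mu\hat\varphi_j}\,\omega_j^n/\theta_Y^n$: even though no derivative of $\psi_j$ enters the final bound, one must carefully verify that the boundary terms from the cutoff near $x^*$ and near the exceptional divisor of $\pi$ vanish in the limit, using the integrability in Lemma \ref{lemma 3.2} and the fact that $\mu\delta \geq N$ forces the weight $e^{-\mu\hat\varphi_j}$ to decay fast enough along $D$. Once this is established, rearranging yields $\bar Q \leq C$ for a constant independent of $j$, and hence $\tr_{\theta_Y}\omega_j \leq C|\sigma_D|_{h_D}^{-2\mu\delta}$, which gives the asserted bound $\Delta_{\theta_Y}\varphi_j \leq C|\sigma_D|_{h_D}^{-2K}$ with $K = \mu\delta$.
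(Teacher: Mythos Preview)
You correctly identify the only new obstacle: the term $-e^{-\mu\hat\varphi_j}\Delta_{\theta_Y}\psi_j$ in Yau's inequality has the wrong sign, since the plurisubharmonicity of $\psi_j$ only gives $\Delta_{\theta_Y}\psi_j\ge -C$, not an upper bound. The paper itself writes nothing beyond ``follows by the same argument in the proof of Lemma~\ref{02nd}'', so some modification is indeed needed.

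However, your proposed integration-by-parts fix is circular. The weight onto which you want to transfer the $\theta_Y$-Laplacian is
\[
W=\mathcal{G}_j\,e^{-\mu\hat\varphi_j}\,\frac{\omega_j^n}{\theta_Y^n}
= \mathcal{G}_j\,e^{-\mu\varphi_j+\mu\delta\log|\sigma_D|^2}\,e^{-\psi_j+F_j+c_j}.
\]
After two integrations by parts, $\Delta_{\theta_Y}W$ contains $\Delta_{\theta_Y}\varphi_j$ (through $e^{-\mu\hat\varphi_j}$), which is precisely the quantity you are estimating, and $\Delta_{\theta_Y}\psi_j$ (through $\omega_j^n/\theta_Y^n$), which is the term you were trying to eliminate. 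A single integration by parts does not help either: $\nabla_{\theta_Y}W$ involves $\nabla_{\theta_Y}\varphi_j$ and $\nabla_{\theta_Y}\mathcal{G}_j$, neither of which is controlled by the $L^\infty$ bound on $\varphi_j$ or by Lemma~\ref{lemma 3.2} (which bounds $\nabla_{\omega_j}\mathcal{G}_j$ in a weighted $L^2$ with weight a power of $\mathcal{G}_j$, not $|\sigma_D|^{-2\mu\delta}$).

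The clean fix --- and presumably what the paper has in mind by ``the same argument'' --- is to absorb $\psi_j$ into the auxiliary function. Set
\[
Q'=e^{-\mu\hat\varphi_j+\psi_j}\,\tr_{\theta_Y}\omega_j,
\]
which is uniformly comparable to $Q$ since $\|\psi_j\|_{L^\infty}\le C$. Redoing the computation of~\nref{eqn:yau2} with the extra $+\psi_j$ in the exponent produces, on one hand, the additional term $Q'\Delta_{\omega_j}\psi_j=Q'\tr_{\omega_j}(\sqrt{-1}\partial\bar\partial\psi_j)$. On the other hand, for the semi-positive form $\alpha_j:=\pi^*\omega_0+\theta_Y+\sqrt{-1}\partial\bar\partial\psi_j\ge 0$ one has the elementary trace inequality $\tr_{\theta_Y}\alpha_j\le(\tr_{\theta_Y}\omega_j)(\tr_{\omega_j}\alpha_j)$, whence
\[
-\Delta_{\theta_Y}\psi_j\ge -(\tr_{\theta_Y}\omega_j)\bigl[\tr_{\omega_j}(\pi^*\omega_0+\theta_Y)+\tr_{\omega_j}(\sqrt{-1}\partial\bar\partial\psi_j)\bigr].
\]
After multiplying by $e^{-\mu\hat\varphi_j+\psi_j}$, the dangerous piece $-Q'\tr_{\omega_j}(\sqrt{-1}\partial\bar\partial\psi_j)$ cancels \emph{exactly} against the new $+Q'\Delta_{\omega_j}\psi_j$, and the remaining $-Q'\tr_{\omega_j}(\pi^*\omega_0+\theta_Y)\ge -C_2\,Q'\tr_{\omega_j}\theta_Y$ is absorbed by enlarging $\mu$. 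One arrives at $\Delta_{\omega_j}Q'\ge -C-n\mu Q'$, and the Green's function argument of Lemma~\ref{02nd} goes through verbatim for $Q'$.
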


\begin{lemma} $\omega_j$ converges to $\omega$ locally smoothly on $C^\infty(X^\circ)$.

\end{lemma}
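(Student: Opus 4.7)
The plan is to combine local higher-order interior estimates for $\varphi_j$ on compact subsets of $\pi^{-1}(X^\circ)$ with the uniqueness of bounded pluripotential solutions to the complex Monge-Amp\`ere equation. Since $\pi$ is a biholomorphism on $\pi^{-1}(X^\circ)$ and $\epsilon_j \theta_Y \to 0$ smoothly, it is enough to show that, after passing to a subsequence, $\varphi_j$ converges smoothly on every compact $K \subset \pi^{-1}(X^\circ)$ to a function $\tilde\varphi$ with $\pi^*\omega_0 + \ddbar\tilde\varphi = \omega$ on $K$.

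First I would establish uniform $C^k(K)$ bounds on $\varphi_j$ for every $k$. On such a $K$, the section $|\sigma_D|_{h_D}$ is bounded below by a positive constant, so the preceding Laplacian estimate reduces to $\Delta_{\theta_Y}\varphi_j \leq C_K$ uniformly in $j$. The right-hand side $e^{-\psi_j + F_j + c_j}(\theta_Y)^n$ of the approximating Monge-Amp\`ere equation is bounded above and below away from zero on $K$, using the uniform $L^\infty$ bound on $\psi_j$, the fact that $F_j$ and its derivatives are uniformly controlled on $K$ (which avoids the exceptional locus where $F$ is singular), and $c_j \to 0$. Together with the uniform $L^\infty$ bound on $\varphi_j$ already established, these imply that $\omega_j$ is uniformly equivalent to $\theta_Y$ on $K$. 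The Evans-Krylov theorem applied to the now uniformly elliptic Monge-Amp\`ere equation yields $C^{2,\alpha}(K)$ bounds, and standard Schauder bootstrap (using the smooth convergence $\psi_j \to \varphi$ and $F_j \to F$ on $K$) gives uniform $C^k(K)$ estimates for every $k$.

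Arzel\`a--Ascoli and a diagonal argument over an exhaustion of $\pi^{-1}(X^\circ)$ then extract a subsequence converging smoothly on compact sets to some $\tilde\varphi \in C^\infty(\pi^{-1}(X^\circ))$, which extends to a bounded $\pi^*\omega_0$-plurisubharmonic function on $Y$ by the uniform $L^\infty$ bound. Passing to the smooth limit in the approximating equation on each $K$ gives
\begin{equation*}
(\omega_0 + \ddbar\tilde\varphi)^n = e^{-\varphi}\,\Omega \quad \text{on } X^\circ,
\end{equation*}
the same equation solved by $\varphi$. By the Ko\l odziej--EGZ uniqueness of bounded pluripotential solutions with fixed right-hand side, $\tilde\varphi = \varphi + c$ for some constant $c$; hence $\pi^*\omega_0 + \ddbar\tilde\varphi = \omega$ on $X^\circ$. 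Since every subsequential limit produces the same current, the full sequence $\omega_j$ converges to $\omega$ in $C^\infty_{\mathrm{loc}}(X^\circ)$.

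The main obstacle is the identification of the limit. Because the approximating equation decouples the potential from the right-hand side through the externally chosen $\psi_j$, the interior a priori estimates by themselves do not pin down $\tilde\varphi$, and one must invoke pluripotential uniqueness. Tracking the normalization on $Y$---where $\sup_Y \varphi_j = 0$ may be attained along the exceptional divisor---introduces a harmless additive constant that vanishes after taking $\ddbar$, so it does not affect the convergence of currents.
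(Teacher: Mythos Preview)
Your proposal is correct and follows essentially the same route as the paper: obtain uniform local $C^k$ bounds on $\varphi_j$ from the second-order (Laplacian) estimate and Schauder/Evans--Krylov theory, extract a smooth subsequential limit $\tilde\varphi$, pass to the limit in the Monge--Amp\`ere equation to get $(\omega_0+\ddbar\tilde\varphi)^n=e^{-\varphi}\Omega$, and then invoke uniqueness of bounded solutions to identify $\tilde\varphi$ with $\varphi$ (up to a constant). The paper's proof is terser but structurally identical; your added remarks on the normalization constant and on why the full sequence converges are correct refinements.
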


\begin{proof} The standard Schauder estimates and linear estimates can be obtain locally due to the second order estimates in Lemma. More precisely, for any $k>0$ and $\cK\subset \subset X\setminus \cS_X$,  there exists $C>0$ such that $$\|\varphi_j\|_{C^k(\cK)} \leq C. $$ Therefore $\varphi_j$ converges to $\varphi_\infty \in PSH(Y, \pi^*\omega_0) \cap C^\infty(\pi^*(X^\circ))$ satisfying
$$(\pi^*\omega+ \ddbar \varphi_\infty)^n = e^{-\varphi}\pi^*\Omega.$$
Then $\varphi_\infty =\varphi$ by the uniqueness of the bounded solution to the complex Monge-Amp\`ere equation. The lemma is then proved. 
\end{proof}

\medskip
\noindent{\bf Proof of Theorem \ref{kefano}.} Since $\omega_j$ converges to $\omega$ smoothly on $X^\circ$ and $\lim_{j\rightarrow \infty} \|\varphi_j- \varphi\|_{L^\infty(\cK)} = 0$ for any $\cK\subset\subset X^\circ$, the estimates in Theorem \ref{kefano} hold uniformly for $\omega_j$ and the theorem is proved by letting $j \rightarrow \infty$ by applying the same arguments in Section \ref{singsobsec} and Section \ref{singdiasec}.  \qed
 

\section{Finite time solutions of the K\"ahler-Ricci flow}\label{section 12}
\setcounter{equation}{0}


Let $(X, g_0)$ be a  projective manifold equipped with a K\"ahler metric  $g_0\in H^{1,1}(X, \mathbb{R})\cap H^2(X, \mathbb{Q})$. We will consider the K\"ahler-Ricci flow (\ref{krflow}) starting with $g_0$. Let $T>0$ be the maximal time for the flow (\ref{krflow}). Let $\omega_0$ and $\omega(t)$ be the K\"ahler forms associated to $g_0$ and $g(t)$, the solutions of (\ref{krflow}) for $t\in [0, T)$.  If $T<\infty$,   the limiting class $\alpha_T = \lim_{t\rightarrow T} ([\omega_0]+T[K_X])$ induces a unique surjective holomorphic map $\Phi: X \rightarrow Y$ as in (\ref{kawam}).  If $\alpha_T$ is big, $\dim Y= n$ and we let $\cS$ be the exceptional locus of $\Phi$. $\cS$ is then an analytic subvariety of $X$. 

The following lemma follows directly from the maximum principle (c.f. \cite{TZ, ST2}). 
\begin{lemma} Suppose $T<\infty$, then there exists $C>0$ such that for all $t\in [0, T)$, 
\begin{equation}\label{volrat0}
\left( \omega(t) \right)^n \leq C \left(\omega_0 \right)^n.
\end{equation}
If we further suppose  the limiting class $\alpha_T$ is big. Then $\omega(t)$ converges to the pullback of a closed positive $(1,1)$-current $\omega_T$ on $Y$ and the convergence is smooth on $X \setminus \cS$.

\end{lemma}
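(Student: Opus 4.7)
The lemma contains two statements: the volume inequality (\ref{volrat0}) and the smooth convergence on $X\setminus \mathcal{S}$ to a pullback current. Both are maximum-principle arguments at the level of K\"ahler potentials, in the style of \cite{TZ,ST2}. Here is the plan.

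Fix a smooth $(1,1)$-form $\chi\in c_1(K_X)$ and a smooth volume form $\Omega$ with $-\ddbar\log\Omega=\chi$. Then (\ref{krflow}) rewrites as $\omega(t)=\omega_0+t\chi+\ddbar\varphi(t)$, where $\varphi$ solves the parabolic Monge-Amp\`ere equation
$$\dot\varphi=\log\frac{(\omega_0+t\chi+\ddbar\varphi)^n}{\Omega},\qquad \varphi(0)=0.$$
Since $\Omega$ is comparable to $\omega_0^n$, the inequality (\ref{volrat0}) is equivalent to an upper bound $\dot\varphi\leq C$ on $[0,T)$. I would obtain this by applying the maximum principle to the auxiliary function $H=(T-t)\dot\varphi+\varphi+nt$. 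Using the identities $\partial_t\dot\varphi=\Delta_\omega\dot\varphi+\tr_\omega\chi$ and $\Delta_\omega\varphi=n-\tr_\omega(\omega_0+t\chi)$, one computes
$$(\partial_t-\Delta_\omega)H=\tr_\omega(\omega_0+T\chi)\geq 0,$$
the nonnegativity coming from nefness of $\alpha_T=[\omega_0+T\chi]$ (working on $[0,T-\epsilon]$ with $\alpha_T+\epsilon[\omega_0]$ strictly K\"ahler, then letting $\epsilon\downarrow 0$). This yields $H\leq\max_{t=0}H=0$ and hence $\dot\varphi\leq -(\varphi+nt)/(T-t)$. Combined with a standard $L^\infty$ bound on $\varphi$ from the maximum principle applied to $\varphi-At$ for $A$ large, and bootstrapping in the Monge-Amp\`ere equation itself, this gives the uniform bound $\dot\varphi\leq C$ and therefore (\ref{volrat0}).

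For the convergence when $\alpha_T$ is big, Kawamata's base-point-free theorem gives $\alpha_T=\Phi^*\beta$ for an ample class $\beta$ on $Y$, with $\dim Y=n$ by bigness. Demailly regularization then produces a K\"ahler current $\omega_0+T\chi+\ddbar\psi\geq\delta\theta$ in $\alpha_T$, with $\psi\in C^\infty(X\setminus\mathcal{S})$ and $\psi\to-\infty$ along $\mathcal{S}$. A maximum-principle comparison between $\varphi(t)$ and $(t/T)\psi$ yields $\omega(t)\geq c_K\omega_0$ on any compact $K\Subset X\setminus\mathcal{S}$. Combined with the volume upper bound from the first part, Yau's parabolic second-order estimate (in the form used in \cite{ST2}) gives uniform bounds on $\tr_{\omega_0}\omega(t)$ over $K$, and parabolic Schauder theory upgrades this to uniform $C^k$ bounds on $K$. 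An equicontinuity argument, exploiting the monotonicity of a suitable perturbation of $\varphi(t)$, extracts the smooth limit on $X\setminus\mathcal{S}$.

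The main obstacle is identifying this limit with the pullback of a current on $Y$. The key point is to show that $\varphi_\infty=\lim_{t\to T}\varphi(t)$ is constant along the fibers of $\Phi$ modulo smooth functions, so that $\omega_T:=\Phi_*(\omega_0+T\chi+\ddbar\varphi_\infty)$ is a well-defined closed positive $(1,1)$-current on $Y$ with $\omega(t)\to\Phi^*\omega_T$ smoothly off $\mathcal{S}$. This fiber-constancy follows from the class identity $(\omega_0+T\chi)|_{\text{fiber}}\equiv 0$ via a pushforward/descent argument, as carried out in \cite{ST2}.
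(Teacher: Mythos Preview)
The paper itself gives no proof of this lemma beyond the single sentence ``follows directly from the maximum principle (c.f.\ \cite{TZ, ST2})'', so your outline is being compared to a citation rather than to a written argument. Your overall strategy---maximum principle on auxiliary combinations of $\varphi$ and $\dot\varphi$---is exactly what those references do, and the second half of your sketch (local second-order estimates off the exceptional locus, then Schauder) is the standard route.

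There is, however, a genuine error in your derivation of the volume upper bound. You correctly compute
\[
(\partial_t-\Delta_\omega)\big((T-t)\dot\varphi+\varphi+nt\big)=\tr_\omega(\omega_0+T\chi),
\]
and you argue this is nonnegative (modulo the usual $\epsilon$-perturbation, though note that nefness of $\alpha_T$ is a cohomological condition and one must add a $\ddbar$-term, not merely $\epsilon\omega_0$, to get a pointwise positive representative). But then the parabolic maximum principle gives $H\ge\min_{t=0}H=0$, not $H\le 0$. Your auxiliary function therefore produces a \emph{lower} bound $\dot\varphi\ge -(\varphi+nt)/(T-t)$, which is useful elsewhere but is the opposite of what (\ref{volrat0}) requires.

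For the upper bound on $\dot\varphi$ one uses instead $G=t\dot\varphi-\varphi-nt$, for which
\[
(\partial_t-\Delta_\omega)G=-\tr_\omega\omega_0<0,
\]
so $G\le G|_{t=0}=0$ and hence $\dot\varphi\le(\varphi+nt)/t$ for $t>0$. Combined with the easy upper bound $\varphi\le Ct$ (obtained by evaluating the Monge--Amp\`ere equation at a spatial maximum of $\varphi$), this gives $\dot\varphi\le C$ on $[\epsilon,T)$, and smoothness on $[0,\epsilon]$ finishes the job. Note in particular that the volume upper bound needs only $T<\infty$, not the nefness of $\alpha_T$, consistent with how the lemma is stated. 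A smaller point: in your second paragraph, a comparison between $\varphi(t)$ and $(t/T)\psi$ gives a lower bound on the potential, not directly $\omega(t)\ge c_K\omega_0$; the metric lower bound comes only after combining the local volume lower bound with the local Yau $C^2$-estimate.
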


The limiting current $\omega(T)$ has bounded local potentials and there exists $C>0$ such that 
\begin{equation}\label{volrat}
\omega^n (T) \leq C(\omega_0)^n
\end{equation}
 is uniformly bounded above. Theorem \ref{thm:main2} will then follow by applying Theorem \ref{singsob} with estimates (\ref{volrat0}) and (\ref{volrat0}).

\bigskip

\noindent {\bf Acknowledgement:} We would like to thank H. Guenancia and M. Paun for their interest and helpful comments on this paper.

\setcounter{equation}{0}

\end{document}